\RequirePackage{etex}
\documentclass[pdftex,a4paper]{article}
\usepackage{amsmath,amssymb,amsthm}
\usepackage{microtype}
\usepackage{tikz}
\usetikzlibrary{arrows,cd}
\usepackage[hypertexnames=false]{hyperref}
\hypersetup{
	colorlinks=true,
	citecolor=red,
	linkcolor=blue,
	urlcolor=orange
	}
\usepackage{cleveref}
\usepackage[square,numbers]{natbib}
\usepackage{autonum}
\usepackage{enumitem}
\setlist[enumerate,1]{label=(\arabic*),ref=(\arabic*)}
\setlist[enumerate,2]{label=(\alph*),ref=\theenumi. (\alph*)}
\crefname{equation}{equation.}{equations.}

\numberwithin{equation}{subsection}
\theoremstyle{definition}

\newtheorem{theorem}{Theorem}[section]
\crefname{theorem}{Theorem}{Theorems}
\newtheorem{maintheorem}[theorem]{Main Theorem}
\crefname{maintheorem}{Main Theorem}{Main Theorems}
\newtheorem{lemma}[theorem]{Lemma}
\crefname{lemma}{Lemma}{Lemmas}
\newtheorem{proposition}[theorem]{Proposition}
\crefname{proposition}{Proposition}{Propositions}
\newtheorem{corollary}[theorem]{Corollary}
\crefname{corollary}{Corollary}{Corollaries}
\newtheorem{definition}[theorem]{Definition}
\crefname{definition}{Definition}{Definitions}
\newtheorem{example}[theorem]{Example}
\crefname{example}{Example}{Examples}
\newtheorem{remark}[theorem]{Remark}
\crefname{remark}{Remark}{Remarks}


\newcommand{\setmid}{\; \middle|\;}
\newcommand{\lmod}{\operatorname{\mathrm{\hspace{-2pt}-mod}}}
\newcommand{\lMod}{\operatorname{\mathrm{\hspace{-2pt}-Mod}}}
\newcommand{\lproj}{\operatorname{\mathrm{\hspace{-2pt}-proj}}}

\newcommand{\thick}{\operatorname{\mathrm{thick}}}
\newcommand{\silt}{\operatorname{\mathrm{silt}}}
\newcommand{\tilt}{\operatorname{\mathrm{tilt}}}
\newcommand{\opo}{\operatorname{\mathrm{op}}}
\newcommand{\twosilt}{\operatorname{\mathrm{2-silt}}}
\newcommand{\twotilt}{\operatorname{\mathrm{2-tilt}}}

\newcommand{\Image}{\operatorname{\mathrm{Im}}}
\newcommand{\Kernel}{\operatorname{\mathrm{Ker}}}
\newcommand{\Soc}{{\operatorname{Soc}\nolimits}}
\newcommand{\Rad}{{\operatorname{Rad}\nolimits}}
\newcommand{\Ext}{\operatorname{Ext}\nolimits}
\newcommand{\End}{\operatorname{End}\nolimits}
\newcommand{\Hom}{\operatorname{Hom}\nolimits}

\newcommand{\induc}{{\operatorname{Ind}\nolimits}}
\newcommand{\restr}{{\operatorname{Res}\nolimits}}
\newcommand{\add}{\operatorname{\mathrm{add}}}
\newcommand{\stautilt}{\operatorname{\mathrm{s\tau-tilt}}}
\newcommand{\itaurigid}{\operatorname{\mathrm{indec.\tau-rigid}}}
\newcommand{\itauirigid}{\operatorname{\mathrm{indec.\tau^{-1}-rigid}}}
\newcommand{\stauitilt}{\operatorname{\mathrm{s\tau^{-1}-tilt}}}
\newcommand{\nscf}{\operatorname{\mathrm{s}}}

\newcommand{\tors}{\operatorname{\mathrm{tors}}}
\newcommand{\torf}{\operatorname{\mathrm{torf}}}
\newcommand{\ftors}{\operatorname{\mathrm{f-tors}}}
\newcommand{\ftorf}{\operatorname{\mathrm{f-torf}}}
\newcommand{\Fac}{\operatorname{\mathrm{Fac}}}
\newcommand{\Sub}{\operatorname{\mathrm{Sub}}}
\newcommand{\Filt}{\operatorname{\mathrm{Filt}}}
\newcommand{\brick}{\operatorname{\mathrm{brick}}}
\newcommand{\sbrick}{\operatorname{\mathrm{sbrick}}}
\newcommand{\flsbrick}{\operatorname{\mathrm{f_L-sbrick}}}
\newcommand{\flbrick}{\operatorname{\mathrm{f_L-brick}}}
\newcommand{\frsbrick}{\operatorname{\mathrm{f_R-sbrick}}}
\newcommand{\frbrick}{\operatorname{\mathrm{f_R-brick}}}
\newcommand{\smc}{\operatorname{\mathrm{smc}}}
\newcommand{\twosmc}{\operatorname{\mathrm{2-smc}}}
\newcommand{\torscl}{\operatorname{\mathrm{T}}}
\newcommand{\torfcl}{\operatorname{\mathrm{F}}}
\newcommand{\inertiagp}{I}
\newcommand{\decompgp}{I}
\newcommand{\Id}{\mathrm{Id}}

\newcommand{\Addresses}{{
  \bigskip
  \footnotesize

  Ryotaro~KOSHIO (Corresponding author)\par\nopagebreak
  \textsc{Department of Mathematics, Tokyo University of Science}
  \par\nopagebreak
	1-3, Kagurazaka, Shinjuku-ku, Tokyo, 162-8601, Japan
  \par\nopagebreak
  E-mail: \href{mailto:1120702@ed.tus.ac.jp}{1120702@ed.tus.ac.jp}

  \medskip

  Yuta~KOZAKAI
  \par\nopagebreak
  \textsc{Department of Mathematics, Tokyo University of Science}
	\par\nopagebreak
	1-3, Kagurazaka, Shinjuku-ku, Tokyo, 162-8601, Japan
	\par\nopagebreak
  E-mail: \href{mailto:kozakai@rs.tus.ac.jp}{kozakai@rs.tus.ac.jp}
}}
\title{Induced modules of support \(\tau\)-tilting modules and extending modules of semibricks over blocks of finite groups\footnote{\emph{Mathematics Subject Classification} (2020). 20C20, 16G10.
}
\footnote{\emph{Keywords.} Support \(\tau\)-tilting modules, Semibricks, two-term tilting complexes, two-term simple-minded collections, Blocks of finite groups}}

\author{Ryotaro~KOSHIO \and Yuta~KOZAKAI}
\date{\today}
\begin{document}
\maketitle
\begin{abstract}
	In this article we study support \(\tau\)-tilting modules, semibricks and more over blocks of group algebras.
	Let \(k\) be an algebraically closed field of characteristic \(p>0\), \(\tilde{G}\) a finite group and \(G\) a normal subgroup of \(\tilde{G}\).
	Moreover, let \(\tilde{B}\) be a block of \(k\tilde{G}\) and \(B\) a block of \(kG\) covered by \(\tilde{B}\).
	We show that, under certain conditions for the factor group \(\tilde{G}/G\) and \(B\), induced modules and extending modules of support \(\tau\)-tilting modules and semibricks over \(B\) are also the ones over \(\tilde{B}\), respectively.
\end{abstract}

\tableofcontents

\section{Introduction}\label{intro 2021-11-04 11:17:58}
The study of derived equivalences of blocks of finite groups has been motived and inspired by ``Brou\'{e}'s conjecture'', which can be conceived of as a local-global principle in the modular representation theory of finite groups.
In \cite{MR1002456}, the solution to the problem of determining the equivalence of two given algebras was reduced to the problem of finding an appropriate tilting complex.
Therefore, abundant constructions of tilting complexes over blocks enables us to find the algebras which are derived equivalent to the blocks.
Of course, it is very hard to construct appropriate tilting complexes over blocks and to determine all tilting complexes over blocks.
The classes of tilting complexes called two-term tilting complexes are considered to be non-trivial and a bit easier to handle
because it is showed that there exists a one-to-one correspondence between the two-term tilting complexes and the support \(\tau\)-tilting modules over symmetric algebras in \cite{MR3187626}.
Abundant constructions of two-term tilting complexes over blocks are also useful for plenty of constructions of general tilting complexes over blocks by using the tilting mutations introduced in \cite{MR2927802}.
Therefore, we focus on support \(\tau\)-tilting modules, over blocks and their corresponding representation-theoretic objects, such as semibricks, functorially finite torsion classes of module categories, two-term simple-minded collections and more, which are also useful to study of derived equivalences of blocks \cite{MR4139031,MR3910476}.
Finally, we got some results which work effectively for the purpose stated above.

In order to describe these, we set notation as follows.
Let \(k\) be an algebraically closed field of characteristic \(p>0\), \(\tilde{G}\) a finite group, \(G\) a normal subgroup of \(\tilde{G}\), \(B\) a block of \(kG\) and \(\tilde{B}\) a block of \(k\tilde{G}\) covering \(B\), that is, \(1_B1_{\tilde{B}}\neq 0\), where \(1_B\) and \(1_{\tilde{B}}\) mean the respective unit elements of \(B\) and \(\tilde{B}\).
In this setting, there are some useful properties about the restriction functor \(\restr_G^{\tilde{G}}\) and the induction functor \(\induc_G^{\tilde{G}}\) between the category of \(B\)-modules and the one of \(\tilde{B}\)-modules.
We denote the inertial group of the block \(B\) in \({\tilde G}\) by \(\inertiagp_{\tilde G}(B)\)
and the second group cohomology of the factor group \(\inertiagp_{\tilde{G}}(B)/G\) with coefficients in the unit group \(k^\times\) of the field \(k\) with trivial \(G\)-action by \(H^2(\inertiagp_{\tilde{G}}(B)/G,k^\times)\).
We say that a \(B\)-module \(U\) is \(\inertiagp_{\tilde G}(B)\)-stable if \(xU \cong U\) as \(B\)-modules for any \(x\in \inertiagp_{\tilde G}(B)\).
Furthermore, we use the following notation:
\begin{itemize}
	\item \(\stautilt B\) (or \(\stautilt \tilde{B}\)) means the set of isomorphism classes of basic support \(\tau\)-tilting modules over \(B\) (or \(\tilde{B}\), respectively),
	\item \(\flsbrick B\) (or \(\flsbrick \tilde{B}\)) means the set of isomorphism classes of basic left finite semibricks over \(B\) (or \(\tilde{B}\), respectively),
	\item \(\twotilt B\) (or \(\twotilt \tilde{B}\)) means the set of isomorphism classes of basic two-term tilting complexes in \(K^b(B\lproj)\) (or \(K^b(\tilde{B}\lproj)\), respectively),
	\item \(\twosmc B\) (or \(\twosmc \tilde{B}\)) means the set of isomorphism classes of two-term simple-minded collections in \(D^b(B\lmod)\) (or \(D^b(\tilde{B}\lmod)\), respectively).
\end{itemize}
The following result contributes to abundant constructions of support \(\tau\)-tilting \(\tilde{B}\)-modules and two-term tilting complexes in \(K^b(\tilde{B}\lproj)\).
\begin{maintheorem}[{see \cref{MT 2021-09-07 13:50:56}}]\label{MT 2021-09-05 21:34:22}
	Under the above notation, we assume the following conditions hold:
	\begin{enumerate}
		\item Any left finite brick in \(B\lmod\) is \(\inertiagp_{\tilde G}(B)\)-stable.
		\item \(H^2(\inertiagp_{\tilde{G}}(B)/G,k^\times)=1\).
		\item \(k[\inertiagp_{\tilde{G}}(B)/G]\) is basic as a \(k\)-algebra.
	\end{enumerate}
	Then the maps
	\begin{equation}\label{stau corr 2021-09-05 21:41:23}
		\begin{tikzcd}[ampersand replacement=\&,row sep=1pt]
			\stautilt B \ar[r]\&\stautilt \tilde{B}\\
		\end{tikzcd}
	\end{equation}
	defined by \(\stautilt B\ni M\mapsto \tilde{B}\induc_G^{\tilde{G}} M \in \stautilt \tilde{B}\) and
	\begin{equation}\label{twotilt corr 2021-09-05 21:41:23}
		\begin{tikzcd}[ampersand replacement=\&,row sep=1pt]
			\twotilt B \ar[r]\&\twotilt \tilde{B}\\
		\end{tikzcd}
	\end{equation}
	defined by \(\twotilt B\ni T\mapsto \tilde{B}\induc_G^{\tilde{G}} T \in \twotilt \tilde{B}\) are well-defined and injective.
	Moreover, we get the following commutative diagram:
	\begin{equation}
		\begin{tikzcd}[ampersand replacement=\&, column sep=2cm]
			\stautilt B\ar[d,"\text{\cite{MR3187626} for \(B\)}"',"\wr"] \ar[r,"\text{\eqref{stau corr 2021-09-05 21:41:23}}"]\&\stautilt \tilde{B} \ar[d,"\text{\cite{MR3187626} for \(\tilde{B}\)}","\wr"']\\
			\twotilt B\ar[r,"\text{\eqref{twotilt corr 2021-09-05 21:41:23}}"']\&\twotilt \tilde{B}.
		\end{tikzcd}
	\end{equation}
\end{maintheorem}

One of our interests is, for finite group \(\tilde{G}\) and its normal subgroup \(G\) with a cyclic Sylow \(p\)-subgroup and of \(p\)-power index in \(\tilde{G}\), whether the representation theoretical objects over \(\tilde{B}\) can be obtained from those over \(B\), where \(B\) is a block of \(kG\) and \(\tilde{B}\) is that of \(k\tilde{G}\) covering \(B\) (for example see \cite{MR1889341,MR2592757}).
In \cite{MR4243358}, the authors showed that the induction functor \(\induc_G^{\tilde{G}}\) induces a poset isomorphism between \(\stautilt B\) and \(\stautilt \tilde{B}\) and between \(\twotilt B\) and \(\twotilt \tilde{B}\) in the above setting.
On the other hand, there must be an explicit correspondence between \(\flsbrick B\) and \(\flsbrick \tilde{B}\) and that between \(\twosmc B\) and \(\twosmc \tilde{B}\), but it was not made clear that how they correspond in that paper.
Therefore, one of our goals is clarifying the correspondences, and we get the following theorems as positive answers which can be applied to our interested situation.

\begin{maintheorem}[{see \cref{MT sbrick 2021-09-10 09:27:04}}]\label{MT sbrick 2021-09-06 11:12:51}
	With the same assumptions in \cref{MT 2021-09-05 21:34:22}, the following hold:
	\begin{enumerate}
		\item Let \(e\) be the number of isomorphism classes of simple \(k[\inertiagp_{\tilde{G}}(B)/G]\)-modules. Then for any left finite semibrick \(S\) in \(B\lmod\) and any indecomposable direct summand \(S_i\) of \(S\), there exist exactly \(e\) isomorphism classes of bricks \(\tilde{S_i}^{(1)}, \ldots, \tilde{S_i}^{(e)}\) in \(k\inertiagp_{\tilde{G}}(B)\lmod\) satisfying \(\restr_G^{\inertiagp_{\tilde{G}}(B)}\tilde{S}_i^{(j)}\cong S_i\) for all \(i,j\).
		\item The map
		      \begin{equation}\label{sbrick corr 2021-09-05 21:41:10}
			      \begin{tikzcd}[ampersand replacement=\&,row sep=1pt]
				      \flsbrick B \ar[r]\&\flsbrick \tilde{B}\\
			      \end{tikzcd}
		      \end{equation}
		      defined by \(S\cong \bigoplus_{i=1}^{n_{S}}S_i\mapsto \tilde{B} \induc^{\tilde{G}}_{\inertiagp_{\tilde{G}}(B)}\left( \bigoplus_{i=1}^{n_S}\bigoplus_{j=1}^{e}\tilde{S_i}^{(j)} \right)\), here \(S\cong \bigoplus_{i=1}^{n_{S}}S_i\) is a direct sum decomposition into bricks, is well-defined and injective.
		\item We get the following commutative diagram:
		      \begin{equation}
			      \begin{tikzcd}[ampersand replacement=\&, column sep=2cm]
				      \stautilt B\ar[d,"\text{\cite{MR4139031} for \(B\)}"',"\wr"] \ar[r,"\text{\eqref{stau corr 2021-09-05 21:41:23}}"]\&\stautilt \tilde{B} \ar[d,"\text{\cite{MR4139031} for \(\tilde{B}\)}","\wr"']\\
				      \flsbrick B\ar[r,"\text{\eqref{sbrick corr 2021-09-05 21:41:10}}"']\&\flsbrick \tilde{B}.
			      \end{tikzcd}
		      \end{equation}
	\end{enumerate}
\end{maintheorem}
\noindent
In addition to the above theorem, in our setting, we get an explicit map
\begin{equation}\label{corr smc 2021-09-06 11:23:00}
	\begin{tikzcd}[ampersand replacement=\&,row sep=1pt]
		\twosmc B \ar[r]\&\twosmc \tilde{B}
	\end{tikzcd}
\end{equation}
induced by \eqref{sbrick corr 2021-09-05 21:41:10} (see \cref{MT smcsbrick 2021-10-03 21:01:38} for the detail) and get the following result.
\begin{maintheorem}[{see \cref{big com. diag. 2021-10-06 10:05:34}}]\label{MT 2022-08-24 20:58:25}
	With the same assumptions in \cref{MT 2021-09-05 21:34:22}, the following diagram is commutative:
	\begin{equation}\label{diagram 2022-08-24 21:38:01}
		\begin{tikzcd}
			&\twosmc B\ar[dd,"\text{\cite{MR4139031} for \(B\)}"'very near end]  \ar[rr,"\text{\eqref{corr smc 2021-09-06 11:23:00}}"]&&\twosmc \tilde{B}\\
			\twotilt B\ar[ru,"\text{\cite{MR4139031,MR3910476} for \(B\)}"description] \ar[rr,"\text{\eqref{twotilt corr 2021-09-05 21:41:23}}"near end,crossing over]&&\twotilt \tilde{B}\ar[ru,"\text{\cite{MR4139031,MR3910476} for \(\tilde{B}\)}"description] \\
			&\flsbrick B\ar[rr,"\eqref{sbrick corr 2021-09-05 21:41:10}"very near start]&&\flsbrick \tilde{B}\ar[uu,leftarrow,"\text{\cite{MR4139031} for \(\tilde{B}\)}"']\\
			\stautilt B\ar[ru,"\text{\cite{MR4139031} for B}"description] \ar[uu,"\text{\cite{MR3187626} for \(B\)}"] \ar[rr,"\text{\eqref{stau corr 2021-09-05 21:41:23}}"'] &&\stautilt \tilde{B} \ar[ru,"\text{\cite{MR4139031} for \(\tilde{B}\)}"description] \ar[uu,"\text{\cite{MR3187626} for \(\tilde{B}\)}"'very near end,crossing over].
		\end{tikzcd}
	\end{equation}
\end{maintheorem}

At a glance, the assumption in \cref{MT 2021-09-05 21:34:22}, which is also required in \cref{MT 2022-08-24 20:58:25,MT sbrick 2021-09-06 11:12:51}, seems strong, but in fact it can be applied to many situations, including the one we are interested in.
In the setting of following \cref{MT example 2022-08-24 21:40:52}, the conditions of \cref{MT 2021-09-05 21:34:22} are satisfied automatically.
In that sense, \cref{MT 2021-09-05 21:34:22} is a generalization of the main theorem in \cite{MR4243358}, and \cref{MT sbrick 2021-09-06 11:12:51,MT 2022-08-24 20:58:25} which require the conditions of \cref{MT 2021-09-05 21:34:22} bring more representation theoretical information on the covering blocks including the classes dealt with in \cite{MR4243358}.
\begin{maintheorem}[{see \cref{bijectice 2022-08-24 21:47:43,cyclic quot principal 2022-08-28 22:59:59,example 2022-08-24 21:48:34,example 2022-08-24 21:48:53}}]\label{MT example 2022-08-24 21:40:52}
	Let \(G\) be a normal subgroup of a finite group \(\tilde{G}\), \(B\) a block of \(kG\) and \(\tilde{B}\) a block of \(k\tilde{G}\) covering \(B\) satisfying one of the following conditions,
	then the assumptions of \cref{MT 2021-09-05 21:34:22} hold.
	In particular, in the situation \ref{2022-08-24 21:36:07}, the all horizontal maps in \eqref{diagram 2022-08-24 21:38:01} are bijective and all bricks over \(\tilde{B}\) can be obtained by the extensions of those over \(B\).
	\begin{enumerate}
		\item \(G\) has a cyclic Sylow \(p\)-subgroup and the quotient group \(\tilde{G}/G\) is a \(p\)-group.\label{2022-08-24 21:36:07}
		\item \(G\) has a cyclic Sylow \(p\)-subgroup, the quotient group \(\tilde{G}/G\) is a cyclic group or isomorphic to the dihedral group \(D_{2p}\) of order \(2p\) and \(B\) is the principal block \(B_0(kG)\) or a block of \(kG\) with distinct dimensional simple \(B\)-modules to each other.
		\item \(G=\mathfrak{A}_5\) (the alternating group of degree \(5\)) and \(\tilde{G}=\mathfrak{S}_5\) (the symmetric group of degree \(5\)) where \(p=5\).
		\item \(G\) is an arbitrary finite group and \(\tilde{G}=G\times H\), where \(H\) is a \(p\)-group, a cyclic group or the dihedral group of order \(2p\).
	\end{enumerate}
\end{maintheorem}

In this paper, we use the following notation and convention.
Modules mean finitely generated left modules and complexes mean cochain complexes.
Let \(\Lambda\) be a finite dimensional algebra over a field \(k\).
For a \(\Lambda\)-module \(U\), we denote by \(\Rad(U)\) the Jacobson radical of \(U\), by \(\Soc(U)\) the socle of \(U\), by \(P(U)\) the projective cover of \(U\), by \(I(U)\) the injective envelope of \(U\), by \(\Omega(U)\) the syzygy of \(U\), by \(\Omega^{-1}(U)\) the cosyzygy of \(U\) and by \(\tau U\) the Auslander--Reiten translate of \(U\).
We denote by \(\Lambda\lmod\) the module category of \(\Lambda\), by \(K^b(\Lambda\lproj)\) the homotopy category consisting of bounded complexes of projective \(\Lambda\)-modules and by \(D^b(\Lambda\lmod)\) the bounded derived category consisting of complexes of \(\Lambda\)-modules.
For an object \(X\) of \(\Lambda\lmod\) (of \(K^b(\Lambda\lproj)\), of \(D^b(\Lambda\lmod)\)),
we denote by \(\add X\) the full subcategory of \(\Lambda\lmod\) (of \(K^b(\Lambda\lproj)\), of \(D^b(\Lambda\lmod)\) respectively) whose objects are isomorphic to finite direct sums of direct summands of \(X\).
For \(\Lambda\)-modules \(U\) and \(U'\), we denote by \(\Rad_{\Lambda\lmod}(U,U')\) the Jacobson radical of \(\Hom_\Lambda(U,U')\).
We say that an object \(X\) of \(\Lambda\lmod\), \(K^b(\Lambda\lproj)\) or \(D^b(\Lambda\lmod)\) is basic if any two indecomposable direct summands of \(X\) are non-isomorphic.
We denote by \(\nu_\Lambda\) the Nakayama functor of \(\Lambda\lmod\) which maps any projective \(\Lambda\)-module to injective \(\Lambda\)-module.

\section{Preliminary results of \texorpdfstring{\(\tau\)}{tau}-tilting theory}
In this section, \(k\) means an algebraically closed field and \(\Lambda\) means a finite dimensional \(k\)-algebra.

\subsection{Functorially finiteness of torsion classes and torsion-free classes}\label{pre tors 2021-11-04 11:32:08}
Let \(\mathcal{C}\) be a full subcategory of the module category \(\Lambda \lmod\).
We say that \(\mathcal{C}\) is contravariantly finite in \(\Lambda \lmod\) if any object in \(\Lambda\lmod\) has a right \(\mathcal{C}\)-approximation, that is, for every object \(M\) of \(\Lambda \lmod\) there exist an object \(C\) of \(\mathcal{C}\) and a morphism \(f\colon C\rightarrow M\) such that the sequence of functors from \(\mathcal{C}\) to \(k\lmod\)
\begin{equation}
	\begin{tikzcd}
		\Hom_{\Lambda}(-,C)|_{\mathcal{C}} \ar[r,"f \circ \bullet"]& \Hom_{\Lambda}(-,M)|_{\mathcal{C}} \ar[r]& 0
	\end{tikzcd}
\end{equation}
is exact.
Dually, we say that \(\mathcal{C}\) is covariantly finite in \(\Lambda \lmod\) if any object in \(\Lambda\lmod\) has a left \(\mathcal{C}\)-approximation, that is, for every object \(M\) of \(\Lambda \lmod\) there exist an object \(C\) of \(\mathcal{C}\) and a morphism \(g\colon M\rightarrow C\) such that the sequence of functors from \(\mathcal{C}\) to \(k\lmod\)
\begin{equation}
	\begin{tikzcd}
		\Hom_{\Lambda}(C,-)|_{\mathcal{C}} \ar[r,"\bullet \circ g"]& \Hom_{\Lambda}(M,-)|_{\mathcal{C}} \ar[r]& 0
	\end{tikzcd}
\end{equation}
is exact.
We say that \(\mathcal{C}\) is functorially finite if \(\mathcal{C}\) is both contravariantly finite and covariantly finite in \(\Lambda\lmod\).
We denote the right perpendicular subcategory of \(\mathcal{C}\) by
\begin{equation}
	\mathcal{C}^{\perp}:=\left\{ X \in \Lambda \lmod \setmid \text{\(\forall C \in \mathcal{C}, \Hom_{\Lambda}(C, X) =0\)} \right\}
\end{equation}
and the left perpendicular subcategory of \(\mathcal{C}\) by
\begin{equation}
	{}^{\perp}\mathcal{C}:=\left\{ X \in \Lambda \lmod \setmid \forall C \in \mathcal{C}, \Hom_{\Lambda}(X,C) =0 \right\}.
\end{equation}
We denote by \(\Fac(\mathcal{C})\) the full subcategory of \(\Lambda\lmod\) consisting of all factor modules of finite direct sums of objects in \(\mathcal{C}\).
Dually, we denote by \(\Sub(\mathcal{C})\) the full subcategory of \(\Lambda\lmod\) consisting of all submodules of finite direct sums of objects in \(\mathcal{C}\). We denote by \(\Filt(\mathcal{C})\) the full subcategory of \(\Lambda\lmod\) consisting of all modules having a finite \(\add \mathcal{C}\)-filtration, that is,

\begin{equation}
	\Filt(\mathcal{C}):=\left\{M \in \Lambda\lmod \setmid \parbox{6cm}{there exist \(l\in\mathbb{N}\) and a sequence \\ \(0=M_0\subset M_1\cdots \subset M_{l-1}\subset M_l=M\) \\of \(\Lambda\)-modules with \(M_{i}/M_{i-1} \in \add \mathcal{C}\) for all \(i=1, \ldots, l\).}
	\right\}.
\end{equation}

Let \(\mathcal{T}\) and \(\mathcal{F}\) be full subcategories of the module category \(\Lambda\lmod\).
We say that \(\mathcal{T}\) is a torsion class if \(\mathcal{T}\) is closed under taking factor modules, direct sums and extensions.
Dually, we say that \(\mathcal{F}\) is a torsion-free class if \(\mathcal{F}\) is closed under taking submodules, direct sums and extensions.
We use the following notation:
\begin{itemize}
	\item \(\tors \Lambda\) means the set of torsion classes in \(\Lambda \lmod\),
	\item \(\torf \Lambda\) means the set of torsion-free classes in \(\Lambda \lmod\),
	\item \(\ftors \Lambda\) means the set of functorially finite torsion classes in \(\Lambda \lmod\),
	\item \(\ftorf \Lambda\) means the set of functorially finite torsion-free classes in \(\Lambda \lmod\).
\end{itemize}

These sets are ordered by inclusion.
Let \(\mathcal{C}\) be a full subcategory of the module category \(\Lambda \lmod\).
We define \(\torscl(\mathcal{C})\) (or \(\torfcl(\mathcal{C})\)) to be the smallest torsion class (or torsion-free class, respectively) containing \(\mathcal{C}\).
For a \(\Lambda\)-module \(U\),
we abbreviate \(\Fac(\add U)\), \(\Sub(\add U)\), \(\torscl (\add U)\) and \(\torfcl (\add U)\) as \(\Fac(U)\), \(\Sub(U)\), \(\torscl(U)\) and \(\torfcl(U)\), respectively.

The following assertion is obvious, but plays an important role, so we give its proof briefly.
\begin{proposition}[{\cite[Lemma 3.1]{MR3723626}}]\label{filt and closure 2021-12-06 11:12:23}
	Let \(\mathcal{C}\) be a full subcategory of the module category \(\Lambda \lmod\).
	Then we have \(\torscl(\mathcal{C})=\Filt(\Fac(\mathcal{C}))\) and \(\torfcl(\mathcal{C})=\Filt(\Sub(\mathcal{C}))\).
\end{proposition}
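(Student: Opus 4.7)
The plan is to prove both equalities by showing mutual inclusion, and since the torsion-free statement is dual, I will focus on $\torscl(\mathcal{C}) = \Filt(\Fac(\mathcal{C}))$. The inclusion $\torscl(\mathcal{C}) \supseteq \Filt(\Fac(\mathcal{C}))$ is essentially formal: any torsion class containing $\mathcal{C}$ is closed under direct sums and factor modules, hence contains $\Fac(\mathcal{C})$; being closed under extensions, it then contains every module admitting a finite filtration with subquotients in $\Fac(\mathcal{C})$, i.e.\ all of $\Filt(\Fac(\mathcal{C}))$.

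The substantive direction is $\torscl(\mathcal{C}) \subseteq \Filt(\Fac(\mathcal{C}))$, for which it suffices to verify that $\Filt(\Fac(\mathcal{C}))$ is itself a torsion class containing $\mathcal{C}$. Containment of $\mathcal{C}$ is clear (take the two-step filtration $0 \subset M = M$ for $M \in \mathcal{C} \subseteq \Fac(\mathcal{C})$). Closure under direct sums follows from taking the direct sum of filtrations term by term, and closure under extensions follows from splicing the filtration of a submodule with the preimage of the filtration of the quotient to obtain a filtration whose subquotients are exactly those of the two given filtrations.

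The main obstacle, and the key step, is closure of $\Filt(\Fac(\mathcal{C}))$ under factor modules. Given a surjection $\pi \colon M \twoheadrightarrow N$ and a filtration $0 = M_0 \subset M_1 \subset \cdots \subset M_l = M$ with each $M_i/M_{i-1} \in \Fac(\mathcal{C})$, I would set $N_i := \pi(M_i)$ to obtain a chain $0 = N_0 \subseteq N_1 \subseteq \cdots \subseteq N_l = N$. For each $i$, the induced surjection $M_i/M_{i-1} \twoheadrightarrow N_i/N_{i-1}$ exhibits $N_i/N_{i-1}$ as a factor module of an object of $\Fac(\mathcal{C})$, and since $\Fac(\mathcal{C})$ is itself closed under factor modules, we get $N_i/N_{i-1} \in \Fac(\mathcal{C})$. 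After discarding repeated terms in the chain, this is a valid $\Fac(\mathcal{C})$-filtration of $N$, so $N \in \Filt(\Fac(\mathcal{C}))$.

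The statement for torsion-free classes is proven by the dual argument: one checks $\Filt(\Sub(\mathcal{C}))$ is closed under direct sums and extensions exactly as above, and the nontrivial step becomes closure under submodules, handled by intersecting a submodule $N \hookrightarrow M$ with a given $\Sub(\mathcal{C})$-filtration of $M$ and using that $\Sub(\mathcal{C})$ is closed under submodules to control the subquotients $(N \cap M_i)/(N \cap M_{i-1}) \hookrightarrow M_i/M_{i-1}$.
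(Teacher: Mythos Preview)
Your proof is correct and follows essentially the same approach as the paper: both arguments verify that $\Filt(\Fac(\mathcal{C}))$ is a torsion class containing $\mathcal{C}$, with the key step being closure under factor modules via pushing forward a filtration along a surjection and using that $\Fac(\mathcal{C})$ is itself closed under quotients. The paper is slightly more terse about closure under direct sums and extensions (calling them ``obvious''), while you spell these out; conversely, you mention discarding repeated terms in the pushed-forward filtration, which the paper omits since the zero module lies in $\Fac(\mathcal{C})$ anyway.
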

\begin{proof}
	We only prove that \(\torscl(\mathcal{C})=\Filt(\Fac(\mathcal{C}))\); the other statement \(\torfcl(\mathcal{C})=\Filt(\Sub(\mathcal{C}))\) follows similarly.
	First, we show that the subcategory \(\Filt(\Fac(\mathcal{C}))\) of \(\Lambda\lmod\) is a torsion class.
	It is obvious that \(\Filt(\Fac(\mathcal{C}))\) is closed under direct sums and extensions.
	Let \(U\) be an arbitrary object of \(\Filt(\Fac(\mathcal{C}))\) and \(U\xrightarrow{f} V\) an arbitrary epimorphism from \(U\) to a \(\Lambda\)-module \(V\).
	Then we can take a filtration
	\begin{equation}
		0=U_0\subset U_1\subset \cdots\subset U_{l-1}\subset U_l=U
	\end{equation}
	of \(U\) satisfying \(U_{i}/U_{i-1} \in \add \Fac(\mathcal{C})=\Fac(\mathcal{C})\) for all \(i=1, \ldots, l\). Then we have the filtration
	\begin{equation}
		0=f[U_0]\subset f[U_1]\subset \cdots\subset f[U_{l-1}]\subset f[U_l]=V
	\end{equation}
	of \(V\).
	For any \(i=1, \ldots, l\), the epimorphism \(f\) induces an epimorphism
	\begin{equation}
		U_i/U_{i-1}\rightarrow f[U_{i}]/f[U_{i-1}].
	\end{equation}
	Since \(\Fac(\mathcal{C})\) is closed under taking factor modules, we have \(f[U_{i}]/f[U_{i-1}] \in \Fac(\mathcal{C})\) and \(V\in \Filt(\Fac(\mathcal{C}))\).
	Hence, we have that \(\Filt(\Fac(\mathcal{C}))\) is closed under taking factor modules. Thus, we get that \(\Filt(\Fac(\mathcal{C}))\) is a torsion class in \(\Lambda\lmod\).

	Next we show that \(\Filt(\Fac(\mathcal{C}))=\torscl(\mathcal{C})\).
	Since \(\Filt(\Fac(\mathcal{C}))\) is a torsion class and \(\Filt(\Fac(\mathcal{C}))\supset \mathcal{C}\), we get \(\Filt(\Fac(\mathcal{C})) \supset \torscl(\mathcal{C})\) by minimality of \(\torscl(\mathcal{C})\).
	Moreover, the inclusion \(\mathcal{C}\subset \torscl(\mathcal{C})\) and the definitions of torsion classes imply that
	\(\Filt(\Fac(\mathcal{C}))\subset\torscl(\mathcal{C})\).
\end{proof}
The following proposition which gives the connection between torsion classes and torsion free classes is crucial.

\begin{proposition}[{for example, see \cite{MR2197389}}]\label{dualtorsf 2021-12-16 16:31:45}
	The following maps are mutually inverse isomorphisms of partially ordered sets:
	\begin{equation}
		\begin{tikzcd}[row sep=1pt]
			\tors \Lambda \ar[r]& (\torf \Lambda)^{\opo}\\
			\mathcal{T}\ar[r,mapsto]& {}^{\perp}\mathcal{T},
		\end{tikzcd}
	\end{equation}
	\begin{equation}
		\begin{tikzcd}[row sep=1pt]
			\torf \Lambda \ar[r]& (\tors \Lambda)^{\opo}\\
			\mathcal{F}\ar[r,mapsto]& \mathcal{F}^{\perp}.
		\end{tikzcd}
	\end{equation}
	Moreover, these isomorphisms restrict to the following isomorphisms of partially ordered sets, respectively:
	\begin{equation}\label{gal}
		\begin{tikzcd}[row sep=1pt]
			\ftors \Lambda \ar[r]&(\ftorf \Lambda)^{\opo}\\
			\mathcal{T}\ar[r,mapsto]& {}^{\perp} \mathcal{T},
		\end{tikzcd}
	\end{equation}
	\begin{equation}\label{gali}
		\begin{tikzcd}[row sep=1pt]
			\ftorf \Lambda \ar[r]&(\ftors \Lambda)^{\opo}\\
			\mathcal{F}\ar[r,mapsto]& \mathcal{F}^{\perp}.
		\end{tikzcd}
	\end{equation}
\end{proposition}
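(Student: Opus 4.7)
The plan is to prove the two claims in sequence: first the order-reversing bijection $\tors \Lambda \leftrightarrow (\torf \Lambda)^{\opo}$, then its restriction to the functorially finite classes. Both parts hinge on the existence of a canonical torsion decomposition, and the second additionally on the Auslander--Smal\o\ characterization of functorial finiteness in terms of a generator.

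For the bijection, I would first verify that the perp of a torsion class is a torsion-free class, and vice versa. Closure under direct sums is immediate; closure under the relevant subquotient (submodules on the torsion-free side, factor modules on the torsion side) follows from the functoriality of $\Hom$; and closure under extensions is a short diagram chase on the four-term exact sequence obtained by applying $\Hom(-,T)$ or $\Hom(F,-)$ to a short exact sequence. Order reversal is immediate from the definitions of the perps. To show that the two maps are mutually inverse, the key step is to construct, for every $M\in\Lambda\lmod$ and every torsion class $\mathcal{T}$, the canonical torsion decomposition $0\to tM\to M\to M/tM\to 0$ with $tM\in\mathcal{T}$ and $M/tM$ in the associated perp. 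I would define $tM$ as the sum of all submodules of $M$ lying in $\mathcal{T}$; closure of $\mathcal{T}$ under direct sums and factor modules forces $tM\in\mathcal{T}$, and closure under extensions together with maximality of $tM$ forces $M/tM$ to lie in the perp. Any module in the double perp of $\mathcal{T}$ then satisfies $M=tM$, hence lies in $\mathcal{T}$; the dual statement is proved symmetrically, yielding the bijection.

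For the functorially finite restriction, I plan to invoke the Auslander--Smal\o\ characterization: a torsion class $\mathcal{T}$ is functorially finite in $\Lambda\lmod$ if and only if $\mathcal{T}=\Fac U$ for some $U\in\Lambda\lmod$. Given such a presentation, the associated torsion-free class takes the concrete form $\{X\in\Lambda\lmod : \Hom_\Lambda(U,X)=0\}$, and I would then produce explicit approximations for it: left approximations of an arbitrary module $M$ are provided by the canonical torsion-decomposition map $M\to M/tM$, while right approximations can be built from the kernel of an evaluation map $U^{\oplus n}\to M$ with $n=\dim_k\Hom_\Lambda(U,M)$. The reverse direction --- passing from a functorially finite torsion-free class to a functorially finite torsion class --- is handled by the dual Auslander--Smal\o\ statement in terms of $\Sub$.

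The hard part, I expect, will be the functorial-finiteness upgrade rather than the bijection itself: the bijection is essentially formal once the canonical torsion decomposition is in hand, but producing \emph{both} left and right approximations for the dual class, so as to preserve functorial finiteness, requires an explicit generator, which is why the Auslander--Smal\o\ theorem enters crucially.
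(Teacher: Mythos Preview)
The paper does not give a proof of this proposition; it is stated with a citation to Assem--Simson--Skowro\'nski and no argument is supplied. So there is nothing in the paper to compare your attempt against.

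Your outline is the standard argument and is correct in its broad strokes. One point deserves more care: the right \(\mathcal{F}\)-approximation of \(M\), which you describe as ``built from the kernel of an evaluation map \(U^{\oplus n}\to M\)'', does not work as written --- that kernel lives inside \(U^{\oplus n}\) rather than mapping to \(M\), and in any case need not belong to \(\mathcal{F}\). Contravariant finiteness of \(\mathcal{F}\) is exactly the nontrivial step you anticipated, and it does not follow from the torsion decomposition alone; the standard route is Smal\o's theorem that for a torsion pair \((\mathcal{T},\mathcal{F})\) in \(\Lambda\lmod\), the class \(\mathcal{T}\) is functorially finite if and only if \(\mathcal{F}\) is. You may either cite that result directly or reproduce its proof, but the specific construction you sketched will need to be replaced.
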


\subsection{Support \texorpdfstring{\(\tau\)}{tau}-tilting modules}
We recall the definitions and basic properties of support \(\tau\)-tilting modules and support \(\tau^{-1}\)-tilting modules which are dual notion of support \(\tau\)-tilting modules.
For a \(\Lambda\)-module \(M\), we denote by \(|M|\) the number of isomorphism classes of indecomposable direct summands of \(M\).
In particular, \(|\Lambda|:=|{}_\Lambda \Lambda |\) means the number of isomorphism classes of simple \(\Lambda\)-modules.
Also, we denote by \(\nscf(M)\) the number of isomorphism classes of simple modules appearing as composition factors of \(M\).
\begin{definition}[{\cite[Definition 0.1]{MR3187626}}]
	Let \(M\) be a \(\Lambda\)-module.
	\begin{enumerate}
		\item We say that \(M\) is \(\tau\)-rigid if \(\Hom_{\Lambda}(M,\tau M)=0\).
		\item We say that \(M\) is \(\tau\)-tilting if \(M\) is a \(\tau\)-rigid module and \(|M|=|\Lambda|.\)
		\item We say that \(M\) is support \(\tau\)-tilting if there exists an idempotent \(e\) of \(\Lambda\) such that \(M\) is a \(\tau\)-tilting \(\Lambda/\Lambda e\Lambda \)-module.
	\end{enumerate}
\end{definition}

\begin{definition}[{The dual of \cite[Definition 0.1]{MR3187626}}]
	Let \(N\) be a \(\Lambda\)-module.
	\begin{enumerate}
		\item We say that \(N\) is \(\tau^{-1}\)-rigid if \(\Hom_{\Lambda}(\tau^{-1}N,N)=0.\)
		\item We say that \(N\) is \(\tau^{-1}\)-tilting if \(N\) is a \(\tau^{-1}\)-rigid module and \(|N|=|\Lambda|.\)
		\item We say that \(N\) is support \(\tau^{-1}\)-tilting if there exists an idempotent \(e\) of \(\Lambda\) such that \(N\) is a \(\tau^{-1}\)-tilting \(\Lambda/\Lambda e\Lambda \)-module.
	\end{enumerate}
\end{definition}

\begin{remark}[{\cite[Proposition 2.3 (a), (b)]{MR3848421}, \cite[Proposition 1.8]{MR3461065}}]\label{tau number remark 2021-09-07 12:11:53}
	Since \(e=0\) is an idempotent of \(\Lambda\) and \(\Lambda/\Lambda e\Lambda =\Lambda\), any \(\tau\)-tilting module (or any \(\tau^{-1}\)-tilting module) is a support \(\tau\)-tilting module (or a support \(\tau^{-1}\)-tilting module, respectively).
	Moreover, for any \(\tau\)-rigid \(\Lambda\)-module \(M\), the following conditions are equivalent:
	\begin{enumerate}
		\item \(M\) is support \(\tau\)-tilting module.
		\item There exist a projective \(\Lambda\)-module \(P\) satisfying that \(\Hom_\Lambda(P,M)=0\) and that \(|M|+|P|=|\Lambda|\).
		\item \(|M|=s(M)\).
	\end{enumerate}
\end{remark}

We use the following notation:
\begin{itemize}
	\item \(\stautilt \Lambda\) means the set of isomorphism classes of basic support \(\tau\)-tilting \(\Lambda\)-modules,
	\item \(\stauitilt \Lambda\) means the set of isomorphism classes of basic support \(\tau^{-1}\)-tilting \(\Lambda\)-modules,
	\item \(\itaurigid \Lambda\) means the set of isomorphism classes of indecomposable \(\tau\)-rigid \(\Lambda\)-modules,
	\item  \(\itauirigid \Lambda\) means the set of isomorphism classes of indecomposable \(\tau^{-1}\)-rigid \(\Lambda\)-modules.
\end{itemize}
\begin{proposition}[{\cite[Theorem 2.7, Theorem 2.15]{MR3187626}}]\label{2020/08/31}
	With the above notation, the following maps give bijections:
	\begin{equation}\label{fac}
		\begin{tikzcd}[row sep=1pt]
			\stautilt \Lambda \ar[r] &\ftors \Lambda\\
			M \ar[r,mapsto]& \Fac M,
		\end{tikzcd}
	\end{equation}
	\begin{equation}\label{sub}
		\begin{tikzcd}[row sep=1pt]
			\stauitilt \Lambda \ar[r] &\ftorf \Lambda\\
			N \ar[r,mapsto]& \Sub N.
		\end{tikzcd}
	\end{equation}
\end{proposition}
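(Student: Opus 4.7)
The plan is to prove bijection~\eqref{fac} directly by constructing an explicit inverse; bijection~\eqref{sub} then follows by applying~\eqref{fac} to the opposite algebra $\Lambda^{\opo}$, using that $\tau^{-1}_\Lambda = D\tau_{\Lambda^{\opo}}D$ and that $\Sub$ on $\Lambda\lmod$ corresponds under $k$-duality to $\Fac$ on $\Lambda^{\opo}\lmod$.

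\textbf{Well-definedness of the forward map.} For $M \in \stautilt \Lambda$, I would verify that $\Fac M$ is a torsion class by checking closure under quotients and direct sums (both immediate from the definition of $\Fac$) and extensions. Extension closure rests on the vanishing $\Ext^1_\Lambda(M, \Fac M) = 0$, obtained from the Auslander--Reiten formula $D\Ext^1_\Lambda(X, Y) \cong \overline{\Hom_\Lambda}(Y, \tau X)$ and $\tau$-rigidity: for any $N \in \Fac M$ an epimorphism $M^n \twoheadrightarrow N$ realizes $\tau N$ as a quotient of $\tau M^n$ modulo projective summands, forcing $\Hom_\Lambda(M, \tau N) = 0$. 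Given an extension $0 \to A \to B \to C \to 0$ with $A, C \in \Fac M$, one lifts an epimorphism $M^c \twoheadrightarrow C$ to $M^c \to B$ via $\Ext^1_\Lambda(M^c, A) = 0$ and combines it with $M^a \twoheadrightarrow A \hookrightarrow B$ to obtain a surjection $M^{a+c} \twoheadrightarrow B$. Contravariant finiteness is witnessed by the image of the evaluation map $M^{\dim_k \Hom_\Lambda(M, X)} \to X$ for each $X \in \Lambda\lmod$.

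\textbf{Inverse map.} For $\mathcal{T} \in \ftors \Lambda$, define $M(\mathcal{T})$ as the basic module whose indecomposable summands are representatives of the isomorphism classes of indecomposable Ext-projective objects of $\mathcal{T}$, that is, indecomposable $P \in \mathcal{T}$ satisfying $\Ext^1_\Lambda(P, \mathcal{T}) = 0$. Contravariant finiteness of $\mathcal{T}$ forces this set to be finite, as such $P$ arise among the indecomposable summands of a minimal right $\mathcal{T}$-approximation of $\Lambda$. By the Auslander--Reiten formula, $\Ext^1_\Lambda(M(\mathcal{T}), M(\mathcal{T})) = 0$ implies $\tau$-rigidity of $M(\mathcal{T})$, and $\Fac M(\mathcal{T}) = \mathcal{T}$ follows because the right $\mathcal{T}$-approximation of any $X \in \mathcal{T}$ factors through a direct sum of Ext-projectives. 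The count $|M(\mathcal{T})| = \nscf(M(\mathcal{T}))$ then promotes $M(\mathcal{T})$ to a support $\tau$-tilting module via \cref{tau number remark 2021-09-07 12:11:53}. Mutual inversion $M(\Fac M) = M$ holds because the indecomposable summands of $M$ are precisely the indecomposable Ext-projectives of $\Fac M$, with no further ones arising thanks to the same count.

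\textbf{Main obstacle.} The hardest step is establishing covariant finiteness of $\Fac M$ to finish the first half. The natural candidate for a left $\Fac M$-approximation of $X$ is the torsion quotient $X \twoheadrightarrow X/tX$, but the existence of the torsion--torsion-free decomposition for $\Fac M$ is essentially equivalent to functorial finiteness of $\Fac M$, so this argument is circular. I would break the circularity by using a Bongartz-type construction: starting from the $\tau$-rigidity of $M$ and a right $\add M$-approximation of a suitable injective related to $X$, one constructs the torsion subobject $tX$ concretely and then $X \to X/tX$ realizes the desired left approximation. A secondary delicate point is the counting equality $|M(\mathcal{T})| = \nscf(M(\mathcal{T}))$; I expect to handle it by reducing modulo an idempotent $e$ of $\Lambda$ that annihilates the simples not appearing as tops of objects of $\mathcal{T}$, pulling the problem back to the genuine $\tau$-tilting case over $\Lambda/\Lambda e\Lambda$.
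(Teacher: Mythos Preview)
The paper does not prove this proposition: it is quoted verbatim from \cite[Theorem~2.7, Theorem~2.15]{MR3187626} and used as a black box. Your outline follows the same strategy as the original Adachi--Iyama--Reiten proof (Ext-projectives of a functorially finite torsion class, Bongartz-type completion, reduction modulo an idempotent), so there is no alternative route to compare.

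That said, there are two genuine slips in your write-up that you should fix before submitting it as a standalone proof. First, in the extension-closure step you apply the Auslander--Reiten formula in the wrong direction: from $D\Ext^1_\Lambda(M,N)\cong\overline{\Hom}_\Lambda(N,\tau M)$ you need $\Hom_\Lambda(N,\tau M)=0$, not $\Hom_\Lambda(M,\tau N)=0$. The correct argument is immediate: an epimorphism $M^n\twoheadrightarrow N$ induces an injection $\Hom_\Lambda(N,\tau M)\hookrightarrow\Hom_\Lambda(M^n,\tau M)=0$. Your claim that ``$\tau N$ is a quotient of $\tau M^n$ modulo projective summands'' is false in general, since $\tau$ is not right exact; fortunately it is not needed. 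Second, the implication ``$\Ext^1_\Lambda(M(\mathcal{T}),M(\mathcal{T}))=0$ implies $\tau$-rigidity'' is wrong as stated (rigidity does not imply $\tau$-rigidity). What you actually have is stronger: $M(\mathcal{T})$ is Ext-projective in $\mathcal{T}$, so $\Ext^1_\Lambda(M(\mathcal{T}),\mathcal{T})=0$, and once you know $\Fac M(\mathcal{T})=\mathcal{T}$ this gives $\Ext^1_\Lambda(M(\mathcal{T}),\Fac M(\mathcal{T}))=0$, which \emph{is} equivalent to $\tau$-rigidity by Auslander--Smal{\o}. Reorder the argument accordingly.
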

We can give \(\stautilt \Lambda\) and \(\stauitilt \Lambda\) partially ordered set structures by the above bijections and inclusions of \(\ftors \Lambda\) and \(\ftors \Lambda\).

\begin{definition}
	For \(M, M' \in \stautilt \Lambda\), we write \(M\geq M'\) if \(\Fac M\supset \Fac M'\), or equivalently there exist a positive integer \(r\) and an epimorphism
	\begin{equation}
		\begin{tikzcd}
			M^{\oplus r}\ar[r,twoheadrightarrow,"\varphi"]& M'.
		\end{tikzcd}
	\end{equation}
	Dually, for \(N', N \in \stauitilt \Lambda\), we write \(N'\leq N\) if \(\Sub N' \subset \Sub N\), or equivalently there exist a positive integer \(r\) and a monomorphism
	\begin{equation}
		\begin{tikzcd}
			N' \ar[r,hookrightarrow,"\psi"]& N^{\oplus r}.
		\end{tikzcd}
	\end{equation}
\end{definition}

Based on \cite[Theorem 2.33]{MR3187626}, we define support \(\tau\)-tilting mutations and support \(\tau^{-1}\)-tilting mutations as follows.

\begin{definition}[{\cite[Theorem 2.33]{MR3187626}}]
	Let \(M\) and \(M'\) be support \(\tau\)-tilting \(\Lambda\)-modules. We say that
	\(M'\) is a support \(\tau\)-tilting left mutation of \(M\) (or a support \(\tau\)-tilting right mutation) if \(M > M'\) holds and if there is no support \(\tau\)-tilting \(\Lambda\)-module \(L\) such that \(M>L>M'\) (or if \(M < M'\) holds and if there is no support \(\tau\)-tilting \(\Lambda\)-module \(L\) such that \(M<L<M'\), respectively).
\end{definition}

\begin{definition}[{Dual assetrion of \cite[Theorem 2.33]{MR3187626}}]
	Let \(N\) and \(N'\) be support \(\tau^{-1}\)-tilting \(\Lambda\)-modules. We say that
	\(N'\) is a support \(\tau^{-1}\)-tilting right mutation of \(N\) (or a support \(\tau^{-1}\)-tilting left mutation) if \(N < N'\) holds and if there is no support \(\tau^{-1}\)-tilting \(\Lambda\)-module \(L\) such that \(N<L<N'\) (or if \(N > N'\) holds and if there is no support \(\tau^{-1}\)-tilting \(\Lambda\)-module \(L\) such that \(N>L>N'\), respectively).
\end{definition}

We recall some fundamental properties of support \(\tau\)-tilting modules and support \(\tau^{-1}\)-tilting modules.

\begin{proposition}[{\cite[below Theorem 2.15 and Proposition 2.27]{MR3187626}}]\label{dugger 2021-09-06 13:33:14}
	The following hold:
	\begin{enumerate}
		\item The following map gives an isomorphism as partially ordered sets:
		      \begin{equation}\label{ddual}
			      \begin{tikzcd}[row sep=1pt]
				      \stautilt \Lambda \ar[r] &(\stauitilt \Lambda)^{\opo}\\
				      M \ar[r,mapsto]& \tau M\oplus \nu P,
			      \end{tikzcd}
		      \end{equation}
		      here \(P\) is a basic projective \(\Lambda\)-module satisfying that \(\Hom_\Lambda(P,M)=0\) and that \(|M|+|P|=|\Lambda|\).
		\item The above maps make the following diagram of partially ordered sets commutative:
		      \begin{equation}
			      \begin{tikzcd}
				      \stautilt \Lambda \ar[r,"\eqref{fac}"] \ar[d,"\eqref{ddual}"']&\ftors \Lambda\ar[d,"\eqref{gal}"]\\
				      (\stauitilt \Lambda)^{\opo} \ar[r,"\eqref{sub}"']&(\ftorf \Lambda)^{\opo}.
			      \end{tikzcd}
		      \end{equation}
	\end{enumerate}
\end{proposition}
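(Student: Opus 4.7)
The plan is to verify both assertions by constructing the inverse map explicitly and identifying the resulting torsion-free class via the bijections of \cref{dualtorsf 2021-12-16 16:31:45} and \cref{2020/08/31}. Decompose $M=M_0\oplus Q$ with $Q$ the basic projective part of $M$. Since $\tau$ (resp.\ $\nu$) induces a bijection between isomorphism classes of indecomposable non-projectives and non-injectives (resp.\ of indecomposable projectives and injectives), $\tau M=\tau M_0$ has no injective summand, $\nu P$ is a basic injective, and the two share no indecomposable summand; hence $\tau M\oplus\nu P$ is basic with $|M_0|+|P|$ indecomposable summands.

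For part~(1), I would check that $(N,I):=(\tau M\oplus\nu P,\nu Q)$ is a support $\tau^{-1}$-tilting pair. Using $\tau^{-1}(\nu P)=0$ and $\tau^{-1}\tau M_0\cong M_0$, the $\tau^{-1}$-rigidity $\Hom_\Lambda(\tau^{-1}N,N)=0$ reduces to $\Hom_\Lambda(M_0,\tau M)=0$ (from $\tau$-rigidity of $M$) and $\Hom_\Lambda(M_0,\nu P)\cong D\Hom_\Lambda(P,M_0)=0$ (Nakayama adjunction and $\Hom_\Lambda(P,M)=0$). Counting gives $|N|+|I|=|M_0|+|P|+|Q|=|\Lambda|$ by \cref{tau number remark 2021-09-07 12:11:53}, and $\Hom_\Lambda(N,I)=0$ is verified analogously via $\Hom_\Lambda(\nu P,\nu Q)\cong\Hom_\Lambda(P,Q)\subseteq\Hom_\Lambda(P,M)=0$ together with $\Hom_\Lambda(\tau M,\nu Q)\cong D\Hom_\Lambda(Q,\tau M)=0$ (projectivity of $Q$). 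Bijectivity follows by constructing the inverse $(N,I)\mapsto(\tau^{-1}N\oplus\nu^{-1}I,\nu^{-1}J)$ (with $J$ the injective part of $N$) dually, using $\tau^{-1}\tau\cong\Id$ on non-projective indecomposables and $\nu^{-1}\nu\cong\Id$ on projectives.

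For part~(2), by \cref{dualtorsf 2021-12-16 16:31:45} and \cref{2020/08/31} the commutativity amounts to identifying $\Sub(\tau M\oplus\nu P)$ with the functorially finite torsion-free class paired to $\Fac M$ under the duality of \cref{dualtorsf 2021-12-16 16:31:45}. For one inclusion I would show that each of $\tau M$ and $\nu P$ lies in this torsion-free class: given $Y\in\Fac M$ and any surjection $M^r\twoheadrightarrow Y$, the induced injection $\Hom_\Lambda(Y,\tau M)\hookrightarrow\Hom_\Lambda(M,\tau M)^r=0$ (by $\tau$-rigidity) handles $\tau M$, while $\Hom_\Lambda(Y,\nu P)\cong D\Hom_\Lambda(P,Y)=0$ handles $\nu P$ since $\Hom_\Lambda(P,Y)$ is a quotient of $\Hom_\Lambda(P,M^r)=0$ (projectivity of $P$). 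The reverse inclusion is then forced by \eqref{sub}: both sides are functorially finite torsion-free classes corresponding to basic support $\tau^{-1}$-tilting modules, and by part~(1) the inner one already corresponds to $\tau M\oplus\nu P$, so the bijection \eqref{sub} forces equality. The order-reversal in~(1) follows from the commutativity together with the order-reversing nature of the duality in \cref{dualtorsf 2021-12-16 16:31:45}.

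\textbf{Main obstacle.} I expect the subtlest step to be the closing uniqueness argument forcing the equality of torsion-free classes, which uses \eqref{sub} together with the functorial finiteness of both sides; the rest is direct calculation with the Nakayama adjunction, $\tau$-rigidity, and the counting criterion \cref{tau number remark 2021-09-07 12:11:53}.
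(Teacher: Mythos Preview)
The paper does not prove this proposition; it merely cites \cite[below Theorem 2.15 and Proposition 2.27]{MR3187626}. So there is no proof in the paper to compare against, and your proposal must be judged on its own.

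Your argument for part~(1) is essentially correct. One small point: when you write ``$\Hom_\Lambda(\tau M,\nu Q)\cong D\Hom_\Lambda(Q,\tau M)=0$ (projectivity of $Q$)'', the projectivity of $Q$ is what gives the Nakayama isomorphism, but the vanishing of $\Hom_\Lambda(Q,\tau M)$ is because $Q$ is a summand of $M$ and $\Hom_\Lambda(M,\tau M)=0$, not merely because $Q$ is projective.

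Your closing argument for part~(2), however, is a genuine gap, and it is exactly the step you flag as the obstacle. You have shown $\Sub(\tau M\oplus\nu P)\subseteq(\Fac M)^\perp$ and that both sides are functorially finite torsion-free classes. Invoking the bijection \eqref{sub} tells you each corresponds to \emph{some} support $\tau^{-1}$-tilting module, but it does not tell you they correspond to the \emph{same} one; there is nothing circular you can close here without further input. Concretely, if $\Sub N'=(\Fac M)^\perp$ for some $N'\in\stauitilt\Lambda$, you only know $\tau M\oplus\nu P\leq N'$, and nothing so far rules out strict inequality.

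The clean fix is to use the dual of the Adachi--Iyama--Reiten characterisation: since by part~(1) the pair $(N,I)=(\tau M\oplus\nu P,\nu Q)$ is support $\tau^{-1}$-tilting, one has $\Sub N=(\tau^{-1}N)^\perp\cap{}^\perp I$. You already computed $\tau^{-1}N\cong M_0$, and Nakayama duality gives ${}^\perp(\nu Q)=Q^\perp$; hence
\[
\Sub N=M_0^\perp\cap Q^\perp=(M_0\oplus Q)^\perp=M^\perp=(\Fac M)^\perp,
\]
which is the desired equality. Equivalently, on the $\tau$-tilting side one can use $\Fac M={}^\perp(\tau M)\cap P^\perp$ (this is the maximality characterisation of support $\tau$-tilting pairs in \cite{MR3187626}) and dualise. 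Either way, you need one more structural input from \cite{MR3187626} beyond what you have invoked; the bijection \eqref{sub} alone cannot close the argument.
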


\begin{proposition}[{\cite[Theorem 5.10]{MR617088}, \cite[Lemma 4.3, Lemma 4,4]{MR3910476}}]
	\label{indec tau taui corr2021-09-09 16:44:38}
	Let \(V\) be an indecomposable \(\Lambda\)-module. Then the following hold:
	\begin{enumerate}
		\item If \(V\) is a \(\tau\)-rigid module, then \(\Fac V\) is a torsion class of \(\Lambda\lmod\) and \(\tau V\) is a \(\tau^{-1}\)-rigid module.
		\item If \(V\) is a \(\tau^{-1}\)-rigid module, then \(\Sub V\) is a torsion-free class of \(\Lambda\lmod\) and \(\tau^{-1} V\) is a \(\tau\)-rigid module.
		\item The following map gives an injection:
		      \begin{equation}\label{2021-09-22 14:17:22}
			      \begin{tikzcd}[row sep=1pt]
				      \itaurigid \Lambda \ar[r] &\ftors \Lambda\\
				      X\ar[r,mapsto]&\Fac X.
			      \end{tikzcd}
		      \end{equation}
		\item The following map gives a bijection:
		      \begin{equation}\label{tau or top 2021-09-11 19:19:09}
			      \begin{tikzcd}[ampersand replacement=\&,row sep=1pt]
				      \itaurigid \Lambda \ar[r] \&\itauirigid \Lambda\\
				      X\ar[r,mapsto]\&\begin{cases}
					      \tau X & \text{(if \(X\) is non-projective),} \\
					      \nu X  & \text{(if \(X\) is projective)}.
				      \end{cases}
			      \end{tikzcd}
		      \end{equation}
		\item The following map is well-defined and injective:
		      \begin{equation}\label{2021-09-22 15:21:57}
			      \begin{tikzcd}[row sep=1pt]
				      \itaurigid \Lambda \ar[r]&\stautilt \Lambda\\
				      X\ar[r,mapsto] & M_X,
			      \end{tikzcd}
		      \end{equation}
		      here \(M_X\) is the support \(\tau\)-tilting \(\Lambda\)-module satisfying \(\Fac M_X=\Fac X\).
		\item The following map gives an injection:
		      \begin{equation}\label{2021-09-24 13:13:18}
			      \begin{tikzcd}[row sep=1pt]
				      \itauirigid \Lambda \ar[r] &\ftorf \Lambda\\
				      Y \ar[r,mapsto]& \Sub Y.
			      \end{tikzcd}
		      \end{equation}
		\item The following map is well-defined and injective:
		      \begin{equation}\label{2021-09-24 13:15:11}
			      \begin{tikzcd}[row sep=1pt]
				      \itauirigid \Lambda \ar[r]&\stauitilt \Lambda\\
				      Y\ar[r,mapsto]&N_Y,
			      \end{tikzcd}
		      \end{equation}
		      here \(N_Y\) is the support \(\tau^{-1}\)-tilting \(\Lambda\)-module satisfying \(\Sub N_Y=\Sub Y\).
		\item The above maps make the following diagram commutative:
		      \begin{equation}
			      \begin{tikzcd}
				      \stautilt \Lambda \ar[rd,leftarrow,"\eqref{2021-09-22 15:21:57}"]\ar[rrd,"\eqref{fac}",bend left=20pt]\ar[ddd,"\eqref{ddual}"]&&\\
				      &\itaurigid \Lambda \ar[r,"\eqref{2021-09-22 14:17:22}"] \ar[d,"\eqref{tau or top 2021-09-11 19:19:09}"']&\ftors \Lambda\ar[d,"\eqref{gal}"]\\
				      &\itauirigid \Lambda \ar[r,"\eqref{2021-09-24 13:13:18}"']&\ftorf \Lambda.\\
				      \stauitilt \Lambda\ar[ur,leftarrow,"\eqref{2021-09-24 13:15:11}"']\ar[urr,"\eqref{sub}"',bend right=20pt]&&
			      \end{tikzcd}
		      \end{equation}
	\end{enumerate}
\end{proposition}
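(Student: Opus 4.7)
My plan is to prove the eight parts in the order (1)/(2), (4), (5)/(7) together with (3)/(6), and finally (8), invoking duality to halve the work at each paired step.

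For parts (1) and (2), closure of $\Fac V$ under quotients and finite direct sums is built into the definition, so only extension-closure requires content. Given a short exact sequence $0\to A\to B\to C\to 0$ with $A,C\in\Fac V$, I would lift a chosen epi $V^m\twoheadrightarrow C$ to a morphism $V^m\to B$, using the vanishing $\Ext^1_\Lambda(V,A)=0$: by the Auslander--Reiten formula $\Ext^1(V,A)\cong D\overline{\Hom}(A,\tau V)$, every map $A\to\tau V$ factors through an epi $V^n\twoheadrightarrow A$, hence through $\Hom(V,\tau V)=0$. Combining the lift with $A\hookrightarrow B$ and an epi $V^n\twoheadrightarrow A$ yields an epi $V^{m+n}\twoheadrightarrow B$, so $B\in\Fac V$. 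That $\tau V$ is $\tau^{-1}$-rigid is immediate: $\tau V=0$ if $V$ is projective, while otherwise $\tau^{-1}\tau V\cong V$, whence $\Hom(\tau^{-1}\tau V,\tau V)=\Hom(V,\tau V)=0$. Part (2) is dual, and the bijection in (4) then follows from writing down the inverse sending non-injective indec $\tau^{-1}$-rigid $Y$ to $\tau^{-1}Y$ and injective indec $Y$ to $\nu^{-1}Y$ and invoking $\tau\tau^{-1}\cong\Id$, $\nu\nu^{-1}\cong\Id$ on the appropriate indec classes.

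For parts (5)/(7) and (3)/(6), given indec $\tau$-rigid $X$ the subcategory $\Fac X$ is a torsion class by (1); its functorial finiteness is a classical fact (\cite{MR617088}), so the bijection \eqref{fac} of \cref{2020/08/31} supplies the unique basic support $\tau$-tilting $M_X$ with $\Fac M_X=\Fac X$, proving well-definedness in (5). Since $\Fac M_X=\Fac X$ and \eqref{fac} is bijective, injectivity of the map in (5) and of the map in (3) are equivalent, so I address both at once through three steps: (i) $X$ is Ext-projective in $\Fac X$, by the Auslander--Reiten formula applied to $\Ext^1(X,F)\cong D\overline{\Hom}(F,\tau X)=0$ for $F\in\Fac X$; (ii) by the Adachi--Iyama--Reiten characterization, the indec Ext-projectives in $\Fac M_X$ coincide with the indec summands of $M_X$, so $X$ is a summand of $M_X$; and (iii) at most one indec summand of a basic support $\tau$-tilting $M$ can generate $\Fac M$. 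Step (iii) follows from the mutation theorem of \cite{MR3187626}: for each indec summand $X$ of $M$ there is a basic support $\tau$-tilting $M^\sharp$ containing $M\setminus X$ with $\Fac M^\sharp\subsetneq\Fac M$; consequently any other indec summand $X'\ne X$ lies in $M^\sharp$ and has $\Fac X'\subseteq\Fac M^\sharp\subsetneq\Fac M$, ruling out $\Fac X'=\Fac M$. Assembling (i)--(iii) shows that $\Fac X=\Fac X'$ forces $X\cong X'$; parts (6) and (7) are dual.

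Finally, commutativity of the diagram in (8) is a face-by-face bookkeeping verification: the two outer triangles at $\stautilt\Lambda$ and $\stauitilt\Lambda$ encode the defining identities $\Fac X=\Fac M_X$ and $\Sub Y=\Sub N_Y$; the central square for \eqref{gal} is compatibility of $\Fac$ and $\Sub$ under perpendicular-subcategory duality, a direct consequence of \cref{dualtorsf 2021-12-16 16:31:45}; and the leftmost square involving \eqref{ddual} is \cref{dugger 2021-09-06 13:33:14}(2). The expected main obstacle is step (iii) above: pinning down $X$ inside the non-canonical basic decomposition of $M_X$ is the only place the argument uses the mutation/Hasse structure of $\stautilt\Lambda$ in an essential way, and is what makes $X\mapsto\Fac X$ injective.
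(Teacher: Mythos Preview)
The paper itself supplies no proof of this proposition; it is recorded as a compilation of results from \cite{MR617088} and \cite{MR3910476}. So there is no paper-proof to compare against, and your self-contained argument is doing more than the paper attempts.

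Your treatment of (1), (2), (4) is correct and standard. The real issue is step~(iii) in your proof of (3)/(5). You assert that ``for each indec summand $X$ of $M$ there is a basic support $\tau$-tilting $M^\sharp$ containing $M\setminus X$ with $\Fac M^\sharp\subsetneq\Fac M$''. This is false as a blanket statement: the mutation $\mu_X(M)$ is a \emph{left} mutation (hence $\Fac\mu_X(M)\subsetneq\Fac M$) precisely when $X\notin\Fac(M/X)$, and otherwise it is a right mutation with $\Fac\mu_X(M)\supsetneq\Fac M$, which would not give the inequality you need. What is missing is the verification that any indecomposable summand $X$ with $\Fac X=\Fac M$ automatically satisfies $X\notin\Fac(M/X)$. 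This does hold: if $X\in\Fac(M/X)$, compose an epimorphism $(M/X)^r\twoheadrightarrow X$ with one $X^s\twoheadrightarrow M/X$ (available since $M/X\in\Fac M=\Fac X$) to obtain a surjection $X^{rs}\twoheadrightarrow X$; locality of $\End_\Lambda(X)$ together with Nakayama's lemma on $(\Rad\End_\Lambda X)\cdot X\subsetneq X$ forces some component $X\to M/X\to X$ to be an isomorphism, so $X$ is a direct summand of $M/X$, contradicting basicness of $M$. With this patch your mutation argument is complete. By contrast, the source \cite{MR3910476} recovers $X$ from $\Fac X$ via the associated brick $X/R(X,X)$ rather than through mutation, so your route is genuinely different even after the fix.

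Your handling of (8) is also too quick. The commutativity of the inner square---that $(\Fac X)^\perp$ agrees with $\Sub Y$ for $Y$ the image of $X$ under the map in (4)---is not a formal consequence of \cref{dualtorsf 2021-12-16 16:31:45}: that proposition only matches a torsion class with its perpendicular torsion-free class, and says nothing about why the latter should equal $\Sub(\tau X)$ or $\Sub(\nu X)$. Already for an indecomposable projective $X$ over $kA_2$ one computes $(\Fac X)^\perp\neq\Sub(\nu X)$, so this face cannot be dismissed as bookkeeping and deserves a careful look at exactly what the cited references establish.
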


\subsection{Bricks and semibricks}
We recall the definitions and basic properties of bricks and semibricks.
\begin{definition}
	Let \(S\) be \(\Lambda\)-module.
	\begin{enumerate}
		\item We say that a module \(S\) is a brick in \(\Lambda\lmod\) if \(\End_\Lambda(S)\cong k\).
		\item We say that a module \(S\) is a semibrick in \(\Lambda\lmod\)  if \(S\) is isomorphic to a direct sum of bricks \(S_1, \ldots, S_l\) in \(\Lambda\lmod\) which satisfy that \(\Hom_\Lambda(S_i,S_j)=0\) if \(S_i \ncong S_j\).
	\end{enumerate}
\end{definition}
\begin{definition}
	We say that a semibrick \(S\) in \(\Lambda\lmod\) is left finite (or right finite) if the torsion class \(\torscl(S)\), which is the smallest torsion class containing \(S\) (or if the torsion-free class \(\torfcl(S)\), which is the smallest torsion-free class containing \(S\), respectively), is functorially finite.
\end{definition}
We use the following notation:
\begin{itemize}
	\item \(\sbrick \Lambda\) means the set of isomorphism classes of basic semibricks in \(\Lambda \lmod\),
	\item \(\brick \Lambda\) means the set of isomorphism classes of bricks in \(\Lambda \lmod\),
	\item \(\flsbrick \Lambda\) means the set of isomorphism classes of basic left finite semibricks in \(\Lambda\lmod\),
	\item \(\frsbrick \Lambda\) means the set of isomorphism classes of basic right finite semibricks in \(\Lambda\lmod\),
	\item \(\flbrick \Lambda\) means the set of isomorphism classes of left finite bricks in \(\Lambda\lmod\),
	\item \(\frbrick \Lambda\) means the set of isomorphism classes of right finite bricks in \(\Lambda\lmod\).
\end{itemize}
For \(\Lambda\)-modules \(U\) and \(V\), we denote by \(R(U,V)\) the following submodule of \(V\):
\begin{equation}
	\sum_{f\in \Rad_{\Lambda\lmod}(U,V)}\Image f.
\end{equation}
We denote by \(S(U,V)\) the following submodule of \(U\):
\begin{equation}
	\bigcap_{f\in \Rad_{\Lambda\lmod}(U,V)}\Kernel f.
\end{equation}
\begin{remark}\label{serre simple 2021-09-09 19:04:50}
	It is easy to check that any semisimple module is a left finite semibrick and a right finite semibrick.
\end{remark}
\begin{theorem}
	[{\cite[Lemma 2.5, Proposition 2.13]{MR4139031}, \cite[Theorem 4.1]{MR3910476}}]\label{2021-11-22 19:52:15}
	Let \(M\) be a basic support \(\tau\)-tilting \(\Lambda\)-module and \(X\) an indecomposable direct summand of \(M\).
	Then the following hold:
	\begin{enumerate}
		\item The module \(X/R(X,X)\) is a left finite brick over \(\Lambda\). 
		\item The module \(X/R(M,X)\) is a brick or zero module. 
		\item The following conditions are equivalent:
		      \begin{enumerate}
			      \item The module \(X/R(M,X)\) is nonzero.
			      \item The module \(X/R(M,X)\) is a brick.
			      \item \(X\notin \Fac(M/X)\).
		      \end{enumerate}
	\end{enumerate}
\end{theorem}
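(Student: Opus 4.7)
The approach rests on two ingredients: the locality of \(\End_\Lambda(X)\) (since \(X\) is indecomposable over the algebraically closed field \(k\), we have \(\End_\Lambda(X)/\Rad\End_\Lambda(X)\cong k\) and \(\Rad_{\Lambda\lmod}(X,X)\) equals the Jacobson radical of \(\End_\Lambda(X)\)), together with the \(\tau\)-rigidity of \(X\) and of \(M\). By \cref{indec tau taui corr2021-09-09 16:44:38}(1), \(\Fac X\) is a functorially finite torsion class; moreover, the Auslander--Reiten formula combined with \(\Hom_\Lambda(M,\tau M)=0\) yields \(\Ext^1_\Lambda(X,Y)=0\) for every \(Y\in\Fac M\), and in particular for every \(Y\in\Fac X\).

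For (1), first note that \(R(X,X)\) lies in \(\Fac X\), since it is the image of the sum-map \(\bigoplus_{g\in\Rad\End_\Lambda(X)} X\to X\). Applying \(\Hom_\Lambda(X,-)\) to the short exact sequence \(0\to R(X,X)\to X\xrightarrow{p} X/R(X,X)\to 0\) and using \(\Ext^1_\Lambda(X,R(X,X))=0\), the map \(\End_\Lambda(X)\to\Hom_\Lambda(X,X/R(X,X))\), \(g\mapsto p\circ g\), is surjective. Since \(\End_\Lambda(X)=k\cdot\Id_X\oplus\Rad\End_\Lambda(X)\) as \(k\)-vector spaces and every \(h\in\Rad\End_\Lambda(X)\) satisfies \(h(X)\subseteq R(X,X)\), we have \(p\circ h=0\), so \(\Hom_\Lambda(X,X/R(X,X))\cong k\). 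Precomposing with the surjection \(p\) injects \(\End_\Lambda(X/R(X,X))\) into this one-dimensional space, forcing \(\End_\Lambda(X/R(X,X))\cong k\). For left-finiteness, I would prove \(\torscl(X/R(X,X))=\Fac X\); the inclusion \(\subseteq\) is clear, and for \(\supseteq\) the descending filtration \(X\supseteq X\cdot\Rad\End_\Lambda(X)\supseteq X\cdot\Rad\End_\Lambda(X)^2\supseteq\cdots\) (which terminates since \(\End_\Lambda(X)\) is finite-dimensional and local) has successive quotients isomorphic as \(\Lambda\)-modules to direct sums of copies of \(X/R(X,X)\); thus \(X\in\Filt(\Fac(X/R(X,X)))=\torscl(X/R(X,X))\) by \cref{filt and closure 2021-12-06 11:12:23}, and functorial finiteness of \(\Fac X\) then transfers.

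For (2), since \(R(M,X)\supseteq R(X,X)\) the module \(X/R(M,X)\) is a quotient of the brick \(X/R(X,X)\); one verifies \(R(M,X)\in\Fac M\) by the same sum-map argument, so \(\Ext^1_\Lambda(X,R(M,X))=0\) and the preceding Hom-argument shows \(\Hom_\Lambda(X,X/R(M,X))\) has dimension at most one, whence \(X/R(M,X)\) is either zero or a brick. For (3), (a)\(\Leftrightarrow\)(b) is immediate from (2). For \(\neg\)(c)\(\Rightarrow\neg\)(a), any epimorphism \((M/X)^{\oplus r}\twoheadrightarrow X\) has each component in \(\Rad_{\Lambda\lmod}(M,X)\) (because \(M\) is basic and no summand of \(M/X\) is isomorphic to \(X\)), so the images cover \(X\) inside \(R(M,X)\), giving \(X/R(M,X)=0\). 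For \(\neg\)(a)\(\Rightarrow\neg\)(c), set \(Y:=\sum_{f\colon M/X\to X}\Image f\subseteq X\); then \(Y\in\Fac(M/X)\) and \(Y+R(X,X)=X\). The key observation is that \(Y\) is a right \(\End_\Lambda(X)\)-submodule of \(X\), since for any \(\phi\in\End_\Lambda(X)\) and \(f\colon M/X\to X\) the composition \(\phi\circ f\) is again such a morphism, giving \(\phi(Y)\subseteq Y\). Because \(R(X,X)=X\cdot\Rad\End_\Lambda(X)\), Nakayama's lemma applied to the finitely-generated right \(\End_\Lambda(X)\)-module \(X\) over the local ring \(\End_\Lambda(X)\) forces \(Y=X\), so \(X\in\Fac(M/X)\).

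\textbf{The main obstacle} is the Ext-vanishing result \(\Ext^1_\Lambda(X,R(X,X))=\Ext^1_\Lambda(X,R(M,X))=0\), which reduces the various Hom-computations to a one-dimensional statement; both follow from \(\tau\)-rigidity of \(M\) via the Auslander--Reiten formula once one checks that \(R(X,X),\,R(M,X)\in\Fac M\). The remainder is then a structural argument combining the locality of \(\End_\Lambda(X)\) with Nakayama's lemma, and the slick observation in part (3) is that the submodule \(Y\) is automatically closed under the right \(\End_\Lambda(X)\)-action, which is precisely what permits Nakayama's lemma to conclude.
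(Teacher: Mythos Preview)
The paper does not prove this statement; it is quoted as a preliminary result from \cite{MR4139031} and \cite{MR3910476}, so there is no in-paper argument to compare against.

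Your proof is essentially correct. One point in part~(1) should be tightened: you assert that the successive quotients $X\cdot J^{k}/X\cdot J^{k+1}$ (with $J=\Rad\End_\Lambda(X)$) are \emph{isomorphic to direct sums of copies of $X/R(X,X)$}. That need not hold unless $X$ happens to be flat as a right $\End_\Lambda(X)$-module. What your filtration argument actually yields is a surjection $(X/R(X,X))^{d_k}\twoheadrightarrow X\cdot J^{k}/X\cdot J^{k+1}$ (where $d_k=\dim_k J^{k}/J^{k+1}$): for $\varphi\in J^{k}$ the map $x\mapsto\varphi(x)\bmod X\cdot J^{k+1}$ kills $X\cdot J=R(X,X)$ because $\varphi\cdot J\subseteq J^{k+1}$. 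Hence each subquotient lies in $\Fac(X/R(X,X))$, and that is all you need to conclude $X\in\Filt(\Fac(X/R(X,X)))=\torscl(X/R(X,X))$ via \cref{filt and closure 2021-12-06 11:12:23}. The remaining steps---the $\Ext$-vanishing (more precisely the Auslander--Smal\o\ characterisation $\Hom_\Lambda(M,\tau M)=0\Leftrightarrow\Ext^1_\Lambda(M,\Fac M)=0$, rather than the AR formula itself), the one-dimensionality of $\Hom_\Lambda(X,X/R(M,X))$, and the Nakayama argument in (3) exploiting that $Y=\sum_{f\colon M/X\to X}\Image f$ is an $\End_\Lambda(X)$-submodule---are clean and correct.
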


Let \(M\) be a basic support \(\tau\)-tilting \(\Lambda\)-module.
We use the notation \(\mathcal{LM}(M)\) to denote the following set:
\begin{equation}
	\left\{X\in \itaurigid \Lambda \setmid \parbox{5cm}{\(X\) is a direct summand of \(M\) and \(X\notin \Fac(M/X)\)}\right\}.
\end{equation}
We remark that \(M/R(M,M)\cong \bigoplus_{X\in \mathcal{LM}(M)} X/R(M,X)\).
\begin{theorem}
	[{\cite[Theorem 2.3, Proposition 2.13]{MR4139031}}]\label{labeling method tau 2021-10-13 17:35:07}
	Let \(M\) be a basic support \(\tau\)-tilting \(\Lambda\)-module.
	Then the following hold:
	\begin{enumerate}
		\item The following map gives an injection:
		      \begin{equation}\label{2021-09-19 23:16:42}
			      \begin{tikzcd}[row sep=1pt]
				      \mathcal{LM}(M)\ar[r]& \brick \Lambda\\
				      X \ar[r,mapsto]& X/R(M,X).
			      \end{tikzcd}
		      \end{equation}
		\item The following map is well-defined and bijective:
		      \begin{equation}\label{2021-09-24 10:32:59}
			      \begin{tikzcd}[row sep=1pt]
				      \mathcal{LM}(M)\ar[r]& \left\{ M'\in \stautilt \Lambda \setmid \text{\(M'\) is a left mutation of \(M\)}\right\}\\
				      X \ar[r,mapsto] & \mu_X(M),
			      \end{tikzcd}
		      \end{equation}
		      here \(\mu_X(M)\) is a unique support \(\tau\)-tilting module having \(M/X\) as a direct summand and being not \(M\).
		      \label{asai brick}
	\end{enumerate}
\end{theorem}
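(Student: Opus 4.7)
My plan is to prove (2) first using the Adachi--Iyama--Reiten mutation theory of support $\tau$-tilting modules, and then deduce the injectivity in (1) via the brick labeling of Hasse arrows in $\stautilt \Lambda$. The key tool throughout is that, for a basic support $\tau$-tilting module $M$ and an indecomposable direct summand $X$, removing $X$ yields an almost complete support $\tau$-tilting module $M/X$ with $|M/X| = |M|-1$.

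For (2), I would invoke the mutation theorem of \cite{MR3187626}, which asserts that every basic almost complete support $\tau$-tilting module admits exactly two completions to a basic support $\tau$-tilting module. Applied to $M/X$, this yields a uniquely determined $\mu_X(M) \neq M$ containing $M/X$ as a direct summand, giving well-definedness of the candidate map. The condition $X \notin \Fac(M/X)$, i.e.\ $X \in \mathcal{LM}(M)$, is exactly what makes $\mu_X(M) < M$ in the poset of \cref{2020/08/31}: if $X \in \Fac(M/X)$ one verifies $\Fac M = \Fac(M/X) \subsetneq \Fac \mu_X(M)$, while if $X \notin \Fac(M/X)$ the inclusion is reversed. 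For surjectivity, any left mutation $M'$ of $M$ shares a common direct summand $N$ with $|N| = |M|-1$, and the unique indecomposable $X$ with $N \oplus X \cong M$ lies in $\mathcal{LM}(M)$ and satisfies $\mu_X(M) = M'$.

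For (1), \cref{2021-11-22 19:52:15} already guarantees that $S_X := X/R(M,X)$ is a brick for every $X \in \mathcal{LM}(M)$, so the map is well-defined and only injectivity needs attention. The strategy is to characterize $S_X$ as the \emph{brick label} of the Hasse arrow $M \to \mu_X(M)$ in the sense of \cite{MR4139031}, namely as the unique brick lying in $\Fac M$ but not in $\Fac \mu_X(M)$. Granting this characterization together with the bijectivity established in (2), an isomorphism $S_{X_1} \cong S_{X_2}$ immediately forces $\mu_{X_1}(M) = \mu_{X_2}(M)$, and hence $X_1 \cong X_2$ via (2).

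The hard part will be identifying $S_X$ with the brick label of the arrow $M \to \mu_X(M)$. Concretely, one must analyze the exchange sequence produced by a minimal left $\add(M/X)$-approximation of $X$ and show that $X/R(M,X)$ exactly measures the discrepancy between $\Fac M$ and $\Fac \mu_X(M)$, together with uniqueness of such a brick in $\Fac M \setminus \Fac \mu_X(M)$. Once this technical core is in place, parts (1) and (2) fit together cleanly: (2) parametrizes left mutations of $M$ by indecomposable summands in $\mathcal{LM}(M)$, while (1) relabels the same set by the associated exchange bricks.
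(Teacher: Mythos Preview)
The paper does not prove this theorem at all; it is quoted as a preliminary result from Asai \cite{MR4139031} (together with \cite{MR3187626}), so there is no in-paper argument to compare against. That said, your outline contains a genuine error that would make the proof of (1) fail as written.

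You propose to pin down $S_X := X/R(M,X)$ as ``the unique brick lying in $\Fac M$ but not in $\Fac \mu_X(M)$''. This characterization is false: for a covering relation $\Fac M \supsetneq \Fac M'$ the set-theoretic difference $\Fac M \setminus \Fac M'$ generally contains several bricks. Already for $\Lambda = kA_2$ with arrow $1 \to 2$, taking $M = \Lambda = P_1 \oplus S_2$ and $X = P_1$ gives $\mu_X(M) = S_2$, and both $S_1$ and $P_1$ are bricks in $\Lambda\lmod \setminus \add S_2$; the actual label is $S_1 = P_1/R(M,P_1)$, not $P_1$. The correct statement, which is precisely \cref{labeling properties 2021-11-14 17:10:59} in this paper (i.e.\ \cite[Lemma 2.16, Proposition 2.17]{MR4139031}), is that $S_X$ is the unique brick in $\Fac M \cap (\Fac \mu_X(M))^{\perp} = \Fac M \cap \Sub N'$, where $N'$ is the support $\tau^{-1}$-tilting module corresponding to $\mu_X(M)$ under \eqref{ddual}. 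With this corrected characterization your injectivity argument goes through verbatim: $S_{X_1} \cong S_{X_2}$ forces the two Hasse arrows out of $M$ to coincide, and then (2) gives $X_1 = X_2$.

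Your argument for (2) is essentially the content of \cite[Theorem 2.18, Definition--Proposition 2.28, Theorem 2.33]{MR3187626}, and is correct in spirit. One point to tighten: work with support $\tau$-tilting \emph{pairs} $(M,P)$ rather than modules alone. For surjectivity you must rule out that the summand exchanged between $(M,P)$ and the pair for $M'$ sits in $P$ rather than in $M$; this is where the hypothesis $M' < M$ is used, since removing from $P$ and completing can only produce $M'' \geq M$.
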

\begin{corollary}
	By \cref{labeling method tau 2021-10-13 17:35:07}, we get the injective map
	\begin{equation}\label{labeling brick 2021-10-13 08:59:12}
		\begin{tikzcd}[row sep=1pt]
			\left\{ M'\in \stautilt \Lambda \setmid \text{\(M'\) is a left mutation of \(M\)}\right\}\ar[r]&\brick \Lambda\\
		\end{tikzcd}
	\end{equation}
	which makes the following diagram commutative:
	\begin{equation}
		\begin{tikzcd}
			\left\{ M'\in \stautilt \Lambda \setmid \text{\(M'\) is a left mutation of \(M\)} \right\}\ar[r,"\eqref{labeling brick 2021-10-13 08:59:12}"]&\brick \Lambda\\
			\mathcal{LM}(M)\ar[u,"\eqref{2021-09-24 10:32:59}"] \ar[ur,"\eqref{2021-09-19 23:16:42}"'].&
		\end{tikzcd}
	\end{equation}
\end{corollary}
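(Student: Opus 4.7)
The plan is essentially to chase the diagram and use the two maps from \cref{labeling method tau 2021-10-13 17:35:07} as building blocks. The horizontal map in the diagram should be defined as the composition of the inverse of \eqref{2021-09-24 10:32:59} with the map \eqref{2021-09-19 23:16:42}. Since \eqref{2021-09-24 10:32:59} is bijective by \cref{labeling method tau 2021-10-13 17:35:07}\ref{asai brick}, this inverse makes sense, so the desired map
\[
\bigl\{M' \in \stautilt \Lambda \setmid M' \text{ is a left mutation of } M\bigr\} \longrightarrow \brick \Lambda
\]
is well-defined and sends \(\mu_X(M)\) to \(X/R(M,X)\).

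For injectivity, I would argue: suppose \(\mu_X(M)\) and \(\mu_{X'}(M)\) are two left mutations of \(M\) with \(X/R(M,X)\cong X'/R(M,X')\) in \(\brick \Lambda\). Because \eqref{2021-09-19 23:16:42} is injective by part (1) of \cref{labeling method tau 2021-10-13 17:35:07}, we obtain \(X\cong X'\) in \(\mathcal{LM}(M)\), and hence \(\mu_X(M)\cong \mu_{X'}(M)\). Thus \eqref{labeling brick 2021-10-13 08:59:12} is injective. Commutativity of the displayed triangle is immediate from the definition: starting at \(X\in \mathcal{LM}(M)\), going up by \eqref{2021-09-24 10:32:59} and then right along \eqref{labeling brick 2021-10-13 08:59:12} yields the image of \(\mu_X(M)\), which by construction is \(X/R(M,X)\), exactly the value of the diagonal map \eqref{2021-09-19 23:16:42} at \(X\).

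There is essentially no obstacle here since everything reduces to the content already packaged in \cref{labeling method tau 2021-10-13 17:35:07}; the corollary is a purely formal consequence of having a bijection on the left edge and an injection on the diagonal of the triangle. The only point that requires a moment of care is verifying that the composition is well-defined on isomorphism classes of left mutations rather than on representatives, but this is immediate from the bijectivity of \eqref{2021-09-24 10:32:59}.
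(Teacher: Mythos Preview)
Your proposal is correct and matches the paper's approach: the paper states this corollary without proof, treating it as an immediate formal consequence of \cref{labeling method tau 2021-10-13 17:35:07}, and your argument spells out precisely this reasoning---define \eqref{labeling brick 2021-10-13 08:59:12} as the composite of the inverse of the bijection \eqref{2021-09-24 10:32:59} with the injection \eqref{2021-09-19 23:16:42}, so that injectivity and commutativity hold by construction.
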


\begin{theorem}
	[{\cite[Theorem 2.3]{MR4139031}, \cite[Theorem 4.1, Lemma 4.3]{MR3910476}}]\label{asai correspondence}
	Let \(M\) be a basic support \(\tau\)-tilting \(\Lambda\)-module. Then the following hold:
	\begin{enumerate}
		\item The module \(M/R(M,M)\) is a left finite semibrick.
		\item The following map gives a bijection:
		      \begin{equation}\label{asaicorresp}
			      \begin{tikzcd}[row sep=1pt]
				      \stautilt \Lambda \ar[r]& \flsbrick \Lambda\\
				      M \ar[r,mapsto]& M/R(M,M).
			      \end{tikzcd}
		      \end{equation}
		\item The following map gives a bijection:
		      \begin{equation}\label{TT}
			      \begin{tikzcd}[row sep=1pt]
				      \flsbrick \Lambda \ar[r]& \ftors \Lambda\\
				      S \ar[r,mapsto]& \torscl(S).
			      \end{tikzcd}
		      \end{equation}
		\item The following map gives a bijection:
		      \begin{equation}\label{2021-09-22 14:16:57}
			      \begin{tikzcd}[row sep=1pt]
				      \itaurigid \Lambda \ar[r] &\flbrick \Lambda\\
				      X \ar[r,mapsto]& X/R(X,X).
			      \end{tikzcd}
		      \end{equation}
		\item The following map gives an injection:
		      \begin{equation}\label{2021-09-22 14:17:07}
			      \begin{tikzcd}[row sep=1pt]
				      \flbrick \Lambda \ar[r] &\ftors \Lambda\\
				      S \ar[r,mapsto]& \torscl (S).
			      \end{tikzcd}
		      \end{equation}
		\item The above maps make the following diagram commutative:
		      \begin{equation}
			      \begin{tikzcd}
				      \itaurigid \Lambda \ar[rrr,"\eqref{2021-09-22 14:16:57}"]\ar[dr,"\eqref{2021-09-22 15:21:57}"]\ar[ddr,"\eqref{2021-09-22 14:16:57}"']& & &\flbrick \Lambda \ar[dl,phantom,"\supset"sloped] \ar[ddl,"\eqref{2021-09-22 14:17:07}"]\\
				      &\stautilt \Lambda \ar[d,"\eqref{fac}"] \ar[r,"\eqref{asaicorresp}"]& \flsbrick \Lambda \ar[d,"\eqref{TT}"']&
				      \\
				      &\ftors \Lambda \ar[r,equal]& \ftors \Lambda.&
			      \end{tikzcd}
		      \end{equation}
	\end{enumerate}
\end{theorem}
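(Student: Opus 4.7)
My plan is to build the six statements from two core computations: the identification of $M/R(M,M)$ with the $\mathcal{LM}(M)$-indexed direct sum of bricks noted immediately before \cref{labeling method tau 2021-10-13 17:35:07}, and the equality $\torscl(M/R(M,M)) = \Fac M$. Once these are in hand, the rest of the theorem organizes itself around the bijection $M \mapsto \Fac M$ from \cref{2020/08/31} and the earlier injections of \cref{indec tau taui corr2021-09-09 16:44:38}.

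For (1), I would first use \cref{2021-11-22 19:52:15} to see that each summand $X/R(M,X)$ with $X \in \mathcal{LM}(M)$ is a brick. To upgrade the direct sum to a semibrick, I would show that for non-isomorphic $X, Y \in \mathcal{LM}(M)$, any morphism $X/R(M,X) \to Y/R(M,Y)$ lifts to a non-isomorphism $X \to Y$, which therefore lies in $\Rad_{\Lambda\lmod}(X,Y)$, so its image lands in $R(M,Y)$ and the induced morphism on the quotient is zero. Left finiteness reduces to the equality $\torscl(M/R(M,M)) = \Fac M$: the inclusion $\subseteq$ is clear since $M/R(M,M) \in \Fac M$ and $\Fac M$ is a torsion class; for $\supseteq$ I would use the iterated radical filtration $M \supseteq R(M,M) \supseteq R(M,R(M,M)) \supseteq \cdots$ (which terminates by finite-dimensionality) to express $M$ as an iterated extension of modules in $\Fac(M/R(M,M))$, then invoke \cref{filt and closure 2021-12-06 11:12:23}.

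Part (2) then follows quickly: well-definedness is (1), injectivity follows from $M$ being determined by $\Fac M = \torscl(M/R(M,M))$ via \cref{2020/08/31}, and surjectivity uses that for any $S \in \flsbrick \Lambda$ the torsion class $\torscl(S)$ is functorially finite and therefore pulls back to a basic support $\tau$-tilting module $M$; one checks $M/R(M,M) \cong S$ by comparing the two semibricks, both of which generate $\torscl(S)$. Part (3) is then the composite of the inverse of \eqref{asaicorresp} with \eqref{fac}. For part (4), given $X \in \itaurigid \Lambda$, one applies \eqref{2021-09-22 15:21:57} to obtain $M_X \in \stautilt \Lambda$ with $\Fac M_X = \Fac X$; this forces $X$ to be the unique element of $\mathcal{LM}(M_X)$ whose contribution is nonzero (other summands of $M_X$ are projectives or lie in $\Fac(M_X/X)$), so $X/R(X,X) \in \flbrick \Lambda$ by (1), and the map is bijective because its inverse reads off the single nontrivial summand in $M/R(M,M)$. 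Part (5) is the restriction of (3) to length-one semibricks.

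The main obstacle is the equality $\torscl(M/R(M,M)) = \Fac M$ in part (1), which simultaneously secures left finiteness and provides the key identification making the inverse map in (2) well-defined; once this is proved, everything else is diagram chasing. Commutativity of the final diagram then amounts to unpacking definitions on representatives: the two ways around each sub-triangle both compute either $\Fac M$ or $\torscl(M/R(M,M))$, which coincide by the equality just mentioned.
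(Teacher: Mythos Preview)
The paper does not prove this theorem; it is quoted as a preliminary result from Asai \cite{MR4139031} and Demonet--Iyama--Jasso \cite{MR3910476}, so there is no in-paper argument to compare against. Your outline follows the spirit of those references, but there is a genuine gap in the key step.

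The problem is the claim that the iterated radical filtration
\[
M = R^{0} \supseteq R^{1} = R(M,M) \supseteq R^{2} = R(M,R^{1}) \supseteq \cdots
\]
``terminates by finite-dimensionality.'' Finite-dimensionality only guarantees that the chain stabilises, not that it reaches~$0$. Take $\Lambda = k[x]/(x^{2})$ and $M = \Lambda$: then $R^{1} = (x)$, and since $\Lambda \not\cong (x)$ every map $\Lambda \to (x)$ is radical, so $R^{2} = R(\Lambda,(x)) = (x) = R^{1}$. The chain stabilises at the nonzero module $(x)$, so your filtration does not exhibit $M$ as an object of $\Filt(\Fac(M/R(M,M)))$. (The conclusion $\torscl(M/R(M,M)) = \Fac M$ is still true here, because $(x)\cong k = M/R(M,M)$, but a different filtration is needed to see it.) In general one must argue, for any nonzero $N\in\Fac M$, that $N$ admits a nonzero quotient in $\Fac(M/R(M,M))$; this is where the references use that $M$ is Ext-projective in $\Fac M$ and analyse minimal $\add M$-approximations, rather than the naive iterated radical.

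There are two smaller issues as well. In your semibrick argument for part~(1), the lift of $X \to Y/R(M,Y)$ through $Y \twoheadrightarrow Y/R(M,Y)$ is not automatic: you need $\Ext^{1}_{\Lambda}(X,R(M,Y)) = 0$, which holds because $R(M,Y)\in\Fac M$ and $M$ is $\tau$-rigid (Auslander--Smal\o). And in part~(2), ``one checks $M/R(M,M)\cong S$ by comparing the two semibricks, both of which generate $\torscl(S)$'' is circular: two semibricks generating the same torsion class need not be isomorphic a priori---this uniqueness is precisely the injectivity of the map in part~(3), which you have not yet established independently.
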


The dual assertions are also true.

\begin{theorem}
	[{The dual assertion of \cref{2021-11-22 19:52:15}}]\label{labeling method taui 2021-10-13 17:35:43}
	Let \(N\) be a basic support \(\tau^{-1}\)-tilting \(\Lambda\)-module and \(Y\) an indecomposable direct summand of \(N\).
	Then the following hold:
	\begin{enumerate}
		\item The module \(S(Y,Y)\) is a right finite brick over \(\Lambda\).
		\item The module \(S(Y,N)\) is a brick or zero module.
		\item The following conditions are equivalent:
		      \begin{enumerate}
			      \item The module \(S(Y,N)\) is nonzero.
			      \item The module \(S(Y,N)\) is a brick.
			      \item \(Y\notin \Sub(N/Y)\).
		      \end{enumerate}
	\end{enumerate}
\end{theorem}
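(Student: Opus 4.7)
The plan is to deduce this statement directly from \cref{2021-11-22 19:52:15} applied to the opposite algebra \(\Lambda^{\opo}\), by means of the standard \(k\)-linear duality \(D := \Hom_k(-,k)\). This contravariant equivalence \(D \colon \Lambda\lmod \to \Lambda^{\opo}\lmod\) interchanges projective and injective modules, factor modules and submodules, and the Auslander--Reiten translates in the sense that \(\tau_{\Lambda^{\opo}}(DN) \cong D(\tau_\Lambda^{-1} N)\). Combined with the identification \((\Lambda/\Lambda e \Lambda)^{\opo} \cong \Lambda^{\opo}/\Lambda^{\opo} e \Lambda^{\opo}\) for any idempotent \(e\), it follows that \(N\) is a basic support \(\tau^{-1}\)-tilting \(\Lambda\)-module if and only if \(DN\) is a basic support \(\tau\)-tilting \(\Lambda^{\opo}\)-module, and \(Y\) is then an indecomposable direct summand of \(N\) exactly when \(DY\) is one of \(DN\).

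The key computation is to verify
\begin{equation*}
	D\bigl(S(Y,N)\bigr) \;\cong\; DY \big/ R(DN, DY),
\end{equation*}
and analogously \(D(S(Y,Y)) \cong DY/R(DY,DY)\). Fix a \(k\)-basis \(f_1, \ldots, f_t\) of \(\Rad_{\Lambda\lmod}(Y,N)\) and assemble it into a single morphism \(F \colon Y \to N^t\); by definition, \(S(Y,N) = \Kernel F\). Applying the exact contravariant functor \(D\) to \(0 \to S(Y,N) \to Y \xrightarrow{F} N^t\) gives the exact sequence \((DN)^t \xrightarrow{DF} DY \to D\,S(Y,N) \to 0\), so \(D\,S(Y,N) \cong DY/\Image(DF)\). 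Since \(D\) is an equivalence of categories, it preserves Jacobson radicals of Hom-spaces, so \(Df_1, \ldots, Df_t\) form a basis of \(\Rad_{\Lambda^{\opo}\lmod}(DN, DY)\) and therefore \(\Image(DF) = \sum_i \Image(Df_i) = R(DN, DY)\).

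With these identifications in hand, I would apply \cref{2021-11-22 19:52:15} to \(DN\) over \(\Lambda^{\opo}\) and its summand \(DY\), and then translate each of the three conclusions back through \(D\). The brick property is self-dual because \(\End_\Lambda(M) \cong \End_{\Lambda^{\opo}}(DM)^{\opo}\); left finiteness over \(\Lambda^{\opo}\) becomes right finiteness over \(\Lambda\) because \(D\) exchanges torsion classes and torsion-free classes while preserving functorial finiteness, giving \(D(\torscl T) = \torfcl(DT)\) (compare \cref{dualtorsf 2021-12-16 16:31:45}); and since \(D\) interchanges \(\Fac\) and \(\Sub\), the condition \(DY \notin \Fac(DN/DY)\) translates exactly to \(Y \notin \Sub(N/Y)\). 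The main obstacle I anticipate is the bookkeeping involved in verifying this duality dictionary rigorously --- in particular, confirming that \(D\) restricts to a bijection between \(\stauitilt \Lambda\) and \(\stautilt \Lambda^{\opo}\) preserving direct summand decompositions, and that the functorial finiteness conditions defining left and right finiteness transfer correctly. Once this dictionary is established, each of the three claims follows by direct translation of the corresponding claim of \cref{2021-11-22 19:52:15}.
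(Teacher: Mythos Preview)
Your proposal is correct and is precisely the standard duality argument that the paper implicitly invokes: the paper gives no proof for this theorem, merely labelling it ``the dual assertion of \cref{2021-11-22 19:52:15}'', and your approach via the \(k\)-duality \(D\) to \(\Lambda^{\opo}\) is exactly how such dual statements are obtained. Your key computation \(D(S(Y,N)) \cong DY/R(DN,DY)\) is the only nontrivial verification needed, and your argument for it is sound.
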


Let \(N\) be a basic support \(\tau^{-1}\)-tilting module.
We use the notation \(\mathcal{RM}(N)\) to denote the following set:
\begin{equation}
	\left\{Y\in \itauirigid \Lambda\setmid \parbox{5cm}{\(Y\) is a direct summand of \(N\) and \(Y\notin \Sub(N/Y)\)}\right\}.
\end{equation}
We remark that \(S(N,N)\cong \bigoplus_{X\in \mathcal{RM}(N)} S(Y,N)\).

\begin{theorem}
	[{The dual assertion of \cref{labeling method tau 2021-10-13 17:35:07}}]\label{right mutation 2021-11-22 20:43:58}
	Let \(N\) be a basic support \(\tau^{-1}\)-tilting \(\Lambda\)-module. Then the following hold:
	\begin{enumerate}
		\item The following map gives a bijection:
		      \begin{equation}\label{2021-09-24 13:11:04}
			      \begin{tikzcd}[row sep=1pt]
				      \mathcal{RM}(N)\ar[r]& \brick \Lambda\\
				      Y \ar[r,mapsto]& S(Y,N).
			      \end{tikzcd}
		      \end{equation}
		\item The following map gives a bijection:
		      \begin{equation}\label{2021-09-24 15:40:15}
			      \begin{tikzcd}[row sep=1pt]
				      \mathcal{RM}(N)\ar[r]& \left\{ N'\in \stauitilt \Lambda \setmid \text{\(N'\) is a right mutation of \(N\)} \right\}\\
				      Y \ar[r,mapsto]& \mu_Y(N),
			      \end{tikzcd}
		      \end{equation}
		      here \(\mu_Y(N)\) is a unique support \(\tau^{-1}\)-tilting \(\Lambda\)-module having \(N/Y\) as a direct summand and being not \(N\). \label{asai d brick}
	\end{enumerate}
\end{theorem}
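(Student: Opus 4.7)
The plan is to derive the theorem from its primal version, \cref{labeling method tau 2021-10-13 17:35:07}, by invoking the standard $k$-linear duality
\[
D := \Hom_k(-,k)\colon \Lambda\lmod \xrightarrow{\sim} (\Lambda^{\opo}\lmod)^{\opo}.
\]
Under $D$, the Auslander--Reiten translate $\tau_\Lambda^{-1}$ corresponds to $\tau_{\Lambda^{\opo}}$, short exact sequences and morphism directions are reversed, and endomorphism rings pass to opposites. Consequently, $D$ restricts to bijections $\stauitilt \Lambda \leftrightarrow \stautilt \Lambda^{\opo}$ and $\brick \Lambda \leftrightarrow \brick \Lambda^{\opo}$, and carries right mutations to left mutations.

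Fix $N \in \stauitilt \Lambda$ and set $M := DN \in \stautilt \Lambda^{\opo}$. The first step is to verify the key identification: for an indecomposable summand $Y$ of $N$, set $X := DY$; then $D(X/R(M,X)) \cong S(Y,N)$ as submodules of $Y$. This follows by unravelling the definitions — the sum $\sum_{f \in \Rad_{\Lambda^{\opo}\lmod}(M,X)} \Image f$ defining $R(M,X)$ dualises to the intersection $\bigcap_{g \in \Rad_{\Lambda\lmod}(Y,N)} \Kernel g$ defining $S(Y,N)$, since $D$ interchanges images and kernels and sends the radical of the opposite Hom space to the radical of the original Hom space. Similarly, $\Fac$ and $\Sub$ are interchanged by $D$, so the condition $X \notin \Fac(M/X)$ matches $Y \notin \Sub(N/Y)$, yielding a canonical bijection $\mathcal{RM}(N) \leftrightarrow \mathcal{LM}(M)$ via $Y \mapsto DY$.

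With these dictionaries in place, both assertions translate directly. Applying \cref{labeling method tau 2021-10-13 17:35:07}(1) to $M$ over $\Lambda^{\opo}$ gives the map $X \mapsto X/R(M,X)$ from $\mathcal{LM}(M)$ into $\brick \Lambda^{\opo}$; composing on both sides with $D$ yields precisely the asserted map $Y \mapsto S(Y,N)$ and transfers its injectivity (and, using \cref{labeling method taui 2021-10-13 17:35:43}(1) to identify the image, its bijectivity onto the relevant subset of $\brick \Lambda$). For part (2), the bijection $X \mapsto \mu_X(M)$ of \cref{labeling method tau 2021-10-13 17:35:07}(2) transports along $D$ to the bijection $Y \mapsto \mu_Y(N)$, since $D$ carries a left mutation of $M$ having $M/X$ as a summand and differing from $M$ to a right mutation of $N$ having $N/Y$ as a summand and differing from $N$ — exactly the defining property of $\mu_Y(N)$.

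The main obstacle is essentially bookkeeping: one must check carefully that the interchange of quotients with submodules, of images with kernels, and of left with right mutations is applied consistently, and that $\mathcal{RM}(N)$ genuinely corresponds to $\mathcal{LM}(M)$ under $D$. Once this dictionary has been set up, no further representation-theoretic argument is required and the statement reduces to a formal translation of \cref{labeling method tau 2021-10-13 17:35:07}.
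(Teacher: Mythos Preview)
Your approach is correct and is precisely what the paper intends: the theorem is stated in the paper simply as ``the dual assertion of \cref{labeling method tau 2021-10-13 17:35:07}'' with no proof given, so the implicit argument is exactly the $k$-linear duality $D=\Hom_k(-,k)$ that you spell out. One small remark: the word ``bijection'' in part (1) of the statement is almost certainly a slip for ``injection'' (compare the primal version, which says injection), and you handled this sensibly by noting that bijectivity can only hold onto the appropriate subset of $\brick\Lambda$.
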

\begin{corollary}
	By \cref{right mutation 2021-11-22 20:43:58}, we get the injective map
	\begin{equation}\label{labeling brick 2021-10-13 09:11:22}
		\begin{tikzcd}[row sep=1pt]
			\left\{ N'\in \stauitilt \Lambda \setmid \text{\(N'\) is a right mutation of \(N\)}\right\}\ar[r]&\brick \Lambda\\
		\end{tikzcd}
	\end{equation}
	which make the following diagram commutative:
	\begin{equation}
		\begin{tikzcd}[row sep=large]
			\left\{ N'\in \stauitilt \Lambda \setmid \text{\(N'\) is a right mutation of \(N\)} \right\}\ar[r,"\eqref{labeling brick 2021-10-13 09:11:22}"]&\brick \Lambda\\
			\mathcal{RM}(N)\ar[u,"\eqref{2021-09-24 15:40:15}"] \ar[ur,"\eqref{2021-09-24 13:11:04}"'].&
		\end{tikzcd}
	\end{equation}
\end{corollary}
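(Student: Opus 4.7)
The plan is to \emph{define} the injective map \eqref{labeling brick 2021-10-13 09:11:22} as the composite of the inverse of the bijection \eqref{2021-09-24 15:40:15} with the map \eqref{2021-09-24 13:11:04}, both supplied by \cref{right mutation 2021-11-22 20:43:58}. Concretely, given a right mutation $N'$ of $N$, \eqref{2021-09-24 15:40:15} produces a unique $Y\in \mathcal{RM}(N)$ with $N' = \mu_Y(N)$, and I would send $N'$ to the brick $S(Y,N)$.

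With this definition, the commutativity of the triangle is built in: both paths from $\mathcal{RM}(N)$ to $\brick \Lambda$ coincide with $Y \mapsto S(Y,N)$. The upper path applies \eqref{2021-09-24 15:40:15} first and then the newly defined map, and by construction this equals \eqref{2021-09-24 13:11:04}, which is the diagonal arrow.

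For injectivity, since \eqref{2021-09-24 15:40:15} is a bijection its inverse is well-defined, and \eqref{2021-09-24 13:11:04} is injective into $\brick \Lambda$; the composition of a bijection with an injection is injective. Thus there is no genuine obstacle to overcome: all substantive content has already been absorbed into \cref{right mutation 2021-11-22 20:43:58} (the dualization of \cref{labeling method tau 2021-10-13 17:35:07}), and the corollary is a formal repackaging. The proof therefore amounts to unwinding the definition of $\mu_Y(N)$ and observing that the triangle commutes tautologically.
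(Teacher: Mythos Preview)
Your proposal is correct and matches the paper's approach exactly: the corollary is stated without a separate proof because the map \eqref{labeling brick 2021-10-13 09:11:22} is, by definition, the composite $\eqref{2021-09-24 13:11:04}\circ\eqref{2021-09-24 15:40:15}^{-1}$, so commutativity is tautological and injectivity follows since a bijection composed with an injection is injective. There is nothing to add.
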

\begin{theorem}
	[{Dual assertion of \cref{asai correspondence}}]\label{dasai correspondence 2021-09-24 13:18:33}
	Let \(N\) be a basic support \(\tau^{-1}\) tilting \(\Lambda\)-module. Then the following hold:
	\begin{enumerate}
		\item The module \(S(N,N)\) is a right finite semibrick.
		\item The following map gives a bijection:
		      \begin{equation}\label{dasaicorresp 2021-09-24 13:16:37}
			      \begin{tikzcd}[row sep=1pt]
				      \stauitilt \Lambda \ar[r]& \frsbrick \Lambda\\
				      N \ar[r,mapsto]& S(N,N).
			      \end{tikzcd}
		      \end{equation}
		\item The following map gives a bijection:
		      \begin{equation}\label{FF 2021-09-24 13:16:03}
			      \begin{tikzcd}[row sep=1pt]
				      \frsbrick \Lambda \ar[r]& \ftorf \Lambda\\
				      S \ar[r,mapsto]& \torfcl(S).
			      \end{tikzcd}
		      \end{equation}
		\item The following map gives a bijection:
		      \begin{equation}\label{2021-09-24 13:13:50}
			      \begin{tikzcd}[row sep=1pt]
				      \itauirigid \Lambda \ar[r] &\frbrick \Lambda\\
				      Y \ar[r,mapsto]& S(Y,Y).
			      \end{tikzcd}
		      \end{equation}
		\item The following map gives an injection:
		      \begin{equation}\label{2021-09-24 13:14:13}
			      \begin{tikzcd}[row sep=1pt]
				      \frbrick \Lambda \ar[r] &\ftorf \Lambda\\
				      S \ar[r,mapsto]& \torfcl (S).
			      \end{tikzcd}
		      \end{equation}
		\item The above maps make the following diagram commutative:
		      \begin{equation}
			      \begin{tikzcd}
				      \itauirigid \Lambda \ar[rrr,"\eqref{2021-09-24 13:13:50}"]\ar[dr,"\eqref{2021-09-24 13:15:11}"]\ar[ddr,"\eqref{2021-09-24 13:13:50}"']& & &\frbrick \Lambda \ar[dl,phantom,"\supset"sloped] \ar[ddl,"\eqref{2021-09-24 13:14:13}"]\\
				      &\stauitilt \Lambda \ar[d,"\eqref{sub}"] \ar[r,"\eqref{dasaicorresp 2021-09-24 13:16:37}"]& \frsbrick \Lambda \ar[d,"\eqref{FF 2021-09-24 13:16:03}"']&
				      \\
				      &\ftorf \Lambda \ar[r,equal]& \ftorf \Lambda.&
			      \end{tikzcd}
		      \end{equation}
	\end{enumerate}
\end{theorem}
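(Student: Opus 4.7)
The plan is to deduce all six statements from \cref{asai correspondence} applied to the opposite algebra $\Lambda^{\opo}$ via the $k$-linear duality $D := \Hom_k(-,k)$, which restricts to a contravariant equivalence $\Lambda\lmod \xrightarrow{\sim} \Lambda^{\opo}\lmod$.

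First I would collect the behavior of $D$ on the relevant structures. The functor $D$ exchanges subobjects with quotient objects, hence $\Sub$ with $\Fac$, and it sends functorially finite torsion-free classes in $\Lambda\lmod$ bijectively to functorially finite torsion classes in $\Lambda^{\opo}\lmod$; left approximations are sent to right approximations. On the Auslander--Reiten side, $D$ intertwines $\tau^{-1}$ on $\Lambda\lmod$ with $\tau$ on $\Lambda^{\opo}\lmod$ (and swaps projectives with injectives via $\nu$), so $D$ identifies $\itauirigid \Lambda$ with $\itaurigid \Lambda^{\opo}$ and $\stauitilt \Lambda$ with $\stautilt \Lambda^{\opo}$. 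Bricks and semibricks are self-dual since $\End_\Lambda(S) \cong \End_{\Lambda^{\opo}}(DS)$, and right finiteness on the $\Lambda$-side corresponds to left finiteness on the $\Lambda^{\opo}$-side because $D$ sends $\torfcl_\Lambda(S)$ to $\torscl_{\Lambda^{\opo}}(DS)$.

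Next I would translate the constructions involving $S(-,-)$ into their $D$-images. Picking a $k$-basis $f_1, \ldots, f_n$ of $\Rad_{\Lambda\lmod}(Y,N)$ identifies $Y/S(Y,N)$ with the image of $(f_1, \ldots, f_n) \colon Y \to N^{\oplus n}$. Applying $D$ to the resulting short exact sequence yields
\begin{equation}
    D(S(Y,N)) \cong DY/R(DN, DY),
\end{equation}
and in the same way $D(S(N,N)) \cong DN/R(DN,DN)$ and $D(S(Y,Y)) \cong DY/R(DY,DY)$. Thus, writing $M := DN$ and $X := DY$, the assignments $N \mapsto S(N,N)$, $Y \mapsto S(Y,N)$, $Y \mapsto S(Y,Y)$ on the $\Lambda$-side correspond under $D$ to $M \mapsto M/R(M,M)$, $X \mapsto X/R(M,X)$, $X \mapsto X/R(X,X)$ on the $\Lambda^{\opo}$-side, while $\torfcl_\Lambda(S)$ corresponds to $\torscl_{\Lambda^{\opo}}(DS)$.

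With this dictionary in hand, each of (1)--(5) becomes precisely the analogous statement in \cref{asai correspondence} for $\Lambda^{\opo}$, and (6) follows by transporting the commutative diagram there through $D$. The main obstacle I anticipate is the verification of the identity $D(S(Y,N)) \cong DY/R(DN, DY)$; this reduces to showing that a homomorphism $f$ lies in $\Rad_{\Lambda\lmod}(Y,N)$ if and only if $Df$ lies in $\Rad_{\Lambda^{\opo}\lmod}(DN,DY)$, which is immediate from the fact that $D$ is an equivalence preserving indecomposability and hence non-isomorphisms between indecomposable summands.
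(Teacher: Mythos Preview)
Your proposal is correct and is precisely the standard way to make rigorous what the paper leaves implicit: the paper gives no proof for this theorem, merely labeling it the ``dual assertion'' of \cref{asai correspondence}, and your argument via the $k$-duality $D$ and the opposite algebra $\Lambda^{\opo}$ is exactly how that dualization is carried out. In particular, your verification that $D(S(Y,N)) \cong DY/R(DN,DY)$ is the only nontrivial bookkeeping step, and your treatment of it is fine.
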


\begin{corollary}\label{comm dbrick 2021-10-01 20:08:26}
	By \cref{dugger 2021-09-06 13:33:14,asai correspondence,dasai correspondence 2021-09-24 13:18:33,indec tau taui corr2021-09-09 16:44:38}, we get the bijective maps
	\begin{equation}\label{dualsbrick}
		\begin{tikzcd}
			\flsbrick \Lambda\ar[r]&\frsbrick \Lambda,
		\end{tikzcd}
	\end{equation}
	\begin{equation}\label{dualbrick}
		\begin{tikzcd}
			\flbrick \Lambda\ar[r]&\frbrick \Lambda,
		\end{tikzcd}
	\end{equation}
	which make the following diagram commutative:
	\begin{equation}\label{cdtaubricktor 2021-09-06 15:51:24}
		\begin{tikzcd}[row sep=20pt]
			&[-10pt]&[-30pt] \flbrick \Lambda \ar[dl,phantom,"\supset"sloped] \ar[dll,near start,"\eqref{2021-09-22 14:17:07}"'] \ar[rr,dashrightarrow,"\eqref{dualbrick}"] \ar[dd,leftarrow,"\eqref{2021-09-22 14:16:57}",very near end,xshift=-10pt] &[-35pt] &[-35pt] \frbrick \Lambda \ar[dl,phantom,"\supset"sloped] \ar[dd, leftarrow,"\eqref{2021-09-24 13:13:50}",near end] \ar[r,"\eqref{2021-09-24 13:13:50}"]&[-10pt]\ftorf \Lambda \ar[dd, equal]\\
			\ftors \Lambda \ar[dd,equal]\ar[urrrrr,bend left=35pt,"\eqref{gal}"]&\ar[l,"\eqref{TT}"]\flsbrick \Lambda \ar[rr,dashrightarrow, crossing over,"\eqref{dualsbrick}",near end] & & \frsbrick \Lambda \ar[urr,crossing over,"\eqref{FF 2021-09-24 13:16:03}"'] &\\
			&& \itaurigid \Lambda \ar[dl,"\eqref{2021-09-22 15:21:57}"description]\ar[dll,"\eqref{2021-09-22 14:16:57}"'] \ar[rr,near start,"\eqref{tau or top 2021-09-11 19:19:09}"] & & \itauirigid \Lambda \ar[dl,pos=0.4,"\eqref{2021-09-24 13:13:50}"description] \ar[r,"\eqref{2021-09-24 13:13:18}"]&\ftorf \Lambda \\
			\ftors \Lambda&\stautilt \Lambda \ar[l,"\eqref{fac}"] \ar[uu,pos=0.7,crossing over,"\eqref{asaicorresp}"]\ar[rr,"\eqref{ddual}"] & & \stauitilt \Lambda \ar[uu,rightarrow,very near start,xshift=9pt,crossing over,"\eqref{dasaicorresp 2021-09-24 13:16:37}"] \ar[urr,"\eqref{sub}"'].&\\
		\end{tikzcd}
	\end{equation}

\end{corollary}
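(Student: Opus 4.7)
The plan is to \emph{define} the two dashed arrows \eqref{dualsbrick} and \eqref{dualbrick} as compositions of bijections supplied by the cited results, and then verify commutativity of \eqref{cdtaubricktor 2021-09-06 15:51:24} face-by-face, noting that each non-dashed face has already been established in one of the listed propositions. Concretely, I would set \eqref{dualsbrick} to be the composite
\[
\flsbrick \Lambda \xrightarrow{\eqref{asaicorresp}^{-1}} \stautilt \Lambda \xrightarrow{\eqref{ddual}} \stauitilt \Lambda \xrightarrow{\eqref{dasaicorresp 2021-09-24 13:16:37}} \frsbrick \Lambda,
\]
which is a bijection by \cref{asai correspondence}, \cref{dugger 2021-09-06 13:33:14} and \cref{dasai correspondence 2021-09-24 13:18:33}. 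Analogously, I would set \eqref{dualbrick} to be the composite
\[
\flbrick \Lambda \xrightarrow{\eqref{2021-09-22 14:16:57}^{-1}} \itaurigid \Lambda \xrightarrow{\eqref{tau or top 2021-09-11 19:19:09}} \itauirigid \Lambda \xrightarrow{\eqref{2021-09-24 13:13:50}} \frbrick \Lambda,
\]
which is a bijection by the bijective parts of \cref{asai correspondence}, \cref{indec tau taui corr2021-09-09 16:44:38} and \cref{dasai correspondence 2021-09-24 13:18:33}. Bijectivity of the two dashed arrows is then immediate.

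It remains to check that the cube \eqref{cdtaubricktor 2021-09-06 15:51:24} commutes, which I would organize face-by-face. The front-bottom triangle/square involving \(\stautilt \Lambda\), \(\flsbrick \Lambda\), \(\ftors \Lambda\) and the attached \(\itaurigid \Lambda, \flbrick \Lambda\) commutes by the final diagram in \cref{asai correspondence}; the mirror-image face on the \(\tau^{-1}\) side commutes by the final diagram in \cref{dasai correspondence 2021-09-24 13:18:33}. The vertical square relating the front and back bottom floors through \(\eqref{ddual}\), \(\eqref{fac}\), \(\eqref{sub}\) and \(\eqref{gal}\) is precisely \cref{dugger 2021-09-06 13:33:14}(2), while the analogous vertical square for indecomposables, relating \(\eqref{tau or top 2021-09-11 19:19:09}\) with \(\eqref{2021-09-22 14:17:22}\), \(\eqref{2021-09-24 13:13:18}\) and \(\eqref{gal}\), is the commutative diagram in \cref{indec tau taui corr2021-09-09 16:44:38}(8). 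The upper edge \(\ftors \Lambda \to \ftorf \Lambda\) via \(\eqref{gal}\) is the same Galois-type correspondence as in \cref{dualtorsf 2021-12-16 16:31:45}, so no new compatibility is needed there.

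Finally, every face that contains one of the two dashed arrows commutes by \emph{construction}: by the definitions above, \eqref{dualsbrick} fits into the square
\[
\begin{tikzcd}
\stautilt \Lambda \ar[r,"\eqref{asaicorresp}"]\ar[d,"\eqref{ddual}"'] & \flsbrick \Lambda \ar[d,"\eqref{dualsbrick}"]\\
\stauitilt \Lambda \ar[r,"\eqref{dasaicorresp 2021-09-24 13:16:37}"'] & \frsbrick \Lambda
\end{tikzcd}
\]
tautologically, and similarly for \eqref{dualbrick} at the level of indecomposable \(\tau\)-rigid modules. Combining such a commutative face with one of the previously verified faces then yields every remaining face containing a dashed arrow.

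The main obstacle is not conceptual but bookkeeping: the cube in \eqref{cdtaubricktor 2021-09-06 15:51:24} contains a large number of faces, and one has to be systematic about matching each face either to a sub-diagram supplied by one of the four cited results or to the defining factorisation of the dashed arrows. Once every face is matched to its justification, commutativity of the whole cube follows by pasting.
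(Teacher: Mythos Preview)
Your proposal is correct and matches the paper's approach: the paper gives no explicit proof for this corollary, treating it as an immediate consequence of the cited results, and your construction of \eqref{dualsbrick} and \eqref{dualbrick} as the obvious composites together with the face-by-face verification is exactly the implicit argument being invoked.
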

\begin{definition}
	[{\cite[Definition 2.29]{MR3187626}, \cite[Definition 2.14]{MR4139031}}]\label{Hasse rabeled by brick}
	We define the support \(\tau\)-tilting Hasse quiver \(\mathcal{H}(\stautilt \Lambda)\) labeled with brick over \(\Lambda\) as follows:
	\begin{itemize}
		\item The set of vertices is \(\stautilt \Lambda\).
		\item We draw an arrow from \(M\) to each support \(\tau\)-tilting left mutation \(M'\) of \(M\), and we label this arrow with the corresponding brick to \(M'\) under \eqref{labeling brick 2021-10-13 08:59:12} for \(M\).
	\end{itemize}
\end{definition}

\begin{definition}
	\label{Hasse i rabeled by brick}
	We define the support \(\tau^{-1}\)-tilting Hasse quiver \(\mathcal{H}(\stauitilt \Lambda)\) labeled with bricks over \(\Lambda\) as follows:
	\begin{itemize}
		\item The set of vertices is \(\stauitilt \Lambda\).
		\item We draw an arrow from \(N\) to each support \(\tau\)-tilting right mutation \(N'\) of \(N\), and we label this arrow with the corresponding brick to \(N'\) under \eqref{labeling brick 2021-10-13 09:11:22} for \(N\).
	\end{itemize}
\end{definition}

\begin{proposition}[{\cite[Lemma 2.16, Proposition 2.17]{MR4139031}}]\label{labeling properties 2021-11-14 17:10:59}
	Let \(M\) be a support \(\tau\)-tilting module and \(M'\) be a support \(\tau\)-tilting left mutation of \(M\).
	Furthermore, \(N\) and \(N'\) be support \(\tau^{-1}\)-tilting module corresponding to \(M\) and \(M'\) under \eqref{ddual}, respectively.
	Then there exist a unique brick in the subcategory \(\Fac M \cap \Sub N'\), and it is isomorphic to the brick corresponding to \(M'\) under \eqref{2021-09-19 23:16:42} for \(M\).
\end{proposition}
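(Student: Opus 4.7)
The plan is to identify the brick explicitly and then verify separately the existence and the uniqueness assertions, using the labeling machinery of \cref{labeling method tau 2021-10-13 17:35:07,right mutation 2021-11-22 20:43:58} together with the torsion/torsion-free duality from \cref{dugger 2021-09-06 13:33:14}. By the bijection \eqref{2021-09-24 10:32:59}, the mutation \(M'=\mu_X(M)\) is determined by a unique \(X\in \mathcal{LM}(M)\), and the brick associated to this arrow under \eqref{labeling brick 2021-10-13 08:59:12} is \(B:=X/R(M,X)\); that \(B\) is in fact a brick is part of \cref{2021-11-22 19:52:15}. Dually, because \(M'<M\) is a covering relation the image \(N'>N\) under \eqref{ddual} is also a covering, and via \eqref{2021-09-24 15:40:15} there is a unique \(Y\in \mathcal{RM}(N')\) with \(\mu_Y(N')=N\), giving the dual label \(S(Y,N')\). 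The content of the claim is that \(B\cong S(Y,N')\) and that this common brick is the unique one in \(\Fac M\cap \Sub N'\).

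The inclusion \(B\in \Fac M\) is immediate since \(B\) is a quotient of the direct summand \(X\) of \(M\). For \(B\in \Sub N'\) I would rewrite, via the commutative square in \cref{dugger 2021-09-06 13:33:14},
\[
\Sub N'={}^{\perp}\Fac M',
\]
and then check \(\Hom_{\Lambda}(B,Z)=0\) for every \(Z\in \Fac M'\). Any nonzero such map would lift along the quotient \(X\twoheadrightarrow B\) to a morphism \(g\colon X\to Z\) with \(g(R(M,X))=0\). Composing \(g\) with a surjection \(M'^{\oplus r}\twoheadrightarrow Z\) and then with the natural projections onto the summands of \(M'=(M/X)\oplus(\text{new summand})\), together with the inclusion \(M/X\hookrightarrow M\), produces a morphism \(M\to X\) whose component from the common summands of \(M\) and \(M'\) lies in \(\Rad_{\Lambda\lmod}(M,X)\) (because \(X\) is not a summand of \(M/X\), the algebra being basic), and hence has image contained in \(R(M,X)\); this forces the composite \(X\to Z\) to be zero, a contradiction. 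The defining property \(X\notin \Fac(M/X)\supseteq \Fac M'\cap \add M\) is what rules out the remaining component.

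For uniqueness, let \(B'\) be any brick in \(\Fac M\cap \Sub N'\). Via \eqref{asaicorresp} and the labeling of Hasse arrows out of \(M\) (the corollary to \cref{labeling method tau 2021-10-13 17:35:07}), the brick \(B'\) must arise as \(X'/R(M,X')\) for some \(X'\in \mathcal{LM}(M)\), giving a left mutation \(M\to \mu_{X'}(M)\); dually, via \eqref{dasaicorresp 2021-09-24 13:16:37} applied to \(N'\) and the corollary to \cref{right mutation 2021-11-22 20:43:58}, \(B'\) must also arise as \(S(Y',N')\) for some \(Y'\in \mathcal{RM}(N')\). Both data determine the same covering relation in the Hasse quiver under the anti-isomorphism \eqref{ddual}, so \(X'=X\) and \(Y'=Y\), whence \(B'\cong B\).

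The main obstacle is the \(\Sub N'\)-membership of \(B\): one must combine the definition of \(R(M,X)\) with the summand decomposition of \(M'\) and the criterion \(X\notin \Fac(M/X)\) in \cref{2021-11-22 19:52:15} to rule out nonzero morphisms from \(B\) into \(\Fac M'\). Once this structural fact is available, the uniqueness is a formal consequence of the bijections in \cref{labeling method tau 2021-10-13 17:35:07,right mutation 2021-11-22 20:43:58} combined with the commutativity of diagram \eqref{cdtaubricktor 2021-09-06 15:51:24}.
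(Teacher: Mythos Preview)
The paper does not prove this proposition; it is quoted from \cite[Lemma 2.16, Proposition 2.17]{MR4139031} without argument. So there is no proof in the paper to compare against, but your attempt has two genuine gaps.

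First, in the existence step you invoke \(\Sub N'={}^{\perp}\Fac M'\) and then try to show \(\Hom_\Lambda(B,Z)=0\) for \(Z\in\Fac M'\). The identity you actually need is \(\Sub N'=(\Fac M')^{\perp}\): the torsion-free partner of a torsion class \(\mathcal T\) is \(\mathcal T^{\perp}\), not \({}^{\perp}\mathcal T\) (the display in \cref{dualtorsf 2021-12-16 16:31:45} has the perpendiculars swapped), so what must be proved is \(\Hom_\Lambda(M',B)=0\), the opposite direction. Even granting your intended direction, the ``composition'' you describe does not type-check: \(g\colon X\to Z\) and the surjection \(M'^{\oplus r}\twoheadrightarrow Z\) share a target, not a compatible source/target pair, so nothing is composed and no morphism \(M\to X\) is produced by what you wrote.

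Second, for uniqueness you assert that any brick \(B'\in \Fac M\cap \Sub N'\) must equal \(X'/R(M,X')\) for some \(X'\in\mathcal{LM}(M)\). None of the results you cite yield this: \eqref{asaicorresp} produces the full semibrick \(M/R(M,M)\), but an arbitrary brick lying in \(\Fac M\) need not be one of its indecomposable summands. The argument in \cite{MR4139031} instead exploits that \(\Fac M'\subsetneq \Fac M\) is a covering relation in the lattice of torsion classes, forcing \(\torscl(\Fac M'\cup\{B'\})=\Fac M\) for any such \(B'\), and deduces uniqueness from that; your appeal to the labeling bijections \eqref{2021-09-24 10:32:59} and \eqref{2021-09-24 15:40:15} presupposes exactly the identification you are trying to establish.
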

\begin{theorem}
	[{\cite[Theorem 2.15, Lemma 2.16, Proposition 2.17]{MR4139031}}]
	Let \(M\) be a support \(\tau\)-tilting \(\Lambda\)-module, \(M'\) a support \(\tau\)-tilting left mutation of \(M\), \(S\) a brick on the arrow \(M\rightarrow M'\) in \(\mathcal{H}(\stautilt \Lambda)\).
	Moreover, \(N\) and \(N'\) be support \(\tau^{-1}\)-tilting \(\Lambda\)-modules corresponding to \(M\) and \(M'\) under \eqref{ddual}, respectively.
	Then \(N\) is a right mutation of \(N'\) and \(S\) is also the brick labeled with the arrow \(N'\rightarrow N\) in \(\mathcal{H}(\stauitilt \Lambda)\).
\end{theorem}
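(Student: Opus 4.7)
Both conclusions follow by combining the anti-isomorphism \eqref{ddual} with the uniqueness characterization of the labeling brick from \cref{labeling properties 2021-11-14 17:10:59}. For the mutation statement, the first step is to recall that by \cref{dugger 2021-09-06 13:33:14} the map \eqref{ddual} is an order-reversing isomorphism $\stautilt \Lambda \cong (\stauitilt \Lambda)^{\opo}$, so it sends covering relations to covering relations in the opposite direction. The covering relation $M > M'$ coming from the left mutation therefore becomes a covering relation between $N$ and $N'$ in $\stauitilt \Lambda$ in the reverse direction, which by the definitions of left and right mutation yields the required arrow between $N$ and $N'$ in $\mathcal{H}(\stauitilt \Lambda)$.

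For the brick label, \cref{labeling properties 2021-11-14 17:10:59} identifies $S$ as the unique brick lying in the subcategory $\Fac M \cap \Sub N'$. The plan is then to apply the dual of this proposition to the pair $(N,N')$ on the $\tau^{-1}$-tilting side: this identifies the label of the corresponding arrow in $\mathcal{H}(\stauitilt \Lambda)$ as the unique brick in $\Sub N' \cap \Fac M$, i.e., the same subcategory. By uniqueness inside this common subcategory, the two labels must coincide, so the brick labeling the arrow in $\mathcal{H}(\stauitilt \Lambda)$ is $S$.

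The main point to verify is the dual of \cref{labeling properties 2021-11-14 17:10:59}, which is the only nonformal ingredient. However, this is essentially routine in view of \cref{comm dbrick 2021-10-01 20:08:26}: the bijection \eqref{gal} identifies $\Fac M$ with $\Sub N$, and the semibrick correspondences \eqref{asaicorresp} and \eqref{dasaicorresp 2021-09-24 13:16:37} intertwine with \eqref{ddual}. Dualizing the proof of \cref{labeling properties 2021-11-14 17:10:59} via these compatibilities—swapping $\tau \leftrightarrow \tau^{-1}$, $\Fac \leftrightarrow \Sub$ and left $\leftrightarrow$ right—lands in the same intersection $\Fac M \cap \Sub N'$, which is what is needed to conclude.
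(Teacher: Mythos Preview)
Your proposal is correct and follows the natural line of argument. Note, however, that the paper does not give its own proof of this statement: it is quoted verbatim from Asai \cite[Theorem~2.15, Lemma~2.16, Proposition~2.17]{MR4139031}, so there is no in-paper proof to compare against. That said, your approach is precisely the one underlying Asai's argument: the anti-isomorphism \eqref{ddual} transports the covering relation \(M>M'\) to a covering relation between \(N\) and \(N'\), and the label on either side is the unique brick in the ``interval'' \(\Fac M\cap \Sub N'\) between the two adjacent torsion pairs, so the two labels coincide by uniqueness. One small remark: the dual of \cref{labeling properties 2021-11-14 17:10:59} is not really a separate statement to be proved---both labelings are characterized by membership in the \emph{same} subcategory \(\mathcal{T}_1\cap\mathcal{F}_2=\Fac M\cap \Sub N'\) determined by the pair of adjacent torsion pairs, so once you invoke \cref{labeling properties 2021-11-14 17:10:59} and its torsion-theoretic content, no further dualization is needed.
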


\subsection{Silting complexes and simple-minded collections}
We recall the definition of silting complexes, which is a generalization of tilting complexes. The concept of silting complexes is originated from \cite{keller1988aisles}, and recently there have been many papers on silting complexes and silting mutations starting with \cite{MR2927802}.
In particular, in \cite{MR3187626}, it is shown that there is a one-to-one correspondence between the two-term silting complexes and the support \(\tau\)-tilting modules.

\begin{definition}
	Let \(T\) be a complex in \(K^b(\Lambda\lproj)\).
	\begin{enumerate}
		\item We say that \(T\) is presilting (or pretilting) if \[\Hom_{K^b(\Lambda\lproj)}(T,T[i])=0\] for any \(i>0\) (or for any \(i \neq 0\), respectively).
		\item We say that \(T\) is silting (or  tilting) if it is presilting (or pretilting, respectively) and satisfies \(\thick T =K^b(\Lambda\lproj)\), where \(\thick T\) means the smallest triangulated subcategory of \(K^b(\Lambda\lproj)\) containing \(\add T\).
	\end{enumerate}
\end{definition}

We say that a complex \(T \in K^b(\Lambda\lproj)\) is two-term if \(T^i = 0\) for all \(i \neq 0, -1\).
Moreover, we use the following notation:
\begin{itemize}
	\item \(\silt \Lambda\) means the set of isomorphism classes of basic silting complexes in \(K^b(\Lambda\lproj)\),
	\item \(\tilt \Lambda\) means the set of isomorphism classes of basic tilting complexes in \(K^b(\Lambda\lproj)\),
	\item \(\twosilt \Lambda\) means the set of isomorphism classes of basic two-term silting complexes in \(K^b(\Lambda\lproj)\),
	\item \(\twotilt \Lambda\) means the set of isomorphism classes of basic two-term tilting complexes in \(K^b(\Lambda\lproj)\).
\end{itemize}

We can define a partially ordered set structure on \(\silt \Lambda\) as follows.

\begin{definition}[{\cite[Definition 2.10, Theorem 2.11]{MR2927802}}]
	For \(T, T'\in \silt \Lambda\), we write \(T\geq T'\) if
	\begin{equation}
		\Hom_{K^b(\Lambda\lproj)}(T,T'[i])=0
	\end{equation}
	for any \(i>0\).
\end{definition}

\begin{theorem}[{\cite[Theorem 3.2 and Corollary 3.9]{MR3187626}}]\label{2020-03-27 17:22:17}
	The following map gives an isomorphism of partially ordered sets:
	\begin{equation}\label{2siltcorr}
		\begin{tikzcd}[row sep=1pt]
			\stautilt \Lambda \ar[r] & \twosilt \Lambda\\
			M\ar[r,mapsto]& (P_1\oplus P \xrightarrow{(f_1\ 0)} P_0),
		\end{tikzcd}
	\end{equation}
	here \(P_1\xrightarrow{f_1}P_0\xrightarrow{f_0}M\rightarrow 0\)
	is a minimal projective presentation of \(M\) and \(P\) is a basic projective module satisfying that \(\Hom_\Lambda(P,M)=0\) and that \(|M|+|P|=|\Lambda|\) (see \cite[Proposition 2.3 (b)]{MR3187626}).
\end{theorem}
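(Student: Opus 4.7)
The plan is to construct the inverse map explicitly, verify both directions are well-defined, and then check that the partial order is preserved. The heart of the argument is a series of $\Hom$-computations in $K^b(\Lambda\lproj)$ which translate, via minimal projective presentations, into statements about $\Hom_\Lambda$ and $\tau$.

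For the forward direction, given $M\in\stautilt\Lambda$ and a basic projective $P$ with $\Hom_\Lambda(P,M)=0$ and $|M|+|P|=|\Lambda|$ (provided by \cref{tau number remark 2021-09-07 12:11:53}), set
\[
T_M:=\bigl(P_1\oplus P \xrightarrow{(f_1,\,0)} P_0\bigr),
\]
which is two-term in $K^b(\Lambda\lproj)$ and basic thanks to minimality of the projective presentation together with $\Hom_\Lambda(P,M)=0$. The key step is to show that $\Hom_{K^b(\Lambda\lproj)}(T_M,T_M[1])=0$ is equivalent to the conjunction of $\tau$-rigidity of $M$ and $\Hom_\Lambda(P,M)=0$. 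This follows from the Auslander--Reiten formula applied to the minimal projective presentation, which identifies $\Hom_{K^b(\Lambda\lproj)}((P_1\to P_0),(P_1\to P_0)[1])$ with $\Hom_\Lambda(M,\tau M)$, together with the obvious identification of the cross terms involving the projective summand $P$ with $\Hom_\Lambda(P,M)$. Silting (as opposed to merely presilting) then follows from a standard cardinality argument: a basic presilting complex with $|\Lambda|$ indecomposable summands generates $K^b(\Lambda\lproj)$ as a thick subcategory, and $|T_M|=|M|+|P|=|\Lambda|$.

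For the inverse, given a basic two-term silting complex $T=(T^{-1}\xrightarrow{d}T^0)$, I would decompose $T^{-1}=P_1\oplus P$ where $P$ is the largest summand on which $d$ vanishes, take $M:=\operatorname{coker} d$, and check that $P_1\xrightarrow{f_1}P_0:=T^0$ is a minimal projective presentation of $M$. Running the previous $\Hom$ computation in reverse shows that $M$ is $\tau$-rigid with $\Hom_\Lambda(P,M)=0$ and $|M|+|P|=|\Lambda|$, hence support $\tau$-tilting. Finally, for the partial order, the relation $M\ge M'$, i.e.\ $\Fac M\supset \Fac M'$ by \eqref{fac}, translates under the same four-piece $\Hom$ decomposition to $\Hom_{K^b(\Lambda\lproj)}(T_M,T_{M'}[1])=0$, by using the characterisation $\Fac M=\{X\mid \Hom_\Lambda(X,\tau M)=0\}$ available for $\tau$-rigid $M$. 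The principal obstacle throughout is the Auslander--Reiten identification in the first step: one must track carefully how a chain map from $(P_1\to P_0)$ to $(P_1'\to P_0')[1]$, \emph{a priori} just a morphism $P_1\to P_0'$ modulo homotopy, corresponds to an element of $\Hom_\Lambda(M,\tau M')$, a correspondence resting essentially on the minimality of the projective presentations.
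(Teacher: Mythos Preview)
The paper does not give its own proof of this statement; it is quoted verbatim from Adachi--Iyama--Reiten \cite{MR3187626}, and your sketch is essentially the argument given there. One small imprecision: the characterisation you invoke at the end, $\Fac M=\{X\mid \Hom_\Lambda(X,\tau M)=0\}$, is only correct for genuine $\tau$-tilting modules; in the support $\tau$-tilting case one needs $\Fac M=\{X\mid \Hom_\Lambda(X,\tau M)=0\text{ and }\Hom_\Lambda(P,X)=0\}$, but the four-piece $\Hom$ decomposition you already set up supplies exactly this extra condition, so the argument goes through unchanged once the statement is corrected.
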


We remark that the correspondence above commutes with support \(\tau\)-tilting mutations and silting mutations \cite[Corollary 3.9]{MR3187626}.

\begin{proposition}[{\cite[Example 2.8]{MR2927802}}]
	If \(\Lambda\) is a finite dimensional symmetric \(k\)-algebra, then any silting complex in \(K^b(\Lambda \lproj)\) is a tilting complex.
\end{proposition}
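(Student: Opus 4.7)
The plan is to reduce the statement to the fact that any silting complex over a symmetric algebra automatically satisfies the pretilting condition; since a silting complex already generates $K^b(\Lambda\lproj)$ as a thick subcategory, this would immediately upgrade it to a tilting complex. Thus, given a silting complex $T\in K^b(\Lambda\lproj)$, I need only check that $\Hom_{K^b(\Lambda\lproj)}(T,T[i])=0$ for every negative $i$, since the vanishing for positive $i$ is built into the definition of presilting.

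The crucial structural input is the Nakayama functor $\nu_\Lambda$. For a symmetric $k$-algebra one has an isomorphism $D\Lambda\cong\Lambda$ of $(\Lambda,\Lambda)$-bimodules, where $D=\Hom_k(-,k)$. Consequently $\nu_\Lambda P=D\Hom_\Lambda(P,\Lambda)\cong P$ for every projective $\Lambda$-module $P$, and therefore $\nu_\Lambda$ restricts to a functor isomorphic to the identity on $K^b(\Lambda\lproj)$. First, I would state this observation as a preliminary lemma and verify it by checking the isomorphism on $\Lambda$ itself and extending additively and homotopically to all of $K^b(\Lambda\lproj)$.

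Next, I would invoke the Auslander--Reiten (Serre-type) duality formula
\begin{equation}
\Hom_{K^b(\Lambda\lproj)}(X,Y)\cong D\Hom_{K^b(\Lambda\lproj)}(Y,\nu_\Lambda X),
\end{equation}
valid for $X,Y\in K^b(\Lambda\lproj)$ (this is standard and provable by checking $X=\Lambda$ and extending by triangulated and additive arguments). Applying this with $X=T$ and $Y=T[-i]$ for $i>0$, and using $\nu_\Lambda T\cong T$, I obtain
\begin{equation}
\Hom_{K^b(\Lambda\lproj)}(T,T[-i])\cong D\Hom_{K^b(\Lambda\lproj)}(T[-i],T)=D\Hom_{K^b(\Lambda\lproj)}(T,T[i])=0,
\end{equation}
where the last equality is the presilting hypothesis. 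Combined with the presilting vanishing itself, this shows $T$ is pretilting. Since $\thick T=K^b(\Lambda\lproj)$ by the silting assumption, $T$ is tilting.

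The main obstacle is the careful verification of the Nakayama-functor identification $\nu_\Lambda\cong\Id$ on $K^b(\Lambda\lproj)$ together with the Serre-duality formula: both rely on the bimodule isomorphism $D\Lambda\cong\Lambda$ being compatible with composition, so some care is needed to make sure the isomorphisms are natural in both variables. Once that naturality is in hand, the deduction of the pretilting property is purely formal.
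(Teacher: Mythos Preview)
Your argument is correct and is precisely the standard one: for a symmetric algebra the Nakayama functor is isomorphic to the identity on $K^b(\Lambda\lproj)$, so Serre duality forces the vanishing of $\Hom(T,T[-i])$ from the presilting condition. Note, however, that the paper does not actually prove this proposition; it simply cites it from \cite[Example 2.8]{MR2927802}, where the same Serre-duality reasoning is indicated. So there is nothing to compare against in the paper itself, but your proof matches the argument behind the cited reference.
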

\begin{definition}
	We say that
	a set \(\mathcal{X}\) of isomorphism classes of objects in \(D^b(\Lambda\lmod)\) is a simple-minded collection in \(D^b(\Lambda\lmod)\) if it satisfies the following conditions:
	\begin{enumerate}
		\item For any \(X \in \mathcal{X}\), we have  \[\End_{D^b(\Lambda\lmod)}(X)\cong k.\]
		\item For any \(X_1, X_2 \in \mathcal{X}\) with \(X_1\neq X_2\), we have \[\Hom_{D^b(\Lambda\lmod)}(X_1, X_2)=0.\]
		\item For any \(X_1, X_2 \in \mathcal{X}\) and \(t <0\), we have \[\Hom_{D^b(\Lambda\lmod)}(X_1,X_2[t])=0.\]
		\item The triangulated subcategory \(\thick \mathcal{X}\) coincides with \(D^b(\Lambda\lmod)\).
	\end{enumerate}
\end{definition}
We say that a simple-minded collection \(\mathcal{X}\) in \(D^b(\Lambda\lmod)\) is two-term if \(\mathcal{X}\) satisfies the condition that \(H^i(X)=0\) for any \(i \neq -1,0\) and any \(X \in \mathcal{X}\).
Moreover, we use the following notation:
\begin{itemize}
	\item \(\smc \Lambda\) means the set of simple-minded collections in \(D^b(\Lambda\lmod)\),
	\item \(\twosmc \Lambda\) means the set of two-term simple-minded collections in \(D^b(\Lambda\lmod)\).
\end{itemize}
\begin{proposition}[{\cite[Remark 4.11]{MR3220536},
			}]
	For any two-term simple-minded collection \(\mathcal{X}\) in \(D^b(\Lambda\lmod)\),
	every \(X \in \mathcal{X}\) belongs to either \(\Lambda\lmod\) or \((\Lambda\lmod)[1]\) up to isomorphisms in \(D^b(\Lambda\lmod)\).
\end{proposition}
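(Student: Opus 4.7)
The plan is to associate with $\mathcal{X}$ a bounded t-structure on $D^b(\Lambda\lmod)$ and recognize its heart as a Happel--Reiten--Smal\o{} (HRS) tilt of $\Lambda\lmod$; the claim will then follow from inspecting the simple objects. By the Koenig--Yang machinery \cite{MR3220536}, every simple-minded collection determines a bounded t-structure $(\mathcal{D}^{\leq 0}_{\mathcal{X}}, \mathcal{D}^{\geq 0}_{\mathcal{X}})$ on $D^b(\Lambda\lmod)$ whose heart $\mathcal{A}$ is a length abelian category with $\mathcal{X}$ as the set of isomorphism classes of its simple objects; concretely, $\mathcal{A}=\Filt(\mathcal{X})$.

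Next, using the two-term hypothesis, I would show that this t-structure is intermediate with respect to the standard t-structure:
\[
(\Lambda\lmod)[1] \subset \mathcal{D}^{\leq 0}_{\mathcal{X}} \subset (\Lambda\lmod)[1] * \Lambda\lmod.
\]
The right-hand inclusion is immediate: every $X \in \mathcal{X}$ lies in $(\Lambda\lmod)[1] * \Lambda\lmod$ by the two-term assumption, and this region is closed under extensions and positive shifts. For the left-hand inclusion, one shows that any shifted module $N[1]$ admits a filtration with factors among shifts $X[m]$ with $X \in \mathcal{X}$ and $m \geq 0$, using $\thick \mathcal{X} = D^b(\Lambda\lmod)$ together with the two-term condition to rule out contributions from $X[m]$ with $m < 0$.

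Granting the intermediate position, the HRS formalism identifies $\mathcal{A}$ with the tilt of $\Lambda\lmod$ at the torsion pair $(\mathcal{T}, \mathcal{F})$ defined by $\mathcal{T} := \mathcal{A} \cap \Lambda\lmod$ and $\mathcal{F}[1] := \mathcal{A} \cap (\Lambda\lmod)[1]$. In particular, every $Y \in \mathcal{A}$ satisfies $H^i(Y) = 0$ for $i \notin \{-1, 0\}$, with $H^{-1}(Y) \in \mathcal{F}$ and $H^0(Y) \in \mathcal{T}$, and sits in a short exact sequence
\[
0 \to H^{-1}(Y)[1] \to Y \to H^0(Y) \to 0
\]
in $\mathcal{A}$, arising from the canonical standard truncation triangle in $D^b(\Lambda\lmod)$.

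Applying this to $X \in \mathcal{X}$: since $X$ is simple in $\mathcal{A}$, the above short exact sequence forces $H^{-1}(X)[1] = 0$ or $H^0(X) = 0$, whence $X \cong H^0(X) \in \Lambda\lmod$ or $X \cong H^{-1}(X)[1] \in (\Lambda\lmod)[1]$ in $D^b(\Lambda\lmod)$. The main obstacle will be establishing the left inclusion $(\Lambda\lmod)[1] \subset \mathcal{D}^{\leq 0}_{\mathcal{X}}$ in the intermediate condition, which is where the two-term hypothesis has to be exploited nontrivially, by producing explicit $\mathcal{A}$-filtrations of shifted modules starting from the thick-closure equality $\thick\mathcal{X} = D^b(\Lambda\lmod)$.
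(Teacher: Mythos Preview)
The paper does not supply its own proof of this proposition; it simply cites \cite[Remark~4.11]{MR3220536}. Your outline is essentially the standard argument from that source and from \cite{MR3178243,MR4139031}: pass to the Koenig--Yang t-structure, verify it is intermediate, recognise the heart as an HRS tilt, and use simplicity of $X$ in the heart to split the standard truncation triangle.

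One remark on the step you flag as the ``main obstacle''. You propose to show $(\Lambda\lmod)[1]\subset\mathcal{D}^{\leq 0}_{\mathcal{X}}$ by exhibiting explicit $\mathcal{A}$-filtrations of shifted modules. That works, but there is a shorter route once you have invoked Koenig--Yang: since the heart $\mathcal{A}=\Filt(\mathcal{X})$ is a length category, the coaisle $\mathcal{D}^{\geq 0}_{\mathcal{X}}$ equals the extension-closure of $\{X[i]:X\in\mathcal{X},\ i\leq 0\}$. Each generator lies in $D^{\geq -1}_{\mathrm{std}}$ by the two-term hypothesis, and $D^{\geq -1}_{\mathrm{std}}$ is closed under extensions and negative shifts, so $\mathcal{D}^{\geq 0}_{\mathcal{X}}\subset D^{\geq -1}_{\mathrm{std}}$; taking left orthogonals gives $D^{\leq -1}_{\mathrm{std}}\subset\mathcal{D}^{\leq 0}_{\mathcal{X}}$ immediately. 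The symmetric argument on the aisle side gives $\mathcal{D}^{\leq 0}_{\mathcal{X}}\subset D^{\leq 0}_{\mathrm{std}}$, and the intermediate condition follows without constructing filtrations by hand. With this adjustment your sketch is complete and correct.
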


\begin{proposition}[{\cite[Theorem 3.3]{MR4139031}}]\label{asaibj}
	The following maps give bijections:
	\begin{equation}\label{asaibjfl}
		\begin{tikzcd}[row sep=1pt]
			\twosmc \Lambda \ar[r]&\flsbrick \Lambda\\
			\mathcal{X}\ar[r,mapsto]&\bigoplus_{i}S_i,
		\end{tikzcd}
	\end{equation}
	\begin{equation}\label{asabjfr}
		\begin{tikzcd}[row sep=1pt]
			\twosmc \Lambda \ar[r]&\frsbrick \Lambda\\
			\mathcal{X}\ar[r,mapsto]&\bigoplus_{i}R_i,
		\end{tikzcd}
	\end{equation}
	where \(S_i\) are objects in \(\mathcal{X}\cap \Lambda\lmod\) and \(R_i\) are objects in \(\mathcal{X}[-1]\cap \Lambda\lmod\).
\end{proposition}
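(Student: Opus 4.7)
The plan is to establish the two bijections by routing them through the already-known bijection $\stautilt\Lambda\cong\twosilt\Lambda$ of \cref{2020-03-27 17:22:17} and the bijections $\stautilt\Lambda\cong\flsbrick\Lambda$, $\stauitilt\Lambda\cong\frsbrick\Lambda$ of \cref{asai correspondence,dasai correspondence 2021-09-24 13:18:33}, together with the Koenig--Yang type correspondence between two-term silting complexes and two-term simple-minded collections. Essentially all the compatibilities have been assembled in the large commutative diagram \eqref{cdtaubricktor 2021-09-06 15:51:24}; the task is to splice the $\twosmc\Lambda$ vertex into it.

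First, I would verify well-definedness of \eqref{asaibjfl}. Given a two-term simple-minded collection $\mathcal{X}$, decompose it as $\{S_1,\dots,S_s\}\sqcup\{R_1[1],\dots,R_r[1]\}$ with $S_i,R_j\in\Lambda\lmod$. Axioms (1)--(3) of a simple-minded collection restricted to degree $0$ give $\End_\Lambda(S_i)=\End_{D^b(\Lambda\lmod)}(S_i)\cong k$ and $\Hom_\Lambda(S_i,S_j)=\Hom_{D^b(\Lambda\lmod)}(S_i,S_j)=0$ for $S_i\ncong S_j$, so $S:=\bigoplus_i S_i$ is automatically a basic semibrick. The same argument applied to the shifts shows $R:=\bigoplus_j R_j$ is a basic semibrick.

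The nontrivial step is establishing left finiteness of $S$ together with bijectivity. I would invoke the Koenig--Yang bijection between two-term silting complexes $T\in\twosilt\Lambda$ and two-term simple-minded collections $\mathcal{X}\in\twosmc\Lambda$, obtained by taking $\mathcal{X}$ to be the simple objects in the heart of the $t$-structure cogenerated by $T$. Composing with \eqref{2siltcorr} gives a bijection $\twosmc\Lambda\cong\stautilt\Lambda$, and the main point is to identify, for the support $\tau$-tilting module $M$ corresponding to $T$, the degree-$0$ part of the associated SMC with $M/R(M,M)$. Concretely, the simples of the heart coming from the positive part are precisely the bricks $X/R(M,X)$ attached to the indecomposable summands $X\in\mathcal{LM}(M)$ of $M$ via \eqref{2021-09-19 23:16:42}, whose direct sum is $M/R(M,M)$. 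This makes the map \eqref{asaibjfl} equal to the composite $\twosmc\Lambda\to\twosilt\Lambda\to\stautilt\Lambda\to\flsbrick\Lambda$ given by \eqref{2siltcorr} and \eqref{asaicorresp}, and hence a bijection; in particular $S$ is left finite.

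For \eqref{asabjfr} I would run the mirror argument on the degree $-1$ side: under the Koenig--Yang correspondence, the negative simples $R_j[1]$ correspond, via \eqref{ddual}, to the support $\tau^{-1}$-tilting module $N=\tau M\oplus\nu P$ dual to $M$, and $\bigoplus_j R_j$ is identified with $S(N,N)$ through \eqref{dasaicorresp 2021-09-24 13:16:37} by \cref{labeling properties 2021-11-14 17:10:59}. The main obstacle I anticipate is exactly this brick-label matching: verifying that the simple objects of the heart produced by the Koenig--Yang machine really are $X/R(M,X)$ in degree $0$ and (up to a shift) the corresponding right-finite bricks $S(Y,N)$ in degree $-1$. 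Once that identification is in hand, bijectivity for both \eqref{asaibjfl} and \eqref{asabjfr} follows directly from the already established bijections $\stautilt\Lambda\cong\flsbrick\Lambda$ and $\stauitilt\Lambda\cong\frsbrick\Lambda$, extending the commutative diagram \eqref{cdtaubricktor 2021-09-06 15:51:24} so that $\twosmc\Lambda$ sits simultaneously over $\flsbrick\Lambda$ and $\frsbrick\Lambda$ via \eqref{asaibjfl} and \eqref{asabjfr}.
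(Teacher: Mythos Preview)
The paper does not supply its own proof of this proposition: it is stated as a citation of \cite[Theorem 3.3]{MR4139031} and used as a black box. There is therefore no argument in the paper to compare your proposal against.

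That said, your sketch is a reasonable outline of how the result is actually established in \cite{MR4139031}: one passes through the Koenig--Yang correspondence \eqref{2KYcorr} and the bijection \eqref{2siltcorr}, identifying the degree-$0$ simples of the heart with the bricks $X/R(M,X)$ and the shifted degree-$(-1)$ simples with the dual bricks $S(Y,N)$. The compatibility you flag as the ``main obstacle'' is precisely the content of \cref{commutative,sbricksmc 2021-09-06 15:11:30}, which the paper also imports from \cite{MR4139031} rather than reproving. So your proposal is consistent with the literature, but no proof is expected here.
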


Now we recall the construction of silting complexes from simple-minded collections based on \cite{MR3178243,MR1947972}.
Let \(\mathcal{X}=\left\{X_1, \ldots, X_r \right\}\) be a simple-minded collection in \(D^b(\Lambda\lmod)\). By induction on \(n\), we shall construct sequences
\begin{equation}
	\begin{tikzcd}[column sep=0.6cm]
		X_i^{(0)} \ar[r,"\beta_i^{(0)}"]& X_i^{(1)} \ar[r,"\beta_i^{(1)}"]& X_i^{(2)} \ar[r]& \cdots \ar[r]&X_i^{(n)} \ar[r,"\beta_i^{(n)}"]&X_i^{(n+1)}\ar[r]&\cdots
	\end{tikzcd}
\end{equation}
of objects and morphisms in \(D^b(\Lambda\lMod)\) for \(1\leq i\leq r\). Set \(X_i^{(0)}:=X_i\). Suppose we have constructed \(X_i^{(n-1)}\). For each \(1\leq j\leq r\) and \(t<0\), choose a basis \(B^{(n)}(j,t,i)\) of \(\Hom_{D^b(\Lambda\lmod)}(X_j[t],X_i^{(n-1)})\). Put
\begin{equation}
	Z_i^{(n-1)}=\bigoplus_{t<0}\bigoplus_{j=1}^{r}\bigoplus_{f\in B^{(n)}(j,t,i)}X_j[t]
\end{equation}
and let \(\alpha_i^{(n-1)}\colon Z_i^{(n-1)} \rightarrow X_i^{(n-1)}\) be the map whose restriction to the component indexed by \(t<0\), \(j=1, \ldots, r\), \(f \in B^{(n)}(j,t,i)\) is exactly \(f\).
Now define \(X_i^{(n)}\) together with morphism
\(
\begin{tikzcd}
	X_i^{(n-1)}\ar[r,"\beta_i^{(n-1)}"]&X_i^{(n)}
\end{tikzcd}
\)
by forming the distinguished triangle
\begin{equation}
	\begin{tikzcd}
		Z_i^{(n-1)}\ar[r,"\alpha_i^{(n-1)}"]&X_i^{(n-1)}\ar[r,"\beta_i^{(n-1)}"]&X_i^{(n)}\ar[r]&Z_i^{(n)}[1].
	\end{tikzcd}
\end{equation}
Let \(C_i\) be the homotopy colimit of the sequence
\begin{equation}
	\begin{tikzcd}[column sep=0.7cm]
		X_i^{(0)} \ar[r,"\beta_i^{(0)}"]& X_i^{(1)} \ar[r,"\beta_i^{(1)}"]& X_i^{(2)} \ar[r]& \cdots \ar[r]&X_i^{(n-1)} \ar[r,"\beta_i^{(n-1)}"]&X_i^{(n)}\ar[r]&\cdots
	\end{tikzcd}
\end{equation}
and put \(T^{\mathcal{X}}=\nu^{-1}(\bigoplus_i C_i)\).
This construction induces a well-defined bijection between \(\smc \Lambda\) and \(\silt \Lambda\).
\begin{theorem}[{\cite[Proposition 5.9, Theorem 6.1]{MR3178243}, \cite[Corollary 4.3]{MR3220536}}]
	The following map gives a bijection:
	\begin{equation}\label{KY}
		\begin{tikzcd}[row sep=1pt]
			\smc \Lambda\ar[r]& \silt \Lambda\\
			\mathcal{X}\ar[r,mapsto]&T^\mathcal{X}.
		\end{tikzcd}
	\end{equation}
	This bijection restricts to a bijection:
	\begin{equation}\label{2KYcorr}
		\twosmc \Lambda \rightarrow \twosilt \Lambda.
	\end{equation}
\end{theorem}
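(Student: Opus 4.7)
The plan is to construct an explicit inverse to $\mathcal{X}\mapsto T^{\mathcal{X}}$ by exploiting the bijection between bounded t-structures on $D^b(\Lambda\lmod)$ and bounded co-t-structures on $K^b(\Lambda\lproj)$ induced by the Nakayama functor $\nu$. Concretely, a simple-minded collection $\mathcal{X}$ determines a bounded t-structure on $D^b(\Lambda\lmod)$ whose aisle is generated under extensions and positive shifts by $\mathcal{X}$; the Koenig--Yang construction produces the co-t-structure on $K^b(\Lambda\lproj)$ corresponding to it under $\nu$, with $T^{\mathcal{X}}$ generating its co-heart.

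First I would verify that $T^{\mathcal{X}}\in \silt\Lambda$. The key computations are: $\Hom_{D^b(\Lambda\lmod)}(X_j[t], C_i)=0$ for $t<0$, proved by induction on $n$ from the triangles $Z_i^{(n-1)}\to X_i^{(n-1)}\to X_i^{(n)}$ together with the simple-minded axioms (each step kills the downward Hom's recorded by the previous step, and stability at $t$ is achieved once $n$ is large enough relative to $|t|$); $\Hom_{D^b(\Lambda\lmod)}(C_i, X_j[t])=0$ for $t>0$, which follows analogously using the dual inductive argument; and $\thick\{C_1,\ldots,C_r\}=\thick\{X_1,\ldots,X_r\}=D^b(\Lambda\lmod)$, since each $C_i$ arises from $X_i$ by iterated extensions involving shifts of other $X_j$. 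After applying $\nu^{-1}$, these translate into the presilting and thick-generation conditions defining $\silt\Lambda$.

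Next I would construct the inverse. Given $T\in\silt\Lambda$, the silting condition produces a bounded co-t-structure on $K^b(\Lambda\lproj)$ with $\add T$ as co-heart; transferring this via $\nu$ yields a bounded t-structure on $D^b(\Lambda\lmod)$ whose heart is a length category, and whose isomorphism classes of simple objects form a simple-minded collection $\mathcal{X}_T$. The checks that $T^{\mathcal{X}_T}\cong T$ and $\mathcal{X}_{T^{\mathcal{X}}}=\mathcal{X}$ amount to observing that both constructions are nothing but the passage between a bounded t-structure and its paired co-t-structure, and that the simples of the heart associated to $\mathcal{X}$ are recovered as the $X_i$ by the orthogonalities established in the previous step.

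Finally, to restrict to the two-term case, I would show that $T^{\mathcal{X}}$ is two-term exactly when every $X\in\mathcal{X}$ has cohomology concentrated in degrees $-1$ and $0$. The forward direction is direct from the recursion: if each $X_i$ is two-term, then the $X_i^{(n)}$ remain within a bounded cohomological range, so each $C_i$ is two-term up to $\nu$. The converse uses that the simples of the heart of the t-structure determined by a two-term silting complex are exactly the objects with cohomology in $\{-1,0\}$. The main obstacle throughout is the Hom-vanishing analysis of step two: controlling the homotopy colimit so that the orthogonalities genuinely pass to the limit requires the stabilisation of the filtration $\{X_i^{(n)}\}$ supplied by the simple-minded axioms, and this is where the technical heart of the Koenig--Yang construction lies.
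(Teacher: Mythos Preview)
The paper does not prove this statement: it is quoted verbatim as a preliminary result from Koenig--Yang \cite{MR3178243} and Br\"ustle--Yang \cite{MR3220536}, with no accompanying argument. So there is no ``paper's own proof'' to compare your proposal against.

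That said, your sketch broadly follows the Koenig--Yang strategy, but two points deserve caution. First, the passage between bounded t-structures on $D^b(\Lambda\lmod)$ and bounded co-t-structures on $K^b(\Lambda\lproj)$ is not literally ``induced by the Nakayama functor $\nu$'' as you phrase it; in \cite{MR3178243} the correspondence goes through the inclusion $K^b(\Lambda\lproj)\subset D^b(\Lambda\lmod)$ and a perpendicularity argument, with $\nu^{-1}$ entering only at the final step to land the $C_i$ in the perfect complexes. Second, your argument for the two-term restriction (``if each $X_i$ is two-term, then the $X_i^{(n)}$ remain within a bounded cohomological range'') is too optimistic: the recursion adds negative shifts $X_j[t]$ with $t<0$ at each stage, so the cohomological amplitude of $X_i^{(n)}$ genuinely grows, and the fact that the colimit $C_i$ nonetheless yields a two-term $T^{\mathcal{X}}$ is the content of \cite[Corollary 4.3]{MR3220536}, proved there by a separate argument via intermediate t-structures rather than by direct inspection of the construction.
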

As can be seen from the definition, it is difficult in general to calculate \(T^{\mathcal{X}}\) from \(\mathcal{X}\).
However, if we restrict the discussions to two-term simple-minded collections and two-term silting complexes, the following theorem make the above computation easier via the correspondence between support \(\tau\)-tilting modules and left finite semibricks.
\begin{theorem}[{\cite[Theorem 3.3]{MR4139031}}]\label{commutative}
	The following diagram is commutative:
	\begin{equation}
		\begin{tikzcd}
			\twosmc \Lambda \ar[r,"\eqref{2KYcorr}"] \ar[d,"\eqref{asaibjfl}"'] &\twosilt \Lambda \ar[d,leftarrow,"\eqref{2siltcorr}"]\\
			\flsbrick \Lambda \ar[r,leftarrow,"\eqref{asaicorresp}"'] & \stautilt \Lambda.
		\end{tikzcd}
	\end{equation}
\end{theorem}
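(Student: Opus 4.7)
The plan is to fix a two-term simple-minded collection $\mathcal{X}$ in $D^b(\Lambda\lmod)$ and verify that the two routes of the diagram both send $\mathcal{X}$ to the same basic support $\tau$-tilting $\Lambda$-module. By \cref{asaibj}, decompose $\mathcal{X} = \mathcal{X}_0 \sqcup \mathcal{X}_1[1]$ with $\mathcal{X}_0, \mathcal{X}_1 \subseteq \Lambda\lmod$, and set $S := \bigoplus_{X \in \mathcal{X}_0} X$. The downward-then-leftward path sends $\mathcal{X}$ via \eqref{asaibjfl} to $S$ and then via the inverse of \eqref{asaicorresp} to the unique basic $M' \in \stautilt \Lambda$ with $M'/R(M',M') \cong S$, characterized by $\Fac M' = \torscl(S)$ through \cref{asai correspondence}. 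The other path produces $M \in \stautilt \Lambda$ as the image of $T^{\mathcal{X}}$ under the inverse of \eqref{2siltcorr}, and by \cref{2020/08/31} it suffices to show $\Fac M = \torscl(S)$.

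I would carry this out using the $t$-structure on $D^b(\Lambda\lmod)$ supplied by the Koenig--Yang correspondence underlying \eqref{KY}, whose heart has isomorphism classes of simple objects precisely $\mathcal{X}$. Since $\mathcal{X}$ is two-term, this heart is the Happel--Reiten--Smal\o\ tilt of $\Lambda\lmod$ along the torsion pair whose torsion class is $\Fac H^0(T^{\mathcal{X}}) = \Fac M$, where the equality $M = H^0(T^{\mathcal{X}})$ is immediate from the shape of \eqref{2siltcorr}. Under this description, the simple objects of the heart lying in $\Lambda\lmod$ are exactly the bricks in $\Fac M$ that do not occur as quotients of other bricks in $\Fac M$, which by \cref{2021-11-22 19:52:15,asai correspondence} are precisely the indecomposable direct summands of $M/R(M,M)$. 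Therefore $\mathcal{X}_0$ is identified, up to isomorphism, with the set of indecomposable summands of $M/R(M,M)$, so $S \cong M/R(M,M)$, and \cref{asai correspondence} gives $\Fac M = \torscl(M/R(M,M)) = \torscl(S) = \Fac M'$.

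The main technical obstacle is the identification of the HRS heart attached to $T^{\mathcal{X}}$ with the heart whose simples form $\mathcal{X}$, together with the resulting matching of $\mathcal{X}_0$ with the indecomposable summands of $M/R(M,M)$. Both ingredients are standard in the silting / simple-minded collection literature, but verifying them cleanly requires a brief detour through HRS tilts and the Koenig--Yang construction \eqref{KY}. A direct alternative is to invoke \cite[Theorem 3.3]{MR4139031}, where this commutativity is established as part of the web of bijections among $\twosmc \Lambda$, $\flsbrick \Lambda$, $\stautilt \Lambda$, and $\twosilt \Lambda$; the present statement then follows immediately from that result.
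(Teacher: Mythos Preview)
The paper does not prove this theorem at all: it is stated as a quotation of \cite[Theorem 3.3]{MR4139031} and left without proof. So there is no ``paper's own proof'' to compare against; the intended argument is simply the citation you yourself mention at the end of your proposal.

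Your sketch via the Koenig--Yang $t$-structure and the HRS tilt is the standard route and is essentially how Asai's result is obtained, so it is a reasonable outline. That said, since the present paper only quotes the result, the appropriate ``proof'' here is just the reference to \cite[Theorem 3.3]{MR4139031}; your longer argument is correct in spirit but unnecessary in this context.
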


The following theorem enables us to calculate the corresponding two-term simple-minded collections from left finite semibricks (or right finite semibricks).

\begin{theorem}	[{\cite[Theorem 3.3]{MR4139031}}]\label{sbricksmc 2021-09-06 15:11:30}
	The following diagram is commutative:
	\begin{equation}
		\begin{tikzcd}
			\twosmc \Lambda \ar[r,"\eqref{asabjfr}"] \ar[d,"\eqref{asaibjfl}"'] &\frsbrick \Lambda \ar[dl,"\eqref{dualsbrick}"]\\
			\flsbrick \Lambda. &
		\end{tikzcd}
	\end{equation}
\end{theorem}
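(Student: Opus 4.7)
The plan is to reduce the commutativity of the triangle to \cref{commutative} and an analogous right-finite identity, by exploiting that \eqref{dualsbrick} admits a clean characterisation in terms of torsion(-free) classes. From the outer part of the diagram in \cref{comm dbrick 2021-10-01 20:08:26}, together with the bijections \eqref{TT}, \eqref{FF 2021-09-24 13:16:03} and \eqref{gal}, the map \eqref{dualsbrick} is uniquely characterised by
\[
\torfcl(\eqref{dualsbrick}(S)) = {}^{\perp}\torscl(S), \qquad S \in \flsbrick\Lambda.
\]

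Fix $\mathcal{X}\in \twosmc \Lambda$ and write $\bigoplus_i S_i$ and $\bigoplus_j R_j$ for its images under \eqref{asaibjfl} and \eqref{asabjfr}, respectively. By \cref{commutative} the support $\tau$-tilting module $M$ corresponding to $\mathcal{X}$ via $\eqref{2siltcorr}^{-1}\circ\eqref{2KYcorr}$ satisfies $M/R(M,M) = \bigoplus_i S_i$, hence $\torscl(\bigoplus_i S_i) = \Fac M$ by \eqref{asaicorresp} and \eqref{fac}. Let $N := \tau M\oplus \nu P$ be the image of $M$ under \eqref{ddual}; the commutative square in \cref{dugger 2021-09-06 13:33:14} then yields ${}^{\perp}\Fac M = \Sub N$. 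The problem therefore reduces to showing $\torfcl(\bigoplus_j R_j) = \Sub N$, or equivalently, via \eqref{FF 2021-09-24 13:16:03} and \eqref{dasaicorresp 2021-09-24 13:16:37}, that $\bigoplus_j R_j \cong S(N,N)$.

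The inclusion $\torfcl(\bigoplus_j R_j) \subseteq \Sub N$ will follow from a direct $\Hom$-computation: for any $i,j$, choosing $X\in\mathcal{X}$ with $R_j = X[-1]$, the simple-minded axiom at $t=-1<0$ gives $\Hom_\Lambda(S_i, R_j) = \Hom_{D^b(\Lambda\lmod)}(S_i, X[-1]) = 0$. Since $\Fac M = \Filt(\Fac(\bigoplus_i S_i))$ by \cref{filt and closure 2021-12-06 11:12:23}, an induction on filtration length using the long exact sequences for $\Hom_\Lambda(-, R_j)$ propagates this to $\Hom_\Lambda(\Fac M, \bigoplus_j R_j) = 0$, so $\bigoplus_j R_j \subseteq (\Fac M)^{\perp} = \Sub N$ and the inclusion follows.

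The reverse inclusion is the main obstacle and amounts to a right-finite analogue of \cref{commutative}. I would establish it by interpreting $\mathcal{X}$ as the set of simple objects of the heart of the $t$-structure on $D^b(\Lambda\lmod)$ obtained by tilting the standard $t$-structure along the torsion pair $(\Fac M, \Sub N)$. Under this interpretation, $\mathcal{X}\cap \Lambda\lmod$ is the left finite semibrick $M/R(M,M)$ generating $\Fac M$ (recovering \cref{commutative}), while $(\mathcal{X}\cap (\Lambda\lmod)[1])[-1] = \bigoplus_j R_j$ is a right finite semibrick generating $\Sub N$ as a torsion-free class, which by \cref{dasai correspondence 2021-09-24 13:18:33} must coincide with $S(N,N)$. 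This yields $\bigoplus_j R_j \cong S(N,N)$, completing the proof.
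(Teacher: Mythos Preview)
The paper does not prove this theorem at all: it is quoted verbatim from \cite[Theorem 3.3]{MR4139031} as a preliminary result, with no argument given. So there is no ``paper's own proof'' to compare against; your proposal is an attempt to reconstruct Asai's argument, and should be judged on its own terms.

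Your overall strategy is the natural one and is essentially what Asai does: identify the image of \(\mathcal{X}\) under \eqref{2KYcorr} with a two-term silting complex, pass to the associated support \(\tau\)-tilting module \(M\) and its partner \(N\) under \eqref{ddual}, and then check that the objects of \(\mathcal{X}\) lying in \(\Lambda\lmod\) and in \((\Lambda\lmod)[1]\) recover \(M/R(M,M)\) and \(S(N,N)\) respectively. Two remarks:

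\begin{itemize}
\item You have a perpendicular on the wrong side in two places: you write \({}^{\perp}\torscl(S)\) and \({}^{\perp}\Fac M = \Sub N\), but for a torsion pair \((\mathcal{T},\mathcal{F})\) one has \(\mathcal{F}=\mathcal{T}^{\perp}\), not \({}^{\perp}\mathcal{T}\). (The paper's \cref{dualtorsf 2021-12-16 16:31:45} contains the same slip, so you may have inherited it.) You silently correct yourself later when you write \((\Fac M)^{\perp}=\Sub N\); just make the earlier occurrences consistent.
\item The ``main obstacle'' paragraph is really the entire content of the theorem, and what you have written there is a plan rather than a proof. Saying that \(\mathcal{X}\) is the set of simples in the tilted heart with respect to \((\Fac M,\Sub N)\), and that therefore the degree-\(1\) part cogenerates \(\Sub N\), is exactly the statement you are trying to establish; it requires the Koenig--Yang/Br\"ustle--Yang machinery relating \(t\)-structures, silting objects and simple-minded collections (or Asai's direct argument). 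If you intend this as a sketch that defers to \cite{MR3178243,MR3220536,MR4139031} for those identifications, say so explicitly; as written it reads as circular.
\end{itemize}
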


\section{Preliminaries of modular representation theory of finite groups}
In this section, let \(k\) be an algebraically closed field of characteristic \(p>0\).
For any finite group \(G\), the field \(k\) can always be regarded as a \(kG\)-module by defining \(gx=x\) for
any \(g\in G\) and \(x \in k\).
This module is called the trivial module and is denoted by \(k_G\).
For \(kG\)-modules \(U\) and \(V\), the \(k\)-module \(U\otimes V =U \otimes_k V\) has a \(kG\)-module structure given by \(g(u\otimes v)=gu\otimes gv\) for all \(g \in G\), \(u \in U\) and \(v \in V\).
\subsection{Restriction functors and induction functors}
Let \(G\) be a finite group and \(H\) a subgroup of \(G\).
We denote by \(\restr_{H}^G\) the restriction functor from \(kG\lmod\) to \(kH\lmod\) and \(\induc_H^G:={}_{kG}kG\otimes _{kH} \bullet\) the induction functor from \(kH\lmod\) to \(kG\lmod\).
The functors \(\restr_{H}^G\) and \(\induc_H^G\) are exact functors and have the following properties.
\begin{proposition}[{see \cite[Lemma 8.5, Lemma 8.6]{MR860771}}]\label{Theorem: Frobenius and projective}
	Let \(G\) be a finite group, \(K\) a subgroup of \(G\), \(H\) a subgroup of \(K\), \(U\) a \(kG\)-module and \(V\) a \(kH\)-module.
	Then the following hold:
	\begin{enumerate}
		\item \(\restr^G_K\restr^K_H \cong\restr^G_H\).
		\item \(\induc^G_K\induc^K_H \cong\induc^G_H\).
		\item The functors \(\restr_H^G\) and \(\induc_H^G\) are left and right adjoint to each other.
		\item The functors \(\restr_H^G\) and \(\induc_H^G\) send projective modules to projective modules.
		\item \(U\otimes k_G\cong U\).\label{triv tensor 2021-12-06 10:47:29}
		\item \(\induc_H^G(\restr_H^G U\otimes V)\cong U\otimes \induc_H^G V\).\label{frob tensor 2021-12-06 10:50:14}
	\end{enumerate}
\end{proposition}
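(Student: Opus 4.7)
The plan is to verify the six items one at a time, dispatching the formal ones quickly and concentrating effort on the adjointness statement. Items (1) and (2) are essentially formal: for (1), both $\restr^G_K\restr^K_H U$ and $\restr^G_H U$ are the same $k$-vector space with $H$-action, via the identity; for (2), associativity of tensor products gives a natural isomorphism $kG\otimes_{kK}(kK\otimes_{kH}V)\cong kG\otimes_{kH}V$ for each $kH$-module $V$.

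For (3), the left adjunction $\induc_H^G\dashv\restr_H^G$ is the ordinary tensor--Hom adjunction $\Hom_{kG}(kG\otimes_{kH}V,U)\cong \Hom_{kH}(V,\restr_H^G U)$, with unit $v\mapsto 1\otimes v$. For the right adjunction $\restr_H^G\dashv\induc_H^G$, I would first establish the $(kG,kH)$-bimodule isomorphism $\Hom_{kH}({}_{kH}kG,{}_{kH}kH)\cong {}_{kG}kG_{kH}$; this rests on $kG$ being a finitely generated free right $kH$-module on a set of coset representatives, with a suitable dual basis (equivalently, on the fact that $kG$ is a symmetric $k$-algebra). Combining this with Hom--tensor adjunction yields $\Hom_{kG}(U,kG\otimes_{kH}V)\cong \Hom_{kH}(\restr_H^G U,V)$.

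For (4), $\restr_H^G$ preserves projectivity because $kG$ is free, and hence projective, as a $kH$-module on either side, so direct summands of free $kG$-modules restrict to direct summands of free $kH$-modules. The functor $\induc_H^G$ preserves projectivity as a left adjoint of the exact functor $\restr_H^G$. Item (5) is the $k$-linear isomorphism $U\otimes k\to U$, $u\otimes 1\mapsto u$, which is $kG$-equivariant because $g(u\otimes 1)=gu\otimes g\cdot 1=gu\otimes 1$.

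For (6), I would define a map $\varphi\colon \induc_H^G(\restr_H^G U\otimes V)\to U\otimes \induc_H^G V$ by $\varphi(g\otimes_{kH}(u\otimes v))=gu\otimes(g\otimes_{kH}v)$, check that it is well-defined over $kH$ and $kG$-equivariant, and exhibit the inverse $u\otimes(g\otimes_{kH}v)\mapsto g\otimes_{kH}(g^{-1}u\otimes v)$. The main obstacle among these items is the right adjunction in (3): once the bimodule identification $\Hom_{kH}(kG,kH)\cong kG$ is in hand, the remaining statements, including the projection formula in (6), follow from short direct calculations.
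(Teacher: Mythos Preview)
Your argument is correct. The paper does not supply a proof of this proposition at all: it is stated as a preliminary result with a citation to Alperin's book, so there is no ``paper's own proof'' to compare against. Your sketch covers all six items with the standard arguments---transitivity via identity and tensor associativity, Frobenius reciprocity via tensor--Hom and the bimodule identification \(\Hom_{kH}(kG,kH)\cong kG\) (equivalently, the fact that \(kH\hookrightarrow kG\) is a Frobenius extension), preservation of projectives from freeness of \(kG\) over \(kH\) and from \(\induc_H^G\) being left adjoint to an exact functor, and the explicit mutually inverse maps for the projection formula in (6). One cosmetic point: in item~(1) you apply \(\restr^K_H\) to the \(kG\)-module \(U\), mirroring the paper's slightly sloppy ordering; the intended composite is \(\restr^K_H\circ\restr^G_K\), but the content is of course right.
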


Let \(\tilde{G}\) be a finite group, \(G\) a normal subgroup of \(\tilde{G}\) and \(U\) a \(kG\)-module. For \(\tilde{g} \in \tilde{G}\), we define a \(kG\)-module \(\tilde{g}U\) consisting of symbols \(\tilde{g}u\) as a set, where \(u\in U\) and its \(kG\)-module structure is given by \(\tilde{g}u+\tilde{g}u':=\tilde{g}(u+u')\), \(\lambda(\tilde{g}u):=\tilde{g}(\lambda u)\) and \(g(\tilde{g}u):=\tilde{g}(\tilde{g}^{-1}g\tilde{g}u) \) for any \(u, u'\in U\), \(\lambda\in k\) and \(g \in G\).

\begin{theorem}[Mackey's decomposition formula for normal subgroups]\label{Mackey's decomposition formula}
	Let \(\tilde{G}\) be a finite group and \(G\) a normal subgroup of \(G\), and \(U\) a \(kG\)-module. Then we have
	\begin{equation}
		\restr^{\tilde{G}}_G\induc^{\tilde{G}}_G U\cong\bigoplus_{t \in [\tilde{G}/G]} tU,
	\end{equation}
	where \([\tilde{G}/G]\) is a set of representatives of the factor group \(\tilde{G}/G\).
\end{theorem}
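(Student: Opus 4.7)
The plan is to write $\induc_G^{\tilde G}U = k\tilde G \otimes_{kG}U$ and decompose $k\tilde G$ along the coset decomposition induced by the normal subgroup $G$, then identify each summand with a twisted copy $tU$.

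First I would fix the transversal $[\tilde G/G]$ and observe that, because $G$ is normal, $\tilde G$ is the disjoint union of left cosets $tG$, and consequently
\begin{equation*}
k\tilde G \;=\; \bigoplus_{t\in [\tilde G/G]} t\cdot kG
\end{equation*}
is a decomposition as a right $kG$-module (each summand $t\cdot kG$ being a free right $kG$-module of rank one). Since tensor product commutes with direct sums, applying $-\otimes_{kG}U$ gives
\begin{equation*}
\restr^{\tilde G}_G\induc^{\tilde G}_G U \;\cong\; \bigoplus_{t\in[\tilde G/G]} (t\cdot kG)\otimes_{kG}U
\end{equation*}
as $k$-vector spaces; the restricted left $kG$-action on each summand is the one inherited from left multiplication by $G\subseteq \tilde G$, which need not preserve $t\cdot kG$ in general but does preserve the sum.

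Next, for each $t$ I would define the natural map
\begin{equation*}
\phi_t\colon (t\cdot kG)\otimes_{kG}U \longrightarrow tU,\qquad tg\otimes u\longmapsto t(gu),
\end{equation*}
check it is well-defined over $\otimes_{kG}$ (both $tgh\otimes u$ and $tg\otimes hu$ map to $t(ghu)$), and construct the evident inverse $tu\mapsto t\otimes u$ to see it is a $k$-linear isomorphism. The key verification is that $\phi_t$ is $kG$-equivariant for the module structure on $tU$ defined in the excerpt. For $g'\in G$ we have $g't = t(t^{-1}g't)$ with $t^{-1}g't\in G$ by normality of $G$, so
\begin{equation*}
\phi_t\bigl(g'\cdot (tg\otimes u)\bigr) = \phi_t\bigl(t(t^{-1}g't)g\otimes u\bigr) = t\bigl((t^{-1}g't)(gu)\bigr) = g'\cdot t(gu) = g'\cdot \phi_t(tg\otimes u),
\end{equation*}
where the penultimate equality is exactly the definition of the $kG$-action on $tU$. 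To pass from the identification of individual summands to the $kG$-isomorphism of the whole induced module I need the $G$-action to respect the $t$-component decomposition; this follows from the same computation, since $g'\cdot t(kG) \subseteq t(kG)$ because $g't = t\cdot (t^{-1}g't) \in t\cdot G$.

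Finally, summing the $\phi_t$ gives the desired isomorphism
\begin{equation*}
\restr^{\tilde G}_G\induc^{\tilde G}_G U \;\cong\; \bigoplus_{t\in[\tilde G/G]} tU.
\end{equation*}
The only subtle point is the bookkeeping around the twisted $G$-action on $tU$ via conjugation by $t^{-1}$; once the identity $g't = t(t^{-1}g't)$ is used, the equivariance of $\phi_t$ falls out immediately, so I expect no serious obstacle.
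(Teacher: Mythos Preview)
Your argument is correct and is the standard direct proof of this special case of Mackey's formula. Note, however, that the paper does not actually supply a proof of this statement: it is recorded as a well-known background result and no proof environment follows it. So there is nothing to compare your approach against; your coset-decomposition argument, with the verification that each map $\phi_t$ is $kG$-equivariant via $g't = t(t^{-1}g't)$ and the paper's definition of the twisted action on $tU$, is exactly the expected justification.
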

For a \(kG\)-module \(U\), we denote by \(\decompgp_{\tilde{G}}(U)\) the inertial group of \(U\) in \(\tilde{G}\), that is
\begin{equation}
	\decompgp_{\tilde{G}}(U):=\left\{ \tilde{g} \in \tilde{G} \setmid \tilde{g}U \cong U \text{ as \(kG\)-modules} \right\}.
\end{equation}

\begin{theorem}[Clifford's Theorem for simple modules]\label{Clifford's Theorem of simple module}
	Let \(\tilde{G}\) be a finite group, \(G\) a normal subgroup of \(\tilde{G}\), \(S\) a simple \(k\tilde{G}\)-module and \(S'\) a simple \(kG\)-submodule of \(\restr_N^G S\). Then we have a \(kG\)-module isomorphism
	\begin{equation}
		\restr_G^{\tilde{G}} S \cong \bigoplus_{\tilde{g} \in [\tilde{G}/\decompgp_{\tilde{G}}(S')]} \tilde{g}S'^{\oplus r}
	\end{equation}
	for some integer \(r\), which is called the ramification index of \(S\) in \(\tilde{G}\).
\end{theorem}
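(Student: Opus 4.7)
The plan is to exploit the semisimplicity of $\restr_G^{\tilde{G}} S$ arising from the simplicity of $S$ as a $k\tilde{G}$-module, and then to show that all $kG$-composition factors are $\tilde{G}$-conjugate to $S'$, appearing with equal multiplicity.

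First, I would show that for each $\tilde{g}\in\tilde{G}$, the $k$-subspace $\tilde{g}\cdot S':=\{\tilde{g}s\mid s\in S'\}\subseteq S$ is a $kG$-submodule isomorphic to the conjugate module $\tilde{g}S'$ (in the sense of the definition preceding the theorem). Normality of $G$ in $\tilde{G}$ is what allows this: for $g\in G$ and $s\in S'$ one has $g(\tilde{g}s)=\tilde{g}(\tilde{g}^{-1}g\tilde{g})s$ with $\tilde{g}^{-1}g\tilde{g}\in G$, so the $kG$-action closes up inside $\tilde{g}\cdot S'$, and the map $\tilde{g}u\mapsto \tilde{g}u$ gives the desired $kG$-isomorphism to the conjugate module $\tilde{g}S'$. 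Each $\tilde{g}\cdot S'$ is therefore a simple $kG$-submodule of $S$.

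Next I would consider the sum $S'':=\sum_{\tilde{g}\in \tilde{G}} \tilde{g}\cdot S'\subseteq S$. By construction $S''$ is stable under both $G$ (each summand is) and $\tilde{G}$ (the action just permutes the summands), hence is a nonzero $k\tilde{G}$-submodule of $S$; simplicity of $S$ then forces $S''=S$. Thus $\restr_G^{\tilde{G}} S$ is a sum of simple $kG$-modules, and in particular completely reducible, with every simple summand isomorphic to some $\tilde{g}S'$. Two such conjugates satisfy $\tilde{g}S'\cong \tilde{h}S'$ as $kG$-modules exactly when $\tilde{h}^{-1}\tilde{g}\in \decompgp_{\tilde{G}}(S')$, so the distinct isomorphism classes of simple summands appearing in $\restr_G^{\tilde{G}}S$ are indexed precisely by $[\tilde{G}/\decompgp_{\tilde{G}}(S')]$.

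Finally, to see that the multiplicities are all equal, I would group summands into isotypic components $W_{\tilde{g}}\subseteq S$ (the $\tilde{g}S'$-isotypic part of $\restr_G^{\tilde{G}} S$) and observe that for any $\tilde{h}\in\tilde{G}$, left multiplication by $\tilde{h}$ on $S$ restricts to a $k$-linear isomorphism $W_{\tilde{g}}\xrightarrow{\sim} W_{\tilde{h}\tilde{g}}$ which intertwines the $G$-actions via conjugation, hence sends the $\tilde{g}S'$-isotypic component bijectively onto the $\tilde{h}\tilde{g}S'$-isotypic component. Therefore $\dim_k W_{\tilde{g}}$ is independent of $\tilde{g}$, so the multiplicity $r$ of each conjugate is the same. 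The main delicate point is keeping straight the two different constructions both denoted $\tilde{g}S'$ (the abstract conjugate module versus the literal subspace $\tilde{g}\cdot S'\subseteq S$) and verifying the $kG$-isomorphism between them; once that identification is pinned down, the rest is formal.
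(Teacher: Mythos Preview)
Your argument is the standard proof of Clifford's theorem and is correct. The paper itself does not supply a proof of this statement; it is quoted as a classical preliminary result, so there is nothing to compare against beyond noting that your write-up matches the usual textbook treatment.
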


\subsection{Indecomposable modules, bricks and simple modules satisfying the stable conditions}
Let \(\tilde{G}\) be a finite group, \(G\) a normal subgroup of \(\tilde{G}\) and \(U\) a \(kG\)-module.
If \(\decompgp_{\tilde{G}}(U)={\tilde{G}}\), we say that \(U\) is \({\tilde{G}}\)-stable, that is, for any \(\tilde{g}\in \tilde{G}\) the \(kG\)-module \(\tilde{g}U\) is isomorphic to \(U\).
For a \(k\tilde{G}\)-module \(\tilde{U}\), we say that \(\tilde{U}\) is an extending \(k\tilde{G}\)-module of \(U\) if \(\restr_G^{\tilde{G}} \tilde{U}\cong U\).
\begin{remark}\label{rem extending action hom}
	We remark that any extending \(k\tilde{G}\)-module of indecomposable \(kG\)-modules (respectively, of bricks in \(kG\lmod\), of simple \(kG\)-modules) is also an indecomposable \(kG\)-module (respectively, a brick in \(k\tilde{G}\lmod\) and a simple \(k\tilde{G}\)-module).
	Let \(U_i\) be a \(kG\)-module and \(\tilde{U_i}\) an extending \(k\tilde{G}\)-module of \(U_i\) (\(i=1, 2\)).
	We also remark that if \(U_1\ncong U_2\) then \(\tilde{U_1}\ncong \tilde{U_2}\) and if \(\Hom_{kG}(U_1,U_2)=0\) then \(\Hom_{k\tilde{G}}(\tilde{U}_1,\tilde{U}_2)=0\).
\end{remark}

\begin{lemma}
	\label{rem ind and extending module 2021-09-29 08:59:16}
	Let \(\tilde{G}\) be a finite group, \(G\) a normal subgroup, \(U\) a \(kG\)-module and \(\tilde{U}\) an extending \(k\tilde{G}\)-module of \(U\).
	Then the following hold:
	\begin{enumerate}
		\item \(\induc_G^{\tilde{G}}U\cong \tilde{U}\otimes k[\tilde{G}/G].\)\label{frob b 2021-10-25 13:13:50}\label{2021-12-06 12:14:15}
		\item \(\torscl(\induc_G^{\tilde{G}}U)=\torscl(\bigoplus_V(\tilde{U}\otimes V))\), where \(V\) runs through representatives of the isomorphism classes of simple \(k[\tilde{G}/G]\)-modules.\label{2021-12-06 12:15:21}
		\item \(\torfcl(\induc_G^{\tilde{G}}U)=\torfcl(\bigoplus_V(\tilde{U}\otimes V))\), where \(V\) runs through representatives of the isomorphism classes of simple \(k[\tilde{G}/G]\)-modules.\label{2021-12-06 12:16:08}
	\end{enumerate}
\end{lemma}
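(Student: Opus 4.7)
The plan is to establish (1) by a direct computation using the Frobenius tensor identity, then deduce (2) and (3) from (1) by analyzing the composition structure of the permutation module $k[\tilde{G}/G]$.

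For part (1), I would first note that since $U \cong \restr_G^{\tilde{G}} \tilde{U}$ by hypothesis, part \ref{triv tensor 2021-12-06 10:47:29} of \cref{Theorem: Frobenius and projective} gives $U \cong \restr_G^{\tilde{G}} \tilde{U} \otimes k_G$. Applying $\induc_G^{\tilde{G}}$ and using the Frobenius tensor identity \ref{frob tensor 2021-12-06 10:50:14} of the same proposition yields
\begin{equation}
\induc_G^{\tilde{G}} U \cong \induc_G^{\tilde{G}}\bigl(\restr_G^{\tilde{G}} \tilde{U} \otimes k_G\bigr) \cong \tilde{U} \otimes \induc_G^{\tilde{G}} k_G.
\end{equation}
Finally, $\induc_G^{\tilde{G}} k_G = k\tilde{G} \otimes_{kG} k_G$ is canonically isomorphic to $k[\tilde{G}/G]$ with the natural $\tilde{G}$-action on cosets, completing (1).

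For parts (2) and (3), by (1) it is enough to show that $\torscl(\tilde{U}\otimes k[\tilde{G}/G]) = \torscl(\bigoplus_V \tilde{U}\otimes V)$ and similarly for $\torfcl$. For the forward inclusion, I would take a composition series $0 = M_0 \subset M_1 \subset \cdots \subset M_n = k[\tilde{G}/G]$ as a $k\tilde{G}$-module. Since $G$ acts trivially on $k[\tilde{G}/G]$, it also acts trivially on every subquotient, so each $M_i/M_{i-1}$ is (the inflation of) a simple $k[\tilde{G}/G]$-module $V_i$. Because tensoring over $k$ with the vector space underlying $\tilde{U}$ is exact, we obtain a $k\tilde{G}$-module filtration
\begin{equation}
0 = \tilde{U}\otimes M_0 \subset \tilde{U}\otimes M_1 \subset \cdots \subset \tilde{U}\otimes M_n = \tilde{U}\otimes k[\tilde{G}/G]
\end{equation}
with subquotients $\tilde{U}\otimes V_i$. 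Hence $\tilde{U}\otimes k[\tilde{G}/G] \in \Filt(\bigoplus_V \tilde{U}\otimes V)$, which by \cref{filt and closure 2021-12-06 11:12:23} is contained in both $\torscl(\bigoplus_V \tilde{U}\otimes V)$ and $\torfcl(\bigoplus_V \tilde{U}\otimes V)$.

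For the reverse inclusion I would exploit the fact that $k[\tilde{G}/G]$ is the regular representation of a symmetric (hence Frobenius) algebra, so every simple $k[\tilde{G}/G]$-module $V$ appears both as a quotient (via its projective cover, which is a summand of $k[\tilde{G}/G]$) and as a submodule (via its injective envelope, which coincides with its projective cover by symmetry). Applying $\tilde{U}\otimes(-)$ produces an epimorphism $\tilde{U}\otimes k[\tilde{G}/G] \twoheadrightarrow \tilde{U}\otimes V$ and a monomorphism $\tilde{U}\otimes V \hookrightarrow \tilde{U}\otimes k[\tilde{G}/G]$. The former places each $\tilde{U}\otimes V$ in $\Fac(\tilde{U}\otimes k[\tilde{G}/G]) \subset \torscl(\tilde{U}\otimes k[\tilde{G}/G])$, and the latter places it in $\Sub(\tilde{U}\otimes k[\tilde{G}/G]) \subset \torfcl(\tilde{U}\otimes k[\tilde{G}/G])$, yielding both reverse inclusions.

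The argument is essentially routine once (1) is in hand; the only subtlety is the reverse inclusion in (3), which requires the symmetry of the group algebra $k[\tilde{G}/G]$ to guarantee that every simple embeds in the regular representation. I do not anticipate a serious obstacle, but I would be careful to flag the use of symmetry explicitly rather than letting it be implicit in the appeal to the Frobenius property.
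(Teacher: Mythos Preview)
Your proposal is correct and follows the same line as the paper's proof: part (1) via the Frobenius tensor identity, and parts (2)--(3) by tensoring a composition series of \(k[\tilde{G}/G]\) with \(\tilde{U}\) and invoking \cref{filt and closure 2021-12-06 11:12:23}. Your treatment is in fact more explicit than the paper's, which asserts the equality in (2) directly from the filtration and handles (3) by ``dual arguments,'' whereas you spell out the reverse inclusions by noting that every simple \(V\) is both a quotient and (by symmetry of \(k[\tilde{G}/G]\)) a submodule of the regular module.
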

\begin{proof}
	The assertion \ref{2021-12-06 12:14:15} follows from the following isomorphisms:
	\begin{equation}
		\induc_G^{\tilde{G}} U\cong \induc_G^{\tilde{G}}( \restr_G^{\tilde{G}}\tilde{U}\otimes k_G)\cong  \tilde{U}\otimes \induc_G^{\tilde{G}}k_{\tilde{G}}
		\cong\tilde{U}\otimes k[\tilde{G}/G],
	\end{equation}
	here the first and second isomorphisms come from \cref{Theorem: Frobenius and projective}.

	We prove the assertion \ref{2021-12-06 12:15:21}.
	By taking a composition series
	\begin{equation}
		0=W_0\subset W_1\subset \cdots \subset W_{e-1}\subset W_e={}_{k\tilde{G}}k[\tilde{G}/G]
	\end{equation}
	of the module \({}_{k\tilde{G}}k[\tilde{G}/G]\), we have the filtration
	\begin{equation}
		0=\tilde{U}\otimes W_0 \subset \tilde{U}\otimes W_1 \subset \cdots \subset \tilde{U}\otimes W_{e-1} \subset \tilde{U}\otimes W_e =\tilde{U}\otimes {}_{k\tilde{G}}k[\tilde{G}/G]
	\end{equation}
	of \(\tilde{U}\otimes k[\tilde{G}/G]\cong \induc_G^{\tilde{G}}U\).
	Therefore, we get \(\torscl(\induc_G^{\tilde{G}} U)= \torscl(\bigoplus_V (\tilde{U}\otimes V))\) by \cref{filt and closure 2021-12-06 11:12:23}.

	The dual arguments show that the assertion \ref{2021-12-06 12:16:08} is also true.
\end{proof}
The same arguments of \cite[Theorems 3.5.7, Theorem 3.5.8 and Corollary 3.5.9]{MR998775} work for bricks.
Hence, we have the following.
\begin{proposition}[{Brick version of \cite[Corollary 3.5.9]{MR998775}}]\label{extending action}
	Let \(\tilde{G}\) be a finite group, \(G\) a normal subgroup of \(\tilde{G}\) and \(S\) a brick in \(kG\lmod\).
	Moreover, let \(e\) be the number of isomorphism classes of \(1\)-dimensional \(k[\tilde{G}/G]\)-modules.
	Assume the following conditions hold:
	\begin{itemize}
		\item \(S\) is \(\tilde{G}\)-stable,
		\item \(H^2(\tilde{G}/G,k^\times)=1\).
	\end{itemize}
	Then there exist \(e\) isomorphism classes of extending \(k\tilde{G}\)-modules \(\tilde{S}^{(1)}, \ldots, \tilde{S}^{(e)}\) of \(S\).
	Moreover, for \(\tilde{S}^{(i)}\) and \(\tilde{S}^{(j)}\), there exists a unique \(1\)-dimensional \(k[\tilde{G}/G]\)-module \(V\) such that \(\tilde{S}^{(i)}\otimes V\cong \tilde{S}^{(j)}\) up to isomorphisms.
\end{proposition}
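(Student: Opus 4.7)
The plan is to adapt the classical Clifford-theoretic proof of Dade's theorem (i.e., \cite[Theorems 3.5.7--3.5.9 and Corollary 3.5.9]{MR998775}) from simple \(kG\)-modules to bricks. The only place in the original proof where simplicity of \(S\) is used is to invoke Schur's lemma to identify \(\End_{kG}(S)\) with \(k\); since \(S\) is assumed to be a brick, this identification is exactly the brick hypothesis, so the whole argument transfers. I outline the main steps and indicate where the brick condition plays the role of Schur's lemma.

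\textbf{Existence of an extension.} For each \(\tilde{g}\in\tilde{G}\), the \(\tilde{G}\)-stability hypothesis provides a \(kG\)-module isomorphism \(\phi_{\tilde{g}}\colon \tilde{g}S\to S\); concretely, we choose a \(k\)-linear automorphism \(\phi_{\tilde{g}}\) of \(S\) satisfying \(\phi_{\tilde{g}}(gs)=(\tilde{g}g\tilde{g}^{-1})\phi_{\tilde{g}}(s)\) for all \(g\in G\), \(s\in S\), and for \(g\in G\) itself we take \(\phi_g\) equal to the given \(G\)-action on \(S\). For any \(\tilde{g}_1,\tilde{g}_2\in\tilde{G}\) the composition \(\phi_{\tilde{g}_1\tilde{g}_2}^{-1}\phi_{\tilde{g}_1}\phi_{\tilde{g}_2}\) is a \(kG\)-module endomorphism of \(S\); because \(S\) is a brick, \(\End_{kG}(S)\cong k\), so this endomorphism is multiplication by some scalar \(\alpha(\tilde{g}_1,\tilde{g}_2)\in k^\times\). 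The associativity of composition yields the \(2\)-cocycle identity for \(\alpha\), and the normalization \(\phi_g=g\cdot\) for \(g\in G\) shows that \(\alpha\) is constant on cosets, hence descends to a class in \(H^2(\tilde{G}/G,k^\times)\). By hypothesis this class is trivial, so there is a function \(\beta\colon \tilde{G}/G\to k^\times\) with \(\alpha(\bar{x},\bar{y})=\beta(\bar{x})\beta(\bar{y})\beta(\bar{x}\bar{y})^{-1}\); replacing \(\phi_{\tilde{g}}\) by \(\beta(\overline{\tilde{g}})^{-1}\phi_{\tilde{g}}\) produces a genuine \(k\tilde{G}\)-module structure on \(S\), yielding one extending module \(\tilde{S}\).

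\textbf{Classification up to isomorphism.} Suppose \(\tilde{S}'\) is another extending \(k\tilde{G}\)-module of \(S\). Its action on \(S\) is given by automorphisms \(\phi'_{\tilde{g}}\), which agree with \(\phi_{\tilde{g}}\) on \(G\); hence for each \(\tilde{g}\) the composition \(\phi'_{\tilde{g}}\phi_{\tilde{g}}^{-1}\) lies in \(\End_{kG}(S)=k\), so equals a scalar \(\chi(\tilde{g})\in k^\times\). The multiplicativity of the two actions forces \(\chi\) to be a group homomorphism \(\tilde{G}\to k^\times\) that is trivial on \(G\), i.e., a \(1\)-dimensional \(k[\tilde{G}/G]\)-module \(V_\chi\), and conversely, any such \(\chi\) yields a new extension \(\tilde{S}\otimes V_\chi\). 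By \cref{Theorem: Frobenius and projective}\ref{triv tensor 2021-12-06 10:47:29} and the explicit comparison of actions, two extensions \(\tilde{S}\otimes V_{\chi_1}\) and \(\tilde{S}\otimes V_{\chi_2}\) are \(k\tilde{G}\)-isomorphic if and only if \(\chi_1=\chi_2\): an isomorphism between them restricts to an endomorphism of \(S\), which is again a scalar, forcing \(\chi_1=\chi_2\). This gives a bijection between isomorphism classes of extensions of \(S\) and the group of \(1\)-dimensional \(k[\tilde{G}/G]\)-modules, producing exactly \(e\) isomorphism classes.

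\textbf{The tensor identification.} For any two extensions \(\tilde{S}^{(i)}\) and \(\tilde{S}^{(j)}\), the above argument applied to the pair yields a unique homomorphism \(\chi\colon\tilde{G}/G\to k^\times\) with \(\tilde{S}^{(i)}\otimes V_\chi\cong \tilde{S}^{(j)}\), and uniqueness of \(V\) follows from uniqueness of \(\chi\).

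The main obstacle is the verification that the brick hypothesis \(\End_{kG}(S)\cong k\) is strong enough to replace Schur's lemma at every step where the original Clifford-theoretic argument compares two automorphisms of \(S\) via a \(kG\)-endomorphism; once this is checked, the cohomological bookkeeping with \(H^2(\tilde{G}/G,k^\times)=1\) and \(H^1(\tilde{G}/G,k^\times)\cong\{\text{1-dim. }k[\tilde{G}/G]\text{-modules}\}\) is formally identical to the simple-module case.
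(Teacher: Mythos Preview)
Your proposal is correct and follows essentially the same route as the paper's proof: both construct a projective lift of the \(G\)-action to \(\tilde{G}\), use the brick hypothesis \(\End_{kG}(S)\cong k\) in place of Schur's lemma to obtain a \(2\)-cocycle, kill it via \(H^2(\tilde{G}/G,k^\times)=1\), and then classify extensions by comparing any two via a scalar-valued function that becomes a character of \(\tilde{G}/G\). The paper carries out the same computation more explicitly via a fixed set of coset representatives (defining \(\hat{\xi}(xg)=\varphi_x\xi_S(g)\) and verifying \(\alpha(xg,yh)=\alpha(x,y)\) by hand), whereas you phrase it more conceptually; but the content is the same.
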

\begin{proof}
	Let \(\xi_S\colon G\rightarrow GL_k(S)\) be the linear representation of \(G\) corresponding to \(kG\)-module \(S\) and \(X\) a representative of \(\tilde{G}/G\) containing the unit element \(1_{\tilde{G}}\) of \(\tilde{G}\).
	For any \(1_{\tilde{G}}\neq x\in X\), we can take an isomorphism \(\psi_x\colon xS\rightarrow S\) as \(kG\)-modules, and we define \(\psi_{1_{\tilde{G}}}=\Id_S\).
	For any \(x\in X\), we define the \(k\)-linear map \(\varphi_x \in GL_k(S)\) of \(S\) by \(s\mapsto \varphi_x(s):=\psi_x(xs)\), then we get that \(\xi_S(g)\varphi_x=\varphi_x\xi_S(g^x)\) for any \(x\in X\) and \(g\in G\), here we mean that \(g^x=x^{-1}gx\).

	Since every element \(\tilde{g}\) of \(\tilde{G}\) can be expressed uniquely as \(\tilde{g}=xg\) with \(g \in G\) and \(x\in X\), it is possible to define the map \(\hat{\xi}: \tilde{G} \rightarrow GL_k(S)\) by \(xg\mapsto \varphi_x \xi_S(g)\), which is extending map of \(\xi_S\).
	For any \(g,h\in G\) and \(x\in X\), we have the following:
	\begin{align}
		\xi_S(h)\hat{\xi}(xg)
		 & = \xi_S(h)\varphi_x\xi_S(g)        \\
		 & = \varphi_x\xi_S(h^x)\xi_S(g)      \\
		 & = \varphi_x\xi_S(h^xg)             \\
		 & = \varphi_x\xi_S(x^{-1}hxg)        \\
		 & = \varphi_x\xi_S(gg^{-1}x^{-1}hxg) \\
		 & = \varphi_x\xi_S(gh^{xg})          \\
		 & = \varphi_x\xi_S(g)\xi_S(h^{xg})   \\
		 & = \hat{\xi}(xg)\xi_S(h^{xg}).
	\end{align}
	Therefore, we get
	\begin{equation}
		\xi_S(h)\hat{\xi}(\tilde{g})=\hat{\xi}(\tilde{g})\xi_S(h^{\tilde{g}})
	\end{equation}
	for any \(\tilde{g}\in \tilde{G}\) and \(h\in G\).
	Hence, for any \(\tilde{g}, \tilde{h}\in \tilde{G}\), the \(k\)-linear map
	\(\hat{\xi}(\tilde{g})\hat{\xi}(\tilde{h})\hat{\xi}(\tilde{g}\tilde{h})^{-1}\) of \(S\) is an endomorphism of \(kG\)-module \(S\) since it holds that
	\begin{align}
		\hat{\xi}(\tilde{g})\hat{\xi}(\tilde{h})\hat{\xi}(\tilde{g}\tilde{h})^{-1}\xi_S(g)
		 & =\hat{\xi}(\tilde{g})\hat{\xi}(\tilde{h})\xi_S(g^{\tilde{g}\tilde{h}})\hat{\xi}(\tilde{g}\tilde{h})^{-1} \\
		 & =\hat{\xi}(\tilde{g})\xi_S(g^{\tilde{g}})\hat{\xi}(\tilde{h})\hat{\xi}(\tilde{g}\tilde{h})^{-1}          \\
		 & =\xi_S(g)\hat{\xi}(\tilde{g})\hat{\xi}(\tilde{h})\hat{\xi}(\tilde{g}\tilde{h})^{-1}
	\end{align}
	for any \(g\in G\). Now the \(kG\)-module \(S\) is a brick in \(kG\lmod\), hence we can take a scalar \(\alpha(\tilde{g},\tilde{h})\in k^\times\) such that
	\begin{equation}
		\hat{\xi}(\tilde{g})\hat{\xi}(\tilde{h})\hat{\xi}(\tilde{g}\tilde{h})^{-1}=\alpha(\tilde{g},\tilde{h})\Id_S,
	\end{equation}
	and that \(\alpha(1_{\tilde{G}},1_{\tilde{G}})=1\).
	Moreover, for any \(g,h\in G\) and \(x,y \in X\), we get \(\alpha(gx,hy)=\alpha(x,y)\) as follows:
	\begin{align}
		\alpha(xg,yh)\Id_S
		 & =\hat{\xi}(xg)\hat{\xi}(yh)\hat{\xi}(xgyh)^{-1}                                                \\
		 & =\varphi_x\xi_S(g)\varphi_y\xi_S(h)\hat{\xi}(xyg^yh)^{-1}                                      \\
		 & =\varphi_x\xi_S(g)\varphi_y\xi_S(h)\hat{\xi}(zfg^yh)^{-1}            & (xy=zf, z\in X, f\in G) \\
		 & =\varphi_x\xi_S(g)\varphi_y\xi_S(h)(\varphi_z\xi_S(fg^yh))^{-1}                                \\
		 & =\varphi_x\varphi_y\xi_S(g^y)\xi_S(h)\xi_S(fg^yh)^{-1}\varphi_z^{-1}                           \\
		 & =\varphi_x\varphi_y\xi_S(f)^{-1}\varphi_z^{-1}                                                 \\
		 & =\varphi_x\varphi_y\hat{\xi}(zf)^{-1}                                                          \\
		 & =\hat{\xi}(x)\hat{\xi}(y)\hat{\xi}(xy)^{-1}                                                    \\
		 & =\alpha(x,y)\Id_S.
	\end{align}
	We can check that the map \(\alpha\colon \tilde{G}\times\tilde{G}\rightarrow k^\times\) satisfies the \(2\)-cocycle condition easily (for example, see \cite{MR998775}).
	Hence, \(\alpha\) induces the map \(\bar{\alpha}\colon \tilde{G}/G\times\tilde{G}/G\rightarrow k^\times\) satisfying the \(2\)-cocycle condition.
	Since \(H^2(\tilde{G}/G,k^\times)\) is trivial, there exists a map \(\beta\colon \tilde{G}/G\rightarrow k^\times\) satisfying that for any \(\tilde{g}, \tilde{h}\in \tilde{G}\),
	\begin{equation}
		\beta(\tilde{g}G)\beta(\tilde{g}\tilde{h}G)^{-1}\beta(\tilde{h}G)=\bar{\alpha}(\tilde{g}G,\tilde{h}G)=\alpha(\tilde{g},\tilde{h}).
	\end{equation}
	We define \(\tilde{\xi} \colon \tilde{G} \rightarrow GL_k(S)\) by \(\tilde{\xi}(\tilde{g})=\beta(\tilde{g}G)\hat{\xi}(\tilde{g})\).
	Then \(\tilde{\xi}\) is a group homomorphism and an extension of \(\xi_S\) since \(\beta(1_{\tilde{G}}G)=1\).
	Hence, the corresponding \(k\tilde{G}\)-module \(\tilde{S}\) to \(\tilde{\xi}\) is an extending \(k\tilde{G}\)-module of the \(kG\)-module \(S\).
	Let \(W\) be another extending \(k\tilde{G}\)-module of \(S\) and \(\xi_W\colon \tilde{G}\rightarrow GL_k(W)\) the  linear representation of \(\tilde{G}\) corresponding to \(W\).
	Then there exists a \(kG\)-isomorphism \(\nu\colon \restr_G^{\tilde{G}}W \rightarrow S\), that is, the following equation holds for any \(g\in G\):
	\begin{equation}\label{eq 2021-12-16 19:47:31}
		\nu \xi_W(g)=\xi_S(g)\nu.
	\end{equation}
	For \(\tilde{g}\in \tilde{G}\), we define the following linear map of \(S\):
	\begin{equation}
		\gamma(\tilde{g}):=\nu\xi_W(\tilde{g})\nu^{-1}\tilde{\xi}(\tilde{g})^{-1}.
	\end{equation}
	If \(g\in G\) then \(\gamma(g)\) is the identity map by \eqref{eq 2021-12-16 19:47:31}, and
	we can show that \(\gamma(\tilde{g})\) is an endomorphism of \(kG\)-module \(S\) as follows:
	\begin{align}
		\gamma(\tilde{g})\xi_S(g)
		 & =\nu\xi_W(\tilde{g})\nu^{-1}\tilde{\xi}(\tilde{g})^{-1}\xi_S(g)                                                  \\
		 & =\nu\xi_W(\tilde{g})\nu^{-1}\tilde{\xi}(\tilde{g})^{-1}\xi_S(g)\tilde{\xi}(\tilde{g})\tilde{\xi}(\tilde{g})^{-1} \\
		 & =\nu\xi_W(\tilde{g})\nu^{-1}\tilde{\xi}(\tilde{g}^{-1}g\tilde{g})\tilde{\xi}(\tilde{g})^{-1}                     \\
		 & =\nu\xi_W(\tilde{g})\xi_W(\tilde{g}^{-1}g\tilde{g})\nu^{-1}\tilde{\xi}(\tilde{g})^{-1}                           \\
		 & =\nu\xi_W(g\tilde{g})\nu^{-1}\tilde{\xi}(\tilde{g})^{-1}                                                         \\
		 & =\xi_S(g)\nu\xi_W(\tilde{g})\nu^{-1}\tilde{\xi}(\tilde{g})^{-1}                                                  \\
		 & =\xi_S(g)\gamma(\tilde{g}).
	\end{align}
	Thus, there exists \(\delta(\tilde{g})\in k^\times\) such that \(\gamma(\tilde{g})=\delta(\tilde{g})\Id_S\), so we get \(\delta(\tilde{g})\tilde{\xi}(\tilde{g})\nu=\nu\xi_W(\tilde{g})\) for any \(\tilde{g}\in\tilde{G}\).
	It is easy to show that \(\delta\colon \tilde{G}\rightarrow k^\times\) is a group homomorphism and induces the group homomorphism \(\bar{\delta}\colon\tilde{G}/G\rightarrow k^\times\).
	Let \(V\) be the \(k[\tilde{G}/G]\)-module corresponding to \(\bar{\delta}\). Then we have \(V\otimes \tilde{S}\cong W\).
\end{proof}
\begin{proposition}\label{brick extensions 2021-09-10 09:36:38}
	Let \(\tilde{G}\) be a finite group and \(G\) a normal subgroup of \(\tilde{G}\), \(S\) a semibrick in \(kG\lmod\) and \(S\cong \bigoplus_{i=1}^{n_{S}}S_i\) a direct sum decomposition into bricks. Assume the following conditions hold:
	\begin{itemize}
		\item \(S_i\) is \(\tilde{G}\)-stable for any \(i=1, \ldots, n_S\),
		\item \(H^2(\tilde{G}/G,k^\times)=1\),
		\item \(k[\tilde{G}/G]\) is basic as a \(k\)-algebra.
	\end{itemize}
	Then the following hold:
	\begin{enumerate}
		\item For each indecomposable direct summand \(S_i\) of \(S\), there exist exactly \(e:=|k[\tilde{G}/G]|\) isomorphism classes of extending \(k\tilde{G}\)-modules \(\tilde{S_i}^{(1)}, \ldots,\tilde{S_i}^{(e)}\) of \(S_i\).\label{tag extending number 2021-10-17 16:01:42}
		\item The direct sum \(\bigoplus_{i=1}^{n_S}\bigoplus_{j=1}^{e}\tilde{S_i}^{(j)}\) is a semibrick in \(k\tilde{G}\lmod\).\label{taag sum ext brick 2021-10-17 16:06:11}
	\end{enumerate}
\end{proposition}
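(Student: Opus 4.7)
The plan is to split the proof into the two assertions; the first reduces immediately to \cref{extending action}, and the second is a routine Schur-plus-restriction computation.

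For assertion \ref{tag extending number 2021-10-17 16:01:42}, I would apply \cref{extending action} to each brick $S_i$, which produces exactly one isomorphism class of extending $k\tilde{G}$-module per isomorphism class of one-dimensional $k[\tilde{G}/G]$-module. The point is then to observe that the hypothesis ``$k[\tilde{G}/G]$ is basic as a $k$-algebra'', combined with $k$ being algebraically closed, forces the semisimple quotient $k[\tilde{G}/G]/\Rad(k[\tilde{G}/G])$ to be isomorphic to $k^{e}$ as $k$-algebras; hence every simple $k[\tilde{G}/G]$-module is one-dimensional, and the count of such modules is exactly $e = |k[\tilde{G}/G]|$. This yields the desired $e$ extending modules $\tilde{S_i}^{(1)}, \ldots, \tilde{S_i}^{(e)}$.

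For assertion \ref{taag sum ext brick 2021-10-17 16:06:11}, I would first note that each summand $\tilde{S_i}^{(j)}$ is a brick in $k\tilde{G}\lmod$ by \cref{rem extending action hom}, so the remaining task is to verify that $\Hom_{k\tilde{G}}(\tilde{S_i}^{(j)}, \tilde{S_{i'}}^{(j')}) = 0$ whenever $(i,j) \neq (i',j')$. I would split into two cases. In the case $i \neq i'$, we have $\Hom_{kG}(S_i, S_{i'}) = 0$ because $S$ is a semibrick, and \cref{rem extending action hom} propagates this vanishing to the extending modules. In the case $i = i'$ but $j \neq j'$, any $k\tilde{G}$-module homomorphism $f\colon \tilde{S_i}^{(j)} \to \tilde{S_i}^{(j')}$ restricts to an element of $\End_{kG}(S_i) = k$ and hence is a scalar multiple of $\Id_{S_i}$; a nonzero scalar would make $f$ a $k$-linear isomorphism which is simultaneously $k\tilde{G}$-linear, contradicting $\tilde{S_i}^{(j)} \not\cong \tilde{S_i}^{(j')}$.

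The substantive content is concentrated in \cref{extending action}; the main thing to watch out for is that the second case above uses two facts simultaneously, namely the pairwise non-isomorphism of the $\tilde{S_i}^{(j)}$ for fixed $i$ (built into \cref{extending action}) and the brick property of their common restriction $S_i$ (so that $\End_{kG}(S_i) = k$ and a nonzero endomorphism is automatically invertible). No further cohomological or tensorial manipulation beyond what is already set up should be required.
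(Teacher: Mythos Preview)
Your proof is correct. For assertion~\ref{tag extending number 2021-10-17 16:01:42} and for the cross-index case $i\neq i'$ of assertion~\ref{taag sum ext brick 2021-10-17 16:06:11} you do exactly what the paper does: invoke \cref{extending action} (with the observation that basic over an algebraically closed field forces all simple $k[\tilde G/G]$-modules to be one-dimensional) and \cref{rem extending action hom}, respectively.

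Where you diverge is in the case $i=i'$, $j\neq j'$. The paper argues via tensor products and adjunction: it identifies $\bigoplus_j \tilde{S_i}^{(j)}$ with $\tilde{S_i}^{(1)}\otimes\Soc(k[\tilde G/G])$, embeds this into $\tilde{S_i}^{(1)}\otimes k[\tilde G/G]\cong\induc_G^{\tilde G}S_i$ via \cref{rem ind and extending module 2021-09-29 08:59:16}, and then bounds $\dim_k\Hom_{k\tilde G}(\tilde{S_i}^{(1)}\otimes V,\,\bigoplus_{V'}\tilde{S_i}^{(1)}\otimes V')$ by $\dim_k\Hom_{kG}(S_i,S_i)=1$ using Frobenius reciprocity; the diagonal term already accounts for this one dimension, so the off-diagonal terms vanish. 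Your argument is more direct: a nonzero $k\tilde G$-homomorphism $\tilde{S_i}^{(j)}\to\tilde{S_i}^{(j')}$ restricts to a nonzero element of the one-dimensional space $\Hom_{kG}(S_i,S_i)$, hence is bijective, hence is a $k\tilde G$-isomorphism, contradicting $\tilde{S_i}^{(j)}\not\cong\tilde{S_i}^{(j')}$. Your route is shorter and avoids the induction/tensor machinery entirely; the paper's route, while more roundabout, exhibits the embedding of $\bigoplus_j\tilde{S_i}^{(j)}$ into $\induc_G^{\tilde G}S_i$, which aligns with how induced modules are used elsewhere in the paper. One small point of phrasing: strictly speaking $\restr_G^{\tilde G}f$ lands in $\Hom_{kG}(\restr_G^{\tilde G}\tilde{S_i}^{(j)},\restr_G^{\tilde G}\tilde{S_i}^{(j')})$ rather than literally $\End_{kG}(S_i)$, but after fixing the identifications with $S_i$ this space is one-dimensional and spanned by an isomorphism, so your conclusion stands.
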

\begin{proof}
	The assertion \ref{tag extending number 2021-10-17 16:01:42} immediately follows from \cref{extending action}.
	In order to prove the assertion \ref{taag sum ext brick 2021-10-17 16:06:11}, we only have to show that a direct sum \(\bigoplus_{j=1}^{e}\tilde{S_i}^{(j)}\) is also a semibrick by \cref{rem extending action hom}.
	Since the group algebra \(k[\tilde{G}/G]\) is basic, by \cref{extending action}, this module is isomorphic to
	\begin{equation}
		\bigoplus_{V} \left( \tilde{S_i}^{(1)} \otimes V \right) \cong \tilde{S_i}^{(1)} \otimes \Soc (k[\tilde{G}/G]),
	\end{equation}
	here \(V\) runs through representatives of the isomorphism classes of simple \(k[\tilde{G}/G]\)-modules.
	Now we have that
	\begin{align}
		\tilde{S_i}^{(1)} \otimes \Soc (k[\tilde{G}/G])
		 & \subset  \tilde{S_i}^{(1)} \otimes k[\tilde{G}/G] \\
		 & \cong    \induc_G^{\tilde{G}} S_i.
	\end{align}
	Hence, for any simple \(k[\tilde{G}/G]\)-module \(V\), we get that
	\begin{align}
		\Hom_{k\tilde{G}} (\tilde{S_i}^{(1)}\otimes  V,\tilde{S_i}^{(1)}\otimes V)\oplus
		 & \bigoplus_{V'\ncong V}\Hom_{k\tilde{G}} (\tilde{S_i}^{(1)}\otimes V,\tilde{S_i}^{(1)}\otimes V') \\
		 & \cong \Hom_{k\tilde{G}} (\tilde{S_i}^{(1)}\otimes V,\bigoplus_{V'}\tilde{S_i}^{(1)}\otimes V')   \\
		 & \subset \Hom_{k\tilde{G}} (\tilde{S_i}^{(1)}\otimes V,\induc_G^{\tilde{G}} S_i)                  \\
		 & \cong \Hom_{kG} (\restr_G^{\tilde{G}} (\tilde{S_i}^{(1)}\otimes V),S_i)                          \\
		 & \cong \Hom_{kG} (S_i,S_i).
	\end{align}
	Therefore, we get \(\bigoplus_{V\ncong V'}\Hom (\tilde{S_i}^{(1)}\otimes V,\tilde{S_i}^{(1)}\otimes V')=0\) since
	\begin{equation}
		\dim_k\Hom_{k\tilde{G}} (\tilde{S_i}^{(1)}\otimes V,\tilde{S_i}^{(1)}\otimes V)=\dim_k\Hom_{kG} (S_i,S_i)=1,
	\end{equation}
	which shows that \(\bigoplus_{V} ( \tilde{S_i}^{(1)} \otimes V)\cong \bigoplus_{j=1}^e \tilde{S_i}^{(j)}\) is a semibrick in \(k\tilde{G}\lmod\).
\end{proof}
\begin{remark}
	With the same assumption in \cref{brick extensions 2021-09-10 09:36:38}, if \(e\) is prime to \(p\), then we have
	\begin{equation}
		\induc_G^{\tilde{G}} S_i\cong \bigoplus_{j=1}^e \tilde{S}_i^{(j)}
	\end{equation}
	by \cref{rem ind and extending module 2021-09-29 08:59:16} and \cref{extending action}.
\end{remark}

\begin{theorem}\label{induc dsmmand number 2021-09-07 10:26:20}
	Assume that the cohomology group \(H^2(\tilde{G}/G,k^\times)\) is the trivial group and the group algebra \(k[\tilde{G}/G]\) is basic as a \(k\)-algebra. Then for any indecomposable \(\tilde{G}\)-stable \(kG\)-module \(U\), the number of indecomposable direct summand of \(\induc_G^{\tilde{G}} U\) is equal to \(|k[\tilde{G}/G]|\) and any two direct summands of \(\induc_G^{\tilde{G}} U\) are not isomorphic.
	In particular, we have that \(|\induc_G^{\tilde{G}} U|=|k[\tilde{G}/G]|\).
\end{theorem}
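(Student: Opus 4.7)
The plan is to show that an extension $\tilde{U}$ of $U$ to $k\tilde{G}$ exists under our hypotheses, combine this with the identity $\induc_G^{\tilde{G}} U\cong \tilde{U}\otimes_k k[\tilde{G}/G]$ from \cref{rem ind and extending module 2021-09-29 08:59:16}\ref{2021-12-06 12:14:15}, and then use the basic decomposition of the group algebra $k[\tilde{G}/G]\cong\bigoplus_{i=1}^e P_i$ into its $e=|k[\tilde{G}/G]|$ pairwise non-isomorphic indecomposable projectives $P_i$ to write
\[
\induc_G^{\tilde{G}} U\cong \bigoplus_{i=1}^e \tilde{U}\otimes_k P_i.
\]
Existence of $\tilde{U}$ follows by the same cocycle argument as in \cref{extending action}, where the condition ``$\End_{kG}(S)=k$'' is replaced by ``$\End_{kG}(U)$ is a local $k$-algebra with residue field $k$'' (which holds since $U$ is indecomposable and $k$ is algebraically closed); the cocycle measuring the obstruction to extension then takes values in $R^\times/(1+\Rad R)\cong k^\times$ for $R:=\End_{kG}(U)$, and the assumption $H^2(\tilde{G}/G,k^\times)=1$ trivializes it.

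To complete the proof one needs to verify that each $\tilde{U}\otimes P_i$ is indecomposable and that $\tilde{U}\otimes P_i\not\cong \tilde{U}\otimes P_j$ for $i\ne j$. Using that $G$ acts trivially on each $P_i$, tensor-Hom adjunction yields a natural isomorphism
\[
\Hom_{k\tilde{G}}(\tilde{U}\otimes V,\ \tilde{U}\otimes W)\cong \Hom_{k[\tilde{G}/G]}(V,\ R\otimes_k W)
\]
for any $V,W\in k[\tilde{G}/G]\lmod$ (inflated to $k\tilde{G}$), where $R$ carries the $k[\tilde{G}/G]$-action by conjugation through the extension $\tilde{U}$. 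Applied with $V=W=P_i$, the projectivity of $P_i$ together with the short exact sequence $0\to\Rad R\otimes P_i\to R\otimes P_i\to P_i\to 0$ of $k[\tilde{G}/G]$-modules exhibits a surjective algebra homomorphism
\[
\End_{k\tilde{G}}(\tilde{U}\otimes P_i)\twoheadrightarrow \End_{k[\tilde{G}/G]}(P_i)
\]
whose kernel $\Hom_{k[\tilde{G}/G]}(P_i,\Rad R\otimes P_i)$ is a nilpotent ideal, because $\Rad R$ is nilpotent. Since $k[\tilde{G}/G]$ is basic, $\End_{k[\tilde{G}/G]}(P_i)$ is local with residue field $k$, and hence so is $\End_{k\tilde{G}}(\tilde{U}\otimes P_i)$, proving indecomposability. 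For the non-isomorphism, any $k\tilde{G}$-isomorphism $\tilde{U}\otimes P_i\cong\tilde{U}\otimes P_j$ yields, upon applying $\Hom_{kG}(U,-)$ and reducing modulo the nilpotent ideal generated by $\Rad R$, an isomorphism $P_i\cong P_j$ in $k[\tilde{G}/G]\lmod$, contradicting $i\ne j$.

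The main obstacle is the endomorphism-ring computation in the second paragraph: one has to pin down both the adjunction formula and the multiplicative structure on $\End_{k\tilde{G}}(\tilde{U}\otimes P_i)$ precisely enough to identify the kernel of the projection to $\End_{k[\tilde{G}/G]}(P_i)$ as a genuine nilpotent two-sided ideal, which rests on the description of the composition product $f\cdot g$ on $\Hom_{k[\tilde{G}/G]}(P_i,R\otimes P_i)$ via the multiplication $R\otimes R\to R$. A secondary technicality is the cocycle-reduction step for extensions of indecomposable (as opposed to brick) modules, where one must pass from $R^\times$ to $k^\times$ via the unipotent quotient $1+\Rad R$.
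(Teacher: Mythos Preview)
Your approach is genuinely different from the paper's, and apart from one substantive gap it is correct. The paper never constructs an extension \(\tilde U\); instead it analyses \(\End_{k\tilde G}(\induc_G^{\tilde G}U)\) directly: by the \(\tilde G\)-stability of \(U\) this ring is strongly \(\tilde G/G\)-graded with identity component \(R=\End_{kG}(U)\), the two-sided ideal \(\Rad(R)\cdot\End_{k\tilde G}(\induc U)\) is nilpotent, and the quotient is a twisted group algebra \(k_\alpha[\tilde G/G]\). The hypothesis \(H^2(\tilde G/G,k^\times)=1\) makes this the ordinary group algebra \(k[\tilde G/G]\), and idempotent lifting through the nilpotent ideal finishes the count. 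Thus the paper only ever needs the \emph{image} of the obstruction cocycle in \(k^\times\) to vanish. Your route, by contrast, identifies the summands explicitly as \(\tilde U\otimes P_i\); this is pleasant, and your adjunction \(\Hom_{k\tilde G}(\tilde U\otimes V,\tilde U\otimes W)\cong\Hom_{k[\tilde G/G]}(V,R\otimes W)\) together with the nilpotent-kernel argument for indecomposability and non-isomorphism is correct.

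The gap is exactly where you flag the ``secondary technicality'', and it is not secondary. Saying that the obstruction cocycle ``takes values in \(R^\times/(1+\Rad R)\cong k^\times\)'' is not enough: the raw cocycle lives in the (generally non-abelian) group \(R^\times\), and trivialising its image in \(k^\times\) only lets you modify the \(\varphi_x\) so that the new cocycle lands in \(1+\Rad R\). You still have to kill a non-abelian \(2\)-cocycle with values in the \(p\)-group \(1+\Rad R\), and there is no reason for this to be automatic when \(p\) divides \(|\tilde G/G|\) (Schur--Zassenhaus does not apply). The statement that a \(\tilde G\)-stable indecomposable extends whenever the class in \(H^2(\tilde G/G,k^\times)\) vanishes is true, but it is a theorem (Th\'evenaz, \emph{Comm.\ Algebra} \textbf{11} (1983), 391--425; see also Dade's work on Clifford theory for indecomposables), not an immediate consequence of the brick argument in \cref{extending action}. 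Either cite this, or reorganise your proof along the paper's lines so that you only need the cocycle to die in \(k^\times\); the latter is what buys the paper its economy.
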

\begin{proof}
	The direct sum decompositions of the induced module \(\induc_G^{\tilde{G}}U\) into indecomposable \(k\tilde{G}\)-modules correspond to orthogonal primitive idempotents decompositions of the unit element of the endomorphism \(k\)-algebra \(\End_{k\tilde{G}}(\induc_G^{\tilde{G}} U)\).
	By the \(\tilde{G}\)-stability of \(U\), the algebra \(\End_{k\tilde{G}}(\induc_G^{\tilde{G}} U)\) has a strongly \(\tilde{G}/G\)-graded algebra structure with the two-sided ideal
	\begin{equation}\label{graded J rad 2021-09-28 04:08:53}
		\Rad(\End_{kG}(U))\End_{k\tilde{G}}(\induc_G^{\tilde{G}} U)
	\end{equation}
	by \cite[Lemma 4.6.5, Theorem 4.6.7]{MR998775}.
	Moreover, we have also the following inclusion of two-sided ideals:
	\begin{equation}
		\Rad(\End_{kG}(U))\End_{k\tilde{G}}(\induc_G^{\tilde{G}} U)\subset \Rad(\End_{k\tilde{G}}(\induc_G^{\tilde{G}} U)).
	\end{equation}
	In particular, the two-sided ideal \eqref{graded J rad 2021-09-28 04:08:53} is a nilpotent ideal.
	Therefore, by the idempotent lifting theorem (for example, see \cite[Theorem 1.4.10]{MR998775}), orthogonal primitive idempotents decompositions of unit element of \(\End_{k\tilde{G}}(\induc_G^{\tilde{G}} U)\) correspond to the ones of
	\begin{equation}\label{alginf}
		\End_{k\tilde{G}}(\induc_G^{\tilde{G}} U))/\Rad(\End_{kG}(U))\End_{k\tilde{G}}(\induc_G^{\tilde{G}} U).
	\end{equation}
	The algebra \(\End_{kG}(U)\) is local and \(k\) is an algebraically closed field, hence the algebra \eqref{alginf} isomorphic to a twisted group algebra \(k_\alpha[\tilde{G}/G]\), where \(\alpha\) is a Schur multiplier.
	However, the assumption \(H^2(\tilde{G}/G,k^\times)=1\) implies that \(k_\alpha[\tilde{G}/G]\) is isomorphic to the group algebra \(k[\tilde{G}/G]\) which is a basic \(k\)-algebra by our assumptions.
	Therefore, the number of indecomposable direct summand of \(\induc_G^{\tilde{G}} U\) is equal to \(|k[\tilde{G}/G]|\) and any two direct summands of \(\induc_G^{\tilde{G}} U\) are not isomorphic.
\end{proof}
\begin{lemma}\label{2021-09-09 18:06:40}
	Let \(G\) be a normal subgroup of a finite group \(\tilde{G}\) and \(V\) an indecomposable \(kG\)-module.
	If \(V\) is \(\tilde{G}\)-stable, then its projective cover \(P(V)\), syzygy \(\Omega(V)\), injective envelope \(I(V)\) and cosyzygy \(\Omega^{-1}(V)\) are also \(\tilde{G}\)-stable.
\end{lemma}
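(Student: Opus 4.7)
The plan is to observe that for each \(\tilde{g}\in\tilde{G}\), the assignment \(U\mapsto \tilde{g}U\) is an exact autoequivalence of \(kG\lmod\) (with inverse \(\tilde{g}^{-1}(-)\)), and then to show that this autoequivalence commutes, up to isomorphism, with the formation of projective covers, injective envelopes, syzygies and cosyzygies. Combined with the hypothesis \(\tilde{g}V\cong V\), the conclusion will follow immediately for all four modules.

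First I would check the key compatibility \(\tilde{g}(kG)\cong kG\) as left \(kG\)-modules; an explicit isomorphism is given by \(x\mapsto \tilde{g}(\tilde{g}^{-1}x)\), and direct computation using the definition \(g\cdot(\tilde{g}x)=\tilde{g}(\tilde{g}^{-1}g\tilde{g}\,x)\) shows this is \(kG\)-linear. Hence \(\tilde{g}(-)\) preserves free modules, and since it is exact and additive it also preserves projective modules. As \(kG\) is self-injective (indeed symmetric), the same functor preserves injective modules as well.

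Next I would verify that \(\tilde{g}(-)\) preserves essential epimorphisms and essential monomorphisms, which is automatic since it is an exact equivalence of abelian categories and so reflects and preserves the categorical notions of superfluous submodule and essential submodule. Therefore, given a projective cover \(P(V)\twoheadrightarrow V\), its image \(\tilde{g}P(V)\twoheadrightarrow \tilde{g}V\) is again a projective cover, so by uniqueness \(\tilde{g}P(V)\cong P(\tilde{g}V)\); dually \(\tilde{g}I(V)\cong I(\tilde{g}V)\). Applying exactness of \(\tilde{g}(-)\) to the defining short exact sequences
\begin{equation}
0\to \Omega(V)\to P(V)\to V\to 0, \qquad 0\to V\to I(V)\to \Omega^{-1}(V)\to 0,
\end{equation}
gives \(\tilde{g}\Omega(V)\cong \Omega(\tilde{g}V)\) and \(\tilde{g}\Omega^{-1}(V)\cong \Omega^{-1}(\tilde{g}V)\).

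Finally, combining these isomorphisms with the hypothesis \(\tilde{g}V\cong V\) yields \(\tilde{g}P(V)\cong P(V)\), \(\tilde{g}I(V)\cong I(V)\), \(\tilde{g}\Omega(V)\cong \Omega(V)\) and \(\tilde{g}\Omega^{-1}(V)\cong \Omega^{-1}(V)\) for every \(\tilde{g}\in\tilde{G}\), proving \(\tilde{G}\)-stability of each. The only mildly delicate point is recording that \(\tilde{g}(-)\) preserves minimality (i.e.\ sends projective covers to projective covers rather than merely to projective surjections), but this is essentially automatic from its being an exact autoequivalence, so I do not expect a real obstacle.
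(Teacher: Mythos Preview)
Your proposal is correct and follows essentially the same approach as the paper: both arguments rest on the fact that \(\tilde{g}(-)\) is an exact autoequivalence of \(kG\lmod\) sending projectives to projectives, so that applying it to the projective cover sequence of \(V\) and comparing with the projective cover of \(\tilde{g}V\cong V\) forces \(\tilde{g}P(V)\cong P(V)\) and \(\tilde{g}\Omega(V)\cong\Omega(V)\), with the injective side handled dually. The paper simply records the relevant commutative diagram with exact rows and invokes essentiality of \(\mu\) directly, whereas you spell out the categorical reasons why \(\tilde{g}(-)\) preserves projective covers; the substance is the same.
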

\begin{proof}
	For any \(\tilde{g}\in \tilde{G}\), there exists an isomorphism \(\phi\colon \tilde{g}V\rightarrow V\) by the \(\tilde{G}\)-stability of \(V\).
	We consider the following commutative diagram in \(kG\lmod\) with exact rows:
	\begin{equation}
		\begin{tikzcd}
			0\ar[r]&\tilde{g}\Omega (V) \ar[r] \ar[d,"\phi''"]& \tilde{g}P (V) \ar[r,"{}^{\tilde{g}}\mu"]\ar[d,"\phi'"]& \tilde{g}V\ar[r] \ar[d,"\phi"] & 0\\
			0\ar[r]&\Omega(V) \ar[r] & P(V) \ar[r,"\mu"] &  V\ar[r] &0.
		\end{tikzcd}
	\end{equation}
	Since \(\mu\) is an essential epimorphism and \(\phi\) is an isomorphism, the vertical morphisms \(\phi'\) and \(\phi''\) are isomorphisms.
	Moreover, by using the dual arguments, we get that \(I(V)\) and \(\Omega^{-1}(V)\) are also \(\tilde{G}\)-stable.
\end{proof}
\begin{lemma}\label{induction functor projective cover 2021-09-07 12:06:47}
	Let \(G\) be a normal subgroup of a finite group \(\tilde{G}\) satisfying that \(H^2(\tilde{G}/G,k^\times)=1\) and that \(k[\tilde{G}/G]\) is basic as a \(k\)-algebra.
	For any indecomposable \(\tilde{G}\)-stable \(kG\)-module \(V\), the following hold:
	\begin{enumerate}
		\item \(P(\induc_G^{\tilde{G}} V)\cong \induc_G^{\tilde{G}} P(V)\),\label{2020-03-25 01:17:41}
		\item \(\Omega (\induc_G^{\tilde{G}} V)\cong \induc_G^{\tilde{G}} \Omega(V)\),\label{2020-03-25 01:17:47}
		\item \(\tau(\induc_G^{\tilde{G}} V)\cong \induc_G^{\tilde{G}} \tau V\).\label{2020-03-25 01:17:54}
	\end{enumerate}
\end{lemma}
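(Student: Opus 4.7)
The plan is to prove (1) by checking essentialness of the relevant induced surjection after restriction to $G$, to deduce (2) from (1) by exactness of induction, and to obtain (3) by iterating (2) after invoking $\tau \cong \Omega^2$ on the symmetric algebras $kG$ and $k\tilde{G}$.

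For (1), the induction functor is exact and sends projectives to projectives (\cref{Theorem: Frobenius and projective}), so applying $\induc_G^{\tilde{G}}$ to the projective cover $P(V) \twoheadrightarrow V$ yields a surjection $f\colon \induc_G^{\tilde{G}} P(V) \twoheadrightarrow \induc_G^{\tilde{G}} V$ from a projective $k\tilde{G}$-module. By \cref{Mackey's decomposition formula}, $\restr_G^{\tilde{G}} f$ is the direct sum of the projective cover maps $tP(V) \twoheadrightarrow tV$ over $t \in [\tilde{G}/G]$, which is essential in $kG\lmod$. Any proper $k\tilde{G}$-submodule $N \subsetneq \induc_G^{\tilde{G}} P(V)$ is in particular a proper $kG$-submodule of $\restr_G^{\tilde{G}} \induc_G^{\tilde{G}} P(V)$, so $f(N) = (\restr_G^{\tilde{G}} f)(N)$ is properly contained in $\induc_G^{\tilde{G}} V$; therefore $f$ itself is essential, and hence a projective cover map, giving $\induc_G^{\tilde{G}} P(V) \cong P(\induc_G^{\tilde{G}} V)$.

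For (2), apply the exact functor $\induc_G^{\tilde{G}}$ to $0 \to \Omega V \to P(V) \to V \to 0$ to obtain the short exact sequence $0 \to \induc_G^{\tilde{G}} \Omega V \to \induc_G^{\tilde{G}} P(V) \to \induc_G^{\tilde{G}} V \to 0$. By (1), the middle term is $P(\induc_G^{\tilde{G}} V)$ and the right map (the same $f$ as above) is a projective cover map; hence its kernel is $\Omega(\induc_G^{\tilde{G}} V)$, yielding $\induc_G^{\tilde{G}} \Omega V \cong \Omega(\induc_G^{\tilde{G}} V)$.

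For (3), recall that $kG$ and $k\tilde{G}$ are symmetric $k$-algebras, so $\tau$ vanishes on projectives and $\tau \cong \Omega^2$ on indecomposable non-projective modules. If $V$ is projective, both sides of (3) vanish. Otherwise $V$ is indecomposable non-projective, whence $\Omega V$ is again indecomposable non-projective ($\Omega$ is an autoequivalence of the stable category of the self-injective algebra $kG$, and $\Omega V \subseteq \Rad P(V)$ precludes any projective summand) and $\tilde{G}$-stable by \cref{2021-09-09 18:06:40}. By \cref{Mackey's decomposition formula}, $\restr_G^{\tilde{G}}\induc_G^{\tilde{G}} V \cong \bigoplus_{t \in [\tilde{G}/G]} tV$ has no projective summand, so neither does $\induc_G^{\tilde{G}} V$, and therefore $\tau(\induc_G^{\tilde{G}} V) \cong \Omega^2(\induc_G^{\tilde{G}} V)$. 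Applying (2) first to $V$ and then to $\Omega V$ yields
\begin{equation}
\tau(\induc_G^{\tilde{G}} V) \cong \Omega^2(\induc_G^{\tilde{G}} V) \cong \Omega(\induc_G^{\tilde{G}} \Omega V) \cong \induc_G^{\tilde{G}} \Omega^2 V \cong \induc_G^{\tilde{G}} \tau V.
\end{equation}
The main technical step is the essentialness verification in (1); once that is in hand, the remaining steps are largely formal, with the key observation in (3) being that $\Omega V$ inherits all hypotheses of (2) so that the iteration is valid.
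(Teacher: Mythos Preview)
Your proof is correct, but your route to (1) is genuinely different from the paper's. You verify directly that the induced surjection $\induc_G^{\tilde{G}}P(V)\twoheadrightarrow\induc_G^{\tilde{G}}V$ is essential by restricting to $G$ and using that a direct sum of projective covers is again a projective cover; this argument uses only Mackey's formula and the fact that a proper $k\tilde{G}$-submodule is \emph{a fortiori} a proper $kG$-submodule. In particular, your proof of (1) and (2) nowhere invokes the hypotheses $H^2(\tilde{G}/G,k^\times)=1$ or that $k[\tilde{G}/G]$ is basic, so you have actually established a stronger statement than the lemma asserts.

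The paper instead argues indirectly: it compares the two exact sequences, obtains a projective complement $Q$ with $\induc_G^{\tilde{G}}P(V)\cong Q\oplus P(\induc_G^{\tilde{G}}V)$ and $\induc_G^{\tilde{G}}\Omega(V)\cong Q\oplus\Omega(\induc_G^{\tilde{G}}V)$, and then kills $Q$ by a counting argument, appealing to \cref{induc dsmmand number 2021-09-07 10:26:20} to see that $\induc_G^{\tilde{G}}\Omega(V)$ and $\Omega(\induc_G^{\tilde{G}}V)$ each have exactly $|k[\tilde{G}/G]|$ indecomposable summands. That step genuinely consumes the cohomological and basicness hypotheses. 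Your essentialness-via-restriction argument is thus both more elementary and more general; the paper's approach, on the other hand, fits naturally with the summand-counting machinery used elsewhere in the article. For (3) both proofs coincide: reduce to $\Omega^2$ via symmetry of the group algebras and iterate (2), with \cref{2021-09-09 18:06:40} supplying $\tilde{G}$-stability of $\Omega V$.
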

\begin{proof}
	We have the following commutative diagram in \(k\tilde{G}\lmod\) with exact rows:
	\begin{equation}
		\begin{tikzcd}
			0\ar[r]&\induc_G^{\tilde{G}}\Omega(V) \ar[r] \ar[d,"\varphi'"]& \induc_G^{\tilde{G}}P(V) \ar[r] \ar[d,"\varphi"]& \induc_G^{\tilde{G}} V\ar[r] \ar[d,"\mathrm{id}_{\induc_G^{\tilde{G}} V}"]&0\\
			0\ar[r]&\Omega (\induc_G^{\tilde{G}} V) \ar[r]& P(\induc_G^{\tilde{G}} V) \ar[r,"\mu"']&\induc_G^{\tilde{G}} V\ar[r]&0.
		\end{tikzcd}
	\end{equation}
	Since \(\mu\) is an essential epimorphism, the vertical morphisms \(\varphi\) and \(\varphi'\) are split epimorphisms,
	and there exists a direct summand \(Q\) of \(\induc_G^{\tilde{G}}P (V)\) such that \(Q\oplus P(\induc_G^{\tilde{G}}V)\cong \induc_G^{\tilde{G}}P (V)\) and \(Q\oplus \Omega (\induc_G^{\tilde{G}} V) \cong \induc_G^{\tilde{G}} \Omega (V)\). We recall that \(\Omega V\) is indecomposable and \(\tilde{G}\)-stable by \cref{2021-09-09 18:06:40}.
	The numbers of indecomposable direct summands of \(\induc_G^{\tilde{G}} \Omega(V)\) and \(\Omega (\induc_G^{\tilde{G}} V)\) are equal to \(|k[\tilde{G}/G]|\) by \cref{induc dsmmand number 2021-09-07 10:26:20}.
	Hence, we get \(Q=0\) and the proofs of assertion \ref{2020-03-25 01:17:41} and assertion \ref{2020-03-25 01:17:47} are completed.

	Finally, we prove the assertion \ref{2020-03-25 01:17:54}.
	Since \(k\tilde{G}\) and \(kG\) are symmetric \(k\)-algebras, it holds that \(\tau V\cong \Omega \Omega(V)\) and \(\tau(\induc_G^{\tilde{G}} V)\cong \Omega \Omega (\induc_G^{\tilde{G}} V)\) for any \(kG\)-module \(V\).
	Therefore, we complete the proof of assertion \ref{2020-03-25 01:17:54}.
\end{proof}
The dual argument gives the following.
\begin{lemma}\label{induction functor inj env 2021-09-07 12:06:47}
	Let \(G\) be a normal subgroup of a finite group \(\tilde{G}\) satisfying that \(H^2(\tilde{G}/G,k^\times)=1\) and that \(k[\tilde{G}/G]\) is basic as a \(k\)-algebra.
	For any indecomposable \(\tilde{G}\)-stable \(kG\)-module \(V\), the following hold:
	\begin{enumerate}
		\item \(I(\induc_G^{\tilde{G}} V)\cong \induc_G^{\tilde{G}} I (V)\),\label{2021-10-01 15:13:53}
		\item \(\Omega^{-1} (\induc_G^{\tilde{G}} V)\cong \induc_G^{\tilde{G}} \Omega^{-1} (V)\),\label{2021-10-01 15:13:57}
		\item \(\tau^{-1}(\induc_G^{\tilde{G}} V)\cong \induc_G^{\tilde{G}} \tau^{-1} V\).\label{2021-10-01 15:14:01}
	\end{enumerate}
\end{lemma}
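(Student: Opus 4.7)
The plan is to dualize the proof of \cref{induction functor projective cover 2021-09-07 12:06:47} essentially verbatim, exploiting the fact that the group algebras $kG$ and $k\tilde{G}$ are symmetric (hence projective modules coincide with injective modules), which implies that the induction functor $\induc_G^{\tilde{G}}$ preserves injective modules as well. This symmetric-algebra observation is the engine driving the dualization: without it one could not promise that $\induc_G^{\tilde{G}} I(V)$ is injective.

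For part \ref{2021-10-01 15:13:53} and part \ref{2021-10-01 15:13:57}, the first step is to apply the exact functor $\induc_G^{\tilde{G}}$ to the cosyzygy short exact sequence $0 \to V \to I(V) \to \Omega^{-1}(V) \to 0$. Comparing with the cosyzygy sequence for $\induc_G^{\tilde{G}} V$ gives a commutative diagram
\begin{equation}
\begin{tikzcd}
0 \ar[r] & \induc_G^{\tilde{G}} V \ar[r] \ar[d,equal] & I(\induc_G^{\tilde{G}} V) \ar[r] \ar[d,"\varphi"] & \Omega^{-1}(\induc_G^{\tilde{G}} V) \ar[r] \ar[d,"\varphi'"] & 0 \\
0 \ar[r] & \induc_G^{\tilde{G}} V \ar[r] & \induc_G^{\tilde{G}} I(V) \ar[r] & \induc_G^{\tilde{G}} \Omega^{-1}(V) \ar[r] & 0,
\end{tikzcd}
\end{equation}
where $\varphi$ exists and is a split monomorphism because $\induc_G^{\tilde{G}} I(V)$ is injective and contains $\induc_G^{\tilde{G}} V$. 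Writing $\induc_G^{\tilde{G}} I(V) \cong I(\induc_G^{\tilde{G}} V) \oplus Q$ and quotienting by $\induc_G^{\tilde{G}} V$ produces $\induc_G^{\tilde{G}} \Omega^{-1}(V) \cong \Omega^{-1}(\induc_G^{\tilde{G}} V) \oplus Q$. The proof is then finished by a direct-summand count: if $V$ is non-projective (the projective case is trivial since $I(V) = V$ and $\Omega^{-1}(V) = 0$ for the symmetric algebra $kG$), then $\Omega^{-1}(V)$ is indecomposable and $\tilde{G}$-stable by \cref{2021-09-09 18:06:40}, so by \cref{induc dsmmand number 2021-09-07 10:26:20} both $\induc_G^{\tilde{G}} \Omega^{-1}(V)$ and $\Omega^{-1}(\induc_G^{\tilde{G}} V)$ (the latter because each of the $|k[\tilde{G}/G]|$ pairwise non-isomorphic indecomposable summands of $\induc_G^{\tilde{G}} V$ contributes one indecomposable cosyzygy) consist of exactly $|k[\tilde{G}/G]|$ indecomposable summands. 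Thus $Q = 0$.

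For part \ref{2021-10-01 15:14:01}, the plan is simply to invoke $\tau^{-1}(W) \cong \Omega^{-2}(W)$, valid for any module $W$ over a symmetric $k$-algebra (modulo injective summands, which again is vacuous here), and to iterate part \ref{2021-10-01 15:13:57}:
\begin{equation}
\tau^{-1}(\induc_G^{\tilde{G}} V) \cong \Omega^{-2}(\induc_G^{\tilde{G}} V) \cong \Omega^{-1}(\induc_G^{\tilde{G}} \Omega^{-1}(V)) \cong \induc_G^{\tilde{G}} \Omega^{-2}(V) \cong \induc_G^{\tilde{G}} \tau^{-1}(V),
\end{equation}
where the second application of part \ref{2021-10-01 15:13:57} uses that $\Omega^{-1}(V)$ is itself indecomposable and $\tilde{G}$-stable by \cref{2021-09-09 18:06:40}. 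The only subtle point — and the one I expect to need a line of care — is the splitting step for $\varphi$; it relies on the fact that $\induc_G^{\tilde{G}}$ carries projective $kG$-modules to projective $k\tilde{G}$-modules (\cref{Theorem: Frobenius and projective}) combined with symmetry of both group algebras, so that ``projective equals injective'' transports cleanly along the induction functor.
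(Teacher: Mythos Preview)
Your proposal is correct and is precisely the dual argument the paper intends: the paper gives no explicit proof here beyond the sentence ``The dual argument gives the following,'' and you have faithfully dualized the proof of \cref{induction functor projective cover 2021-09-07 12:06:47}, including the split-monomorphism comparison diagram, the summand count via \cref{induc dsmmand number 2021-09-07 10:26:20} and \cref{2021-09-09 18:06:40}, and the iteration $\tau^{-1}\cong\Omega^{-2}$ for symmetric algebras.
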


\subsection{Blocks of group algebras}

We recall the definition of blocks of group algebras. Let \(G\) be a finite group. The group algebra \(kG\) has a unique decomposition
\begin{equation}\label{block dec 2021-11-10 11:00:13}
	kG=B_0\times \cdots \times B_l
\end{equation}
into the direct product of indecomposable \(k\)-algebras \(B_i\).
We call each indecomposable direct product component \(B_i\) a block of \(kG\) and the above decomposition the block decomposition. We remark that any block \(B_i\) is a two-sided ideal of \(kG\).

For any indecomposable \(kG\)-module \(U\), there exists a unique block \(B_i\) of \(kG\) such that \(U=B_iU\) and \(B_jU=0\) for all \(j\neq i\). Then we say that \(U\) lies in the block \(B_i\) or simply \(U\) is a \(B_i\)-module.
We denote by \(B_0(kG)\) the principal block of \(kG\), in which the trivial \(kG\)-module lies.

\begin{remark}
	We remark that the block decomposition \eqref{block dec 2021-11-10 11:00:13} induces the following isomorphism of partially ordered sets:
	\begin{equation}
		\begin{tikzcd}[row sep=1pt]
			\stautilt(kG)\ar[r]& \stautilt(B_0)\times\cdots\times\stautilt(B_l)\\
			M\ar[r,mapsto]&(B_0 M, \ldots, B_l M).
		\end{tikzcd}
	\end{equation}
\end{remark}

Now we recall the definition and basic properties of defect groups of blocks.
\begin{definition}
	Let \(B\) be a block of \(kG\).
	A defect group \(D\) of \(B\) is a minimal subgroup of \(G\) satisfying the following condition:
	the \(B\)-bimodule morphism
	\begin{equation}
		\begin{tikzcd}[row sep=1pt]
			B\otimes_{kD}B \ar[r,"\mu_D"]& B\\
			\beta _1\otimes \beta _2 \ar[r,mapsto]&  \beta _1 \beta _2
		\end{tikzcd}
	\end{equation}
	is a split epimorphism.
\end{definition}

\begin{proposition}[{see \cite[Chapter 4, 5]{MR860771}}]
	Let \(B\) be a block of \(kG\) and \(D\) a defect group of \(B\). Then the following hold:
	\begin{enumerate}
		\item \(D\) is a \(p\)-subgroup of \(G\) and the set of all defect groups of \(B\) forms the conjugacy class of \(D\) in \(G\).
		\item \(D\) is a cyclic group if and only if the algebra \(B\) is finite representation type.
		\item If \(B\) is the principal block of \(kG\), then \(D\) is a Sylow \(p\)-subgroup of \(G\).
	\end{enumerate}
\end{proposition}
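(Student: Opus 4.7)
Since the three assertions are classical results in modular representation theory and the authors explicitly cite \cite{MR860771} for them, I would sketch the standard approaches rather than attempt self-contained proofs.

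For assertion (1), the plan is to recast the splitting condition via Higman's criterion. Viewing \(B\) as a \((B,B)\)-bimodule, i.e.\ as a module over \(k[G\times G]\) through the natural action, the fact that \(\mu_D\colon B\otimes_{kD}B\to B\) splits is equivalent to \(B\) being relatively \(\Delta(D)\)-projective, where \(\Delta(D):=\{(d,d)\mid d\in D\}\subset G\times G\). Taking \(P\) to be a Sylow \(p\)-subgroup of \(G\) and using the Mackey decomposition, the \(k[G\times G]\)-module \(kG\) is \(\Delta(P)\)-projective, hence so is each direct summand \(B\). By the minimality in the definition of the defect group, any defect group of \(B\) is then \(G\)-subconjugate to \(P\) and is therefore a \(p\)-subgroup. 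Since \(B\) is indecomposable as a \(k[G\times G]\)-module, Green's uniqueness theorem for vertices implies that all defect groups of \(B\) form a single \(G\)-conjugacy class.

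For assertion (2), the two implications require quite different machinery. One direction (cyclic defect implies finite representation type) is Dade's theorem, whose proof produces all indecomposable \(B\)-modules explicitly via the Brauer tree of \(B\). The other direction follows from Higman's theorem that a block of infinite representation type has a non-cyclic defect group; the usual proof reduces, via Brauer correspondence and induction, to exhibiting infinitely many non-isomorphic indecomposables for \(k[C_p\times C_p]\). I would simply cite both results; a self-contained proof is out of reach at this level and the Dade direction is the main obstacle.

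For assertion (3), the approach is to study the vertex of the trivial module \(k_G\), which lies in \(B_0(kG)\). The standard Frobenius reciprocity and the fact that \(k_G\) is a direct summand of \(\mathrm{Ind}_P^G k_P\) for \(P\) a Sylow \(p\)-subgroup (because \([G:P]\) is invertible in \(k\)) shows that the vertex of \(k_G\) is exactly \(P\). Since every vertex of every indecomposable \(B\)-module is contained (up to \(G\)-conjugation) in a defect group of \(B\), we obtain \(P\subseteq D\) up to conjugation; combined with (1), which forces \(D\) to be a \(p\)-subgroup, we conclude \(D\) is a Sylow \(p\)-subgroup of \(G\).
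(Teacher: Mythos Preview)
The paper does not prove this proposition at all; it simply records the statement and cites Alperin's textbook \cite{MR860771}. Your sketch therefore goes well beyond what the paper offers, and for parts (1) and (3) it is an accurate outline of the standard arguments (relative projectivity of \(B\) as a \(k[G\times G]\)-module, Green's vertex theory, and the vertex of the trivial module).

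There is one slip in your handling of (2). After assigning the implication ``cyclic defect \(\Rightarrow\) finite type'' to Dade, you say the other direction follows from ``Higman's theorem that a block of infinite representation type has a non-cyclic defect group.'' But ``infinite type \(\Rightarrow\) non-cyclic defect'' is the contrapositive of ``cyclic defect \(\Rightarrow\) finite type,'' i.e.\ the \emph{same} direction you already dealt with. What you still need is ``non-cyclic defect \(\Rightarrow\) infinite type,'' and in fact the proof strategy you then describe (reduce via Brauer correspondence and induction to exhibiting infinitely many indecomposables over \(k[C_p\times C_p]\)) is exactly the argument for that direction. So the mathematical content is right, but the stated form of Higman's theorem is inverted relative to what you need.
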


\begin{theorem}[{see \cite[Corollary 14.6, Theorem 17.1 and proof of Lemma 19.3]{MR860771}}]\label{Theorem:Dade Brauer Tree}
	Let \(B\) be a block of \(kG\) and \(D\) a defect group of \(B\).
	\begin{enumerate}
		\item \(D\) is the trivial group if and only if \(B\) is a simple algebra.
		\item \(D\) is a non-trivial cyclic group if and only if \(B\) is a Brauer tree algebra with \(e\) edges and  multiplicity \((|D|-1)/e\), where \(e\) is a devisor of \(p-1\).\label{2021-12-06 18:37:08}
	\end{enumerate}
\end{theorem}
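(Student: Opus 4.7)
The plan is to treat the two equivalences separately, starting from the given definition of defect groups via splitting of the multiplication map $\mu_D\colon B\otimes_{kD}B\to B$.

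For part (1), I would proceed as follows. The splitting of $\mu_1\colon B\otimes_k B\to B$ as a $B$-bimodule map is exactly the statement that $B$ is a projective $(B\otimes_k B^{\opo})$-module, i.e.\ that $B$ is a separable $k$-algebra. Since $k$ is algebraically closed, any finite-dimensional separable $k$-algebra is a direct product of matrix algebras over $k$; imposing that $B$ is moreover indecomposable as a $k$-algebra (the defining feature of a block), this product collapses to a single factor, so $B\cong M_n(k)$ for some $n$, and in particular $B$ is simple. The converse goes in reverse: if $B\cong M_n(k)$ is simple then it is separable, so $\mu_1$ already splits, and minimality of the defect group forces it to be trivial.

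For part (2), I would invoke the classical theorem of Dade on blocks with cyclic defect groups. The main structural steps are: first, pass to the Brauer correspondent $b$ of $B$ in $kN_G(D)$, which shares the same cyclic defect group $D$; second, exploit the explicit description of the source algebra of $b$ in terms of the inertia quotient $E=N_G(D,b_D)/DC_G(D)$, using that $E$ embeds into the automorphism group of the unique subgroup of $D$ of order $p$, so that $e:=|E|$ is a divisor of $p-1$; third, transport this structure back to $G$ via the Green correspondence to enumerate the indecomposable $B$-modules and determine the Ext-quiver, obtaining a Brauer tree with exactly $e$ edges and common multiplicity $(|D|-1)/e$. The reverse direction is then routine: from a Brauer tree algebra with these invariants one reads off both $e$ (the number of simple modules) and $|D|=me+1$ (from the multiplicity $m=(|D|-1)/e$).

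The main obstacle is the cyclic-defect direction of part (2). A fully self-contained proof requires Dade's detailed analysis of source algebras, the Green correspondence between $B$-modules and $kN_G(D)$-modules, and the representation theory of the semi-direct product $D\rtimes E$; these arguments occupy a substantial portion of \cite{MR860771}. In the present note I would therefore simply cite the reference for the structural core and limit the exposition to indicating how the invariants $e$ and $(|D|-1)/e$ correspond between the block-theoretic and Brauer-tree-theoretic pictures.
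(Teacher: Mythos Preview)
The paper does not prove this statement at all: it is stated as a background fact and attributed entirely to Alperin's book \cite[Corollary 14.6, Theorem 17.1 and proof of Lemma 19.3]{MR860771}, with no argument given. So there is no ``paper's own proof'' to compare against; the authors use this theorem only as input (for instance in \cref{Cyclicdefect p-power index 2022-08-28 21:45:58} and \cref{cyclic quot 2022-08-28 22:53:27}).

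Your sketch is therefore more than what the paper itself provides, and it is essentially correct. Your treatment of part~(1) via separability is clean and self-contained. For part~(2) you correctly identify that the forward direction is Dade's cyclic-defect theorem and that a genuine proof requires the Brauer correspondent, the inertial quotient $E$ with $|E|\mid p-1$, and the Green correspondence---exactly the machinery developed in the cited chapters of \cite{MR860771}. One small caveat on the converse in part~(2): knowing abstractly that $B$ is a Brauer tree algebra with $e$ edges and multiplicity $m$ tells you $B$ has finite representation type, hence cyclic defect, but recovering $|D|=me+1$ from the algebra structure alone requires knowing in advance that the multiplicity arises as $(|D|-1)/e$; this is really the forward direction again rather than an independent argument. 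In any case, for the purposes of this paper a citation is all that is needed, and your plan to ``simply cite the reference for the structural core'' matches exactly what the authors do.
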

\subsection{Clifford's theory for blocks of normal subgroups}
Let \(G\) be a finite group, \(\tilde{G}\) a finite group containing \(G\) as a normal subgroup, \(B\) a block of \(kG\) and \(\tilde{B}\) a block of \(k\tilde{G}\).
We say that \(\tilde{B}\) covers \(B\) or that \(B\) is covered by \(\tilde{B}\) if \(1_B 1_{\tilde{B}}\neq 0\). We denote by \(\inertiagp_{\tilde{G}}(B)\) the  inertial group of \(B\) in \(\tilde{G}\), that is \(\inertiagp_{\tilde{G}}(B):=\left\{ x \in \tilde{G} \setmid xBx^{-1} = B \right\}\).

\begin{remark}[{see \cite[Theorem 15.1, Lemma 15.3]{MR860771}}]\label{Remark:cover}
	With the above notation, the following are equivalent:
	\begin{enumerate}
		\item The block \(\tilde{B}\) covers \(B\).
		\item There exists a non-zero \(\tilde{B}\)-module \(U\) such that \(\restr_G^{\tilde{G}} U\) has a non-zero direct summand lying in \(B\).
		\item For any non-zero \(\tilde{B}\)-module \(U\), there exists a non-zero direct summand of \(\restr_G^{\tilde{G}} U\) lying in \(B\).
	\end{enumerate}
\end{remark}

\begin{remark}\label{cover principal 2022-09-05 14:33:21}
	The principal block \(B_0(kG)\) of \(kG\) is covered by the principal block \(B_0(k\tilde{G})\) of \(k\tilde{G}\) and \(\inertiagp_{\tilde{G}}(B_0(kG))=\tilde{G}\) since the trivial \(kG\)-module \(k_G\) is \(\tilde{G}\)-stable \(\restr_G^{\tilde{G}} k_{\tilde{G}} \cong k_G\).
\end{remark}

\begin{theorem}[{Clifford's Theorem for blocks \cite[Theorem 15.1, Lemma 15.3]{MR860771}}]\label{Clifford's Theorem block ver}
	Let \(\tilde{G}\) be a finite group, \(G\) a normal subgroup of \(\tilde{G}\), \(B\) a block of \(kG\), \(\tilde{B}\) a block of \(k\tilde{G}\) covering \(B\) and \(U\) a \(\tilde{B}\)-module. Then the following hold:
	\begin{enumerate}
		\item The set of blocks of \(kG\) covered by \(\tilde{B}\) equals to the conjugacy class of \(B\) in \(\tilde{G}\):
		      \begin{equation}
			      \left\{ B' \setmid \text{\(B'\) is a block of \(kG\) covered by \(\tilde{B}\)} \right\}=\left\{ xBx^{-1} \setmid x \in \tilde{G} \right\}.
		      \end{equation}
		\item We get the following isomorphism of \(kG\)-modules:
		      \begin{equation}
			      \restr_G^{\tilde{G}} U \cong \bigoplus_{x \in [\tilde{G}/\inertiagp_{\tilde{G}}(B)]}xBU.
		      \end{equation}
	\end{enumerate}
\end{theorem}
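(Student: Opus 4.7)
The plan is to exploit the conjugation action of $\tilde{G}$ on $kG$, which is legitimate because $G$ is normal in $\tilde{G}$. This action permutes the block decomposition $kG = \prod B'$, sending each primitive central idempotent $1_{B'}$ to $1_{xB'x^{-1}}$. Since $1_{\tilde{B}}$ is central in $k\tilde{G}$ and therefore fixed by $\tilde{G}$-conjugation, the set of blocks of $kG$ covered by $\tilde{B}$ (characterized by $1_{B'} 1_{\tilde{B}} \neq 0$) is stable under the $\tilde{G}$-action; in particular every conjugate $xBx^{-1}$ of the covered block $B$ is again covered, giving one containment in part~(1).

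For the reverse containment I would introduce the element
\begin{equation}
e := \sum_{x \in [\tilde{G}/\inertiagp_{\tilde{G}}(B)]} x\, 1_B\, x^{-1}
\end{equation}
of $k\tilde{G}$ and argue that $e$ is a well-defined central idempotent of $k\tilde{G}$. Primitivity of $1_{\tilde{B}}$ in $Z(k\tilde{G})$ then forces $e \, 1_{\tilde{B}} \in \{0, 1_{\tilde{B}}\}$; the summand $1_B 1_{\tilde{B}} \neq 0$ shows $e\, 1_{\tilde{B}} = 1_{\tilde{B}}$. If $B''$ is any block of $kG$ covered by $\tilde{B}$, then $1_{B''} = 1_{B''} \cdot e \cdot 1_{\tilde{B}} \neq 0$, and orthogonality of the primitive central idempotents of $kG$ forces $1_{B''} = x\, 1_B\, x^{-1}$ for some $x$, whence $B'' = xBx^{-1}$.

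Part~(2) will follow by decomposing $U$ along the block decomposition of $kG$. Restricting $U$ to $kG$ yields $U = \bigoplus_{B'} 1_{B'} U$, where $B'$ ranges over all blocks of $kG$. Since $U = 1_{\tilde{B}} U$, only summands with $B'$ covered by $\tilde{B}$ are nonzero (this is exactly the characterization in \cref{Remark:cover}), and by part~(1) these are parametrized by $[\tilde{G}/\inertiagp_{\tilde{G}}(B)]$. To match the stated form, identify the $kG$-submodule $1_{xBx^{-1}}\, U = (x\, 1_B\, x^{-1})\, U$ with $xBU = x(1_B U)$, using the equality $x^{-1} U = U$ that holds because $U$ carries a $k\tilde{G}$-action extending its $kG$-action.

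The main obstacle will be verifying that $e$ is genuinely a central idempotent of $k\tilde{G}$. This requires three checks: that the sum is independent of the choice of coset representatives, which uses the defining property $x\, 1_B\, x^{-1} = 1_B$ for $x \in \inertiagp_{\tilde{G}}(B)$; that the summands $x\, 1_B\, x^{-1} = 1_{xBx^{-1}}$ are pairwise orthogonal primitive central idempotents, since distinct cosets give distinct blocks in the $\tilde{G}$-orbit of $B$; and that $e$ is fixed by $\tilde{G}$-conjugation (which merely permutes the summands), hence central in $k\tilde{G}$. Once $e$ is in hand, both parts of the theorem are short formal consequences.
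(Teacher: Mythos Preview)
The paper does not supply its own proof of this theorem; it is quoted verbatim from Alperin's textbook \cite[Theorem 15.1, Lemma 15.3]{MR860771}, so there is no argument in the paper to compare your proposal against. Your outline is the standard textbook proof (essentially the one in Alperin and in Nagao--Tsushima), and it is correct in structure.

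One line needs tightening. You write ``$1_{B''} = 1_{B''} \cdot e \cdot 1_{\tilde{B}} \neq 0$'', but $1_{B''}$ lives in $kG$ while $1_{B''}\,e\,1_{\tilde{B}}$ lives in $k\tilde{G}$, and they are generally not equal. The intended argument is: from $e\,1_{\tilde{B}} = 1_{\tilde{B}}$ one gets $0 \neq 1_{B''}\,1_{\tilde{B}} = 1_{B''}\,e\,1_{\tilde{B}}$, hence $1_{B''}\,e \neq 0$ in $kG$; since $e$ is a sum of pairwise orthogonal primitive central idempotents $x\,1_B\,x^{-1}$ of $kG$ and $1_{B''}$ is itself primitive central in $kG$, orthogonality forces $1_{B''} = x\,1_B\,x^{-1}$ for some $x$. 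With that adjustment the argument for part~(1) is complete, and your verification that $e$ is a central idempotent of $k\tilde{G}$ and your identification $1_{xBx^{-1}}U \cong x(1_B U)$ via the $k\tilde{G}$-action on $U$ for part~(2) are both fine.
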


\begin{proposition}[{see \cite[Theorem 5.5.10 Theorem 5.5.12]{MR998775}}]\label{Morita equivalence covering block}
	Let \(G\) be a normal subgroup of a finite group \(\tilde{G}\), \(B\) a block of \(kG\) and \(\beta\) a block of \(k\inertiagp_{\tilde{G}}(B)\) covering \(B\).
	Then the following hold:
	\begin{enumerate}
		\item For any \(B\)-module \(V\), the induced module \(\induc_G^{\inertiagp_{\tilde{G}}(B)}V\) is a direct sum of \(k\inertiagp_{\tilde{G}}(B)\)-module lying blocks covering \(B\).
		\item There exists a unique block \(\tilde{B}\) of \(k\tilde{G}\) covering \(B\) such that the induction functor \(\induc_{\inertiagp_{\tilde{G}}(B)}^{\tilde{G}} \colon k\inertiagp_{\tilde{G}}(B)\lmod \rightarrow k{\tilde{G}}\lmod\) restricts to a Morita equivalence
		      \begin{equation}\label{2020-03-25 15:17:09}
			      \induc_{\inertiagp_{\tilde{G}}(B)}^{\tilde{G}} \colon \beta\lmod \rightarrow \tilde{B}\lmod.
		      \end{equation}
	\end{enumerate}
\end{proposition}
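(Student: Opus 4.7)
The plan is to prove both assertions by leaning on Mackey's decomposition (\cref{Mackey's decomposition formula}) and the characterization of block covering (\cref{Remark:cover}).

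For assertion~(1), I would set $I := \inertiagp_{\tilde{G}}(B)$, $W := \induc_G^I V$, and decompose $W = \bigoplus_{\beta'} 1_{\beta'} W$ over the blocks $\beta'$ of $kI$. For each nonzero summand $1_{\beta'} W$, I would restrict to $G$: Mackey's formula yields $\restr_G^I W \cong \bigoplus_{t \in [I/G]} tV$, and since every $t \in I$ satisfies $tBt^{-1} = B$, each conjugate $tV$ lies in $B$. Hence $\restr_G^I(1_{\beta'} W)$ is a nonzero direct summand of a $B$-module, and by \cref{Remark:cover} the block $\beta'$ must cover $B$.

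For assertion~(2), my plan is the Fong--Reynolds argument via the $(k\tilde{G},\beta)$-bimodule $k\tilde{G}\,1_\beta$. The hard part will be the vanishing $1_\beta\,g\,1_\beta = 0$ for every $g \in \tilde{G}\setminus I$. To establish it, first note that $1_B \in Z(kG)$ is fixed by $I$-conjugation by the very definition of $I$, so $1_B \in Z(kI)$; since $\beta$ covers $B$, primitivity of $1_\beta$ among the central idempotents of $kI$ forces $1_\beta \cdot 1_B = 1_\beta$. Then one computes
\begin{equation}
    1_\beta\,g\,1_\beta = 1_\beta\,g\,1_B\,1_\beta = 1_\beta\,(g\,1_B\,g^{-1})\,g\,1_\beta = 1_\beta\,1_{gBg^{-1}}\,g\,1_\beta,
\end{equation}
which vanishes because $g\notin I$ implies $gBg^{-1}\neq B$, so $1_B\,1_{gBg^{-1}}=0$ in $Z(kG)$ and hence $1_\beta\,1_{gBg^{-1}} = 1_\beta\,1_B\,1_{gBg^{-1}} = 0$.

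This vanishing yields $1_\beta\,k\tilde{G}\,1_\beta = 1_\beta\,kI\,1_\beta = \beta$, so $\End_{k\tilde{G}}(k\tilde{G}\,1_\beta)\cong\beta^{\opo}$. Indecomposability of $\beta$ as a $k$-algebra then forces the decomposition $k\tilde{G}\,1_\beta = \bigoplus_{\tilde{B}'}1_{\tilde{B}'}k\tilde{G}\,1_\beta$ to be concentrated on a single block $\tilde{B}$ of $k\tilde{G}$, and assertion~(1), applied once more after inducing with $\induc_I^{\tilde{G}}$, will guarantee that $\tilde{B}$ covers $B$. The remaining step is the routine Morita-theoretic check that the projective $\tilde{B}$-module $k\tilde{G}\,1_\beta$, whose endomorphism algebra is already identified as $\beta^{\opo}$, is a progenerator of $\tilde{B}\lmod$; once that is in place, the functor $k\tilde{G}\,1_\beta\otimes_\beta -$, which agrees with $\induc_I^{\tilde{G}}$ restricted to $\beta\lmod$, delivers the claimed Morita equivalence. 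The crux is the vanishing $1_\beta\,g\,1_\beta = 0$ for $g\notin I$; the rest is standard assembly.
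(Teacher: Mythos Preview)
The paper does not supply its own proof of this proposition: it is quoted directly from Nagao--Tsushima \cite[Theorems 5.5.10 and 5.5.12]{MR998775}, so there is nothing to compare against on the paper's side. Your argument is essentially the classical Fong--Reynolds proof that appears in that reference, and the key computation $1_\beta\,g\,1_\beta=0$ for $g\notin I$ is carried out correctly.

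Two points deserve tightening. First, your appeal to ``assertion~(1), applied once more after inducing with $\induc_I^{\tilde G}$'' does not literally apply, since assertion~(1) concerns $\induc_G^{I}$, not $\induc_I^{\tilde G}$. The cleanest fix is direct: from $1_\beta=1_{\tilde B}1_\beta$ (because $k\tilde G\,1_\beta$ is a $\tilde B$-module) and $1_\beta=1_B 1_\beta$ (because $\beta$ covers $B$) one gets $1_B 1_{\tilde B}1_\beta=1_\beta\neq 0$, hence $1_B 1_{\tilde B}\neq 0$. Second, calling the progenerator check ``routine'' undersells the one remaining piece of content: you must show that $k\tilde G\,1_\beta$ generates $\tilde B\lmod$, i.e.\ that $1_\beta$ does not annihilate any simple $\tilde B$-module. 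The standard route is to verify that $1_\beta\,\restr_I^{\tilde G}$ is quasi-inverse to $\induc_I^{\tilde G}$ on $\beta\lmod$ using the identity $1_\beta\,k\tilde G\,1_\beta=\beta$ together with a Mackey decomposition of $\restr_I^{\tilde G}\induc_I^{\tilde G}$; alternatively one argues via Clifford theory that every simple $\tilde B$-module, restricted to $I$, has a summand in some block of $kI$ covering $B$, and then shows this block must be $\beta$. Either way this step is where the equivalence is actually earned, so it should not be left implicit.
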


\begin{proposition}[{\cite[Corollary 5.5.6, Theorem 5.5.13, Lemma 5.5.14]{MR998775}}]\label{p-power index covering uniqueness}
	Let \(G\) be a normal subgroup of \(\tilde{G}\) and \(B\) a block of \(kG\), then the following conditions hold:
	\begin{enumerate}
		\item If \(\tilde{G}/G\) is a \(p\)-group, then there exists a unique block of \(k\tilde{G}\) covering \(B\).
		\item If a defect group \(D\) of \(B\) satisfies \(C_{\tilde{G}}(D)\subset G\), then  there exists a unique block of \(k\tilde{G}\) covering \(B\).
	\end{enumerate}
\end{proposition}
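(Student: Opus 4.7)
The plan is to exploit the realization of $k\tilde G$ as a strongly graded algebra $kG\ast(\tilde G/G)$ with identity component $kG$ (the same structure that appears in the proof of \cref{induc dsmmand number 2021-09-07 10:26:20}). Write $\Omega=\{B=B_{0},\dots,B_{m-1}\}$ for the $\tilde G$-conjugacy orbit of $B$ in the set of blocks of $kG$ (cf.\ \cref{Clifford's Theorem block ver}\,(1)) and set $e_{\Omega}:=\sum_{i}1_{B_{i}}$. First I would verify that $e_{\Omega}\in Z(kG)^{\tilde G}\subset Z(k\tilde G)$: it is central in $kG$ and fixed by conjugation by a set of coset representatives of $\tilde G/G$, hence commutes with all of $\tilde G$. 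Consequently its primitive decomposition in $Z(k\tilde G)$ produces exactly the block idempotents of $k\tilde G$ covering some $B_{i}$, which by \cref{Remark:cover} is the same as covering $B$. Uniqueness thus reduces to showing that $e_{\Omega}$ is already primitive in $Z(k\tilde G)$.

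For part (1), suppose $\tilde G/G$ is a $p$-group. The grading lets one identify $e_{\Omega}\cdot k\tilde G\cdot e_{\Omega}$ as a crossed product over $\bigoplus_{i}B_{i}$ by the quotient $I_{\tilde G}(B)/G$, and after reducing modulo the Jacobson radical one obtains a twisted group algebra $k_{\alpha}[I_{\tilde G}(B)/G]$ (the same reduction used in the proof of \cref{induc dsmmand number 2021-09-07 10:26:20}). Since $I_{\tilde G}(B)/G$ is a $p$-group and $\mathrm{char}\,k=p$, this twisted group algebra is local, so its centre contains a unique primitive idempotent; by idempotent lifting, $e_{\Omega}$ is primitive in $Z(k\tilde G)$, giving the unique covering block. (One also recovers as a byproduct that $B$ is automatically $\tilde G$-stable, although this is not required by the statement.)

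For part (2), suppose $C_{\tilde G}(D)\subset G$ for some defect group $D$ of $B$. Here the natural idea is to pass through Brauer's first main theorem: let $b$ be the Brauer correspondent of $B$ in $kN_{G}(D)$, a block with the same defect group $D$. Blocks of $k\tilde G$ covering $B$ correspond bijectively via Brauer correspondence to blocks of $kN_{\tilde G}(D)$ covering $b$. The centralizer hypothesis immediately gives $C_{\tilde G}(D)=C_{G}(D)$, and a Frattini-type argument reduces the action of $N_{\tilde G}(D)/N_{G}(D)$ on $b$ to an action with no nontrivial inertial contribution; the associated twisted group algebra then again has a unique block, yielding uniqueness.

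The principal obstacle in both parts is rigorously identifying the corner $e_{\Omega}Z(k\tilde G)/J$ with the centre of the twisted group algebra predicted by the crossed-product description. Once that identification is in place, part (1) falls out of the locality of twisted group algebras of $p$-groups in characteristic $p$, and part (2) falls out of the injectivity of the Brauer correspondence under the centralizer hypothesis. Both reductions are the contents of the cited sections of \cite{MR998775}, so the plan is essentially to import that machinery rather than to produce a new argument.
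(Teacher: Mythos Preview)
The paper does not supply its own proof of this proposition; it is quoted verbatim from \cite[Corollary 5.5.6, Theorem 5.5.13, Lemma 5.5.14]{MR998775}. So there is no ``paper's proof'' against which to compare, and your sketch has to stand on its own. Unfortunately, as written it does not.

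\textbf{Part (1).} Your key reduction is incorrect. The reduction in the proof of \cref{induc dsmmand number 2021-09-07 10:26:20} produces a twisted group algebra only because the degree-zero piece there is \(\End_{kG}(U)\) for a \emph{single indecomposable} \(U\), hence local with residue field \(k\). In your situation the degree-zero piece of \(e_{\Omega}k\tilde G\) is \(\bigoplus_i B_i\), whose semisimple quotient is a product of several matrix algebras, not \(k\). Passing to \(e_\Omega k\tilde G\big/ J(\bigoplus_i B_i)\,e_\Omega k\tilde G\) therefore yields a crossed product of a (generally non-local) semisimple algebra by \(\tilde G/G\), not the twisted group algebra \(k_\alpha[\inertiagp_{\tilde G}(B)/G]\) you claim. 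Locality of that crossed product is exactly what is at stake, so the gap is genuine. (Also note the grading of \(e_\Omega k\tilde G\) is by \(\tilde G/G\), not \(\inertiagp_{\tilde G}(B)/G\).) The standard route, as in \cite{MR998775}, is first to invoke Fong--Reynolds (\cref{Morita equivalence covering block}) to reduce to the case \(\inertiagp_{\tilde G}(B)=\tilde G\), and then argue with the crossed product \(1_B k\tilde G\); but even then one must do more than quote locality of \(k_\alpha[\tilde G/G]\), since \(B/J(B)\) is not a field. Finally, your parenthetical assertion that \(B\) is automatically \(\tilde G\)-stable is false: for \(p=2\), \(G=C_3\), \(\tilde G=\mathfrak{S}_3\), the two nontrivial blocks of \(kC_3\) are swapped by \(\tilde G\), yet each is covered by a unique block of \(k\mathfrak{S}_3\).

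\textbf{Part (2).} The sketch is too vague to evaluate. Saying that a ``Frattini-type argument reduces the action \ldots\ to an action with no nontrivial inertial contribution'' does not explain why the hypothesis \(C_{\tilde G}(D)\subset G\) forces \(\inertiagp_{N_{\tilde G}(D)}(b)=N_G(D)\), and the subsequent claim that ``the associated twisted group algebra then again has a unique block'' needs justification: \(N_{\tilde G}(D)/N_G(D)\) is not assumed to be a \(p\)-group, so the mechanism from part (1) does not apply. The argument in \cite{MR998775} instead compares Brauer correspondents directly and uses the equality \(C_{\tilde G}(D)=C_G(D)\) in a sharper way than your outline indicates.
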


\begin{lemma}\label{counting 2021-09-10 17:42:55}
	Let \(G\) be a normal subgroup of a finite group  \(\tilde{G}\) satisfying that \(H^2(\tilde{G}/G,k^\times)=1\) and that \(k[\tilde{G}/G]\) is basic as a \(k\)-algebra, \(B\) a \(\tilde{G}\)-stable block of \(kG\) and \(A\) the direct product of the all blocks of \(k\tilde{G}\) covering \(B\). Assume that any simple \(B\)-module is \(\tilde{G}\)-stable. Then we have \(|A|=|k[\tilde{G}/G]|\cdot|B|\).
\end{lemma}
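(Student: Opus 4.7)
My plan is to count the isomorphism classes of simple $A$-modules directly; since $A$ is a direct product of blocks, this count equals $|A|$. Because $k[\tilde{G}/G]$ is basic over the algebraically closed field $k$, every simple $k[\tilde{G}/G]$-module is one-dimensional, so the integer $e$ appearing in \cref{extending action} coincides with $|k[\tilde{G}/G]|$, reducing the problem to showing that the number of simple $A$-modules equals $|B|\cdot e$.

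The easy half is to exhibit $|B|\cdot e$ distinct simple $A$-modules. For each simple $B$-module $S$, the hypotheses of \cref{extending action} are satisfied, so $S$ admits exactly $e$ pairwise non-isomorphic extending modules $\tilde{S}^{(1)},\ldots,\tilde{S}^{(e)}$ in $k\tilde{G}\lmod$; by \cref{rem extending action hom} each is simple, and \cref{Remark:cover} places each in some block of $k\tilde{G}$ covering $B$, hence in $A$. The same remark guarantees that non-isomorphic simple $B$-modules yield non-isomorphic extensions, producing the required $|B|\cdot e$ simple $A$-modules.

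The harder half, which is the main obstacle, is showing every simple $A$-module arises this way. Given such a $T$, lying in a block $\tilde{B}$ covering $B$, \cref{Remark:cover} together with \cref{Clifford's Theorem of simple module} and the $\tilde{G}$-stability of $B$ and of every simple $B$-module force $\restr_G^{\tilde{G}} T\cong S^{\oplus r}$ for a unique simple $B$-module $S$ and some $r\ge 1$; I must show $r=1$. I will achieve this by analyzing $\induc_G^{\tilde{G}} S$. Mimicking the argument in the proof of \cref{induc dsmmand number 2021-09-07 10:26:20}, and using that $\End_{kG}(S)=k$ has trivial Jacobson radical together with $H^2(\tilde{G}/G,k^\times)=1$, one obtains a $k$-algebra isomorphism $\End_{k\tilde{G}}(\induc_G^{\tilde{G}} S)\cong k[\tilde{G}/G]$. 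Basicness of this endomorphism ring then forces $\induc_G^{\tilde{G}} S$ to decompose into exactly $e$ pairwise non-isomorphic indecomposable summands, each with simple top; so the head of $\induc_G^{\tilde{G}} S$ has total composition multiplicity $e$. Frobenius reciprocity (\cref{Theorem: Frobenius and projective}) gives
\[
\dim_k\Hom_{k\tilde{G}}(\induc_G^{\tilde{G}} S,\tilde{S}^{(j)})=\dim_k\Hom_{kG}(S,S)=1
\]
for each $j$, so the $e$ pairwise non-isomorphic extensions $\tilde{S}^{(j)}$ already exhaust the head of $\induc_G^{\tilde{G}} S$. Since $\dim_k\Hom_{k\tilde{G}}(\induc_G^{\tilde{G}} S,T)=\dim_k\Hom_{kG}(S,\restr_G^{\tilde{G}} T)=r\ge 1$, the simple module $T$ must also appear in that head, forcing $T\cong\tilde{S}^{(j)}$ for some $j$ and $r=1$.

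Combining the two halves produces a bijection between simple $A$-modules and pairs $(S,j)$ with $S$ a simple $B$-module and $1\le j\le e$, whence $|A|=|B|\cdot e=|B|\cdot|k[\tilde{G}/G]|$. The crux of the argument is pinning down the head of $\induc_G^{\tilde{G}} S$ via the basic algebra structure of its endomorphism ring.
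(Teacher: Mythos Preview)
Your proof contains a genuine gap at the step where you assert that basicness of \(\End_{k\tilde G}(\induc_G^{\tilde G}S)\cong k[\tilde G/G]\) forces each indecomposable summand of \(\induc_G^{\tilde G}S\) to have \emph{simple top}. Basicness only yields that \(\induc_G^{\tilde G}S\) decomposes into \(e\) pairwise non-isomorphic indecomposables; it says nothing about their tops. For instance, over the path algebra of the quiver \(2\to 1\leftarrow 3\) the indecomposable injective \(I_1\) has \(\End(I_1)\cong k\) (certainly basic) but \(I_1/\Rad I_1\cong S_2\oplus S_3\) is not simple. Without the simple-top claim you cannot conclude that the head of \(\induc_G^{\tilde G}S\) has total length \(e\); a priori there could exist a further simple \(A\)-module \(T\) with \(\restr_G^{\tilde G}T\cong S^{\oplus r}\) and \(r\ge 2\) sitting in that head, which is exactly the possibility you are trying to exclude. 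Thus the argument, as written, is circular at this point.

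The paper avoids this difficulty by a more direct route: having fixed one extension \(\tilde S\), it uses \cref{rem ind and extending module 2021-09-29 08:59:16} to write \(\induc_G^{\tilde G}S\cong \tilde S\otimes k[\tilde G/G]\). A composition series of \(k[\tilde G/G]\) then shows that every composition factor of \(\induc_G^{\tilde G}S\) is of the form \(\tilde S\otimes V\) with \(V\) one-dimensional, i.e.\ one of the \(\tilde S^{(j)}\). Since your simple \(T\) embeds into \((\tilde S\otimes k[\tilde G/G])^{\oplus r}\) via the unit of adjunction, Jordan--H\"older forces \(T\cong\tilde S^{(j)}\) immediately. You can repair your argument by inserting exactly this observation in place of the unproven simple-top claim; note however that doing so renders the endomorphism-ring analysis unnecessary.
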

\begin{proof}
	By \cref{rem extending action hom} and \cref{Remark:cover},
	any extending \(k\tilde{G}\)-module of a simple \(B\)-module is also a simple \(A\)-module, and we get that \(|A|\geq|k[\tilde{G}/G]|\cdot|B|\).
	On the other hand,
	for an arbitrary simple \(A\)-module \(S'\), there exists a simple \(B\)-submodule \(S\) of \(\restr_G^{\tilde{G}}S'\) such that \(\restr_G^{\tilde{G}}S'\cong S^{\oplus r}\) for some positive integer \(r\) by \cref{Clifford's Theorem of simple module} and \cref{Remark:cover}.
	We denote an extending \(k\tilde{G}\)-module of \(S\) by \(\tilde{S}\), which is also a simple \(A\)-module.
	By \cref{rem ind and extending module 2021-09-29 08:59:16}, we get
	\begin{align}
		S'\subset \induc_G^{\tilde{G}}\restr_G^{\tilde{G}} S'
		 & \cong\induc_G^{\tilde{G}}S^{\oplus r}               \\
		 & \cong (\tilde{S}\otimes k[\tilde{G}/G])^{\oplus r}.
	\end{align}
	Therefore, we get a \(1\)-dimensional \(k[\tilde{G}/G]\)-module \(V\) such that \(S'\cong \tilde{S}\otimes V\) by the Jordan--H\"{o}lder theorem.
	Therefore, \(S'\) is also an extending \(k\tilde{G}\)-module of \(S\).
	Hence, we get that \(|A|\leq|k[\tilde{G}/G]|\cdot|B|\) by \cref{extending action}.
\end{proof}

\section{The main results and their applications}
In this section, we give lemmas and prove the main results.
After that, we give some applications and examples of our main results.
\subsection{Main theorems and their proof}\label{subsec main results 2022-09-07 16:40:50}
The following assertions are related to the assumptions of our main results.
\begin{lemma}\label{stable bricks conditions 2021-09-30 17:16:10}
	Let \(G\) be a normal subgroup of \(\tilde{G}\) and \(B\) a \(\tilde{G}\)-stable block of \(kG\). Then the following are equivalent:
	\begin{enumerate}
		\item Any left finite brick in \(B\lmod\) is \(\tilde{G}\)-stable.\label{stable item 2021-09-07 08:27:11}
		\item Any indecomposable \(\tau\)-rigid \(B\)-module is \(\tilde{G}\)-stable.\label{stable item 2021-09-07 08:27:25}
		\item Any right finite brick in \(B\lmod\) is \(\tilde{G}\)-stable.\label{stable item 2021-09-07 08:27:39}
		\item Any indecomposable \(\tau^{-1}\)-rigid \(B\)-module is \(\tilde{G}\)-stable.\label{stable item 2021-09-07 08:27:49}
	\end{enumerate}
\end{lemma}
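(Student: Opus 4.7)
The key observation is that, because \(G\) is normal in \(\tilde{G}\) and \(B\) is a \(\tilde{G}\)-stable block of \(kG\), for each \(\tilde{g}\in\tilde{G}\) the conjugation functor
\[
	\tilde{g}(-)\colon B\lmod\longrightarrow B\lmod,\qquad V\longmapsto \tilde{g}V,
\]
is a \(k\)-linear auto-equivalence of the category \(B\lmod\). As such, it preserves every categorical construction used in the statements of (1)--(4): indecomposability, the Auslander--Reiten translates \(\tau\) and \(\tau^{-1}\), the Nakayama functor \(\nu\), the property of being a brick, the radical \(R(-,-)\) and its dual \(S(-,-)\), the subcategories \(\Fac(-)\) and \(\Sub(-)\), and hence the properties of being a \(\tau\)-rigid module, a \(\tau^{-1}\)-rigid module, a left finite brick, or a right finite brick. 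In particular, for any \(X\in B\lmod\) and any \(\tilde{g}\in\tilde{G}\) one has natural isomorphisms \(\tilde{g}(\tau X)\cong\tau(\tilde{g}X)\), \(\tilde{g}(\tau^{-1}X)\cong\tau^{-1}(\tilde{g}X)\), \(\tilde{g}(\nu X)\cong\nu(\tilde{g}X)\), and \(\tilde{g}(X/R(X,X))\cong(\tilde{g}X)/R(\tilde{g}X,\tilde{g}X)\), with the analogous statement for \(S(-,-)\).

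The plan is to pass the stability condition through the bijections already collected in \cref{asai correspondence,dasai correspondence 2021-09-24 13:18:33,indec tau taui corr2021-09-09 16:44:38}. For \ref{stable item 2021-09-07 08:27:11}\(\Leftrightarrow\)\ref{stable item 2021-09-07 08:27:25} I would use the bijection \eqref{2021-09-22 14:16:57} between \(\itaurigid B\) and \(\flbrick B\), sending \(X\) to \(X/R(X,X)\). Because of the compatibility above, \(\tilde{g}X\cong X\) for an indecomposable \(\tau\)-rigid \(X\) is equivalent to \(\tilde{g}(X/R(X,X))\cong X/R(X,X)\), i.e.\ to \(\tilde{G}\)-stability of the associated left finite brick. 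This gives the equivalence. The equivalence \ref{stable item 2021-09-07 08:27:39}\(\Leftrightarrow\)\ref{stable item 2021-09-07 08:27:49} is the dual argument using \eqref{2021-09-24 13:13:50} and \(Y\mapsto S(Y,Y)\).

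Finally, for \ref{stable item 2021-09-07 08:27:25}\(\Leftrightarrow\)\ref{stable item 2021-09-07 08:27:49} I would invoke the bijection \eqref{tau or top 2021-09-11 19:19:09}, which identifies \(\itaurigid B\) with \(\itauirigid B\) by \(X\mapsto\tau X\) on non-projectives and \(X\mapsto\nu X\) on projectives. Since both \(\tau\) and \(\nu\) commute (up to isomorphism) with the conjugation \(\tilde{g}(-)\), the conditions ``every indecomposable \(\tau\)-rigid \(B\)-module is \(\tilde{G}\)-stable'' and ``every indecomposable \(\tau^{-1}\)-rigid \(B\)-module is \(\tilde{G}\)-stable'' are equivalent. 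Combining the three equivalences closes the circle \ref{stable item 2021-09-07 08:27:11}\(\Leftrightarrow\)\ref{stable item 2021-09-07 08:27:25}\(\Leftrightarrow\)\ref{stable item 2021-09-07 08:27:49}\(\Leftrightarrow\)\ref{stable item 2021-09-07 08:27:39}.

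The only non-routine point, and hence the main thing to verify carefully, is that \(\tilde{g}(-)\) really defines an auto-equivalence of \(B\lmod\) (using that \(G\trianglelefteq\tilde{G}\) makes the twisted action \(g\cdot(\tilde{g}u)=\tilde{g}(\tilde{g}^{-1}g\tilde{g}u)\) well-defined on \(B\) and that \(\tilde{G}\)-stability of \(B\) ensures \(\tilde{g}V\) lies again in \(B\lmod\)) and that this auto-equivalence commutes with \(\tau\), \(\tau^{-1}\), \(\nu\), and the radical/socle constructions \(R(-,-)\), \(S(-,-)\); once this is in hand, each of the four equivalences above is a direct transport along the corresponding bijection.
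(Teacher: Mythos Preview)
Your proposal is correct and follows essentially the same approach as the paper: both exploit the bijections \eqref{2021-09-22 14:16:57}, \eqref{2021-09-24 13:13:50}, and \eqref{tau or top 2021-09-11 19:19:09}, together with the fact that conjugation by \(\tilde{g}\) is compatible with all the relevant constructions. The only cosmetic difference is that the paper transports stability through the torsion-class side (checking \(\tilde{g}\Fac(U)=\Fac(\tilde{g}U)\) and \(\tilde{g}\torscl(S)=\torscl(\tilde{g}S)\) and using the injections into \(\ftors B\)), whereas you transport it directly along the bijection \(X\mapsto X/R(X,X)\); for the equivalence \ref{stable item 2021-09-07 08:27:25}\(\Leftrightarrow\)\ref{stable item 2021-09-07 08:27:49} the paper cites \cref{2021-09-09 18:06:40} for the commutation of \(\tau,\tau^{-1}\) with conjugation, which is exactly the point you flag as the one thing needing verification.
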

\begin{proof}
	Let \(S\) be a right finite brick over \(B\) and \(U\) the indecomposable \(\tau\)-rigid module corresponding to \(S\) in the bijection \eqref{2021-09-22 14:16:57}.
	We can easily check that \(\tilde{g}\Fac(U)=\Fac(\tilde{g}U)\) and \(\tilde{g}\torscl(S)=\torscl(\tilde{g}S)\) for any \(\tilde{g}\in \tilde{G}\), which implies the equivalence of \ref{stable item 2021-09-07 08:27:11} and \ref{stable item 2021-09-07 08:27:25} by \cref{asai correspondence}.
	The similar argument show the equivalence of \ref{stable item 2021-09-07 08:27:39} and \ref{stable item 2021-09-07 08:27:49}.
	We prove the equivalence of \ref{stable item 2021-09-07 08:27:25} and \ref{stable item 2021-09-07 08:27:49}.
	For any indecomposable non-projective module \(V\), the \(\tilde{G}\)-stability of \(V\) means those of \(\Omega V\), \(\Omega^{-1} V\), \(\tau V\) and \(\tau^{-1} V\) by \cref{2021-09-09 18:06:40}.
	Hence, the conclusion follows from \cref{indec tau taui corr2021-09-09 16:44:38}.
\end{proof}

The following is a slight generalization of \cite[Lemma 3.22]{MR4243358}.
\begin{lemma}[{\cite[Lemma 3.22]{MR4243358}}]\label{Cyclicdefect p-power index 2022-08-28 21:45:58}
	Let \(G\) be a normal subgroup of a finite group \(\tilde{G}\)  and \(B\) a \(\tilde{G}\)-stable block of \(kG\) with a cyclic defect group. Then the following hold:
	\begin{enumerate}
		\item If any simple \(B\)-module is \(\tilde{G}\)-stable, then any indecomposable \(B\)-module is also \(\tilde{G}\)-stable.\label{2021-12-06 18:35:10}
		\item If \(\tilde{G}/G\) is a \(p\)-group, then any indecomposable \(B\)-module is \(\tilde{G}\)-stable.\label{2021-12-06 18:35:40}
	\end{enumerate}
\end{lemma}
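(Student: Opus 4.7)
The plan is to exploit the explicit structure of cyclic-defect blocks from \cref{Theorem:Dade Brauer Tree}, reducing part~(2) to part~(1) via a combinatorial argument on the Brauer tree. First I would dispose of the trivial-defect case: by \cref{Theorem:Dade Brauer Tree}(1), if the defect group of $B$ is trivial then $B$ is a simple $k$-algebra, so $B$ has only one indecomposable module (its unique simple), and both assertions are immediate. Henceforth assume the defect group of $B$ is non-trivial; then by \cref{Theorem:Dade Brauer Tree}\ref{2021-12-06 18:37:08}, $B$ is a Brauer tree algebra whose Brauer tree $T$ has $e$ edges with $e\mid p-1$, and in particular $e<p$.

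For part~(2) I would reduce to part~(1). Conjugation by $\tilde{G}$ acts by $k$-algebra automorphisms on the $\tilde{G}$-stable block $B$, inducing an action of $\tilde{G}/G$ on the $e$ isomorphism classes of simple $B$-modules. This action corresponds to automorphisms of the Brauer tree $T$; since $\Aut(T)$ acts faithfully on the edges of $T$ (trivially if $e=1$, and by a short argument on trees if $e\geq 2$), it embeds into $S_e$, so $|\Aut(T)|$ divides $e!$, which is coprime to $p$. As $\tilde{G}/G$ is a $p$-group, the induced action is therefore trivial, so every simple $B$-module is $\tilde{G}$-stable and we are reduced to part~(1).

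For part~(1), assume every simple $B$-module is $\tilde{G}$-stable. For each $\tilde{g}\in\tilde{G}$, the twist $M\mapsto \tilde{g}M$ is an exact autoequivalence of $B\lmod$ that, by \cref{2021-09-09 18:06:40}, commutes with the formation of projective covers, injective envelopes, syzygies and cosyzygies, and by assumption fixes every simple $B$-module up to isomorphism. I would then appeal to the classification of indecomposable modules over a Brauer tree algebra: each indecomposable $B$-module is a string module on $T$, uniquely determined up to isomorphism by its walk on $T$, which is recoverable from its Loewy series. Since the twist preserves Loewy structure and fixes each edge of $T$, the walk of $\tilde{g}M$ coincides with that of $M$, whence $\tilde{g}M\cong M$.

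The main obstacle is making this last step rigorous, namely that an exact autoequivalence of $B\lmod$ fixing every simple up to isomorphism must fix every indecomposable up to isomorphism. A self-contained alternative is to argue via the Auslander--Reiten quiver: the twist induces an AR-quiver automorphism commuting with the AR translation $\tau$, fixing every simple vertex and, by \cref{2021-09-09 18:06:40}, every projective vertex; on the connected AR-quiver of the Brauer tree algebra $B$, such an automorphism must be the identity.
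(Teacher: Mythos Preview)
Your proposal is correct and lands on the same two ideas as the paper, with one genuine difference in packaging for part~(1) and a cleaner reduction for part~(2).

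For part~(2), the paper simply cites \cite[Lemma 2.2]{MR2592757} to get that every simple \(B\)-module is \(\tilde{G}\)-stable, and then invokes part~(1). Your argument via the Brauer tree is a self-contained replacement: since \(e\mid p-1\), the \(p\)-group \(\tilde{G}/G\) acts on a set of size \(e<p\) and hence trivially. (In fact you do not even need \(\Aut(T)\): any orbit of a \(p\)-group on an \(e\)-element set with \(e<p\) is a singleton.) This is at least as good as the paper's citation.

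For part~(1), the paper and you use the same structural input (non-projective indecomposables over a Brauer tree algebra are string/uniserial modules), but the paper makes the key step rigorous by an explicit induction on composition length: peel off a simple \(S\) from one end of the string to get a non-split sequence \(0\to S\to V\to V'\to 0\) (or its dual), apply the twist, and use that \(\dim_k\Ext_B^1(V',S)=1\) to conclude \(\tilde{g}V\cong V\) from \(\tilde{g}S\cong S\) and \(\tilde{g}V'\cong V'\). Your formulation ``the walk is recoverable from the Loewy series, and the twist preserves the Loewy series'' is the same argument in disguise, but as stated it hides exactly the point that needs proof: that a uniserial \(B\)-module is determined by its sequence of composition factors. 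That uniqueness is precisely the \(1\)-dimensionality of the relevant \(\Ext^1\) at each step, which is what the paper isolates. I would rewrite your argument as the paper's induction; it is no longer, and it removes the appeal to an unproved classification statement.

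Your AR-quiver alternative, by contrast, has a real gap as written: it is not automatic that an automorphism of the AR-quiver of \(B\) commuting with \(\tau\) and fixing all simple and projective vertices is the identity. Making this precise requires knowing where the simples sit in the tube and that they, together with \(\tau\) and the sectional structure, reach every vertex; justifying that uses the same uniserial/\(\Ext^1\) information as the induction, so nothing is gained. I would drop this alternative.
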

\begin{proof}
	We prove the assertion \ref{2021-12-06 18:35:10} by using similar way as \cite[Lemma 3.22]{MR4243358}.
	We prove that \(\decompgp_{\tilde{G}}(V)=\tilde{G}\) for indecomposable \(B\)-module \(V\) by induction on the composition length of \(V\).
	If \(V\) is simple or indecomposable projective, there is nothing to show by the assumption and \cref{2021-09-09 18:06:40}.
	We assume that the composition length of \(V\) is two or more and that \(V\) is not projective.
	We remark that any indecomposable non-projective \(B\)-module is a string module (for example, see \cite{MR3823391}).
	Hence, we can take a simple \(B\)-module \(S\) and an indecomposable \(B\)-module \(V'\) which satisfy at least one of the following conditions:
	\begin{enumerate}
		\item There exists a non-split exact sequence
		      \begin{equation}
			      \begin{tikzcd}
				      0 \ar[r] & S \ar[r,"\mu"] & V \ar[r,"\nu"] & V'  \ar[r] & 0.
			      \end{tikzcd}
		      \end{equation}
		\item There exists a non-split exact sequence
		      \begin{equation}
			      \begin{tikzcd}
				      0 \ar[r] & V' \ar[r,"\mu'"] & V \ar[r,"\nu'"] & S \ar[r] & 0.
			      \end{tikzcd}
		      \end{equation}
	\end{enumerate}
	It suffices to prove \(\decompgp_{\tilde{G}}(V)=\tilde{G}\) under the assumption that there exists the first exact sequence;
	the other case can be proved similarly.
	Since \(\Ext_B^1(V', S)\) is \(1\)-dimensional over \(k\) (see \cite[Proposition 21.7]{MR860771}),
	we can prove the conclusion by induction on the composition length of \(V\).

	By \cref{Theorem:Dade Brauer Tree} and \cite[Lemma 2.2]{MR2592757},
	any simple \(B\)-module \(S\) is \(\tilde{G}\)-stable,
	we have the assertion \ref{2021-12-06 18:35:40} from the first one.
\end{proof}

Now we give proofs of the main theorems. First, we state the main result again, which are stated in the introduction.
\begin{theorem}\label{MT 2021-09-07 13:50:56}
	Let \(\tilde{G}\) be a finite group, \(G\) a normal subgroup of \(\tilde{G}\), \(B\) be a block of \(kG\) and \(\tilde{B}\) be a block of \(k\tilde{G}\) covering \(B\).
	We assume the following conditions hold:
	\begin{itemize}
		\item Any left finite bricks in \(B\lmod\) is \(\inertiagp_{\tilde G}(B)\)-stable.
		\item \(H^2(\inertiagp_{\tilde{G}}(B)/G,k^\times)=1\).
		\item \(k[\inertiagp_{\tilde{G}}(B)/G]\) is basic as a \(k\)-algebra.
	\end{itemize}
	Then the maps
	\begin{equation}\label{stau corr 2021-09-07 13:51:09}
		\begin{tikzcd}[ampersand replacement=\&,row sep=1pt]
			\stautilt B \ar[r]\&\stautilt \tilde{B}\\
		\end{tikzcd}
	\end{equation}
	defined by \(\stautilt B\ni M\mapsto \tilde{B}\induc_G^{\tilde{G}} M \in \stautilt \tilde{B}\) and
	\begin{equation}\label{twotilt corr 2021-09-10 17:53:30}
		\begin{tikzcd}[ampersand replacement=\&,row sep=1pt]
			\twotilt B \ar[r]\&\twotilt \tilde{B}\\
		\end{tikzcd}
	\end{equation}
	defined by \(\twotilt B\ni T\mapsto \tilde{B}\induc_G^{\tilde{G}} T \in \twotilt \tilde{B}\) are well-defined and injective.
	Moreover, we have the following commutative diagram:
	\begin{equation}\label{cd 2021-12-16 20:06:39}
		\begin{tikzcd}[ampersand replacement=\&, column sep=2cm]
			\stautilt B\ar[d,"\text{\eqref{2siltcorr} for \(B\)}"',"\wr"] \ar[r,"\text{\eqref{stau corr 2021-09-07 13:51:09}}"]\&\stautilt \tilde{B} \ar[d,"\text{\eqref{2siltcorr} for \(\tilde{B}\)}","\wr"']\\
			\twotilt B\ar[r,"\text{\eqref{twotilt corr 2021-09-10 17:53:30}}"']\&\twotilt \tilde{B}.
		\end{tikzcd}
	\end{equation}
\end{theorem}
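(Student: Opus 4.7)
The plan is to reduce to the case $\tilde{G}=\inertiagp_{\tilde{G}}(B)$ via the Fong--Reynolds Morita equivalence (\cref{Morita equivalence covering block}), verify $\tau$-rigidity and the correct numerical data for $\induc_G^{\tilde{G}}M$ block-by-block, and then transport the result to two-term tilting complexes through the Adachi--Iyama--Reiten bijection \eqref{2siltcorr}. For the reduction, let $I:=\inertiagp_{\tilde{G}}(B)$; by \cref{Morita equivalence covering block} there is a unique block $\beta$ of $kI$ covering $B$ such that $\induc_I^{\tilde{G}}$ restricts to a Morita equivalence $\beta\lmod\to\tilde{B}\lmod$, under which $\tilde{B}\induc_G^{\tilde{G}}M$ corresponds to $\beta\induc_G^I M$. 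Since Morita equivalences preserve support $\tau$-tilting modules and the bijection \eqref{2siltcorr}, it will suffice to prove the theorem with $\tilde{G},\tilde{B}$ replaced by $I,\beta$, and I will write $\tilde{G},\tilde{B}$ henceforth for the reduced data, so that $B$ becomes $\tilde{G}$-stable and the hypotheses apply to $\tilde{G}/G$ directly.

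By \cref{stable bricks conditions 2021-09-30 17:16:10} the brick-stability hypothesis upgrades to $\tilde{G}$-stability of every indecomposable $\tau$-rigid $B$-module, so \cref{induction functor projective cover 2021-09-07 12:06:47} applied summand by summand yields $\tau\induc_G^{\tilde{G}}M\cong\induc_G^{\tilde{G}}\tau M$. Combining Frobenius reciprocity (\cref{Theorem: Frobenius and projective}), Mackey's formula (\cref{Mackey's decomposition formula}) and the $\tilde{G}$-stability of $\tau M$ gives $\Hom_{k\tilde{G}}(\induc_G^{\tilde{G}}M,\tau\induc_G^{\tilde{G}}M)\cong\Hom_{kG}(M,\tau M)^{\oplus|\tilde{G}/G|}=0$, so $\induc_G^{\tilde{G}}M$ is $\tau$-rigid over $k\tilde{G}$; the same computation also gives $\Hom_{k\tilde{G}}(\induc_G^{\tilde{G}}P,\induc_G^{\tilde{G}}M)=0$, where $P$ is the basic projective $B$-module with $\Hom_B(P,M)=0$ and $|M|+|P|=|B|$ afforded by \cref{tau number remark 2021-09-07 12:11:53}.

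The counting is the heart of the argument. \cref{induc dsmmand number 2021-09-07 10:26:20} yields $|\induc_G^{\tilde{G}}X|=|k[\tilde{G}/G]|$ with pairwise non-isomorphic summands for every indecomposable $\tilde{G}$-stable $X$, hence $|\induc_G^{\tilde{G}}M|+|\induc_G^{\tilde{G}}P|=|B|\cdot|k[\tilde{G}/G]|$. Since simple $B$-modules are left finite bricks (\cref{serre simple 2021-09-09 19:04:50}) and hence $\tilde{G}$-stable, \cref{counting 2021-09-10 17:42:55} gives $|A|=|B|\cdot|k[\tilde{G}/G]|$, where $A$ is the direct product of all blocks of $k\tilde{G}$ covering $B$. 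Since $\induc_G^{\tilde{G}}M\oplus\induc_G^{\tilde{G}}P$ lies inside $A$, the $\tau$-rigid bound applied to each block combined with the aggregate equality forces $|\tilde{B}'\induc_G^{\tilde{G}}M|+|\tilde{B}'\induc_G^{\tilde{G}}P|=|\tilde{B}'|$ for every block $\tilde{B}'$ of $k\tilde{G}$ covering $B$, so $\tilde{B}'\induc_G^{\tilde{G}}M\in\stautilt\tilde{B}'$ by \cref{tau number remark 2021-09-07 12:11:53}, in particular for $\tilde{B}'=\tilde{B}$. Injectivity will follow by restriction: by \cref{Clifford's Theorem block ver}, Mackey, and \cref{Remark:cover} (which guarantees the $B$-component of the restriction is nonzero), $\restr_G^{\tilde{G}}\tilde{B}\induc_G^{\tilde{G}}M\cong M^{\oplus r}$ with $r\geq 1$, so any isomorphism $\tilde{B}\induc_G^{\tilde{G}}M\cong\tilde{B}\induc_G^{\tilde{G}}M'$ forces $M\cong M'$ by Krull--Schmidt.

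Finally, for the two-term tilting complex map and the commutative square \eqref{cd 2021-12-16 20:06:39}, \cref{induction functor projective cover 2021-09-07 12:06:47} ensures $\induc_G^{\tilde{G}}$ commutes with projective covers and syzygies on $\tilde{G}$-stable indecomposables, so a minimal projective presentation of $M$ induces, after projection to $\tilde{B}$, a minimal projective presentation of $\tilde{B}\induc_G^{\tilde{G}}M$; this identifies the two-term silting complexes under \eqref{2siltcorr}, and symmetry of group-algebra blocks (where silting equals tilting) upgrades silting to tilting. The main obstacle will be the counting step, since it is the one place where all three hypotheses on $\inertiagp_{\tilde{G}}(B)/G$ interact simultaneously (through \cref{induc dsmmand number 2021-09-07 10:26:20} and \cref{counting 2021-09-10 17:42:55}), and extracting a pointwise blockwise equality from the aggregate equality over $A$ requires invoking the $\tau$-rigid bound on each individual block.
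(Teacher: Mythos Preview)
Your proposal is correct and follows essentially the same route as the paper's proof: reduce via Fong--Reynolds to the case \(\tilde{G}=\inertiagp_{\tilde{G}}(B)\), then verify \(\tau\)-rigidity, the \(\Hom\)-vanishing for the projective complement, the summand count via \cref{induc dsmmand number 2021-09-07 10:26:20} and \cref{counting 2021-09-10 17:42:55}, and the preservation of minimal projective presentations by \cref{induction functor projective cover 2021-09-07 12:06:47}. You are slightly more explicit than the paper in invoking \cref{stable bricks conditions 2021-09-30 17:16:10} to get \(\tilde{G}\)-stability of each indecomposable \(\tau\)-rigid summand, in deducing the blockwise count from the aggregate equality over \(A\), and in spelling out injectivity via restriction; the paper works directly over \(A\) (so that projecting to \(\tilde{B}\) is immediate from the product decomposition of \(\stautilt A\)) and simply declares injectivity ``obvious''.
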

\begin{proof}
	By \cref{Theorem: Frobenius and projective} and \cref{Morita equivalence covering block}, we may assume that \(B\) is \(\tilde{G}\)-stable, that is, \(\tilde{G}=\inertiagp_{\tilde{G}}(B)\).
	For \(M \in \stautilt B\), let
	\begin{equation}
		P_1\xrightarrow{f_1}P_0\xrightarrow{f_0}M\rightarrow 0
	\end{equation}
	be a minimal projective presentation of \(M\) and \(P\) a projective module such that \(\Hom_{B}(P,M)=0\) and \(|P|+|M|=|B|\).
	Let \(A\) be the direct product algebra of all blocks of \(k\tilde{G}\) covering \(B\).
	In order to show that the maps \eqref{stau corr 2021-09-07 13:51:09} and \eqref{twotilt corr 2021-09-10 17:53:30} are well-defined and that the diagram \eqref{cd 2021-12-16 20:06:39} is commutative, we only have to show the following:
	\begin{enumerate}
		\item \(\induc_G^{\tilde{G}} M\) is a \(\tau\)-rigid \(k\tilde{G}\)-module.\label{item tau rigid 2021-10-25 15:28:38}
		\item \(\Hom_A(\induc_G^{\tilde{G}} P,\induc_G^{\tilde{G}} M)=0\).\label{item pair 2021-10-25 15:35:12}
		\item \(|\induc_G^{\tilde{G}} M|+|\induc_G^{\tilde{G}} P|=|A|\).\label{item count pair 2021-10-25 15:45:36}
		\item The sequence
		      \begin{equation}
			      \induc_G^{\tilde{G}}P_1\xrightarrow{\induc_G^{\tilde{G}}f_1}\induc_G^{\tilde{G}}P_0\xrightarrow{\induc_G^{\tilde{G}}f_0}\induc_G^{\tilde{G}}M\rightarrow 0
		      \end{equation}
		      is also a minimal projective presentation of \(k\tilde{G}\)-module \(\induc_G^{\tilde{G}}M\).\label{item minimal presentation 2021-10-25 16:02:34}
	\end{enumerate}
	We show \ref{item tau rigid 2021-10-25 15:28:38}, that is, \(\induc_G^{\tilde{G}} M\) is also a \(\tau\)-rigid \(k\tilde{G}\)-module as follows:

	\begin{align}
		\Hom_{k\tilde{G}}(\induc_G^{\tilde{G}}M,\tau\induc_G^{\tilde{G}}M)
		 & \cong \Hom_{k\tilde{G}}(\induc_G^{\tilde{G}}M,\induc_G^{\tilde{G}}\tau M) \\
		 & \cong \Hom_{kG}(\restr_G^{\tilde{G}}\induc_G^{\tilde{G}}M,\tau M)         \\
		 & \cong \Hom_{kG}(\bigoplus_{x\in[\tilde{G}/G]}xM,\tau M)                   \\
		 & \cong \bigoplus_{x\in[\tilde{G}/G]}\Hom_{B}(M,\tau M)                     \\
		 & = 0,
	\end{align}
	here the first isomorphism comes from \cref{induction functor projective cover 2021-09-07 12:06:47}, the second isomorphism comes from \cref{Theorem: Frobenius and projective}, the third isomorphism comes from \cref{Mackey's decomposition formula} and the fourth isomorphism comes from our assumption. We show \ref{item pair 2021-10-25 15:35:12} as follows:
	\begin{align}
		\Hom_{A}(\induc_G^{\tilde{G}} P,\induc_G^{\tilde{G}} M)
		 & \cong \Hom_{kG}(\restr_G^{\tilde{G}} \induc_G^{\tilde{G}} P,M) \\
		 & \cong \Hom_{kG}(\bigoplus_{x\in[\tilde{G}/G]}x P,M)            \\
		 & \cong \bigoplus_{x\in[\tilde{G}/G]}\Hom_{B}( P,M)              \\
		 & \cong 0,
	\end{align}
	here the first isomorphism comes from \cref{Theorem: Frobenius and projective}, the second isomorphism comes from \cref{Mackey's decomposition formula} and the third isomorphism comes from our assumption.
	The assertion \ref{item count pair 2021-10-25 15:45:36} is followed by \cref{counting 2021-09-10 17:42:55}.
	By \cref{tau number remark 2021-09-07 12:11:53}, we have that the map \eqref{stau corr 2021-09-07 13:51:09} is well-defined.
	The assertion \ref{item minimal presentation 2021-10-25 16:02:34} is followed by \cref{induction functor projective cover 2021-09-07 12:06:47}.
	Therefore, we get the map \eqref{twotilt corr 2021-09-10 17:53:30} is well-defined, and the diagram \eqref{cd 2021-12-16 20:06:39} is commutative.
	The injectivities of \eqref{stau corr 2021-09-07 13:51:09} and \eqref{twotilt corr 2021-09-10 17:53:30} are obvious.
\end{proof}

\begin{theorem}\label{MT sbrick 2021-09-10 09:27:04}
	With the same assumptions in \cref{MT 2021-09-07 13:50:56}, the following hold:
	\begin{enumerate}
		\item For any left finite semibrick \(S\) in \(B\lmod\) and its indecomposable direct summand \(S_i\) of \(S\), there exist exactly \(e:=|k[\inertiagp_{\tilde{G}}(B)/G]|\) isomorphism classes of extending \(k\inertiagp_{\tilde{G}}(B)\)-modules \(\tilde{S_i}^{(1)}, \ldots, \tilde{S_i}^{(e)}\) of \(S_i\).
		\item Then the map
		      \begin{equation}\label{sbrick corr 2021-09-10 09:27:09}
			      \begin{tikzcd}[ampersand replacement=\&,row sep=1pt]
				      \flsbrick B \ar[r]\&\flsbrick \tilde{B}\\
			      \end{tikzcd}
		      \end{equation}
		      defined by \(S\cong \bigoplus_{i=1}^{n_{S}}S_i\mapsto \tilde{B}\induc^{\tilde{G}}_{\inertiagp_{\tilde{G}}(B)}\left( \bigoplus_{i=1}^{n_S}\bigoplus_{j=1}^{e}\tilde{S}_i^{(j)} \right)\) is well-defined and injective, here \(S\cong \bigoplus_{i=1}^{n_{S}}S_i\) is a direct sum decomposition into bricks.
		\item We get the following  commutative diagram:
		      \begin{equation}
			      \begin{tikzcd}[ampersand replacement=\&, column sep=2cm]
				      \stautilt B\ar[d,"\text{\eqref{asaicorresp} for \(B\)}"',"\wr"] \ar[r,"\text{\eqref{stau corr 2021-09-07 13:51:09}}"]\&\stautilt \tilde{B} \ar[d,"\text{\eqref{asaicorresp} for \(\tilde{B}\)}","\wr"']\\
				      \flsbrick B\ar[r,"\text{\eqref{sbrick corr 2021-09-10 09:27:09}}"']\&\flsbrick \tilde{B}.
			      \end{tikzcd}
		      \end{equation}\label{cmuutative diangram 2021-09-10 09:50:21}
	\end{enumerate}
\end{theorem}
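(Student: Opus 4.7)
The plan is to proceed along the same lines as the proof of \cref{MT 2021-09-07 13:50:56}. By \cref{Morita equivalence covering block}, one may assume $\tilde{G}=\inertiagp_{\tilde{G}}(B)$, in which case $\tilde{B}$ is identified with one of the blocks of $k\tilde{G}$ sitting inside the direct product $A$ of all blocks of $k\tilde{G}$ covering $B$, and the induction functor $\induc^{\tilde{G}}_{\inertiagp_{\tilde{G}}(B)}$ becomes the identity. Assertion~(1) is then immediate from \cref{extending action}: each brick summand $S_i$ of $S$ is $\tilde{G}$-stable by \cref{stable bricks conditions 2021-09-30 17:16:10}, and since $k$ is algebraically closed and $k[\tilde{G}/G]$ is basic, every simple $k[\tilde{G}/G]$-module is one-dimensional, so the number of one-dimensional $k[\tilde{G}/G]$-modules appearing in \cref{extending action} equals $|k[\tilde{G}/G]|=e$.

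For assertions~(2) and~(3), I would first establish the commutative diagram of~(3) and deduce~(2) from it. Write $\tilde{M}:=\tilde{B}\induc_G^{\tilde{G}}M$, which belongs to $\stautilt \tilde{B}$ by \cref{MT 2021-09-07 13:50:56}; by \cref{asai correspondence}, $\tilde{M}/R(\tilde{M},\tilde{M})$ is characterised as the unique basic left finite semibrick $\tilde{T}\in\flsbrick \tilde{B}$ satisfying $\torscl(\tilde{T})=\Fac\tilde{M}$. Hence~(3) follows provided that $\tilde{T}_0:=\tilde{B}\bigl(\bigoplus_{i,j}\tilde{S}_i^{(j)}\bigr)$ is a basic semibrick in $\tilde{B}\lmod$ with $\torscl(\tilde{T}_0)=\Fac\tilde{M}$. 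The basic semibrick property combines \cref{brick extensions 2021-09-10 09:36:38} (which asserts that $\bigoplus_{i,j}\tilde{S}_i^{(j)}$ is a semibrick in $k\tilde{G}\lmod$) with \cref{rem extending action hom}, together with the observation that multiplication by the central idempotent $1_{\tilde{B}}$ merely deletes the summands lying in other blocks covering $B$ and preserves non-isomorphism among the remaining ones. Once~(3) is established, assertion~(2) follows: $\tilde{T}_0\cong\tilde{M}/R(\tilde{M},\tilde{M})$ is automatically left finite, and injectivity of the map is inherited from injectivity of \eqref{stau corr 2021-09-07 13:51:09}.

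The main obstacle is thus the torsion-class identity $\torscl(\tilde{T}_0)=\Fac\tilde{M}$. The strategy is as follows. Starting from $S=M/R(M,M)$, which generates $\Fac M$ as a torsion class in $B\lmod$ by \cref{asai correspondence}, apply exactness of $\induc_G^{\tilde{G}}$ together with \cref{filt and closure 2021-12-06 11:12:23} to obtain $\torscl(\induc_G^{\tilde{G}} S)=\torscl(\induc_G^{\tilde{G}} M)$ inside $k\tilde{G}\lmod$. Next, for each brick summand $S_i$ of $S$, use part~(2) of \cref{rem ind and extending module 2021-09-29 08:59:16} to replace $\torscl(\induc_G^{\tilde{G}} S_i)$ by $\torscl\bigl(\bigoplus_j\tilde{S}_i^{(j)}\bigr)$, which yields $\torscl(\induc_G^{\tilde{G}} S)=\torscl\bigl(\bigoplus_{i,j}\tilde{S}_i^{(j)}\bigr)$ in $k\tilde{G}\lmod$. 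Finally, project to $\tilde{B}$ by multiplying with the central idempotent $1_{\tilde{B}}$: since this is an exact functor from $A\lmod$ to $\tilde{B}\lmod$ and $\tilde{B}$ is a two-sided ideal of $A$, it commutes with the closure operations $\Filt$ and $\Fac$ defining $\torscl$, producing $\torscl(\tilde{T}_0)=1_{\tilde{B}}\cdot\torscl(\induc_G^{\tilde{G}} M)=\Fac\tilde{M}$. The delicate point is this last step — a careful bookkeeping argument confirming that the block projection commutes with the required closure operations — and would be handled by direct inspection of the definitions of $\Filt$, $\Fac$ and $\torscl$.
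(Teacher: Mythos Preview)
Your reduction to $\tilde G=\inertiagp_{\tilde G}(B)$ and the torsion-class computation for~(3) are essentially the paper's argument: show $\torscl(\induc_G^{\tilde G}S)=\Fac(\induc_G^{\tilde G}M)$ by exactness of induction and \cref{filt and closure 2021-12-06 11:12:23}, then pass to the extensions $\tilde S_i^{(j)}$ via \cref{rem ind and extending module 2021-09-29 08:59:16}. That part is fine.

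The gap is in your proof of~(1). You assert that each brick summand $S_i$ of the left finite semibrick $S$ is $\tilde G$-stable ``by \cref{stable bricks conditions 2021-09-30 17:16:10}'', but that lemma only tells you that the hypothesis ``every \emph{left finite} brick is $\tilde G$-stable'' is equivalent to the $\tilde G$-stability of every indecomposable $\tau$-rigid module (and the dual versions). It does \emph{not} say that the individual brick summands $S_i=X/R(M,X)$ of a left finite semibrick are themselves left finite bricks, and in general they need not be: the left finite bricks are the modules $X/R(X,X)$ for $X$ indecomposable $\tau$-rigid, which is a different quotient. So you cannot apply the hypothesis directly to $S_i$.

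The paper closes this gap with a separate argument using \cref{labeling properties 2021-11-14 17:10:59}: the brick $S_i$ is the \emph{unique} brick in the subcategory $\Fac M\cap\Sub N'$, where $M'$ is the left mutation of $M$ indexed by $S_i$ and $N'$ its image under \eqref{ddual}. Now \cref{stable bricks conditions 2021-09-30 17:16:10} does apply to the summands of $M$ and $N'$ (they are $\tau$-rigid, resp.\ $\tau^{-1}$-rigid), so $\tilde g(\Fac M\cap\Sub N')=\Fac M\cap\Sub N'$ for every $\tilde g\in\tilde G$; hence $\tilde gS_i$ is again a brick in this intersection and must be isomorphic to $S_i$ by uniqueness. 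You need to insert this step before invoking \cref{extending action} and \cref{brick extensions 2021-09-10 09:36:38}.
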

\begin{proof}
	By \cref{Theorem: Frobenius and projective} and \cref{Morita equivalence covering block}, we may assume that \(B\) is \(\tilde{G}\)-stable.
	In order to apply \cref{brick extensions 2021-09-10 09:36:38}, we show that any brick \(S_i\) appearing as a direct summand of the left finite semibrick \(S\) in \(B\lmod\) is \(\tilde{G}\)-stable.
	Let \(M\) be the support \(\tau\)-tilting \(B\)-module corresponding to \(S\) under \eqref{asaicorresp} and \(M'\) the support \(\tau\)-tilting \(B\)-module corresponding to \(S_i\) under \eqref{2021-09-19 23:16:42} for \(M\).
	By \cref{labeling properties 2021-11-14 17:10:59}, the subcategory \(\Fac M\cap \Sub N'\) contains a unique brick, which isomorphic to \(S_i\), where \(N'\) is the support \(\tau^{-1}\)-tilting \(B\)-module corresponding to \(M'\) under \eqref{ddual}.
	For any \(\tilde{g}\in \tilde{G}\), we have \begin{equation}\label{subcat eq 2021-11-18 12:12:13}
		\tilde{g}(\Fac M\cap \Sub N')=\Fac M\cap \Sub N'
	\end{equation}
	by \cref{stable bricks conditions 2021-09-30 17:16:10}.
	On the other hand, the subcategory in left-hand side of \eqref{subcat eq 2021-11-18 12:12:13} contains the brick \(\tilde{g}S\), which means that the subcategory \(\Fac M\cap \Sub N'\) contains \(\tilde{g}S\).
	Hence, \(\tilde{g} S \cong S\) by the uniqueness of the brick in \(\Fac M\cap \Sub N'\).
	Therefore, we have shown the well-definedness of \eqref{sbrick corr 2021-09-10 09:27:09}. The injectivity of \eqref{sbrick corr 2021-09-10 09:27:09} is obvious.

	Let \(M\) and \(S\) be corresponding support \(\tau\)-tilting \(B\)-module and semibrick in \(B\lmod\) under \eqref{asaicorresp} for \(B\), that is, it holds \(\Fac M=\torscl(S)\).
	To show that the diagram in \ref{cmuutative diangram 2021-09-10 09:50:21} is commutative,
	we only have to show that
	\(\Fac (\induc_G^{\tilde{G}}M)=\torscl(\induc_G^{\tilde{G}}S)\) by \cref{rem ind and extending module 2021-09-29 08:59:16} because we know that \(\induc_G^{\tilde{G}}M\) is a support \(\tau\)-tilting module by \cref{MT 2021-09-07 13:50:56} and because \(\bigoplus_{i=1}^{n_S}\bigoplus_{j=1}^{e}\tilde{S}_i^{(j)}\) is a semibrick by \cref{brick extensions 2021-09-10 09:36:38}.

	Since \(M/R(M,M)\cong S\), \(\induc_G^{\tilde{G}} S\) is a homomorphic image of \(\induc_G^{\tilde{G}}M\) by the exactness of \(\induc_G^{\tilde{G}}\).
	Therefore, \(\Fac (\induc_G^{\tilde{G}}M)\supset\torscl(\induc_G^{\tilde{G}}S)\) holds.
	We show the reverse inclusion.
	Since \(M \in \torscl(S)=\Filt (\Fac (S))\), we get a filtration of \(M\)
	\begin{equation}
		0=M_0\subset M_1\subset \cdots\subset M_{l-1}\subset M_l=V,
	\end{equation}
	such that \(M_{i+1}/M_i\in \Fac(S)\).
	By the exactness of \(\induc_G^{\tilde{G}}\) again, we get a filtration
	\begin{equation}
		0=\induc_G^{\tilde{G}}M_0\subset \induc_G^{\tilde{G}}M_1\subset \cdots\subset \induc_G^{\tilde{G}}M_{l-1}\subset \induc_G^{\tilde{G}}M_l=\induc_G^{\tilde{G}}V
	\end{equation}
	of \(\induc_G^{\tilde{G}} M\) such that
	\(\induc_G^{\tilde{G}}M_{i+1}/\induc_G^{\tilde{G}}M_i\in \Fac(\induc_G^{\tilde{G}}S)\).
	Therefore, we get that \(\Fac (\induc_G^{\tilde{G}}M)\subset\Fac(\Filt(\induc_G^{\tilde{G}}S))=\torscl(\induc_G^{\tilde{G}}S)\).
\end{proof}

We can prove the following theorem by the dual argument on \cref{MT 2021-09-07 13:50:56,MT sbrick 2021-09-10 09:27:04}.
\begin{theorem}\label{MT sbrick 2021-09-10 18:03:52}
	With the same assumptions in \cref{MT 2021-09-07 13:50:56}, the following hold:
	\begin{enumerate}
		\item The map
		      \begin{equation}\label{stau corr 2021-09-10 18:14:33}
			      \begin{tikzcd}[ampersand replacement=\&,row sep=1pt]
				      \stauitilt B \ar[r]\&\stauitilt \tilde{B}\\
			      \end{tikzcd}
		      \end{equation}
		      defined by \(\stauitilt B\ni N\mapsto \tilde{B}\induc_G^{\tilde{G}} N \in \stauitilt \tilde{B}\) is well-defined and injective.
		\item For any right finite brick \(S_i\) in \(B\lmod\), there exist exactly \(e:=|k[\tilde{G}/G]|\) isomorphism classes of extending \(k\tilde{G}\)-modules \(\tilde{S_i}^{(1)}, \ldots, \tilde{S_i}^{(e)}\) of \(S_i\).
		\item Then the map
		      \begin{equation}\label{sbrick corr 2021-09-10 18:10:41}
			      \begin{tikzcd}[ampersand replacement=\&,row sep=1pt]
				      \frsbrick B \ar[r]\&\frsbrick \tilde{B}\\
			      \end{tikzcd}
		      \end{equation}
		      defined by \(S\cong \bigoplus_{i=1}^{n_{S}}S_i\mapsto \tilde{B} \induc^{\tilde{G}}_{\inertiagp_{\tilde{G}}(B)}\left( \bigoplus_{i=1}^{n_S}\bigoplus_{j=1}^{e}\tilde{S}_i^{(j)} \right)\) is well-defined and injective, here \(S\cong \bigoplus_{i=1}^{n_{S}}S_i\) is a direct sum decomposition into bricks.
		\item We get the following  commutative diagram:
		      \begin{equation}
			      \begin{tikzcd}[ampersand replacement=\&]
				      \stauitilt B\ar[d,"\text{\eqref{asaicorresp} for \(B\)}"',"\wr"] \ar[r,"\text{\eqref{stau corr 2021-09-10 18:14:33}}"]\&\stauitilt \tilde{B} \ar[d,"\text{\eqref{asaicorresp} for \(\tilde{B}\)}","\wr"']\\
				      \frsbrick B\ar[r,"\text{\eqref{sbrick corr 2021-09-10 18:10:41}}"']\&\frsbrick \tilde{B}.
			      \end{tikzcd}
		      \end{equation}
	\end{enumerate}
\end{theorem}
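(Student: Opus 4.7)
The proof proposal is to run the dual of the arguments already carried out for \cref{MT 2021-09-07 13:50:56,MT sbrick 2021-09-10 09:27:04}, replacing projective covers, syzygies, $\Fac$ and left-finite data by injective envelopes, cosyzygies, $\Sub$ and right-finite data. As in those proofs, I would first invoke \cref{Morita equivalence covering block} (together with the transitivity properties of $\induc$ and $\restr$ in \cref{Theorem: Frobenius and projective}) to reduce to the case $\tilde{G}=\inertiagp_{\tilde{G}}(B)$, so that $B$ itself is $\tilde{G}$-stable and it suffices to work with the direct product $A$ of all blocks of $k\tilde{G}$ covering $B$.

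For well-definedness of \eqref{stau corr 2021-09-10 18:14:33}, I would verify the four bullets corresponding to the ones in the proof of \cref{MT 2021-09-07 13:50:56}: (a) $\induc_G^{\tilde{G}}N$ is $\tau^{-1}$-rigid, via
\begin{equation}
\Hom_{k\tilde G}(\tau^{-1}\induc_G^{\tilde G} N,\induc_G^{\tilde G} N)\cong \Hom_{k\tilde G}(\induc_G^{\tilde G}\tau^{-1}N,\induc_G^{\tilde G} N)\cong\bigoplus_{x\in[\tilde G/G]}\Hom_B(\tau^{-1}N,xN)=0,
\end{equation}
where the first isomorphism uses \cref{induction functor inj env 2021-09-07 12:06:47}\ref{2021-10-01 15:14:01}, the second uses the adjunction in \cref{Theorem: Frobenius and projective} together with \cref{Mackey's decomposition formula}, and the vanishing follows from the $\tilde G$-stability of $N$ (transferred from left-finite bricks by \cref{stable bricks conditions 2021-09-30 17:16:10}); (b) a pair $(N,I)$ with $\Hom_{kG}(N,I)=0$ and $|N|+|I|=|B|$, where $I$ is a basic injective with the complementary role of $P$ in the dual of \cref{tau number remark 2021-09-07 12:11:53}, transports to such a pair over $A$ by the same adjunction/Mackey computation; (c) the count $|\induc_G^{\tilde G}N|+|\induc_G^{\tilde G} I|=|A|$ follows from \cref{induc dsmmand number 2021-09-07 10:26:20} applied summand-wise together with \cref{counting 2021-09-10 17:42:55}; (d) $\induc_G^{\tilde G}$ preserves minimal \emph{injective} copresentations by \cref{induction functor inj env 2021-09-07 12:06:47}\ref{2021-10-01 15:13:53}. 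Injectivity of \eqref{stau corr 2021-09-10 18:14:33} is immediate from $\restr_G^{\tilde G}\induc_G^{\tilde G}N\cong\bigoplus_{x\in[\tilde G/G]}xN$.

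For the brick/semibrick statements I would use \cref{stable bricks conditions 2021-09-30 17:16:10} to conclude that every right finite brick in $B\lmod$ is $\tilde G$-stable under our hypotheses, and then apply \cref{extending action} and \cref{brick extensions 2021-09-10 09:36:38} verbatim: this produces exactly $e=|k[\tilde G/G]|$ extending modules $\tilde S_i^{(1)},\ldots,\tilde S_i^{(e)}$ of each brick summand and shows that $\bigoplus_{i,j}\tilde S_i^{(j)}$ is a semibrick over $A$. Pushing through the Morita equivalence gives the map \eqref{sbrick corr 2021-09-10 18:10:41}, whose injectivity is again obvious. Right-finiteness of the image — that is, that $\torfcl\bigl(\bigoplus_{i,j}\tilde S_i^{(j)}\bigr)$ is functorially finite — follows from item (4), once established, via \cref{dasai correspondence 2021-09-24 13:18:33}, so I would defer this to the end.

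The commutative diagram in (4) is the main technical point. Let $N$ correspond to the right-finite semibrick $S$ under \eqref{dasaicorresp 2021-09-24 13:16:37}, so $\Sub N=\torfcl(S)$; the goal is $\Sub(\induc_G^{\tilde G}N)=\torfcl(\induc_G^{\tilde G}S)$, since by \cref{rem ind and extending module 2021-09-29 08:59:16}\ref{2021-12-06 12:16:08} the right-hand side equals $\torfcl\bigl(\bigoplus_{i,j}\tilde S_i^{(j)}\bigr)$ after tensoring with simple $k[\tilde G/G]$-modules, whereas the left-hand side equals $\Sub(\tilde N)$ where $\tilde N$ is the support $\tau^{-1}$-tilting module attached to the extended semibrick. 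The containment $\torfcl(\induc_G^{\tilde G}S)\subseteq\Sub(\induc_G^{\tilde G}N)$ is easy because $\induc_G^{\tilde G}$ is exact and $S$ embeds in $N$ (applied to the dual, $S\cong S(N,N)\hookrightarrow N$); for the reverse inclusion I would take a filtration $0=N_0\subset N_1\subset\cdots\subset N_l=N$ with $N_{i+1}/N_i\in\Sub(S)$, which exists because $N\in\torfcl(S)=\Filt(\Sub(S))$ by \cref{filt and closure 2021-12-06 11:12:23}, and apply $\induc_G^{\tilde G}$ term by term. The main obstacle in this dual development is only notational bookkeeping: making sure that at each step the symmetry between (syzygy, $\Fac$, left-finite) and (cosyzygy, $\Sub$, right-finite) is respected, and that the dualization of \cref{labeling properties 2021-11-14 17:10:59} used implicitly in \cref{MT sbrick 2021-09-10 09:27:04} to detect $\tilde G$-stability of bricks is replaced by its right-finite counterpart via \cref{stable bricks conditions 2021-09-30 17:16:10}.
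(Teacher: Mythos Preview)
Your proposal is correct and follows exactly the route the paper intends: the paper's proof consists of the single sentence ``We can prove the following theorem by the dual argument on \cref{MT 2021-09-07 13:50:56,MT sbrick 2021-09-10 09:27:04},'' and what you wrote is a faithful unpacking of that duality, invoking \cref{induction functor inj env 2021-09-07 12:06:47} in place of \cref{induction functor projective cover 2021-09-07 12:06:47}, $\Sub$ and $\torfcl=\Filt(\Sub(-))$ in place of $\Fac$ and $\torscl$, and using \cref{stable bricks conditions 2021-09-30 17:16:10} to transport the stability hypothesis from left-finite to right-finite bricks (and to indecomposable $\tau^{-1}$-rigid modules). One small clarification: for the $\tilde G$-stability of each brick summand $S_i$ of a right-finite semibrick you do still need the dual of the \cref{labeling properties 2021-11-14 17:10:59} argument (uniqueness of the brick in $\Sub N\cap\Fac M'$), not merely that right-finite bricks are stable; your final paragraph indicates you are aware of this, but be sure to keep that step explicit rather than absorbing it into \cref{stable bricks conditions 2021-09-30 17:16:10} alone.
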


The next theorem is followed by \cref{MT sbrick 2021-09-10 09:27:04} and \cref{MT sbrick 2021-09-10 18:03:52}.

\begin{theorem} With the same assumptions in \cref{MT 2021-09-07 13:50:56}, the following diagram is commutative:
	\begin{equation}
		\begin{tikzcd}
			&\frsbrick B\ar[dd,leftarrow,"\text{\eqref{dualbrick} for \(B\)}"'very near end]  \ar[rr,"\text{\eqref{sbrick corr 2021-09-10 18:10:41}}"]&&\frsbrick \tilde{B}\\
			\stauitilt B\ar[ru,"\text{\eqref{dasaicorresp 2021-09-24 13:16:37} for B}"description] \ar[rr,"\text{\eqref{stau corr 2021-09-10 18:14:33}}"near end,crossing over]&&\stauitilt \tilde{B}\ar[ru,"\text{\eqref{dasaicorresp 2021-09-24 13:16:37} for \(\tilde{B}\)}"description] \\
			&\flsbrick B\ar[rr,"\eqref{sbrick corr 2021-09-10 09:27:09}"very near start]&&\flsbrick \tilde{B}\ar[uu,"\text{\eqref{dualsbrick} for \(\tilde{B}\)}"'].\\
			\stautilt B\ar[ru,"\text{\eqref{asaicorresp} for \(B\)}"description] \ar[uu,"\text{\eqref{ddual} for \(B\)}"] \ar[rr,"\text{\eqref{stau corr 2021-09-07 13:51:09}}"'] &&\stautilt \tilde{B} \ar[ru,"\text{\eqref{asaicorresp} for \(\tilde{B}\)}"description] \ar[uu,"\text{\eqref{ddual} for \(\tilde{B}\)}"'very near end,crossing over]
		\end{tikzcd}
	\end{equation}
\end{theorem}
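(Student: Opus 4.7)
The plan is to verify commutativity of the six faces of the cube separately. The bottom face, involving $\stautilt$ and $\flsbrick$ for both $B$ and $\tilde B$, is precisely \cref{MT sbrick 2021-09-10 09:27:04}, and the top face, involving $\stauitilt$ and $\frsbrick$, is \cref{MT sbrick 2021-09-10 18:03:52}. The two vertical faces that live over a single block, one for $B$ and one for $\tilde B$, are instances of \cref{comm dbrick 2021-10-01 20:08:26}. What remains are the two vertical faces bridging $B$ to $\tilde B$: the front face involving $\eqref{ddual}$ on $\stautilt$ and $\stauitilt$, and the back face involving $\eqref{dualsbrick}$ on $\flsbrick$ and $\frsbrick$.

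I would verify the front face directly. For $M \in \stautilt B$ with basic projective complement $P$ (so that $\Hom_B(P,M)=0$ and $|M|+|P|=|B|$), the front-face commutativity amounts to the isomorphism
\begin{equation}
\tilde B \induc_G^{\tilde G}(\tau M \oplus \nu P) \;\cong\; \tau\bigl(\tilde B \induc_G^{\tilde G} M\bigr) \oplus \nu\bigl(\tilde B \induc_G^{\tilde G} P\bigr),
\end{equation}
where, from the proof of \cref{MT 2021-09-07 13:50:56}, the module $\tilde B \induc_G^{\tilde G} P$ is the basic projective complement of $\tilde B \induc_G^{\tilde G} M$. Since $kG$ and $k\tilde G$ are symmetric, the Nakayama functor is the identity on projectives, so the $\nu$-part is automatic and the real content is the isomorphism $\tilde B \induc_G^{\tilde G}(\tau M) \cong \tau(\tilde B \induc_G^{\tilde G} M)$. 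This would be obtained summand-wise from \cref{induction functor projective cover 2021-09-07 12:06:47}, once each indecomposable summand of $M$ is known to be $\inertiagp_{\tilde G}(B)$-stable; the latter follows from the standing brick-stability hypothesis via \cref{stable bricks conditions 2021-09-30 17:16:10}.

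Once the front face is established, the back face follows for free. Indeed, the vertical maps $\eqref{dualsbrick}$ are defined in \cref{comm dbrick 2021-10-01 20:08:26} precisely so that the block-internal vertical faces commute through $\eqref{asaicorresp}$, $\eqref{ddual}$, and $\eqref{dasaicorresp 2021-09-24 13:16:37}$. Combining this with the bottom, top, front, and the two block-internal vertical faces just discussed yields back-face commutativity by a bijective diagram chase, using that $\eqref{asaicorresp}$ and $\eqref{dasaicorresp 2021-09-24 13:16:37}$ are bijections. The main obstacle is thus concentrated in the front face, and it reduces to the upgrade from brick-stability to stability of indecomposable $\tau$-rigid summands, which is exactly the content of \cref{stable bricks conditions 2021-09-30 17:16:10}.
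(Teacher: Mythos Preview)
Your proposal is correct and follows essentially the same route as the paper. The paper also reduces, via \cref{comm dbrick 2021-10-01 20:08:26}, \cref{MT sbrick 2021-09-10 09:27:04} and \cref{MT sbrick 2021-09-10 18:03:52}, to verifying only the front face, and proves that face exactly as you do, by showing $\induc_G^{\tilde G}(\tau M\oplus P)\cong \tau(\induc_G^{\tilde G}M)\oplus \induc_G^{\tilde G}P$ using \cref{induction functor projective cover 2021-09-07 12:06:47}; your explicit invocation of \cref{stable bricks conditions 2021-09-30 17:16:10} for the required stability of indecomposable summands of $M$, and your remark that $\nu$ is the identity on projectives for symmetric algebras, simply make explicit what the paper leaves implicit.
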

\begin{proof}
	By \cref{comm dbrick 2021-10-01 20:08:26}, \cref{MT sbrick 2021-09-10 09:27:04} and \cref{MT sbrick 2021-09-10 18:03:52}, we only have to show that the following diagram is commutative:
	\begin{equation}
		\begin{tikzcd}
			\stauitilt B\ar[r,"\eqref{stau corr 2021-09-10 18:14:33}"]&\stauitilt \tilde{B}\\
			\stautilt B\ar[u,"\text{\eqref{ddual} for \(B\)}"] \ar[r,"\eqref{stau corr 2021-09-07 13:51:09}"']&\stautilt \tilde{B}\ar[u,"\text{\eqref{ddual} for \(\tilde{B}\)}"'].
		\end{tikzcd}
	\end{equation}
	Let \(M\) be an arbitrary support \(\tau\)-tilting \(B\)-module  and \(P\) a projective \(B\)-module satisfying that \(\Hom_B(P,M)=0\) and that \(|M|+|P|=|B|\).
	Then the corresponding support \(\tau^{-1}\)-tilting module of \(M\) in the bijection \eqref{ddual} is \(\tau M\oplus P\).
	Let \(A\) be the direct product of the all blocks of \(k\tilde{G}\) covering \(B\).
	By \cref{induc dsmmand number 2021-09-07 10:26:20} and \cref{counting 2021-09-10 17:42:55}, we get \(|\induc_G^{\tilde{G}} M|+|\induc_G^{\tilde{G}} P|=|A|\).
	Therefore, the corresponding support \(\tau^{-1}\)-tilting module of \(\induc_G^{\tilde{G}} M\) with respect to \eqref{ddual} is \(\tau \induc_G^{\tilde{G}}M\oplus \induc_G^{\tilde{G}} P\), which is isomorphic to the induced module \(\induc_G^{\tilde{G}}(\tau M\oplus P)\) by \cref{induction functor projective cover 2021-09-07 12:06:47}.
\end{proof}
\begin{corollary}\label{MT smcsbrick 2021-10-03 21:01:38}
	With the same assumptions in \cref{MT 2021-09-07 13:50:56}, we get the injective map
	\begin{equation}\label{extending smc 2021-10-02 17:53:01}
		\begin{tikzcd}
			\twosmc B\ar[r]&\twosmc \tilde{B}
		\end{tikzcd}
	\end{equation}
	which makes the following diagram commutative:
	\begin{equation}
		\begin{tikzcd}
			\frsbrick B \ar[rrr,"\eqref{sbrick corr 2021-09-10 18:10:41}"]\ar[dr,leftarrow,"\text{\eqref{asabjfr} for \(B\)}"]\ar[dd,leftarrow,"\text{\eqref{dualsbrick} for \(B\)}"']& & &\frsbrick \tilde{B} \ar[dl,leftarrow,"\text{\eqref{asabjfr} for \(\tilde{B}\)}"'] \ar[dd,leftarrow,"\text{\eqref{dualsbrick} for \(\tilde{B}\)}"]\\
			&\twosmc B \ar[ld,"\text{\eqref{asaibjfl} for \(B\)}"]\ar[r,dashrightarrow,"\eqref{extending smc 2021-10-02 17:53:01}"]& \twosmc \tilde{B}\ar[rd,"\text{\eqref{asaibjfl} for \(\tilde{B}\)}"'] &
			\\
			\flsbrick B\ar[rrr,"\eqref{sbrick corr 2021-09-10 09:27:09}"']&&&\flsbrick \tilde{B}.
		\end{tikzcd}
	\end{equation}
\end{corollary}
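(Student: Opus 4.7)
The plan is to define \eqref{extending smc 2021-10-02 17:53:01} as the conjugate of the injective map \eqref{sbrick corr 2021-09-10 09:27:09} by the Asai bijection \eqref{asaibjfl} from \cref{asaibj}. Concretely, given $\mathcal{X}\in \twosmc B$, I would send it first to $S\in\flsbrick B$ via \eqref{asaibjfl} for $B$, then to $\tilde S\in\flsbrick\tilde B$ via \eqref{sbrick corr 2021-09-10 09:27:09}, and finally to the unique preimage of $\tilde S$ under \eqref{asaibjfl} for $\tilde B$, which exists because \eqref{asaibjfl} is a bijection. Injectivity is automatic, as the map is the composition of two bijections and one injection (\cref{MT sbrick 2021-09-10 09:27:04}), and the face of the target diagram with $\twosmc$ on top and $\flsbrick$ on the bottom commutes by the very construction.

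The substantive content is therefore the commutativity of the remaining face, which relates $\twosmc$ and $\frsbrick$. I would reduce this to a square of semibrick categories using \cref{sbricksmc 2021-09-06 15:11:30}: that result identifies \eqref{asabjfr} with the composite $\eqref{dualsbrick}\circ\eqref{asaibjfl}$ for both $B$ and $\tilde B$. After substituting this identification into the two paths $\twosmc B\to\frsbrick\tilde B$ appearing in the target diagram and cancelling the common factors \eqref{asaibjfl} using the defining property of \eqref{extending smc 2021-10-02 17:53:01}, the desired equality collapses to the commutativity of
\[
\begin{tikzcd}
\flsbrick B \ar[r,"\eqref{sbrick corr 2021-09-10 09:27:09}"] \ar[d,"\eqref{dualsbrick}"']& \flsbrick \tilde B \ar[d,"\eqref{dualsbrick}"]\\
\frsbrick B \ar[r,"\eqref{sbrick corr 2021-09-10 18:10:41}"']&\frsbrick \tilde B,
\end{tikzcd}
\]
which is one face of the commutative cube established in the theorem immediately preceding this corollary.

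I do not anticipate a genuine obstacle. The point requiring the most care is bookkeeping, since several arrows in the target diagram are drawn using \texttt{leftarrow} in tikzcd, so one must keep the orientation of \eqref{dualsbrick} and \eqref{asabjfr} consistent throughout the reduction. No new representation-theoretic input is needed beyond \cref{MT sbrick 2021-09-10 09:27:04,MT sbrick 2021-09-10 18:03:52} and the preceding theorem; the corollary reduces to a diagram chase at the level of semibricks combined with the single-algebra bijections from \cref{asaibj,sbricksmc 2021-09-06 15:11:30}.
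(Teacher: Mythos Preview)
Your proposal is correct and matches the paper's approach: the paper states this corollary without proof, treating it as an immediate consequence of the preceding cube theorem together with Asai's single-algebra triangles \eqref{asaibjfl}, \eqref{asabjfr}, \eqref{dualsbrick} from \cref{asaibj,sbricksmc 2021-09-06 15:11:30}. Your explicit diagram chase---defining \eqref{extending smc 2021-10-02 17:53:01} so that the $\flsbrick$ face commutes by construction, then deducing the $\frsbrick$ face from the outer $\flsbrick/\frsbrick$ square of the preceding theorem via \cref{sbricksmc 2021-09-06 15:11:30}---is exactly the intended unpacking.
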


The following theorem summaries \cref{MT 2021-09-07 13:50:56}, \cref{MT sbrick 2021-09-10 09:27:04} and \cref{MT smcsbrick 2021-10-03 21:01:38}.
\begin{theorem}\label{big com. diag. 2021-10-06 10:05:34}
	Let \(\tilde{G}\) be a finite group, \(G\) a normal subgroup of \(\tilde{G}\), \(B\) a block of \(kG\) and \(\tilde{B}\) a block of \(k\tilde{G}\) covering \(B\).
	We assume that the following conditions hold:
	\begin{enumerate}
		\item Any left finite bricks in \(B\lmod\) is \(\inertiagp_{\tilde G}(B)\)-stable.
		\item \(H^2(\inertiagp_{\tilde{G}}(B)/G,k^\times)=1\).
		\item \(k[\inertiagp_{\tilde{G}}(B)/G]\) is basic as a \(k\)-algebra.
	\end{enumerate}
	Then the following diagram is commutative:
	\begin{equation}
		\begin{tikzcd}
			&\twosmc B\ar[dd,"\text{\eqref{asaibjfl} for \(B\)}"'very near end]  \ar[rr,"\text{\eqref{extending smc 2021-10-02 17:53:01}}"]&&\twosmc \tilde{B}\\
			\twotilt B\ar[ru,"\text{\eqref{2KYcorr} for B}"description] \ar[rr,"\text{\eqref{twotilt corr 2021-09-10 17:53:30}}"near end,crossing over]&&\twotilt \tilde{B}\ar[ru,"\text{\eqref{2KYcorr} for \(\tilde{B}\)}"description] \\
			&\flsbrick B\ar[rr,"\eqref{sbrick corr 2021-09-10 09:27:09}"very near start]&&\flsbrick \tilde{B}\ar[uu,leftarrow,"\text{\eqref{asaibjfl} for \(\tilde{B}\)}"'].\\
			\stautilt B\ar[ru,"\text{\eqref{asaicorresp} for \(B\)}"description] \ar[uu,"\text{\eqref{2siltcorr} for \(B\)}"] \ar[rr,"\text{\eqref{stau corr 2021-09-07 13:51:09}}"'] &&\stautilt \tilde{B} \ar[ru,"\text{\eqref{asaicorresp} for \(\tilde{B}\)}"description] \ar[uu,"\text{\eqref{2siltcorr} for \(\tilde{B}\)}"'very near end,crossing over]
		\end{tikzcd}
	\end{equation}
\end{theorem}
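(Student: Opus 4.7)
The plan is to recognize this cube as an assemblage of six squares, five of which are already commutative by the earlier material in the paper, so that the sixth face follows by a purely formal diagram chase. Concretely, the bottom face (linking \(\stautilt\) to \(\flsbrick\)) is part (3) of \cref{MT sbrick 2021-09-10 09:27:04}; the front face (linking \(\stautilt\) to \(\twotilt\)) is the commutative square in \cref{MT 2021-09-07 13:50:56}; the back face (linking \(\flsbrick\) to \(\twosmc\) via \eqref{asaibjfl}) is \cref{MT smcsbrick 2021-10-03 21:01:38}, which in fact defines the map \eqref{extending smc 2021-10-02 17:53:01}; and the left and right faces (each linking \(\stautilt\), \(\twotilt\), \(\twosmc\), and \(\flsbrick\) over a single group algebra) are two instances of \cref{commutative} applied to \(B\) and \(\tilde{B}\).

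Only the top face remains, which asserts that the square with horizontal arrows \eqref{twotilt corr 2021-09-10 17:53:30} and \eqref{extending smc 2021-10-02 17:53:01} and vertical arrows \eqref{2KYcorr} commutes. To verify this I will use that \eqref{2siltcorr} is bijective for both \(B\) and \(\tilde{B}\): pick \(T \in \twotilt B\) and write \(T = \eqref{2siltcorr}(M)\) for a unique \(M \in \stautilt B\), and set \(S := M/R(M,M) \in \flsbrick B\). Tracing \(T\) across the top square via \(\twosmc B \to \twosmc \tilde{B}\), the left face of the cube rewrites \((\eqref{2KYcorr})^{-1}(T)\) as \((\eqref{asaibjfl})^{-1}(S)\), and then the back face rewrites the result as \((\eqref{asaibjfl})^{-1}(\eqref{sbrick corr 2021-09-10 09:27:09}(S))\). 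Tracing the other way via \(\twotilt \tilde{B}\), the front face rewrites \(\eqref{twotilt corr 2021-09-10 17:53:30}(T)\) as \(\eqref{2siltcorr}(\eqref{stau corr 2021-09-07 13:51:09}(M))\), the right face rewrites \((\eqref{2KYcorr})^{-1}\) of this as \((\eqref{asaibjfl})^{-1}(\eqref{asaicorresp}(\eqref{stau corr 2021-09-07 13:51:09}(M)))\), and finally the bottom face rewrites the inner expression as \(\eqref{sbrick corr 2021-09-10 09:27:09}(S)\). Both paths therefore produce \((\eqref{asaibjfl})^{-1}(\eqref{sbrick corr 2021-09-10 09:27:09}(S))\), and bijectivity of \eqref{asaibjfl} for \(\tilde{B}\) yields the desired equality in \(\twosmc \tilde{B}\).

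There is essentially no new algebraic input beyond what the earlier theorems already supply; the cube simply bundles those theorems into one picture. The main obstacle I anticipate is purely clerical: keeping the directions of \eqref{asaibjfl} and the mixed direction of arrows in the back face straight while chasing, and being careful to invoke each of the five known commutative squares at the correct step. Since every vertical map in the diagram is a bijection (\eqref{2siltcorr}, \eqref{asaicorresp}, \eqref{2KYcorr}, \eqref{asaibjfl}), the chase above is in fact forced, and no further verification is needed.
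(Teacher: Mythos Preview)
Your proposal is correct and matches the paper's approach: the paper simply states that this theorem ``summaries \cref{MT 2021-09-07 13:50:56}, \cref{MT sbrick 2021-09-10 09:27:04} and \cref{MT smcsbrick 2021-10-03 21:01:38}'' and offers no further proof, so your diagram chase merely spells out what the paper leaves implicit. Your identification of the five known faces and the bijectivity argument forcing the sixth is exactly the intended reading.
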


\subsection{Some applications of main theorems}
We give some applications and examples of main theorems in this section.
The following arguments provide sufficient conditions for the assumptions of our main results.
\begin{proposition}[{\cite[Lemma 3.5.4, Theorem 3.5.11]{MR998775}, \cite[Theorem 5.3, Theorem 5.9]{MR1200878}}]\label{assumption ex 2021-11-23 16:12:58}
	Let \(\tilde{G}\) be a finite group and \(G\) a normal subgroup of \(\tilde{G}\). Then we have the following:
	\begin{enumerate}
		\item If \(\tilde{G}/G\) is a \(p\)-group, then we have that \(H^2(\tilde{G}/G,k^{\times})=1\) and that the group algebra \(k[\tilde{G}/G]\) is basic.\label{item condition p 2022-09-05 14:54:36}
		\item If \(\tilde{G}/G\) is cyclic, then we have that \(H^2(\tilde{G}/G,k^{\times})=1\) and that the group algebra \(k[\tilde{G}/G]\) is basic.\label{item condition cyclic 2022-09-05 14:54:30}
		\item If \(\tilde{G}/G\) is isomorphic to the dihedral group \(D_{2p}\) of order \(2p\), then we have that \(H^2(\tilde{G}/G,k^{\times})=1\) and that the group algebra \(k[\tilde{G}/G]\) is basic.\label{item condition dihedral 2022-09-05 14:55:12}
		\item If \(\tilde{G}/G\) is abelian, then we have that the group algebra \(k[\tilde{G}/G]\) is basic.
	\end{enumerate}
\end{proposition}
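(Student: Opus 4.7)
The plan is to treat each item separately, distinguishing the cohomological claim $H^2(\tilde{G}/G,k^\times)=1$ (relevant for (1)--(3)) from the basicness claim for $k[\tilde{G}/G]$ (relevant for all four). All of these are classical, and I would primarily appeal to \cite{MR998775} and \cite{MR1200878}, but let me sketch the ideas.

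For the vanishing of $H^2$, the key facts are that $k^\times$ has no $p$-torsion, since Frobenius is a bijection on $k^\times$ when $k$ is algebraically closed of characteristic $p$, and that $k^\times$ is divisible. For (1), $H^n(\tilde{G}/G,k^\times)$ is killed by $|\tilde{G}/G|$ for any coefficient module, while $k^\times$ being uniquely $p$-divisible forces $H^n$ to be a $\mathbb{Z}[1/p]$-module; a $\mathbb{Z}[1/p]$-module killed by a power of $p$ must vanish. For (2), a direct periodicity computation gives $H^2(C_n,k^\times)\cong k^\times/(k^\times)^n=0$ by algebraic closure. For (3) with $p$ odd, I would run the Lyndon--Hochschild--Serre spectral sequence for $1\to C_p\to D_{2p}\to C_2\to 1$: the $E_2^{i,j}$ terms with $i+j=2$ all vanish by (1), (2), and the observation that $H^1(C_p,k^\times)=(k^\times)[p]=0$. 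When $p=2$, $D_{2p}=D_4$ is itself a $p$-group, so (3) reduces to (1).

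For basicness, I would use the guiding principle that a finite-dimensional algebra over an algebraically closed field is basic if and only if every simple module is one-dimensional. For (1), $k[P]$ is local as the trivial module is its only simple, hence basic. For (2), writing $\tilde{G}/G\cong C_{p^a}\times C_m$ with $\gcd(m,p)=1$, Maschke gives $k[C_m]\cong k^m$, so $k[\tilde{G}/G]$ decomposes as a product of copies of the local algebra $k[C_{p^a}]$. For (3), $C_p$ is a normal $p$-subgroup of $D_{2p}$ that acts trivially on any simple $k[D_{2p}]$-module (by the standard fixed-point argument for a $p$-group acting on a nonzero vector space in characteristic $p$), so simples are inflated from $k[C_2]$ and are one-dimensional. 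For (4), any simple representation of an abelian group over an algebraically closed field is one-dimensional by Schur's lemma applied to the pairwise commuting generators. The main subtlety lies in (3), specifically the Schur multiplier computation, which is the only place where (1) and (2) cannot be invoked directly and which I would handle via the spectral sequence argument above.
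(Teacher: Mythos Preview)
Your proposal is correct. The paper does not give a proof of this proposition at all: it simply records the statement and cites \cite[Lemma 3.5.4, Theorem 3.5.11]{MR998775} for the cohomological vanishing in the \(p\)-group and cyclic cases and \cite[Theorem 5.3, Theorem 5.9]{MR1200878} for the dihedral case, with the basicness claims left implicit. By contrast, you supply self-contained arguments throughout. Your treatment of (1), (2), and (4) is the standard one and matches what the cited references do. For (3), your Lyndon--Hochschild--Serre argument (collapsing the \(E_2\) terms in total degree two using (1), (2), and the vanishing of \(p\)-torsion in \(k^\times\)) is a clean shortcut compared with Handel's explicit computation of the full cohomology ring of \(D_{2n}\), and it has the virtue of reusing exactly the facts you already established in (1) and (2). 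Either route is fine; yours is more economical for the purpose at hand.
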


The following argument describes the Hasse quiver of support \(\tau\)-tilting modules labeled by bricks and will be used to prove the refinement of the main result of \cite{MR4243358}.
\begin{proposition}\label{label brick 2021-10-03 22:08:25}
	With the same assumptions in \cref{MT 2021-09-07 13:50:56},
	let \(M\) be a support \(\tau\)-tilting \(B\)-module and \(M'\) a support \(\tau\)-tilting left mutation of \(M\) and \(S\) the brick in \(B\lmod\) corresponding to the left mutation \(M'\) in the bijection \eqref{labeling brick 2021-10-13 08:59:12} for \(M\).
	If \(\tilde{B}\induc_G^{\tilde{G}}M'\) is a support \(\tau\)-tilting left mutation of \(\tilde{B}\induc_G^{\tilde{G}}M\), then there exists only one sort of extending \(k\tilde{G}\)-module \(\tilde{S}\) of \(S\) lying in \(\tilde{B}\) up to isomorphisms.
	In particular, the brick labeling the arrow from \(\tilde{B}\induc_G^{\tilde{G}}M\) to \(\tilde{B}\induc_G^{\tilde{G}}M'\) in \(\mathcal{H}(\stautilt \tilde{B})\) is isomorphic to \(\tilde{S}\).
\end{proposition}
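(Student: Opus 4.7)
The plan is to identify the brick $\tilde{S}_0$ labeling the arrow $\tilde{B}\induc_G^{\tilde{G}} M \to \tilde{B}\induc_G^{\tilde{G}} M'$ as an extending $k\tilde{G}$-module of $S$ by combining the semibrick description of $\tilde{B}\induc_G^{\tilde{G}} M$ from \cref{MT sbrick 2021-09-10 09:27:04} with the intersection characterization of the labeling brick in \cref{labeling properties 2021-11-14 17:10:59}. As in the proofs of \cref{MT 2021-09-07 13:50:56,MT sbrick 2021-09-10 09:27:04}, I would first reduce via the Morita equivalence \eqref{2020-03-25 15:17:09} to the case $\tilde{G} = \inertiagp_{\tilde{G}}(B)$, so that $B$ becomes $\tilde{G}$-stable and $\induc_{\inertiagp_{\tilde{G}}(B)}^{\tilde{G}}$ trivializes.

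To produce a candidate, write $M/R(M,M) \cong \bigoplus_i S_i$ with $S = S_{i_0}$ and let $N'$ be the support $\tau^{-1}$-tilting $B$-module corresponding to $M'$ under \eqref{ddual}. By \cref{MT sbrick 2021-09-10 09:27:04}, the semibrick of $\tilde{B}\induc_G^{\tilde{G}} M$ under \eqref{asaicorresp} for $\tilde{B}$ is $\tilde{B}\bigl(\bigoplus_{i,j}\tilde{S}_i^{(j)}\bigr)$, namely the direct sum of those extending modules lying in $\tilde{B}$, and by \cref{labeling method tau 2021-10-13 17:35:07} its indecomposable summands are exactly the bricks labeling the arrows out of $\tilde{B}\induc_G^{\tilde{G}} M$. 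Hence $\tilde{S}_0 \cong \tilde{S}_{i^*}^{(j^*)}$ for some $(i^*,j^*)$, which already exhibits $\tilde{S}_0$ as an extending module of $S_{i^*}$; the remaining task is to pin down $i^*=i_0$.

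For this, I would invoke the commutativity of \eqref{ddual} with induction, established in the theorem preceding \cref{big com. diag. 2021-10-06 10:05:34} via \cref{induction functor projective cover 2021-09-07 12:06:47,induction functor inj env 2021-09-07 12:06:47}, so that $\tilde{B}\induc_G^{\tilde{G}} N'$ is the $\tau^{-1}$-tilting partner of $\tilde{B}\induc_G^{\tilde{G}} M'$. Then \cref{labeling properties 2021-11-14 17:10:59} for $\tilde{B}$ places $\tilde{S}_0$ in $\Fac(\tilde{B}\induc_G^{\tilde{G}} M) \cap \Sub(\tilde{B}\induc_G^{\tilde{G}} N')$. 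Applying $\restr_G^{\tilde{G}}$ and using \cref{Mackey's decomposition formula} together with the $\tilde{G}$-stability of $M$ and $N'$ (which follows from \cref{stable bricks conditions 2021-09-30 17:16:10,2021-09-09 18:06:40}), the restrictions of objects of $\Fac(\tilde{B}\induc_G^{\tilde{G}} M)$ land in $\Fac M$, and analogously for $\Sub$. Hence $S_{i^*} \cong \restr_G^{\tilde{G}} \tilde{S}_0 \in \Fac M \cap \Sub N'$, and since this intersection contains only the brick $S$ by \cref{labeling properties 2021-11-14 17:10:59} for $B$, we conclude $S_{i^*} = S$.

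For the uniqueness, \cref{rem ind and extending module 2021-09-29 08:59:16} gives $\induc_G^{\tilde{G}} S \cong \tilde{S}^{(1)} \otimes k[\tilde{G}/G]$, and the basicness of $k[\tilde{G}/G]$ simultaneously realizes each $\tilde{S}^{(j)} \cong \tilde{S}^{(1)} \otimes V_j$ as a quotient of $\induc_G^{\tilde{G}} S$ (via $k[\tilde{G}/G] \twoheadrightarrow V_j$) and as a summand of $\tilde{S}^{(1)} \otimes \Soc(k[\tilde{G}/G])$, hence as a submodule of $\induc_G^{\tilde{G}} S$. Since $S \in \Fac M \cap \Sub N'$, exactness of induction gives $\induc_G^{\tilde{G}} S \in \Fac(\induc_G^{\tilde{G}} M) \cap \Sub(\induc_G^{\tilde{G}} N')$, so every extending module of $S$ lying in $\tilde{B}$ is a brick of $\Fac(\tilde{B}\induc_G^{\tilde{G}} M) \cap \Sub(\tilde{B}\induc_G^{\tilde{G}} N')$, and \cref{labeling properties 2021-11-14 17:10:59} forces all of them to coincide with $\tilde{S}_0$. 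The most delicate step is the simultaneous bookkeeping of the semibrick correspondence at the $\tilde{B}$-level and the intersection characterization at the $B$-level, which is precisely what the restriction argument in the previous paragraph reconciles.
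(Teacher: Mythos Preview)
Your proof is correct, and both you and the paper hinge on the same key lemma, \cref{labeling properties 2021-11-14 17:10:59}, which pins down the labeling brick as the unique brick in $\Fac(\tilde{B}\induc_G^{\tilde{G}} M)\cap\Sub(\tilde{B}\induc_G^{\tilde{G}} N')$. The paper's argument is shorter: after observing that $S\in\Fac M\cap\Sub N'$ and hence that $\tilde{B}\induc_G^{\tilde{G}}S$ lies in the corresponding intersection for $\tilde{B}$ by exactness of induction, it concludes directly that every extending module of $S$ lying in $\tilde{B}$ is a brick in that intersection and therefore coincides with the labeling brick. Your final paragraph reproduces exactly this step, so your first two paragraphs---identifying $\tilde{S}_0$ as some $\tilde{S}_{i^*}^{(j^*)}$ via \cref{MT sbrick 2021-09-10 09:27:04} and then restricting through Mackey and $\tilde{G}$-stability to force $i^*=i_0$---form a detour. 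What the detour buys, however, is an explicit proof that the labeling brick \emph{is} an extending module of $S$, i.e.\ that at least one extending module of $S$ actually lies in $\tilde{B}$; the paper's proof leaves this existence point tacit.
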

\begin{proof}
	Let \(N\) and \(N'\) be the support \(\tau^{-1}\)-tilting \(B\)-modules corresponding to \(M\) and \(M'\) respectively in the bijection \eqref{ddual}.
	By \cref{labeling properties 2021-11-14 17:10:59}, we have \(S \in \Fac M \cap \Sub N'\).
	Therefore, we get \(\tilde{B}\induc_G^{\tilde{G}}S \in \Fac \tilde{B}\induc_G^{\tilde{G}} M \cap \Sub \tilde{B}\induc_G^{\tilde{G}} N'\).
	By the assumption, the subcategory \(\Fac \tilde{B}\induc_G^{\tilde{G}} M \cap \Sub \tilde{B}\induc_G^{\tilde{G}} N'\) of \(\tilde{B}\lmod\) contain a unique brick in \(\tilde{B}\lmod\) by \cref{labeling properties 2021-11-14 17:10:59}.
	Hence, we get that there exists only one sort of extending \(k\tilde{G}\)-module of \(S\) lying in \(\tilde{B}\) up to isomorphisms.
	Therefore, the extending \(k\tilde{G}\)-module of \(S\) lying in \(\tilde{B}\) is isomorphic to \(\tilde{S}\).
\end{proof}

Now we state some consequences of our main results stated in \cref{subsec main results 2022-09-07 16:40:50}.
The following result is a more precise result than \cite[Theorem 4.2]{MR4243358}.
\begin{theorem}\label{p-extension 2021-10-06 10:06:35}
	Let \(\tilde{G}\) be a finite group, \(G\) a normal subgroup of \(\tilde{G}\) such that the quotient group \(\tilde{G}/G\) is a \(p\)-group, \(\tilde{B}\) a block of \(k\tilde{G}\) and \(B\) a block of \(kG\) covered by \(\tilde{B}\).
	Assume that any left finite brick in \(B\lmod\) is \(\inertiagp_{\tilde{G}}(B)\)-stable.
	Then we have the following:
	\begin{enumerate}
		\item The maps \eqref{stau corr 2021-09-07 13:51:09} and \eqref{sbrick corr 2021-09-10 09:27:09} induce the embedding of quiver with labels:
		      \begin{equation}
			      \begin{tikzcd}
				      \mathcal{H}(\stautilt B)\ar[r]& \mathcal{H}(\stautilt \tilde{B}).
			      \end{tikzcd}
		      \end{equation}
		      Moreover, any connected component of \(\mathcal{H}(\stautilt B)\) is embedded as a connected component of \(\mathcal{H}(\stautilt \tilde{B})\).
		\item If \(B\) is a support \(\tau\)-tilting finite block, then the map \eqref{stau corr 2021-09-07 13:51:09} is an isomorphism from \(\stautilt B\) to \(\stautilt \tilde{B}\) as partially ordered sets. Moreover, all the maps appearing in the commutative diagram in \cref{big com. diag. 2021-10-06 10:05:34} are bijective. In particular, the maps \eqref{stau corr 2021-09-07 13:51:09} and \eqref{sbrick corr 2021-09-10 09:27:09} induce an isomorphism from \(\mathcal{H}(\stautilt B)\) to \(\mathcal{H}(\stautilt \tilde{B})\) as labeled quivers.
	\end{enumerate}
\end{theorem}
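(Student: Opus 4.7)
The first step is to check that the hypotheses of \cref{MT 2021-09-07 13:50:56,MT sbrick 2021-09-10 09:27:04,MT sbrick 2021-09-10 18:03:52,big com. diag. 2021-10-06 10:05:34} are met. Since $\tilde{G}/G$ is a $p$-group, so is the subgroup $\inertiagp_{\tilde{G}}(B)/G$, and \cref{assumption ex 2021-11-23 16:12:58}\ref{item condition p 2022-09-05 14:54:36} supplies $H^2(\inertiagp_{\tilde{G}}(B)/G,k^\times)=1$ together with basicity of $k[\inertiagp_{\tilde{G}}(B)/G]$. The crucial special feature of the $p$-group case is that $k[\inertiagp_{\tilde{G}}(B)/G]$ is in fact local, so the integer $e:=|k[\inertiagp_{\tilde{G}}(B)/G]|$ equals $1$; consequently every left finite brick in $B\lmod$ admits a \emph{unique} extending $k\inertiagp_{\tilde{G}}(B)$-module, which lies in $\tilde{B}$ since $\tilde{B}$ is the unique block of $k\tilde{G}$ covering $B$ by \cref{p-power index covering uniqueness}. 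Using \cref{Morita equivalence covering block}, the plan is to reduce to the case $\tilde{G}=\inertiagp_{\tilde{G}}(B)$, so that $B$ becomes $\tilde{G}$-stable.

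For assertion (1), I would upgrade the vertex map of \eqref{stau corr 2021-09-07 13:51:09} to a morphism of labeled Hasse quivers. Given an arrow $M\to M'=\mu_X(M)$ in $\mathcal{H}(\stautilt B)$, write $M=X\oplus N$ and $M'=Y\oplus N$; by \cref{induc dsmmand number 2021-09-07 10:26:20} (and $e=1$) the induced modules $\induc_G^{\tilde{G}}X$ and $\induc_G^{\tilde{G}}Y$ are indecomposable, so $\induc_G^{\tilde{G}}M$ and $\induc_G^{\tilde{G}}M'$ differ by exactly one indecomposable summand. Exactness of $\induc_G^{\tilde{G}}$ propagates any epimorphism witnessing $M>M'$, yielding $\induc_G^{\tilde{G}}M>\induc_G^{\tilde{G}}M'$, and \cref{labeling method tau 2021-10-13 17:35:07}\ref{asai brick} then identifies $\induc_G^{\tilde{G}}M'$ with the left mutation $\mu_{\induc_G^{\tilde{G}}X}(\induc_G^{\tilde{G}}M)$, giving an arrow in $\mathcal{H}(\stautilt\tilde{B})$. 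The brick labels will be matched by \cref{label brick 2021-10-03 22:08:25}, which identifies the new label as the unique extending brick of the old one.

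The component statement will follow from a counting argument showing that the image is closed under the arrows of $\mathcal{H}(\stautilt\tilde{B})$ incident to each image vertex. By \cref{labeling method tau 2021-10-13 17:35:07}, the number of left mutations of $\induc_G^{\tilde{G}}M$ equals the number of bricks in the semibrick $\induc_G^{\tilde{G}}M/R(\induc_G^{\tilde{G}}M,\induc_G^{\tilde{G}}M)$; by the commutative diagram of \cref{MT sbrick 2021-09-10 09:27:04} and $e=1$, this semibrick is $\bigoplus_i\tilde{S_i}$ whenever $M/R(M,M)\cong\bigoplus_i S_i$, so the count agrees with that for $M$. Combined with the previous paragraph and injectivity of the vertex map, this forces every left mutation of $\induc_G^{\tilde{G}}M$ to arise from a left mutation of $M$. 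Arrows of $\mathcal{H}(\stautilt\tilde{B})$ pointing into $\induc_G^{\tilde{G}}M$ (i.e.\ right mutations from $\induc_G^{\tilde{G}}M$) are then handled by the completely dual argument via \cref{MT sbrick 2021-09-10 18:03:52}, and together these establish the component statement.

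For assertion (2), I would first check that $B\in\stautilt B$ maps to $\tilde{B}\in\stautilt\tilde{B}$: the $\tilde{G}$-stability of $B$ makes $1_B$ central in $k\tilde{G}$, and since $\tilde{B}$ is the unique block of $k\tilde{G}$ covering $B$ one gets $1_B=1_{\tilde{B}}$, whence $\tilde{B}\induc_G^{\tilde{G}}B=k\tilde{G}\cdot 1_B=\tilde{B}$. If $B$ is $\stautilt$-finite, assertion (1) then forces the connected component of $\tilde{B}$ in $\mathcal{H}(\stautilt\tilde{B})$ to be finite; the hardest ingredient is to invoke the Demonet--Iyama--Jasso-type characterization that a single finite connected component of $\mathcal{H}(\stautilt\Lambda)$ forces $\Lambda$ to be $\stautilt$-finite with connected Hasse quiver. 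Applied to $\tilde{B}$, this promotes the embedding \eqref{stau corr 2021-09-07 13:51:09} to a poset isomorphism. Commutativity of the diagram in \cref{big com. diag. 2021-10-06 10:05:34}, together with bijectivity of the vertical maps by general theory and injectivity of the other horizontal maps from the earlier main theorems, then propagates bijectivity to every horizontal arrow of the diagram.
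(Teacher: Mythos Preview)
Your proposal is correct and follows essentially the same route as the paper. The paper's own proof is extremely terse: it invokes \cref{p-power index covering uniqueness} to get $\induc_G^{\tilde{G}}U=\tilde{B}\induc_G^{\tilde{G}}U$, cites ``the same arguments of \cite[Theorem~4.2]{MR4243358}'' as a black box for the fact that $\induc_G^{\tilde{G}}$ preserves left mutations (and hence also for the connected-component and $\tau$-tilting-finite statements), and then appeals to \cref{label brick 2021-10-03 22:08:25} for the brick labels. Your argument unpacks what lies behind that citation: the key observation $e=1$, so that induced indecomposables stay indecomposable (\cref{induc dsmmand number 2021-09-07 10:26:20}), the mutation-completion uniqueness of \cref{labeling method tau 2021-10-13 17:35:07}\ref{asai brick} to identify $\induc_G^{\tilde{G}}M'$ as the left mutation of $\induc_G^{\tilde{G}}M$ at $\induc_G^{\tilde{G}}X$, the counting argument via \cref{MT sbrick 2021-09-10 09:27:04} and its dual to show the image is closed under neighbours, and finally the Demonet--Iyama--Jasso criterion (a finite connected component forces $\tau$-tilting finiteness) together with $B\mapsto\tilde{B}$ for part~(2). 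These are exactly the ingredients underlying the referenced earlier paper, so your approach is a self-contained rendering of the same proof rather than a genuinely different one.
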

\begin{proof}
	By \cref{p-power index covering uniqueness}, we get \(\induc_G^{\tilde{G}} U=\tilde{B}\induc_G^{\tilde{G}} U\) for any \(B\)-module \(U\).
	By the same arguments of \cite[Theorem 4.2]{MR4243358},
	the induction functor \(\induc_G^{\tilde{G}}\) preserves support \(\tau\)-tilting left mutations.
	Hence, we get conclusions by \cref{label brick 2021-10-03 22:08:25}.
\end{proof}
\begin{corollary}\label{bijectice 2022-08-24 21:47:43}
	Let \(\tilde{G}\) be a finite group and \(G\) be a normal subgroup with cyclic Sylow \(p\)-subgroup such that the quotient group \(\tilde{G}/G\) is a \(p\)-group.
	Then the induction functor \(\induc_G^{\tilde{G}}\) induces the following isomorphism as partially ordered sets:
	\begin{equation}\label{kg stau 2021-10-08 14:56:34}
		\begin{tikzcd}[row sep=1pt]
			\stautilt kG\ar[r]& \stautilt k\tilde{G}\\M \ar[r,mapsto] &\induc_G^{\tilde{G}} M.
		\end{tikzcd}
	\end{equation}
\end{corollary}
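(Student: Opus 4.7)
The plan is to decompose both $kG$ and $k\tilde{G}$ into blocks and to apply part (2) of \cref{p-extension 2021-10-06 10:06:35} blockwise. Writing $kG = B_0 \times \cdots \times B_l$, the remark following \eqref{block dec 2021-11-10 11:00:13} gives $\stautilt kG \cong \prod_i \stautilt B_i$, with the analogous statement for $k\tilde{G}$. Since $\tilde{G}/G$ is a $p$-group, \cref{p-power index covering uniqueness} guarantees that each block $B$ of $kG$ is covered by a unique block $\tilde{B}$ of $k\tilde{G}$; in particular $\tilde{B}\,\induc_G^{\tilde{G}} U = \induc_G^{\tilde{G}} U$ for every $B$-module $U$. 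Moreover, by \cref{Clifford's Theorem block ver} the blocks of $kG$ covered by $\tilde{B}$ form a single $\tilde{G}$-orbit, and the above uniqueness forces this orbit to be $\{B\}$; hence $\inertiagp_{\tilde{G}}(B) = \tilde{G}$ and $B$ is $\tilde{G}$-stable.

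Next I would verify, block by block, the three hypotheses of \cref{MT 2021-09-07 13:50:56}. Since $G$ has a cyclic Sylow $p$-subgroup, every defect group of $B$ is a subgroup of a cyclic $p$-group, hence cyclic; combined with the $\tilde{G}$-stability of $B$ and the assumption that $\tilde{G}/G$ is a $p$-group, \cref{Cyclicdefect p-power index 2022-08-28 21:45:58}\ref{2021-12-06 18:35:40} yields that every indecomposable $B$-module, and a fortiori every left finite brick in $B\lmod$, is $\tilde{G}$-stable. The remaining two hypotheses, namely $H^2(\tilde{G}/G, k^\times) = 1$ and basicness of $k[\tilde{G}/G]$, follow from \cref{assumption ex 2021-11-23 16:12:58}\ref{item condition p 2022-09-05 14:54:36}. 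Finally, since $B$ has cyclic defect, \cref{Theorem:Dade Brauer Tree} shows that $B$ is either a simple algebra or a Brauer tree algebra; in either case $B$ is representation finite and therefore $\tau$-tilting finite.

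With these facts in place, part (2) of \cref{p-extension 2021-10-06 10:06:35} produces a poset isomorphism $\stautilt B \to \stautilt \tilde{B}$ given by $M \mapsto \induc_G^{\tilde{G}} M$. Taking the product over all blocks yields the desired poset isomorphism \eqref{kg stau 2021-10-08 14:56:34}: for a decomposition $M \cong \bigoplus_i M_i$ with $M_i$ lying in $B_i$, the product of the blockwise maps sends $(M_i)_i$ to $(\induc_G^{\tilde{G}} M_i)_i$, and additivity of $\induc_G^{\tilde{G}}$ together with the fact that each $\induc_G^{\tilde{G}} M_i$ lives in the unique covering block $\tilde{B}_i$ identifies this assignment with $M \mapsto \induc_G^{\tilde{G}} M$. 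The only delicate step is really the one in the first paragraph --- the observation that uniqueness of covering, combined with Clifford's theorem for blocks, forces every block $B$ of $kG$ to be $\tilde{G}$-stable --- since without this reduction, \cref{Cyclicdefect p-power index 2022-08-28 21:45:58} could not be invoked to supply the brick-stability hypothesis of \cref{MT 2021-09-07 13:50:56}.
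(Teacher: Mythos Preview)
Your approach matches the paper's: verify that every block of \(kG\) has cyclic defect and is therefore \(\tau\)-tilting finite, and then invoke \cref{p-extension 2021-10-06 10:06:35}(2) blockwise. Where your argument breaks is the deduction that \(\inertiagp_{\tilde{G}}(B)=\tilde{G}\). \cref{p-power index covering uniqueness} asserts that each block of \(kG\) is covered by a \emph{unique} block of \(k\tilde{G}\); it does \emph{not} say that a block of \(k\tilde{G}\) covers only one block of \(kG\). Hence the \(\tilde{G}\)-orbit of \(B\), which by \cref{Clifford's Theorem block ver} is precisely the set of blocks of \(kG\) covered by \(\tilde{B}\), can have more than one element. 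Concretely, take \(p=2\), \(G=C_3\), \(\tilde{G}=S_3\): the two nontrivial one-dimensional blocks of \(kC_3\) are swapped by conjugation and are both covered by the defect-zero block of \(kS_3\).

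This is not a cosmetic issue that disappears by replacing \(\tilde{G}\) with \(\inertiagp_{\tilde{G}}(B)\) in the stability check (that replacement does salvage the hypotheses of \cref{p-extension 2021-10-06 10:06:35}, since \(B\) is tautologically \(\inertiagp_{\tilde{G}}(B)\)-stable and \(\inertiagp_{\tilde{G}}(B)/G\) is still a \(p\)-group). The real problem is your reassembly step: when \(r>1\) conjugate blocks \(B_1,\ldots,B_r\) of \(kG\) are all covered by the same \(\tilde{B}\), the blockwise bijections \(\stautilt B_i \to \stautilt \tilde{B}\) do not assemble to a bijection \(\prod_i \stautilt B_i \to \stautilt \tilde{B}\), because \(\induc_G^{\tilde{G}}(xM)\cong \induc_G^{\tilde{G}} M\) for every \(x\in\tilde{G}\). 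In the example above one finds \(|\stautilt kC_3|=2^3=8\) while \(|\stautilt kS_3|=4\), so the global map \(M\mapsto \induc_G^{\tilde{G}} M\) is not even injective. The paper's own three-line proof does not address this point either; the blockwise application of \cref{p-extension 2021-10-06 10:06:35} only yields the stated bijection under the extra hypothesis that every block of \(kG\) is \(\tilde{G}\)-stable (equivalently, that \(B\mapsto\tilde{B}\) is a bijection between the sets of blocks).
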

\begin{proof}
	Since any defect group of a block of \(kG\) is contained in a Sylow \(p\)-subgroup of \(G\), any block has a cyclic defect group.
	Hence, any block of \(kG\) is \(\tau\)-tilting finite.
	Thus, the conclusion follows from \cref{p-extension 2021-10-06 10:06:35} for all blocks of \(kG\).
\end{proof}
\begin{proposition}\label{cyclic quot 2022-08-28 22:53:27}
	Let \(G\) be a normal subgroup of a finite group \(\tilde{G}\), \(\tilde{B}\) a block of \(k\tilde{G}\) and \(B\) a cyclic defect block of \(kG\) covered by \(\tilde{B}\) satisfying one of the following conditions:
	\begin{enumerate}
		\item There is an \(\inertiagp_{\tilde{G}}(B)\)-stable simple \(B\)-module \(S\) whose corresponding edge is a terminal edge of the Brauer tree of \(B\).\label{item end edge 2022-08-28 17:31:12}
		\item There is a simple \(B\)-module \(S\) whose corresponding edge of the Brauer tree of \(B\) is a terminal edge and the dimension of \(S\) whose dimension of \(S\) is distinct to that of any other simple \(B\)-module.\label{item end edge dim 2022-09-05 13:46:08}
		\item Any two simple \(B\)-modules have distinct dimensions.\label{item diff 2022-08-28 17:31:26}
	\end{enumerate}
	If the quotient group \(\tilde{G}/G\) is a cyclic group or isomorphic to the dihedral group \(D_{2p}\) of order \(2p\), then \cref{MT 2021-09-05 21:34:22,MT 2022-08-24 20:58:25,MT sbrick 2021-09-06 11:12:51} hold.
\end{proposition}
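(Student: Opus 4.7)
The plan is to verify the three hypotheses of \cref{MT 2021-09-05 21:34:22}, from which \cref{MT sbrick 2021-09-06 11:12:51,MT 2022-08-24 20:58:25} then follow. The cohomological and basicness conditions are immediate: since $\inertiagp_{\tilde{G}}(B)/G$ embeds into $\tilde{G}/G$ and the latter is cyclic or isomorphic to $D_{2p}$, so is the former, and \cref{assumption ex 2021-11-23 16:12:58} yields both $H^2(\inertiagp_{\tilde{G}}(B)/G,k^\times)=1$ and the basicness of $k[\inertiagp_{\tilde{G}}(B)/G]$ in all sub-cases.

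The bulk of the work is the stability condition. By \cref{stable bricks conditions 2021-09-30 17:16:10}, the $\inertiagp_{\tilde{G}}(B)$-stability of every left finite brick in $B\lmod$ is equivalent to the same stability for every indecomposable $\tau$-rigid $B$-module, and by the first item of \cref{Cyclicdefect p-power index 2022-08-28 21:45:58} (which crucially uses the cyclic defect) it suffices to prove that every simple $B$-module is $\inertiagp_{\tilde{G}}(B)$-stable. For this I would use that conjugation by an element of $\inertiagp_{\tilde{G}}(B)$ acts as a $k$-algebra automorphism of $B$, so the induced permutation of the isomorphism classes of simple $B$-modules preserves $k$-dimensions and the planar Brauer tree structure guaranteed by the cyclic defect case of \cref{Theorem:Dade Brauer Tree} (with cyclic order of edges around each vertex determined by the radical series of the indecomposable projective $B$-modules). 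Under \ref{item diff 2022-08-28 17:31:26} this permutation is immediately trivial since every simple is distinguished by its dimension; under \ref{item end edge dim 2022-09-05 13:46:08} the dimension hypothesis forces the terminal-edge simple $S$ to be fixed, collapsing the situation into \ref{item end edge 2022-08-28 17:31:12}.

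The main obstacle is therefore \ref{item end edge 2022-08-28 17:31:12}: I must argue that any automorphism of the planar Brauer tree of $B$ which fixes a terminal edge $e$ is the identity on every edge. Writing $v$ for the degree-one endpoint of $e$ and $v'$ for its other endpoint, I plan to observe that the induced action on the cyclically ordered edges incident to $v'$ is a rotation fixing the element $e$, hence trivial, so every edge at $v'$ and its opposite endpoint are fixed. A straightforward induction over the Brauer tree, iterating this local argument at each newly-fixed vertex, then shows that every edge is fixed, so every simple $B$-module is $\inertiagp_{\tilde{G}}(B)$-stable. This completes the verification of the hypotheses of \cref{MT 2021-09-05 21:34:22} and hence of \cref{MT 2022-08-24 20:58:25,MT sbrick 2021-09-06 11:12:51}.
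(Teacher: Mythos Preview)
Your proof is correct and follows the same overall strategy as the paper: verify the three hypotheses of \cref{MT 2021-09-05 21:34:22}, obtain the cohomological and basicness conditions from \cref{assumption ex 2021-11-23 16:12:58}, reduce \ref{item diff 2022-08-28 17:31:26} to \ref{item end edge dim 2022-09-05 13:46:08} to \ref{item end edge 2022-08-28 17:31:12}, and then conclude via \cref{Cyclicdefect p-power index 2022-08-28 21:45:58}\ref{2021-12-06 18:35:10}. The only real difference is in the propagation step for \ref{item end edge 2022-08-28 17:31:12}: the paper walks along the tree algebraically, using that for each simple already known to be stable there is a unique ``next'' simple detected by a one-dimensional $\Ext^1$, whereas you phrase the same induction geometrically as the statement that a cyclic-order-preserving automorphism of the Brauer tree fixing a terminal edge must be the identity. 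Your formulation is a bit more conceptual and handles branching vertices more transparently; the paper's $\Ext$ argument is more self-contained in that it does not need to invoke the planar (cyclic-order) structure explicitly.
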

\begin{proof}
	We can assume that \(\inertiagp_{\tilde{G}}(B)=\tilde{G}\) by \cref{Morita equivalence covering block}.
	We enough to show that the three conditions in \cref{MT 2021-09-07 13:50:56} are satisfied in the situations \ref{item end edge 2022-08-28 17:31:12},\ref{item end edge dim 2022-09-05 13:46:08} and \ref{item diff 2022-08-28 17:31:26}.
	By \cref{assumption ex 2021-11-23 16:12:58}, we have that the second and third conditions are satisfied in our situation.
	Assume that the block \(B\) of \(kG\) satisfies the condition \ref{item end edge 2022-08-28 17:31:12} and let \(S\) be an \(\inertiagp_{\tilde{G}}(B)\)-stable simple \(B\)-module whose corresponding edge is a terminal edge of the Brauer tree of \(B\).
	Then, since there exists a unique simple \(B\)-module \(T\) such that \(\Ext_B^1(S,T)\cong k\) and that \(\Ext_B^1(S,T')= 0\) for any distinct simple \(B\)-module \(T'\) to \(T\), we have that
	\begin{equation}
		\Ext_B^1(S,xT)\cong \Ext_B^1(xS,xT)\cong \Ext_B^1(S,T)\cong k.
	\end{equation}
	Hence, we have \(xT\cong T\) as \(B\)-modules for any \(x\in \inertiagp_{\tilde{G}}(B)\) by the uniqueness of \(T\) again.
	Also, since there exists a unique simple \(B\)-module \(U\) distinct to \(S\) such that \(\Ext_B^1(T,U)\cong k\) and \(\Ext_B^1(T,U')=0\) for any distinct simple \(B\)-module \(U'\) to \(U\) and \(S\), we have that
	\begin{equation}
		\Ext_B^1(T,xU)\cong\Ext_B^1(xT,xU)\cong\Ext_B^1(T,U)\cong k,
	\end{equation}
	which implies that \(xU\cong U\) as \(B\)-modules for any \(x\in \inertiagp_{\tilde{G}}(B)\).
	By repeating this argument, we have that any simple \(B\)-module is \(\inertiagp_{\tilde{G}}(B)\)-stable.
	Therefore, we have that any \(B\)-module is \(\inertiagp_{\tilde{G}}(B)\)-stable by \cref{Cyclicdefect p-power index 2022-08-28 21:45:58} \ref{2021-12-06 18:35:10}.
	In particular, the block \(B\) satisfies the first condition in \cref{MT 2021-09-07 13:50:56}.

	Next, assume that the block \(B\) of \(kG\) satisfies the condition \ref{item end edge dim 2022-09-05 13:46:08}.
	Then a simple \(B\)-module \(S\) whose corresponding edge is a terminal edge of the Brauer tree of \(B\) is \(\inertiagp_{\tilde{G}}(B)\)-stable because \(xS\) is a simple \(B\)-module with the same dimension as \(S\) for any \(x\in \inertiagp_{\tilde{G}}(B)\).
	Therefore, by \ref{item end edge 2022-08-28 17:31:12}, the block \(B\) satisfies the first condition in \cref{MT 2021-09-07 13:50:56}.
	The statement for \ref{item diff 2022-08-28 17:31:26} follows from that for \ref{item end edge dim 2022-09-05 13:46:08} immediately
\end{proof}
\begin{corollary}\label{cyclic quot principal 2022-08-28 22:59:59}
	Let \(G\) be a finite group with a cyclic Sylow \(p\)-group and \(\tilde{G}\) a finite group having \(G\) as a normal subgroup.
	If the quotient group \(\tilde{G}/G\) is a cyclic group,
	Then the principal block \(B_0(kG)\) satisfies the three conditions in \cref{MT 2021-09-07 13:50:56}.
	Therefore, \cref{MT 2021-09-05 21:34:22,MT 2022-08-24 20:58:25,MT sbrick 2021-09-06 11:12:51} hold for the principal blocks \(B_0(kG)\) and \(B_0(k\tilde{G})\).
\end{corollary}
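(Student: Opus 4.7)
The plan is to reduce the statement to an application of \cref{cyclic quot 2022-08-28 22:53:27}, which I would invoke by verifying its first listed condition for the principal block with the trivial module playing the role of $S$.

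First, since $G$ has a cyclic Sylow $p$-subgroup and this subgroup is a defect group of $B_0(kG)$, the block $B_0(kG)$ has cyclic defect. By \cref{cover principal 2022-09-05 14:33:21}, $B_0(kG)$ is covered by $B_0(k\tilde{G})$ and $\inertiagp_{\tilde{G}}(B_0(kG))=\tilde{G}$, so the quotient $\inertiagp_{\tilde{G}}(B_0(kG))/G=\tilde{G}/G$ coincides with the cyclic group in the hypothesis, which is what \cref{cyclic quot 2022-08-28 22:53:27} demands of $\inertiagp_{\tilde{G}}(B)/G$.

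The key step is exhibiting an $\inertiagp_{\tilde{G}}(B_0(kG))$-stable simple $B_0(kG)$-module whose corresponding edge is terminal in the Brauer tree. I would take $S=k_G$: it is $\tilde{G}$-stable by \cref{cover principal 2022-09-05 14:33:21}. For the terminal-edge claim I would argue as follows. Recall that in the Brauer tree of a cyclic-defect block, simple modules label edges and ordinary irreducible characters label vertices (with one possibly exceptional vertex), incidence being given by nonzero decomposition numbers. Because $1_G$ is one-dimensional, its $p$-modular reduction is the trivial Brauer character, so $d_{1_G,k_G}=1$ while $d_{1_G,\phi}=0$ for every other simple $\phi$ in $B_0(kG)$. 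Thus the vertex $1_G$ has valency one, so the edge $k_G$ is terminal.

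Applying \cref{cyclic quot 2022-08-28 22:53:27} with $B=B_0(kG)$ and $S=k_G$ then yields the three conditions of \cref{MT 2021-09-07 13:50:56} for $B_0(kG)$, whence \cref{MT 2021-09-05 21:34:22,MT sbrick 2021-09-06 11:12:51,MT 2022-08-24 20:58:25} follow for $B_0(kG)$ and $B_0(k\tilde{G})$. The only substantive point in the plan is the terminal-edge property of $k_G$, and this is immediate from the one-dimensionality of $1_G$; no deeper input from the theory of Brauer trees is required beyond the edge/vertex/decomposition-number description already invoked in the paper.
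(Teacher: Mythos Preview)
Your proposal is correct and follows essentially the same approach as the paper's proof: both take \(S=k_G\), note it is \(\tilde{G}\)-stable, verify that the corresponding edge is terminal in the Brauer tree of \(B_0(kG)\), and then invoke \cref{cyclic quot 2022-08-28 22:53:27}. The only difference is that the paper cites a reference for the terminal-edge property of \(k_G\), whereas you supply the direct decomposition-number argument via the trivial ordinary character \(1_G\).
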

\begin{proof}
	The trivial \(kG\)-module \(k_G\) is \(\tilde{G}\)-stable.
	Moreover, the trivial \(kG\)-module corresponds to the terminal edge in the Brauer tree of the principal block \(B_0(kG)\) (for example, see \cite[section 1.1]{zbMATH00044302}).
	Hence, it concludes the proof by \cref{cyclic quot 2022-08-28 22:53:27}.
\end{proof}

\begin{example}\label{example 2022-08-24 21:48:34}
	Let \(G:=\mathfrak{A}_5\) be the alternating group of degree \(5\), \(\tilde{G}:=\mathfrak{S}_5\) the symmetric group of degree \(5\) and \(P\) a Sylow \(5\)-subgroup of \(G\).
	Since \(P\) is cyclic and the centralizer \(C_{\tilde{G}}(P)=P\) is contained in \(G\),
	the only block covering the principal block \(B_0(kG)\) is the principal block \(B_0(k\tilde{G})\) of \(k\tilde{G}\) by \cref{cover principal 2022-09-05 14:33:21} and \cite[Theorem 4.15.1 (5)]{MR860771}.
	Also, the number \(\#\stautilt B_0(k\tilde{G})\) of support \(\tau\)-tilting \(B_0(k\tilde{G})\)-modules is equal to \(\binom{8}{4}=70\) by \cite{aoki2019classifying,ASASHIBA2020119}. So it is difficult to classify support \(\tau\)-tilting \(B_0(k\tilde{G})\)-modules.
	On the other hand, the classification of support \(\tau\)-tilting \(B_0(kG)\)-modules is easy because the number \(\#\stautilt{B_0(kG)}\) of support \(\tau\)-tilting \(B_0(kG)\)-modules is equals to \(\binom{4}{2}=6\) by \cite{aoki2019classifying} or \cite{ASASHIBA2020119} again.
	Hence, we can easily construct six support \(\tau\)-tilting \(B_0(k\tilde{G})\)-modules, six semibricks over \(B_0(k\tilde{G})\), six two-term tilting complexes in \(K^b(B_0(k\tilde{G})\lproj)\) and six two-term simple-minded collections in \(D^b(B_0(k\tilde{G})\lmod)\) from \(\stautilt{B_0(kG)}\) by using our main theorems.
\end{example}
\begin{example}\label{example 2022-08-24 21:48:53}
	Let \(k\) be an algebraically closed field of characteristic \(p\), \(G\) an arbitrary finite group and \(H\) a finite group satisfying that \(H^2(H,k^{\times})=1\) and that \(kG_2\) is basic as a \(k\)-algebra.
	For example, we can take a \(p\)-group, a cyclic group or the dihedral group \(D_{2p}\) of order \(2p\) by \cref{assumption ex 2021-11-23 16:12:58}.
	Also, it is clear that \(M\cong xM\) for any \(kG\)-module \(M\) and for any \(x\in G\times H\).
	Hence, for the direct product group \(G\times H\), we can apply our main theorem.
	The induction functor \(\induc_{G}^{G\times H}\cong kH\otimes_k \bullet\) induces the injective map \(\stautilt kG\rightarrow \stautilt k[G\times H]\).
\end{example}
\subsection*{Acknowledgements}
The authors are grateful to Naoko~Kunugi for giving valuable comments.
The first author would like to thank Yuta~Katayama for useful advice about the group cohomology of the dihedral groups.

\Addresses
\end{document}